\pdfoutput=1

\documentclass[10pt]{amsart}
\usepackage[utf8]{inputenc}
\usepackage[T1]{fontenc}
\usepackage{mathtools}
\usepackage{amsmath,amsthm,amsfonts,amssymb,mathrsfs,pb-diagram,amscd}
\usepackage{amsmath}
\usepackage{amsxtra}
\usepackage{amscd}
\usepackage{amsthm}
\usepackage{amsfonts}
\usepackage{amssymb}
\usepackage{eucal}
\usepackage{xcolor}
\usepackage[enableskew,vcentermath]{youngtab}
\usepackage{bm}
\usepackage{leftindex}
\usepackage{graphics}
\usepackage{mathrsfs}
\usepackage{enumerate}
\usepackage{tikz}

\usetikzlibrary{decorations,decorations.pathmorphing,decorations.pathreplacing,decorations.markings}
\usepackage[pagebackref=false]{hyperref}
\DeclareMathOperator\Std{Std}
\DeclareMathOperator\res{res}

\allowdisplaybreaks[4]

\def\O{\mathscr{O}}
\def\K{\mathscr{K}}
\def\hO{{\hat{\O}}}
\def\hK{{\hat{\K}}}
\newtheorem{thm}{Theorem}[section]
\theoremstyle{plain}

\newtheorem{lem}[thm]{Lemma}

\newtheorem{prop}[thm]{Proposition}
\newtheorem{cor}[thm]{Corollary}

\theoremstyle{definition}
\newtheorem{defn}[thm]{Definition}
\newtheorem{example}[thm]{Example}

\theoremstyle{remark}
\newtheorem{rem}[thm]{Remark}
\newtheorem{conjecture}[thm]{Conjecture}

\definecolor{A}{rgb}{.75,1,.75}

\numberwithin{equation}{section}

\newcommand{\C}{\mathbb C}
\newcommand{\Z}{\mathbb Z}
\newcommand{\N}{\mathbb N}

\newcommand{\mb}{\mathtt{b}}
\newcommand{\mt}{\mathfrak{t}}

\newcommand{\supp}{\text{supp}}

\newcommand{\undla}{\underline{\lambda}}
\newcommand{\undmu}{\underline{\mu}}
\newcommand{\undQ}{\underline{Q}}

\newcommand{\Add}{{\rm Add}}
\newcommand{\Rem}{{\rm Rem}}
\newcommand{\Hom}{{\rm Hom}}

	\newcommand{\hs}{\hspace*}
\def\O{\mathscr{O}}
\def\K{\mathscr{K}}
\def\hO{{\hat{\O}}}
\def\hK{{\hat{\K}}}
\def\rd{{\rm{d}}}
\def\Sym{\mathfrak{S}}
\def\pr{{\rm pr}}
\def\tQ{\widetilde{Q}}
\let\<=\langle
\let\>=\rangle
\def\({\big(}
\def\){\big)}

\newcommand{\fourlinerightarrow}{%
	\mathrel{%
		\vcenter{%
			\offinterlineskip
			\halign{##\cr
				$\rule{0.6em}{0.3pt}\kern-0.2em\rule{0.3em}{0.3pt}$\cr
				\noalign{\kern0.9pt}
				$\rule{0.6em}{0.3pt}\kern-0.2em\rule{0.3em}{0.3pt}$\cr
				\noalign{\kern0.9pt}
				$\rule{0.6em}{0.3pt}\kern-0.2em\rule{0.3em}{0.3pt}$\cr
				\noalign{\kern0.9pt}
				$\rule{0.6em}{0.3pt}\kern-0.2em\rule{0.3em}{0.3pt}$\cr
			}%
		}%
		\kern-0.2em\raisebox{0.0ex}{$\succ$}%
	}%
}

\newcommand{\fourlineleftarrow}{%
	\mathrel{%
		\raisebox{-0.2ex}{$\prec$}
		\kern-0.2em
		\vbox{%
			\offinterlineskip
			\halign{##\cr
				$\rule{0.6em}{0.3pt}\kern-0.2em\rule{0.3em}{0.3pt}$\cr
				\noalign{\kern0.9pt}
				$\rule{0.6em}{0.3pt}\kern-0.2em\rule{0.3em}{0.3pt}$\cr
				\noalign{\kern0.9pt}
				$\rule{0.6em}{0.3pt}\kern-0.2em\rule{0.3em}{0.3pt}$\cr
				\noalign{\kern0.9pt}
				$\rule{0.6em}{0.3pt}\kern-0.2em\rule{0.3em}{0.3pt}$\cr
			}%
		}%
	}%
}

{\vskip-\lastskip\medskip
	\noindent
	{\em #1.}\enspace
}%
{\qed\par\medskip
}

\usepackage{enumitem}

\newlist{caselist}{enumerate}{1}
\setlist[caselist,1]{
	label=\textbf{Case \arabic{section}.\arabic{subsection}.\arabic*:},
	ref=\arabic{section}.\arabic{subsection}.\arabic*,
	before=\setcounter{caselisti}{0}
}

\newcounter{case}

\begin{document}

	\title[cyclotomic Hecke-Clifford superalgebras]{On the semisimplicity and Schur elements of (super)symmetric superalgebras}
	\subjclass[2010]{20C08, 16W55, 16G99}
	\keywords{cyclotomic Hecke-Clifford superalgebras, Schur elements, semisimplicity criterion, symmetrizing forms, supersymmetrizing forms}

    \author{Lei Shi}
    	\address{Max-Planck-Institut f\"ur Mathematik\\
	    Vivatsgasse 7, 53111 Bonn\\
	    Germany}
    \email{leishi202406@163.com}

	\begin{abstract}
In this paper, we use Schur elements to derive semisimplicity criteria for (super)symmetric superalgebras. We obtain a closed formula for the Schur elements of cyclotomic Hecke-Clifford superalgebras $\mathcal{H}^{f}_{\mathbb{K}}$. As applications, we prove that two trace functions $\gimel_n$ and $t_{1,n}$ on the Hecke-Clifford superalgebra, which are defined in different ways, are proportional. We give a semisimplicity criterion for $\mathcal{H}^{f}_{\mathbb{K}}$ when it is (super)symmetric. We also derive semisimplicity criteria for cyclotomic quiver Hecke superalgebras of types $A^{(1)}_{e}$, $C^{(1)}_{e}$, $A^{(2)}_{2e}$ and $D^{(2)}_{e+1}$.
	\end{abstract}
	\maketitle
	
	\setcounter{tocdepth}{1}
	\tableofcontents
	
	\section{Introduction}
	
	Let $\mathbb{K}$ be a field and let $A$ be a finite-dimensional symmetric algebra over $\mathbb{K}$. Let ${\rm Irr}(A)$ denote a complete set of pairwise non-isomorphic simple $A$-modules. To each $V\in {\rm Irr}(A)$, Geck and Pfeiffer \cite{GF} associated an element $S_V\in \mathbb{K}$, called the Schur element of $V$. They proved that 
	\begin{align}\label{semisimplicity-non-super}
	\text{ $A$ is semisimple if and only if $S_V\neq 0$ for every $V\in {\rm Irr}(A)$. }
	\end{align}
	They also gave a semisimplicity criterion in a modular reduction setting \cite[Proposition 7.3.9, Theorem 7.4.7]{GF}. These criteria are useful in the study of Hecke algebras \cite{GF} and their cyclotomic generalizations \cite{CJ}.
	
 {\bf In the rest of this paper, ${\rm R}$ is an integral domain of characteristic different from $2$ and $\mathbb{K}$ is a field of characteristic different from $2$.} The first goal of this paper is to establish super analogues of the above semisimplicity criteria. The super setting has an additional feature: simple modules may be of type \texttt{M} or type \texttt{Q}, and this distinction has to be reflected in any Schur-element criterion. We begin by recalling the notion of a (super)symmetric superalgebra.
	
%
%
	
	\begin{defn}\label{(super)symmetrizing form}\cite[Definition 2.1]{LS1}, \cite[Section 4.1, 5.1]{WW2}
		Let ${\mathcal{A}}={\mathcal{A}}_{\overline{0}}\oplus {\mathcal{A}}_{\overline{1}}$ be an ${\rm R}$-superalgebra
		which is free and of finite rank over ${\rm R},$ $|\cdot|: {\mathcal{A}} \rightarrow \Bbb Z_{2}$ be the parity map.
		
		(i) We call an ${\rm R}$-linear map $t:\mathcal{A} \rightarrow {\rm R}$ non-degenerate if there is a $\Z_2$-homogeneous basis $\mathcal{B}$ such that the determinant ${\rm det}\left(t(b_1b_2)\right)_{b_1,b_2\in \mathcal{B}}\in {\rm R}^\times.$
		
		(ii) The superalgebra $\mathcal{A}$ is called symmetric if there is an evenly non-degenerate ${\rm R}$-linear map $t:\mathcal{A} \rightarrow {\rm R}$
		such that $t(xy)=t(yx)$ for any $x, y\in \mathcal{A}$. In this case, we call $t$ a symmetrizing form on $\mathcal{A}$.
		
		(iii) The superalgebra $\mathcal{A}$ is called supersymmetric if there is an evenly non-degenerate ${\rm R}$-linear map $t:\mathcal{A} \rightarrow {\rm R}$
		such that  $t(xy)=(-1)^{|x||y|}t(yx)$ for any homogeneous $x, y\in \mathcal{A}$. In this case, we call $t$ a supersymmetrizing form on $\mathcal{A}.$
	\end{defn}
	
	Let $\mathcal{A}$ be a finite-dimensional $\mathbb{K}$-superalgebra. We denote by ${\rm Irr}(\mathcal{A})$ a complete set of pairwise non-isomorphic simple $\mathcal{A}$-supermodules. When $\mathcal{A}$ is supersymmetric or symmetric, the Schur element $s_V$ attached to $V\in{\rm Irr}(\mathcal{A})$ was introduced in \cite{LS3}, \cite{WW2}. 
		
	Now suppose that $\hO$ is a discrete valuation ring with residue field $\mathbb{K}$ and fraction field $\hK$, and that $\mathcal{A}$ is an $\hO$-algebra which is free of finite rank as an $\hO$-module. The following theorem is the first main result of this paper; we refer the reader to Section \ref{preli} for the unexplained notation.
	
		\begin{thm}\label{modular-semisimple 2}
			Suppose $\mathcal{A}$ is (super)symmetric with a (super)symmetrizing form $t$, and $\mathbb{K}\mathcal{A}$, $\mathscr{K}\mathcal{A}$ are split. Then the following hold.
			\begin{enumerate}
				\item If $t$ is symmetric, then $\mathbb{K}\mathcal{A}$ is semisimple if and only if ${\rm pr}(s_V)\neq 0$ for any $V\in { \rm Irr}(\mathscr{K}\mathcal{A})$.
				\item If $t$ is supersymmetric, then $\mathbb{K}\mathcal{A}$ is semisimple if and only if ${\rm pr}(s_V)\neq 0$ for any $V\in { \rm Irr}(\mathscr{K}\mathcal{A})$ and all simple modules of $\mathscr{K}\mathcal{A}$ are of {\em type }\texttt{M}.
			\end{enumerate}
		\end{thm}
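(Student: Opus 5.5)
We reduce the theorem to the classical argument of Geck--Pfeiffer \cite[Theorem 7.4.7]{GF}, with the super features built in. The main tool is the central element attached to the form. Let $\mathcal{B}$ be a homogeneous $\hO$-basis of $\mathcal{A}$ and $\mathcal{B}^\vee=\{b^\vee\}$ the dual basis with respect to $t$, so $t(b_1 b_2^\vee)=\delta_{b_1,b_2}$. Non-degeneracy of $t$ over $\hO$ makes $\mathcal{B}^\vee$ an $\hO$-basis as well. Consider the Casimir-type operator
\[
\mathcal{I}(x)=\sum_{b\in\mathcal{B}}(\pm 1)\, b\,x\,b^\vee ,
\]
with signs chosen in the supersymmetric case so that it commutes with the grading. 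On $\mathscr{K}\mathcal{A}$, which is split semisimple, decompose the form as $t=\sum_V s_V^{-1}\chi_V$ (suitably modified for type \texttt{Q} modules). This is how the Schur elements of \cite{LS3}, \cite{WW2} are defined.

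\textbf{Step 1.} For each $V\in{\rm Irr}(\mathscr{K}\mathcal{A})$, show that $\mathcal{I}(e)$ acts on $V$ as a scalar multiple of $s_V$, where $e$ is a primitive idempotent of the Wedderburn component of $V$. In particular $s_V\in\hO$: this follows because $\mathcal{I}(e)$ is central and $\hO$ is integrally closed. For a type \texttt{M} module $V$ the computation matches the non-super case, with $\mathcal{I}(e_V)=s_V e_V$ on the block $e_V$. For a type \texttt{Q} module the simple block is $M_n(\mathcal{C}_1)$, with $\mathcal{C}_1$ the Clifford superalgebra. There I will compute explicitly with the supersymmetric trace. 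An odd form on $\mathcal{C}_1$ is forced: a supersymmetric trace on $M_n(\mathcal{C}_1)$ is odd on that block. This is the source of the type \texttt{M} requirement in (2). When $t$ is even supersymmetric, the restriction to a type \texttt{Q} block must vanish on the odd centre, so the block contributes a degenerate piece. Hence a type \texttt{Q} module forces degeneracy after reduction, and I will check this directly.

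\textbf{Step 2.} Note that $\mathbb{K}\mathcal{A}$ is semisimple if and only if the image of $\mathcal{I}$ on $\mathbb{K}\mathcal{A}$, the projective centre (Higman ideal), contains $1$. Equivalently, $\mathcal{I}$ is surjective onto the centre over $\mathbb{K}$. This is Higman's criterion, and it holds verbatim for superalgebras, since $\mathcal{I}(x)$ splits $x$ through the multiplication map $\mathcal{A}\otimes\mathcal{A}\to\mathcal{A}$ as a bimodule map.

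\textbf{Step 3.} Use Tits deformation and splitness: $\mathbb{K}\mathcal{A}$ and $\mathscr{K}\mathcal{A}$ then have decomposition maps. Take $\mathcal{I}(1)=\sum_V s_V\,\dim(\cdots)\,\epsilon_V$, where the $\epsilon_V$ are the central primitive idempotents. More usefully, take the elements $\mathcal{I}(e_V)=s_Ve_V$, which lie in $\mathcal{A}$ because $e_V$ can be chosen in $\mathcal{A}$ up to lifting.

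If all ${\rm pr}(s_V)\neq0$ (and in case (2) all modules have type \texttt{M}), I will argue that $\mathbb{K}\mathcal{A}$ is semisimple. The primitive idempotents of $\hK\mathcal{A}$ then lie in $\hO\mathcal{A}$ up to units, so the decomposition matrix is the identity and the dimensions match, giving $\sum(\dim)^2=\dim\mathcal{A}$. Alternatively, ${\rm pr}(s_V)\ne0$ shows that $\sum_V \mathcal{I}(e_V)/s_V=1$ holds in the Higman ideal over $\hO$, and hence after reduction. Conversely, if $\mathbb{K}\mathcal{A}$ is semisimple, then idempotents lift by Tits deformation. Reducing $\mathcal{I}(e_V)=s_Ve_V$ and using that the reduced central idempotent is nonzero, the reduced Schur element equals the Schur element of the simple $\mathbb{K}\mathcal{A}$-module under the reduced form. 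That element is nonzero by the super analogue of \eqref{semisimplicity-non-super}, which I assume from \cite{LS3}. Type \texttt{M} holds by Step 1, because over $\mathbb{K}$ a semisimple supersymmetric superalgebra with a type \texttt{Q} block cannot carry an even non-degenerate supersymmetric form.

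I expect the main obstacle to be integrality: the central idempotents $e_V$ are only in $\hK\mathcal{A}$. This is handled by working with $\mathcal{I}(x)$ for $x\in\mathcal{A}$ and the $\hO$-valued central characters, together with the hypothesis that the reduction is split.
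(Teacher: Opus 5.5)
Your route, Higman's criterion for the Casimir operator $\mathcal I$, is genuinely different from the paper's. The paper proves (1) by passing to $|\mathcal A|$, citing Geck--Pfeiffer (Theorem \ref{modular-semisimple 1}) and using the comparison $s_V=S_{|V|}$, $2s_V=S_{V^\pm}$ of Proposition \ref{Prop:comparing schur}. For the ``if'' part of (2) it reduces $t^{\mathscr K}=\sum_V s_V^{-1}\chi'_V$ modulo $\mathfrak m$, so that $t^{\mathbb K}$ kills $\mathcal J(\mathbb K\mathcal A)$; for the converse it uses the super Tits theorem and reduced orthogonality relations. Your Step 2 is sound: $\sum_b(-1)^{|b|}b^\vee\otimes b$ commutes with $\mathcal A$ because $t$ is even and supersymmetric, and for split algebras separability is the same as semisimplicity.

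Step 3, however, has a genuine gap in the ``if'' direction. You work with the expansion $t=\sum_V s_V^{-1}\chi_V$ and with Wedderburn idempotents of $\mathscr K\mathcal A$, which means you are assuming $\mathscr K\mathcal A$ is semisimple. The theorem only assumes it is split, and $s_V$ is defined by $I(f)=s_V{\rm tr}(f){\rm id}_V$ (resp.\ ${\rm suptr}$) for split simple modules of a possibly non-semisimple algebra. You first need the implication: all $s_V\neq0$ (and all simples of type \texttt{M} in (2)) $\Rightarrow$ $\mathscr K\mathcal A$ semisimple. This is the half of the super analogue of \eqref{semisimplicity-non-super} that the paper proves itself (Theorem \ref{thm:semisimplicity criterion 1}), via Proposition \ref{proj-criterion}: $s_V\neq0$ yields $\phi$ with $I(\phi)={\rm id}_V$, so every simple module is projective. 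In your language, choose even $x_V$ with $\chi_W(x_V)=\delta_{V,W}$ (resp.\ $\chi'_W$) and set $z=\mathcal I(\sum_V s_V^{-1}x_V)$. Then $z$ is central and acts by $1$ on every simple module, so $z$ is a unit and $\mathcal I(z^{-1}\sum_V s_V^{-1}x_V)=1$.

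Second, the integrality steps are wrong as stated.

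\begin{itemize}
\item A primitive idempotent $e$ of $\mathscr K\mathcal A$ need not lie in $\mathcal A$. So $\mathcal I(e)$ need not be integral, and ``central plus $\mathscr O$ integrally closed'' proves nothing.
\item Nor can $e_V$ be ``chosen in $\mathcal A$ up to lifting'': lifting goes from $\mathbb K\mathcal A$ to $\mathcal A$ and needs completeness.
\item The fact $s_V\in\mathscr O$ comes from the orthogonality relations \eqref{orth:symm}, \eqref{orth:supersymm} with $\rho_V$ integral on $\tilde V$, as in the paper.
\end{itemize}

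What your identity $\sum_V\mathcal I(e_V)/s_V=1$ actually needs is $s_VE^V_{ij}\in\mathcal A$ for matrix units relative to $\tilde V$. By non-degeneracy of $t$ over $\mathscr O$, the $\mathscr O$-valued functional $a\mapsto\rho_V(a)_{ji}$ equals $t(y\,\cdot)$ for some $y\in\mathcal A$. The character expansion then identifies $y=s_VE^V_{ij}$ (suitably modified on type \texttt{Q} blocks in (1)). Hence $x=\sum_V s_V^{-1}E^V_{11}\in\mathcal A$ when all $s_V$ are units, and $\mathcal I(x)=1$ reduces to $\mathbb K$.

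The converse has two problems of its own. You obtain type \texttt{M} for $\mathbb K\mathcal A$, whereas the statement needs it for $\mathscr K\mathcal A$. And identifying ${\rm pr}(s_V)$ with $s_{\mathbb K\tilde V}$ needs $\mathbb K\tilde V$ to be simple. Your own Step 2 gives a cleaner argument that avoids super Tits deformation altogether:
\begin{itemize}
\item Lift an even $\bar x$ with $\bar{\mathcal I}(\bar x)=1$ to $x\in\mathcal A_{\bar0}$. Then $\mathcal I(x)\in1+\mathfrak m\mathcal A$ is a central unit, and $y=\mathcal I(x)^{-1}x$ satisfies $\mathcal I(y)=1$.
\item Hence $\mathscr K\mathcal A$ is separable, so it is semisimple, and in case (2) all its simples are of type \texttt{M}.
\item Letting $\mathcal I(y)$ act on $V$ gives $s_V\chi_V(y)=1$ (resp.\ $s_V\chi'_V(y)=1$) with $\chi_V(y)\in\mathscr O$, so $s_V\in\mathscr O^\times$.
\end{itemize}
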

		
	We also establish a super analogue of \eqref{semisimplicity-non-super} and a super version of Tits's deformation theorem (Theorem \ref{super tits's}), both of which are used in the proof of Theorem \ref{modular-semisimple 2}. 
		
	We then apply Theorem \ref{modular-semisimple 2} to cyclotomic Hecke-Clifford superalgebras $\mathcal{H}^{f}_{\mathbb{K}}$. These superalgebras were introduced by Brundan and Kleshchev \cite{BK}
		in their study of the modular representation theory of the spin symmetric group. They are related to crystals of twisted types $A^{(2)}$ \cite{BK} and $D^{(2)}$ \cite{T}. Kang, Kashiwara and Tsuchioka gave a non-trivial $\Z$-graded structure on $\mathcal{H}^{f}_{\mathbb{K}}$ \cite{KKT}. Moreover, Kang, Kashiwara and Oh used $\mathcal{H}^{f}_{\mathbb{K}}$ to categorify certain highest weight modules of (super)quantum groups of types $A^{(1)}$, $C^{(1)}$ and twisted types $A^{(2)}$ and $D^{(2)}$ \cite{KKO1,KKO2}. See also \cite{HW}, which initiated the study of categorification of quantum Kac-Moody superalgebras with no isotropic odd simple
		roots by cyclotomic quiver Hecke superalgebras.
		
	Let $n\in\N$ and let $\mathcal{H}^{f}_{\mathbb{K}}:=\mathcal{H}^{f}_{\mathbb{K}}(n)$ be the cyclotomic Hecke-Clifford superalgebra over an algebraically closed field $\mathbb{K}$, where $f=f^{(\bullet)}_{\undQ}$ is a polynomial determined by $\undQ=(Q_1,Q_2,\ldots,Q_m)\in({\rm R}^\times)^m$ and $\bullet\in\{\mathtt{0},\mathtt{s}\}$. We denote by $P^{(\bullet)}_{n}(q^2,\undQ)$ the Poincar\'e polynomial appearing in \cite[before Proposition 4.11]{SW}. It was proved in \cite[Theorem 1.1]{SW} that if $P^{(\bullet)}_{n}(q^2,\undQ)\neq 0$, then $\mathcal{H}^{f}_{\mathbb{K}}$ is semisimple over $\mathbb{K}$. In \cite[Conjecture 4.13]{SW}, we conjectured that this non-vanishing condition is also necessary for the semisimplicity of $\mathcal{H}^{f}_{\mathbb{K}}$. The following theorem gives a partial positive answer to this conjecture and is the second main result of this paper. Again, the unexplained notation can be found in Section \ref{Cyclotomic Hecke-Clifford superalgebra}.
		
%
		\begin{thm}\label{semisimplicity for cyclotomic Hecke-Clifford}
			Let $\bullet\in\{\mathtt{0},\mathtt{s}\}$ and $\undQ=(Q_1,Q_2,\ldots,Q_m)\in(\mathbb{K}^*)^m$. Assume $f=f^{(\bullet)}_{\undQ}$.
			Then we have the following.
			\begin{enumerate} 
				\item Suppose $\bullet=\mathtt{0}$. Then $\mathcal{H}^{f}_{\mathbb{K}}$ is (split) semisimple if and only if $P^{(\mathtt{0})}_{n}(q^2,\undQ)\neq 0$.
				\item Suppose $\bullet=\mathtt{s}$ and $\Gamma_n^{(\mathsf{s})}(q^2,\underline{Q})\neq 0$. Then $\mathcal{H}^{f}_{\mathbb{K}}$ is (split) semisimple if and only if $P^{(\mathtt{s})}_{n}(q^2,\undQ)\neq 0$.
			\end{enumerate}
		\end{thm}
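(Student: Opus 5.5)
We plan to deduce Theorem \ref{semisimplicity for cyclotomic Hecke-Clifford} from Theorem \ref{modular-semisimple 2}, applied to a generic integral form of $\mathcal{H}^{f}$ that specializes to $\mathcal{H}^{f}_{\mathbb{K}}$. Sufficiency is already known: \cite[Theorem 1.1]{SW} shows that $P^{(\bullet)}_{n}(q^2,\undQ)\neq 0$ implies split semisimplicity. So only necessity is at stake. We must show that if $\mathcal{H}^{f}_{\mathbb{K}}$ is semisimple, then $P^{(\bullet)}_{n}(q^2,\undQ)\neq0$.

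The construction is as follows. Let $\hO$ be a discrete valuation ring with residue field $\mathbb{K}$ whose fraction field $\hK$ carries deformed parameters $\hat q,\hat Q_1,\dots,\hat Q_m$ lifting $q,\undQ$. We choose the lifts, for instance by a one-parameter family $q+x$, $Q_i+a_i x$ localized at $x=0$ with generic $a_i$, so that $P^{(\bullet)}_n(\hat q^2,\underline{\hat Q})\neq 0$. The algebra $\mathcal{A}=\mathcal{H}^f_{\hO}(n)$ is then free over $\hO$ by the basis theorem. Over $\hK$ it is split semisimple by \cite{SW}, with simple modules indexed by (strict) multipartitions and of known type \texttt{M} or \texttt{Q}. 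Over $\mathbb{K}$ it is split because $\mathbb{K}$ is algebraically closed.

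We next need a (super)symmetrizing form $t$ on $\mathcal{A}$. By the results announced in the abstract, in case $\bullet=\mathtt{0}$ the algebra carries such a form, which we take to be symmetric or supersymmetric as appropriate. In case $\bullet=\mathtt{s}$ the hypothesis $\Gamma^{(\mathsf{s})}_n(q^2,\undQ)\neq0$ is presumably exactly what makes the candidate form non-degenerate, since its Gram determinant should be a unit in $\hO$ precisely under this condition. We then invoke the closed formula for the Schur elements $s_{V}$, where $V=V(\underline{\lambda})$ runs over simple $\hK\mathcal{A}$-modules, obtained earlier in the paper.

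Theorem \ref{modular-semisimple 2} now says that $\mathbb{K}\mathcal{A}=\mathcal{H}^f_{\mathbb{K}}$ is semisimple iff ${\rm pr}(s_{V})\neq 0$ for all $V$, together with the condition that all simples are of type \texttt{M} in the supersymmetric case. The key combinatorial step is to show that
\[
\prod_{V} {\rm pr}(s_{V})\neq 0 \iff P^{(\bullet)}_n(q^2,\undQ)\neq 0,
\]
and, in the supersymmetric case, to check that the type condition is automatic or is absorbed into the same criterion. For the implication needed, we will argue by contrapositive. Suppose $P^{(\bullet)}_n(q^2,\undQ)=0$; then some factor of the form $1+q^{2k}$, $q^{2k}-1$, $Q_i-q^{2k}Q_j$ or $Q_iQ_j-q^{2k}$ with $|k|<n$ vanishes. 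For each such factor we will exhibit a multipartition $\underline{\lambda}$ of $n$ whose Schur element contains that factor in its numerator after cancellation. The natural candidates are hook or two-row shapes placed in suitable components: a single row of length $k+1$ for $q^{2k}=\pm1$ type factors, and one-box-plus-row configurations across components $i,j$ for the $Q$-factors. The remaining boxes are put in a component where they contribute no zero or pole.

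\textbf{Main obstacle.} The hard part is this matching between the Schur-element formula and the Poincaré polynomial. Schur elements are ratios, and we must ensure that the vanishing factor is not cancelled by the denominator; for instance, the $Q_i$ in the numerator of $s_{\underline{\lambda}}$ must be units in $\hO$. This requires careful tracking of contents and residues $q(\mathrm{res})$ along hooks, which is where we expect most of the work to go.
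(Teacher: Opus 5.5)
Your architecture is the paper's: deform over a DVR $\hO$ with residue field $\mathbb{K}$, use the form $t_{r,n}$, apply Theorem~\ref{modular-semisimple 2} to get ${\rm pr}(s_{\undla})\neq0$ for every $\undla$, and read off the factors of $P^{(\bullet)}_n(q^2,\undQ)$ from special Schur elements. The gap is that the proposal stops exactly at the step that carries the proof, and the shapes you sketch would not work.

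Every $\undla\in\mathscr{P}^{\bullet,m}_n$ has exactly $n$ boxes, so ``a row of length $k+1$, remaining boxes in a harmless component'' is not available; for $\bullet=\mathtt{0}$, $m=1$ there is no other component at all. Moreover, once two components $c_1<c_2$ are both nonempty, $\mathbb{X}^{\undla}_{c_1,c_2}$ acquires denominators $Q_{c_1}q^{2(b-a)}-Q_{c_2}q^{2(k-{\lambda^{(c_2)}_k}')}$. These can vanish at $x=0$ in exactly the degenerate situations you want to detect, so the zero you are after may be cancelled.

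The missing idea is the choice of test shapes: $\lambda^{(c)}=\Gamma_{u,n-u}$, $1\le u\le n$, with \emph{all other components empty}, plus $\lambda^{(0)}=(n)$ when $\bullet=\mathtt{s}$. For these the closed formula collapses to the cancellation-free expressions of Corollaries~\ref{special shape 1} and~\ref{special shape 2}. Their denominators are powers of $q$ and $Q_c$, $q^2\pm1$, $q+q^{-1}$ and, for $\bullet=\mathtt{s}$, factors $q^{2k}+1$, $Q_cq^{2k}+1$ of $\Gamma^{(\mathsf{s})}_n(q^2,\undQ)$ --- all units in $\hO$. The cross factors $(Q_c-Q_{c'}q^{-2t})(Q_cQ_{c'}-q^{-2(t+1)})$ of $P^{(\bullet)}_n$ then arise from the \emph{empty} components $c'$, and $u=1$ and $u=n$ together cover $1-n\le t\le n-1$.

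Your list of factors of $P^{(\bullet)}_n$ also omits $Q_i^2-q^{-2t}$ ($3-n\le t\le n-1$) and $Q_i^2-q^{-4t}$ ($1-n\le t\le n$), whose exponents are not bounded by $n$. For $\bullet=\mathtt{0}$ the latter need the whole family $u=1,\dots,n$, each hook contributing $\sqrt{(Q_c^2q^{4(u-n)}-1)(Q_c^2q^{4u}-1)}$.

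For $\bullet=\mathtt{s}$, do not try to exhibit $q^{2t}+1$ or $Q_i+q^{-2t}$ in a numerator. For the shapes that work they sit in denominators, e.g.\ $\prod_{k=0}^{n-1}(q^{2k}+1)$ for $\lambda^{(0)}=(n)$, and in $Q_i^2-q^{-4t}$ the hooks supply only $Q_i-q^{-2t}$. These factors are nonzero directly because $\Gamma^{(\mathsf{s})}_n(q^2,\undQ)\neq0$. The same hypothesis makes $t_{r,n}$ symmetrizing over $\hO$. Derive this from the invertibility of $(1+X_1)\cdots(1+X_n)$ over $\hO$ (Corollary~\ref{symmetric-O}), not from an unverified Gram-determinant claim.

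Finally, with $\hO=\mathbb{K}[x]_{(x)}$ the algebra $\hK\mathcal{A}$ need not be split. Theorem~\ref{semisimple:non-dege} is stated over an algebraically closed field, and the eigenvalues $\mathtt{b}_\pm$ and the square roots in $s_{\undla}$ need not lie in $\mathbb{K}(x)$. Theorem~\ref{modular-semisimple 2} requires splitness, and the square root above must exist in $\hK$ (hence in $\hO$) for you to pass from ${\rm pr}(s_{\undla})\neq0$ to nonvanishing of the reduced radicand. Either replace $\hO$ by a localization of its integral closure in a suitable finite extension (the residue field stays $\mathbb{K}$), or, as the paper does, work in $\mathbb{K}[[x]]$ with a deformation for which \eqref{integral} holds.
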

		
	Theorem \ref{semisimplicity for cyclotomic Hecke-Clifford} can be viewed as a super version of the main result in \cite{Ar1}. The proof uses the (super)symmetrizing form $t_{r,n}$ from \cite[Theorem 1.2]{LS3} together with the Schur elements $s_{\undla}$ of simple modules \cite[Theorem 1.4]{LS3}. When $P^{(\bullet)}_{n}(q^2,\undQ)\neq 0$, the complete set of pairwise non-isomorphic simple modules is parametrized by $\mathscr{P}^{\bullet,m}_{n}$. For $\undla\in\mathscr{P}^{\bullet,m}_{n}$ with $\bullet\in\{\mathsf{0},\mathsf{s}\}$, a recursive formula for the Schur element $s_{\undla}$ of the simple $\mathcal{H}^{f}_{\mathbb{K}}$-module $\mathbb{D}(\undla)$ with respect to $t^{\mathbb{K}}_{r,n}$ was obtained in \cite[Theorem 1.4]{LS3}. In this paper, we derive the following closed formula for $s_{\undla}$, which may be viewed as an analogue of \cite[Corollary 6.3]{Ma2}.
		
			\begin{thm}\label{closed-schur}
			Suppose that $\bullet\in\{\mathsf{0},\mathsf{s}\}$ and  $P^{(\bullet)}_{n}(q^2,\undQ)\neq 0$ holds. Let $\undla\in\mathscr{P}^{\bullet,m}_{n},$ then the Schur element $s_{\undla}$ of simple $\mathcal{H}^{f}_{\mathbb{K}}$-module $\mathbb{D}(\undla)$ with respect to $t^{\mathbb{K}}_{r,n}$ is given by
			\begin{align*}
				s_{\undla}
				=\begin{cases}
					\left(\prod\limits_{(a,b,c)\in\undla}\left(-2\epsilon\sqrt{[b-a+1]_{c,q^2}[b-a]_{c,q^2}}\right)\right)
					\cdot \left(\prod\limits_{c=1}^m	\mathbb{Y}^{\undla}_{c}\right)\cdot \left(\prod\limits_{1\leq c_1<c_2\leq m}	\mathbb{X}^{\undla}_{c_1,c_2}\right),& \text{ if } \bullet=\mathsf{0},\\
					2^{-\lceil \sharp\mathcal{D}_{\undla}/2 \rceil}	\cdot \left(\prod\limits_{c=0}^m	\mathbb{Y}^{\undla}_{c}\right)\cdot \left(\prod\limits_{0\leq c_1<c_2\leq m}	\mathbb{X}^{\undla}_{c_1,c_2}\right), & \text{ if } \bullet=\mathsf{s}.
				\end{cases}
			\end{align*}
		\end{thm}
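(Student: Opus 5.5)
The plan is to solve the recursion from \cite[Theorem 1.4]{LS3} by induction on $n=|\undla|$. That theorem expresses $s_{\undla}$ (with respect to $t^{\mathbb{K}}_{r,n}$) in terms of $s_{\undla\setminus\alpha}$ for a removable box $\alpha$ of $\undla$. The natural choice is the box containing $n$ in a fixed standard tableau, e.g.\ the last box of the last nonempty component. The recursion multiplies by an explicit ratio. This ratio involves the eigenvalue data of the box $\alpha$, namely the quantities $[b-a+1]_{c,q^2}$ and $[b-a]_{c,q^2}$ attached to its residue. It also involves products over the addable and removable boxes of $\undla\setminus\alpha$, coming from the seminormal form of \cite{SW}. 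The base case $n=0$ is $s_{\emptyset}=1$, and all the products in the theorem are empty for the empty multipartition. The induction therefore reduces to one identity: the ratio of the right-hand side of Theorem \ref{closed-schur} for $\undla$ and for $\undla\setminus\alpha$ equals the factor in the recursion.

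Step one is to compute this ratio factor by factor. The box product $\prod_{(a,b,c)}$ contributes exactly the factor for the new box $\alpha=(a,b,c)$. In the case $\bullet=\mathsf{s}$, the power of $2$ changes by $2^{-\lceil\sharp\mathcal{D}_{\undla}/2\rceil+\lceil\sharp\mathcal{D}_{\undla\setminus\alpha}/2\rceil}$. This change is nontrivial exactly when $\alpha$ lies on the diagonal of the $0$-th component, which changes the type of $\mathbb{D}(\undla)$ (\texttt{M} versus \texttt{Q}). It has to match the type-dependent factor $2$ or $1$ that appears in the recursion of \cite{LS3}. For the remaining factors, I would regroup $\mathbb{Y}^{\undla}_c$ (within one component) and $\mathbb{X}^{\undla}_{c_1,c_2}$ (between two components) row by row and column by column. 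The aim is to show that adding $\alpha$ to component $c$ changes $\mathbb{Y}_c$ by hook-type factors, coming from the boxes in the same row and column as $\alpha$. It changes $\mathbb{X}_{c,c'}$ by a product over the rows of component $c'$. This is the analogue of the telescoping argument behind \cite[Corollary 6.3]{Ma2}.

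Step two is to match these ratios with the recursion factor. The recursion factor is naturally a product over the addable nodes $\beta$ and removable nodes of $\undla\setminus\alpha$ of terms like $(\mathtt{q}(\alpha)-\mathtt{q}(\beta))$ and $(\mathtt{q}(\alpha)-\mathtt{q}(\beta)^{-1})$, since the $\mathtt{q}$-functions of Hecke--Clifford are symmetric under $x\mapsto x^{-1}$. I would convert this form into the hook form using the standard telescoping: along each row of a component, the products of contents of addable and removable nodes collapse into a product over the boxes in the row or column of $\alpha$.

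The hardest step is this matching, because the $\mathtt{q}$-function gives factors in both $q^{2k}Q_c$ and $q^{-2k}Q_c^{-1}$. For $\mathsf{s}$ there is also the $0$-th component with its strict-partition (shifted) combinatorics. There the telescoping has to account for the diagonal boxes in $\mathcal{D}_{\undla}$, and the sign and $\sqrt{\cdot}$ conventions (the $\epsilon$) must be checked consistently. I would first verify the one-component cases ($m=1$ with $\bullet=\mathsf{0}$, and $m=0$ with $\bullet=\mathsf{s}$) on small examples to pin down the normalizations. After that I would do the general telescoping, treating the $\mathbb{X}$-factors separately, since they only involve products of differences between the contents of different components.
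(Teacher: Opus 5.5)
Your plan is essentially the paper's proof. The theorem follows by rewriting $\mathtt{b}_-(\res(\alpha))-\mathtt{b}_+(\res(\alpha))=-2\epsilon\sqrt{[b-a+1]_{c,q^2}[b-a]_{c,q^2}}$ via \eqref{eigenvalues} in Proposition~\ref{Schur-Non-dengerate}, where the power of $2$ is already a global prefactor, so nothing needs matching there. The real content is Proposition~\ref{closed q} for $q(\undla)^{-1}$, proved as you outline: induction on $n$ peeling off $\mt_{\undla}^{-1}(n)$, factoring with \eqref{difference-residue}, and telescoping via bijections between removable and addable nodes, first in the one-component cases (Lemma~\ref{closed q-special}) and then for the $\mathbb{X}$ factors.

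One practical caveat concerns which box you peel. The paper peels the bottom box of the last column of the \emph{first} nonempty component $c$, so that $\lambda^{(c_1)}=\emptyset$ for all $c_1<c$; the asymmetric definition of $\mathbb{X}^{\undla}_{c_1,c_2}$ fits this choice. Your suggested last box of the last nonempty component is still legitimate, because $q(\undla)$ does not depend on $\mt$. But then the ratio of $\mathbb{X}^{\undla}_{c_1,c}$ for $c_1<c$ involves every box of $\lambda^{(c_1)}$ and needs an extra telescoping over that whole diagram.
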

		
	It is natural to ask for a cancellation-free formula for $s_{\undla}$, as in \cite{CJ}. Because of the complexity of $s_{\undla}$ in the super setting, we obtain such formulas only for some special shapes; see Subsection \ref{Cancellation-free formula for some special shapes}. These special cases are nevertheless sufficient for the proof of Theorem \ref{semisimplicity for cyclotomic Hecke-Clifford}.
	
	As a byproduct, Theorem \ref{closed-schur} allows us to compare two trace forms on the Hecke-Clifford superalgebra. Let $\mathcal{HC}_{\rm R}(n)$ be the Hecke-Clifford superalgebra over ${\rm R}$. This algebra was introduced in the study of Olshanski duality \cite{O}. A trace function is a map $\phi:\mathcal{HC}_{\rm R}(n)\rightarrow {\rm R}$ such that $\phi(ab)=\phi(ba)$ for any $a,b\in{\mathcal{HC}_{\rm R}(n)}_{\bar{0}}$ and $\phi\left({\mathcal{HC}_{\rm R}(n)}_{\bar{1}}\right)=0$. In \cite{WW2}, Wan and Wang defined a trace form $\gimel^{\rm R}_n$ on $\mathcal{HC}_{\rm R}(n)$. Its definition is based on a specific basis of the cocenter, and it was shown to be non-degenerate after extending scalars to the fraction field of $\Z[\frac{1}{2}][q,q^{-1}]$. This trace function is closely related to spin fake degrees \cite{WW1}. Recently, another trace form $t^{\rm R}_{1,n}$ on $\mathcal{HC}_{\rm R}(n)$ was introduced in \cite[Proposition 1.3]{LS3}. The form $t^{\rm R}_{1,n}$ is obtained by modifying a natural Frobenius form by a certain element. It is non-degenerate in a more general setting, and its values on a certain basis of $\mathcal{HC}_{\rm R}(n)$ can be computed explicitly. The relationship between $\gimel^{\rm R}_n$ and $t^{\rm R}_{1,n}$ is not immediate from their definitions. We prove the following comparison.
	
		\begin{prop}\label{trace function}
		$$\gimel^{\rm R}_n=\frac{1}{2^n} t^{\rm R}_{1,n}.$$
	\end{prop}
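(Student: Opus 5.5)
The proof will go through character expansions. Both $\gimel^{\rm R}_n$ and $t^{\rm R}_{1,n}$ are trace functions in the sense above: they vanish on odd elements and are symmetric on the even part. Also, $\mathcal{HC}_{\rm R}(n)$ is free over ${\rm R}$. So it is enough to prove the identity after extending scalars to a suitable field over which $\mathcal{HC}(n)$ is split semisimple, for instance an algebraic closure $\mathbb{F}$ of the fraction field of $\Z[\frac12][q,q^{-1}]$. It is also enough to treat the generic ring first and then specialize, since both forms are defined compatibly with base change. Over $\mathbb{F}$, every trace function is a linear combination of the characters $\chi_V$, $V\in{\rm Irr}(\mathcal{HC}_{\mathbb{F}}(n))$. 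For a type \texttt{Q} module one uses the appropriate reduced character so that the characters form a basis of the dual of the cocenter. The standard Schur-element identity for a (super)symmetrizing form $t$ then reads
\begin{align*}
t=\sum_{V\in{\rm Irr}}\frac{c_V}{s_V}\,\chi_V,
\end{align*}
where $c_V\in\{1,\tfrac12\}$ depends on whether $V$ is of type \texttt{M} or type \texttt{Q}, in the normalization of \cite{LS3}, \cite{WW2}. Consequently, two non-degenerate trace forms are proportional with constant $c$ if and only if their Schur elements satisfy $s^{(1)}_V=c^{-1}s^{(2)}_V$ for every $V$.

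The key steps, in order, are as follows.

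First, identify $\mathcal{HC}(n)$ with the case $m=1$, $\bullet=\mathsf{s}$ (or whichever level-one cyclotomic quotient the paper uses) of $\mathcal{H}^f$. Under this identification $t_{1,n}$ is the form $t_{r,n}$ with $r=1$, and ${\rm Irr}$ is indexed by strict partitions $\lambda$ of $n$.

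Second, read off the Schur elements $s_\lambda$ of $t_{1,n}$ from Theorem \ref{closed-schur} in this special case. With $m=1$ the factors $\mathbb{X}$ disappear and $\mathbb{Y}$ reduces to an explicit product of quantum hook-type expressions, together with the factor $2^{-\lceil\sharp\mathcal{D}_\lambda/2\rceil}$.

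Third, obtain the Schur elements of $\gimel_n$ from \cite{WW2}. Wan and Wang computed them, or equivalently expressed $\gimel_n$ in terms of characters via spin fake degrees and the $q$-analogue of the spin hook formula \cite{WW1}.

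Fourth, compare the two formulas factor by factor and check that they differ exactly by the factor $2^n$, uniformly in $\lambda$.

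As a sanity check and an alternative route, evaluate both forms at $1$. One has $t_{1,n}(1)$ from \cite[Proposition 1.3]{LS3}, and $\gimel_n(1)=1$ by construction. Presumably $t_{1,n}(1)=2^n$, coming from the Clifford part $\mathcal{C}_n$ of dimension $2^n$. This fixes the constant once proportionality is known. Proportionality can also be attacked directly: both forms vanish on the same complement of the span of a cocenter basis. Wan–Wang define $\gimel_n$ by being $\delta$ on a specific cocenter basis (essentially $1$ on the identity and $0$ on the other basis elements $T_w X^\alpha$-type representatives). One would check that $t_{1,n}$ also kills all non-identity basis elements, using the explicit values of $t_{1,n}$ on the standard basis given in \cite{LS3}.

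The main obstacle is the third/fourth step: matching normalizations. This involves the type \texttt{Q} halving, the $2^{-\lceil\sharp\mathcal{D}_\lambda/2\rceil}$ factors, and the square roots $\sqrt{[b-a+1][b-a]}$ versus Wan–Wang's quantum shifted hook lengths. I would first try the direct basis approach: compute $t_{1,n}$ on the Wan–Wang cocenter representatives, which are built from the $X_i$-monomials and $T_w$ for $w$ in the minimal-length elements of odd-part conjugacy classes. If $t_{1,n}$ visibly vanishes on the non-identity representatives, only $t_{1,n}(1)=2^n$ needs to be checked. If that fails, I would fall back on the Schur-element comparison using Theorem \ref{closed-schur}.
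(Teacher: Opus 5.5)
Your Schur-element route (the ``key steps'') is the paper's proof. You reduce by base change to ${\bf A}=\Z[\frac12][q,q^{-1}]$ and pass to $\mathbb{F}=\overline{\C(q)}$, where $\mathcal{HC}_{\mathbb{F}}(n)$ is split semisimple and both forms are symmetrizing (Theorem \ref{th:spinSchur}, Proposition \ref{Hecke-Cliffod sym}). You then expand both forms via Proposition \ref{schur formula 1} and show $c^\lambda=2^n s_\lambda$ (Lemma \ref{schur=schur}).

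Some specifics are off. $\mathcal{HC}_{\rm R}(n)$ is the case $m=0$, $\bullet=\mathsf{s}$, $f=X_1-1$, $r=1$; with $m=1$ the level would be $3$ and an $\mathbb{X}^{\lambda}_{0,1}$ factor would appear. So $s_\lambda=2^{-\lceil\sharp\mathcal{D}_\lambda/2\rceil}\,\mathbb{Y}^\lambda_0$, as in Corollary \ref{special shape 2}. The square roots $\sqrt{[b-a+1]_{c,q^2}[b-a]_{c,q^2}}$ belong only to the $\bullet=\mathsf{0}$ formula and play no role here.

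The step you leave as ``compare factor by factor'' is the real content, and it rests on two observations. First, $\sharp\mathcal{D}_\lambda=\ell(\lambda)$, so the $2$-power in $s_\lambda$ is $2^{\ell(\lambda)-\lceil\ell(\lambda)/2\rceil}=2^{\lfloor\ell(\lambda)/2\rfloor}$. Second, $h^{\tilde\lambda}_{i,j}=\lambda_i+\lambda_{j+1}$ for $1\le i\le j\le\ell(\lambda)$. This turns $\prod_{1\le i<i'\le\ell(\lambda)+1}\bigl(q^{2(\lambda_i+\lambda_{i'})}-1\bigr)$ into the shifted-hook factors of the boxes in columns $\le\ell(\lambda)$, and the second product in $\mathbb{Y}^\lambda_0$ covers the remaining columns. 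Comparing with \eqref{eq:elemSchur} then gives $c^\lambda=2^n s_\lambda$.

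The direct basis approach you would try first cannot work as stated, because $\gimel_n$ is not a delta function on the cocenter basis. By definition $\gimel_n(T_{w_\nu})=\bigl(\frac{q^2-1}{2}\bigr)^{n-\ell(\nu)}\neq 0$ for every $\nu\in\mathcal{OP}_n$; the representatives are the $T_{w_\nu}$ themselves, and no $X^\alpha$ enter. So $t_{1,n}$ does not vanish on the non-identity representatives, and checking $t_{1,n}(1)=2^n$ is far from sufficient. That value is the coefficient of $1$ in $(1+X_1)\cdots(1+X_n)$, not a dimension count of the Clifford part.

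A correct direct proof would have to establish $\tau_{1,n}\bigl(T_{w_\nu}(1+X_1)\cdots(1+X_n)\bigr)=2^n\bigl(\frac{q^2-1}{2}\bigr)^{n-\ell(\nu)}$ for every odd partition $\nu$, and then invoke Theorem \ref{cocenter}. Since the $X_k$ are complicated expressions in the $C^\beta T_w$ basis, this is exactly the computation the character-theoretic argument avoids.
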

	
	The proof reduces to the case ${\rm R}=\Z[\frac{1}{2}][q,q^{-1}]$ and then uses base change. In this special case, we apply semisimple representation theory and compare Schur elements to obtain the proposition.
	
	Finally, we use Kang-Kashiwara-Tsuchioka's isomorphism and Theorem \ref{semisimplicity for cyclotomic Hecke-Clifford} to study semisimplicity criteria for cyclotomic quiver Hecke superalgebras of affine types $A^{(1)}_{e}$, $C^{(1)}_{e}$, $A^{(2)}_{2e}$ and $D^{(2)}_{e+1}$; see Corollaries \ref{semisimplicty typeA' }, \ref{semisimplicty typeC' }, \ref{semisimplicty twised typeA' } and \ref{semisimplicty twised typeD' }. For affine types $A^{(1)}_{e}$ and $C^{(1)}_{e}$, we recover \cite[Corollary 1.6.11]{Ma1} and \cite[Theorem 1.1]{Sp}. For affine types $A^{(2)}_{2e}$ and $D^{(2)}_{e+1}$, the criteria appear to be new.
	
	In a forthcoming paper, we will prove the semisimplicity criterion for $\mathcal{H}^{f}_{\mathbb{K}}$, with $f=f^{(\bullet)}_{\undQ}$ and $\bullet\in\{\mathtt{0},\mathtt{s},\mathtt{ss}\}$, by a different method. We will also consider the degenerate case, namely the cyclotomic Sergeev superalgebra. 
	
The paper is organized as follows. In Section \ref{preli}, we recall some basic facts about superalgebras and prove the semisimplicity criterion in Subsection \ref{Semisimple Criterion}. After comparing several Schur elements and developing a super analogue of Tits's deformation theorem, we prove Theorem \ref{modular-semisimple 2} in Subsection \ref{Modular Reduction}. In Section \ref{Cyclotomic Hecke-Clifford superalgebra}, we recall basic properties of affine and cyclotomic Hecke-Clifford superalgebras, together with the related combinatorics; in particular, we review the separate condition and the semisimple representation theory. In Section \ref{Some formulae for Schur elements}, we study Schur elements for cyclotomic Hecke-Clifford superalgebras. Theorem \ref{closed-schur} is proved in Subsection \ref{Closed formula for Schur elements}, and cancellation-free formulas for some special shapes are obtained in Subsection \ref{Cancellation-free formula for some special shapes}. In Section \ref{On two trace functions of Hecke-Clifford superalgebra}, we prove Proposition \ref{trace function}. In Section \ref{Semisimplicity criterion for cyclotomic Hecke-Clifford superalgebra}, we prove Theorem \ref{semisimplicity for cyclotomic Hecke-Clifford}. In Section \ref{Cyclotomic quiver Hecke superalgebra and cyclotomic quiver Hecke-Clifford superalgebra}, we introduce the relevant Dynkin diagrams and the corresponding cyclotomic quiver Hecke superalgebras and cyclotomic quiver Hecke-Clifford superalgebras. In Section \ref{Semisimplicity criterion for cyclotomic quiver Hecke superalgebra}, we study semisimplicity criteria for cyclotomic quiver Hecke superalgebras of affine types $A^{(1)}_{e}$, $C^{(1)}_{e}$, $A^{(2)}_{2e}$ and $D^{(2)}_{e+1}$.

\bigskip
\centerline{\bf Acknowledgements}
\bigskip
The author thanks Shuo Li and Jinkui Wan for many valuable discussions.
\bigskip

	\section{Semisimplicity criterion for (super)symmetric superalgebras}\label{preli}

	\subsection{Some basics about superalgebras}\label{basics}
	We recall some basic notions of superalgebras, referring the
	reader to~\cite[\S 2-b]{BK}. Let us denote by
	$|v|\in\mathbb{Z}_2$ \label{pag:||} the parity of a homogeneous vector $v$ of an
	${\rm R}$-vector superspace. By a superalgebra, we mean a
	$\mathbb{Z}_2$-graded associative ${\rm R}$-algebra.  Let $\mathcal{A}$ be an
	${\rm R}$-superalgebra. By an $\mathcal{A}$-module, we mean a $\mathbb{Z}_2$-graded
	left $\mathcal{A}$-module. A homomorphism $f:V\rightarrow W$ of
	$\mathcal{A}$-modules $V$ and $W$ means a linear map such that $
	f(av)=(-1)^{|f||a|}af(v).$  Note that this and other such
	expressions only make sense for homogeneous $a, f$ and the meaning
	for arbitrary elements is to be obtained by extending linearly from
	the homogeneous case. 
	
{\bf In the rest of this subsection, we assume $\mathcal{A}$ is a finite-dimensional superalgebra over $\mathbb{K}$.} Let $V$ be a finite-dimensional
	$\mathcal{A}$-module. Let $\Pi
	V$ \label{pag:parity shift} be the same underlying vector space but with the opposite
	$\mathbb{Z}_2$-grading. The new action of $a\in\mathcal{A}$ on $v\in\Pi
	V$ is defined in terms of the old action by $a\cdot
	v:=(-1)^{|a|}av$. Note that the identity map on $V$ defines
	an isomorphism from $V$ to $\Pi V$. Let $\delta: \mathcal{A}\to \mathcal{A}, a\mapsto (-1)^{|a|}a$ be an automorphism of $\mathcal{A}$. Then we can associate another $\mathcal{A}$-module $V^{\delta}$, which is the same vector space $V$ with the new action $a\cdot v:=\delta(a)v$. By forgetting the grading we may consider any superalgebra $\mathcal{A}$ as the usual algebra which will be denoted by $|\mathcal{A}|$. Similarly, any $\mathcal{A}$-supermodule $V$ can be considered as a usual $|\mathcal{A}|$-module denoted by $|V|$. 

The category of finite-dimensional $\mathcal{A}$-supermodules is denoted $\mathcal{A}$-smod. If we forget the super structure, the category of finite-dimensional $|\mathcal{A}|$-modules is denoted $|\mathcal{A}|$-mod. For any module $V\in \mathcal{A}$-smod, we use $\rho_V$ to denote its matrix representation with respect to a $\Z_2$-graded basis of $V$. For $V'\in |\mathcal{A}|$-mod, we use $\rho_{V'}$ to denote its matrix representation with respect to any basis of $V$. Let ${\rm R}\to {\rm R'}$ be a ring homomorphism, we denote ${\rm R'}\mathcal{A}:={\rm R'}\otimes_{\rm R}\mathcal{A}$. Moreover, for any $V\in\mathcal{A}$-smod, we use the notation ${\rm R'}V:={\rm R'}\otimes_{\rm R}V \in{\rm R'}\mathcal{A}$-smod. The same notation applies to $V\in|\mathcal{A}|$-mod. 

Let  $V\in \mathcal{A}$-smod be a simple module, we call $V$ split simple if $\mathbb{K}'V\in \mathbb{K}'\mathcal{A}$-smod remains to be simple, for any field extension $\mathbb{K}\subset\mathbb{K}'$. We say $\mathcal{A}$ is split if any simple $\mathcal{A}$-module $V\in \mathcal{A}$-smod is split.  In particular, when $\mathbb{K}$ is an algebraically closed field, $\mathcal{A}$ is always split.

Suppose $\mathcal{A}$ is split. A superalgebra analogue of Schur's Lemma states that the endomorphism
	algebra of a finite-dimensional simple module over $\mathcal{A}$  is either one-dimensional or two-dimensional. In the
	former case, we call the module of {\em type }\texttt{M}, while in
	the latter case the module is called of {\em type }\texttt{Q}.
	
	\begin{lem}\cite[Lemma 12.2.1, Corollary 12.2.10]{K2}\label{lem:type MQ}
	Suppose $\mathcal{A}$ is split and $V$ is a simple $\mathcal{A}$-module. If $V$ is of type $\texttt{M}$, then by forgetting the grading, $|V|$ is a simple $|\mathcal{A}|$-module. If $V$ is of type $\texttt{Q}$, then by forgetting the grading, $|V|$  is isomorphic to a direct sum of two non-isomorphic simple $|\mathcal{A}|$-modules. That is, there exist two non-isomorphic simple $|\mathcal{A}|$-modules $V^+,V^-$ such that $|V|\cong V^+\oplus V^-$ as $|\mathcal{A}|$-modules. Moreover if $V_1,\cdots,V_m$ (resp. $V_{m+1},\cdots,V_n$) are pairwise non-isomorphic simple $\mathcal{A}$-modules of type \texttt{M} (resp. \texttt{Q}), then $$\{|V_1|, \cdots,|V_m|, V_{m+1}^\pm, \cdots, V_{n}^\pm\}$$ is a set of pairwise non-isomorphic $|\mathcal{A}|$-modules. Moreover, $(V_i^\pm)^\delta\cong V_i^\mp$.
	\end{lem}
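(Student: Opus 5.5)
The plan is to work with the even part of the supercommutant of $V$. Since $\mathcal{A}$ is split, $E:=\mathrm{End}_{\mathcal{A}}(V)$ (super endomorphisms) is either $\mathbb{K}\cdot{\rm id}$ (type \texttt{M}) or $\mathbb{K}\cdot{\rm id}\oplus\mathbb{K}J$, where $J$ is odd and can be normalized so that $J^2=\pm 1$. To get this normalization I would use splitness to pass to a field extension where a square root exists, and note that the statement is insensitive to such an extension. An odd endomorphism in this sign convention anticommutes with odd elements of $\mathcal{A}$. I would turn it into an honest $|\mathcal{A}|$-linear map by setting $\tilde J:=J\circ\sigma$, where $\sigma$ is the grading operator $v\mapsto(-1)^{|v|}v$. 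After rescaling by a scalar one gets $\tilde J^2=1$.

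\textbf{Type \texttt{M}.} Suppose $W\subset|V|$ is a nonzero proper $|\mathcal{A}|$-submodule. Then $\sigma W$ is also a submodule, because $\sigma a\sigma^{-1}=\delta(a)$. I would then run the density/Jacobson argument: by Wedderburn applied to the ungraded algebra $|\mathcal{A}|$ acting on the semisimple module $|V|$, $|V|$ is semisimple as an $|\mathcal{A}|$-module (being a quotient of super-semisimple structure; alternatively, the radical of $|\mathcal{A}|$ is graded). Its ungraded commutant $\mathrm{End}_{|\mathcal{A}|}(|V|)$ is stable under conjugation by $\sigma$, so it is a superalgebra. Its even part consists of even super endomorphisms. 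Its odd part consists of maps $\phi$ with $\phi a=a\phi$; then $\phi\sigma$ is a super odd endomorphism. Hence $\mathrm{End}_{|\mathcal{A}|}(|V|)\cong E$ as vector spaces. In type \texttt{M} this gives dimension $1$, so the semisimple module $|V|$ is simple.

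\textbf{Type \texttt{Q}.} The ungraded commutant is 2-dimensional and spanned by $1$ and $\tilde J$, with $\tilde J^2=1$. Its idempotents $(1\pm\tilde J)/2$ split $|V|=V^+\oplus V^-$, and each summand has a 1-dimensional commutant, so each is simple. They are non-isomorphic, since otherwise the commutant would be $M_2(\mathbb{K})$. Moreover $\sigma$ maps $V^+$ to $V^-$: $\sigma\tilde J\sigma^{-1}=-\tilde J$ because $J$ is odd. Since $\sigma$ intertwines $V$ with $V^\delta$, this gives $(V^\pm)^\delta\cong V^\mp$.

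\textbf{Pairwise non-isomorphism across different $V_i$.} Any $|\mathcal{A}|$-homomorphism $\phi$ from a summand of $|V_i|$ to $|V_j|$ extends by zero to a map $|V_i|\to|V_j|$. Decompose it into its $\sigma$-even and $\sigma$-odd parts, $(\phi\pm\sigma\phi\sigma)/2$. The even part is a super homomorphism $V_i\to V_j$. The odd part, after composing with $\sigma$, is an odd super homomorphism $V_i\to V_j$. Both vanish for $i\ne j$ by super Schur, so $\phi=0$.

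The main obstacle is the semisimplicity of $|V|$. I would prove it by showing that the radical of $|\mathcal{A}|$ is $\delta$-stable, hence graded, so it annihilates the simple supermodule $V$. Then $|V|$ is a module over the semisimple algebra $|\mathcal{A}|/\mathrm{rad}$, and is therefore semisimple.
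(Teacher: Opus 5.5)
The paper gives no proof of this lemma, only a citation to Kleshchev's book, which works over an algebraically closed field. Over such a field your argument is sound and is a reasonable variant of the standard proof. The standard proof relies on the observation you start with and then drop: for any $|\mathcal{A}|$-submodule $W\subseteq|V|$, both $W\cap\sigma W$ and $W+\sigma W$ are graded submodules. Hence either $|V|$ is simple, or $|V|=W\oplus\sigma W$ with $\sigma W\cong W^{\delta}$. This gives the semisimplicity of $|V|$ and the $\delta$-twist at once, without the graded-radical argument. The type is then read off from $\dim\mathrm{End}_{|\mathcal{A}|}(|V|)=\dim\mathrm{End}_{\mathcal{A}}(V)$, exactly as in your commutant comparison.

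The real difficulty is not semisimplicity but the normalization of $\tilde J$, and there your proposal has a genuine gap.
\begin{itemize}
\item One has $\tilde J^2=-J^2=\beta\in\mathbb{K}^\times$. The idempotents splitting $|V|$, and hence any decomposition $|V|\cong V^+\oplus V^-$ over $\mathbb{K}$, exist only when $\beta$ is a square in $\mathbb{K}$.
\item Your claim that the statement is insensitive to extending scalars is false. A decomposition over $\mathbb{K}'$ need not descend to $\mathbb{K}$, and a simple $|\mathcal{A}|$-module can become reducible over $\mathbb{K}'$.
\item With the paper's notion of split this actually happens. Let $\mathcal{A}=\mathbb{K}[c]/(c^2-b)$ with $c$ odd and $b$ a non-square in $\mathbb{K}$. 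Up to parity shift, every simple supermodule is isomorphic to $V=\mathcal{A}$, which stays simple over every extension field. The odd map $J$ with $J(1)=c$, $J(c)=-b$ is an endomorphism with $J^2=-b$, so $V$ is of type \texttt{Q}. Yet $|V|\cong\mathbb{K}(\sqrt{b})$ is a simple $|\mathcal{A}|$-module.
\end{itemize}
So the type \texttt{Q} assertion cannot be derived from the hypothesis as literally stated, and your proof needs a stronger one. Either assume $\mathbb{K}$ algebraically closed, or assume $|\mathcal{A}|$ is split as well. In the latter case $\mathrm{End}_{|\mathcal{A}|}(|V|)$ is a $2$-dimensional commutative split semisimple algebra, hence $\mathbb{K}\times\mathbb{K}$, and $\beta$ is automatically a square. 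Your type \texttt{M} argument and your cross-comparison via $(\phi\pm\sigma\phi\sigma)/2$ need no such assumption.
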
 
	
	\begin{example}\label{simple algebra}	
		(1) Let $V$ be a superspace with superdimension $(m,n)$ over a field $\mathbb{K}$. Then $\mathcal{M}_{m,n}:={\text{End}}_{\mathbb{K}}(V)$ is a simple superalgebra with simple module $V$ of {\em type }\texttt{M}.  \\
		(2) Let $V$ be a superspace with superdimension $(n,n)$ over a field $\mathbb{K}$. We define
		$\mathcal{Q}_n:=\left\{\begin{pmatrix}
			A  & B\\
			-B & A
		\end{pmatrix}\biggm| A,B\in  M_n\right\}\subset \mathcal{M}_{n,n},$ then $\mathcal{Q}_n$ is a simple superalgebra with simple module $V$ of {\em type }\texttt{Q}.
	\end{example}
	
Denote by $\mathcal{J}(\mathcal{A})$  the usual (non-super) Jacobson radical of $\mathcal{A}.$ We call $\mathcal{A}$ semisimple if $\mathcal{J}(\mathcal{A})=0$. Hence $\mathcal{A}$ is semisimple if and only if $|\mathcal{A}|$ is semisimple. By a super version of Wedderburn's theorem, any split semisimple superalgebra is a direct product of finitely many
	simple split superalgebras, and any split simple superalgebra is isomorphic to some $\mathcal{M}_{m,n}$ or $\mathcal{Q}_n$ (see \cite[Theorem 12.2.9]{K2}).

\subsection{Semisimplicity Criterion}\label{Semisimple Criterion}
Recall the definition of (super)symmetrizing form in Definition \ref{(super)symmetrizing form}. {\bf In this subsection, we assume $\mathcal{A}$ is free and of finite rank over ${\rm R}$ with a (super)symmetrizing form $t$.} Let $\mathcal{B}$ be a $\Z_2$-homogeneous basis of $\mathcal{A}.$ We denote by $\mathcal{B}^{\vee}=\{b^{\vee}\mid b\in \mathcal{B}\}$ the dual basis, which is also homogeneous and satisfies \begin{align}\label{dual}t(b^\vee b')=\delta_{b,b'}\qquad \text{ for any $b,b'\in\mathcal{B}.$ }
	\end{align}Suppose ${V,V'}$ are two $\mathcal{A}$-modules. For any homogeneous map $f\in{\rm Hom}_{{\rm R}}(V, {V'})$, we define $I(f)\in {\rm Hom}_{{\rm R}}(V, { V'})$ by letting $$
I(f)(v):=\begin{cases} \sum_{b\in \mathcal{B}}(-1)^{|f||b|}b^{\vee} f(bv), &\text{if $t$ is symmetric};\\
	\sum_{b\in \mathcal{B}}(-1)^{|f||v|+|b|}b^{\vee} f(bv), &\text{if $t$ is supersymmetric},
\end{cases}
$$ for $v\in V$.

As noted in \cite[\S 5.1]{WW2}, \cite[\S 2.1]{LS3}, when $t$ is (super)symmetric, $I(f)$ is independent of the choice of the homogeneous basis $\mathcal{B}$ and $I(f)\in{\rm Hom}_{\mathcal{A}}(V, {V'})$. 
By definition, for any $g\in{\rm End}_{\mathcal{A}}(V)_{\bar{0}},h\in{\rm End}_{\mathcal{A}}({V'})_{\bar{0}}$, we have $I(hfg)=hI(f)g$.

{\bf In the rest of this subsection, we require ${\rm R}=\mathbb{K}$, i.e. $\mathcal{A}$ is (super)symmetric over $\mathbb{K}$ with a (super)symmetrizing form $t$.}

\begin{defn}\cite[\S 1.1.2]{CW}
	Let $V$ be a superspace with superdimension $(m,n)$ over a field $\mathbb{K}$. For $f\in{\rm End}_{\mathbb{K}}(V)$, we write $f=f_{\bar{0}}+f_{\bar{1}}$, where $f_{a}$ is the $a$-th component of $f$. We define the supertrace of $f$ as follows$$
	{\rm suptr}(f):={\rm tr}(f_{\bar{0}}\downarrow_{V_{\bar{0}}})-{\rm tr}(f_{\bar{0}}\downarrow_{V_{\bar{1}}}).
	$$
\end{defn}


\begin{defn}	Suppose $\mathcal{A}$ is a superalgebra over $\mathbb{K}$ and $V$ is an $\mathcal{A}$-module, we define $$
	\chi_V(a):={\rm tr}\left(\rho_V(a)\right),\qquad \chi'_V(a):={\rm suptr}\left(\rho_V(a)\right),\qquad \forall a\in\mathcal{A}.
	$$
\end{defn}	

\begin{lem}\cite[Lemma 5.2]{WW2}\label{schur}
	Suppose $\mathcal{A}$ is symmetric with a symmetrizing form $t$.	Let $V$ be a split simple $\mathcal{A}$-module. Then there exists a unique element $s_V\in \mathbb{K}$ such that\begin{align*}I(f)=s_V{\rm tr}(f){\rm id}_V \qquad\text{for $f\in{{\rm End}_{\mathbb{K}}(V)}_{\bar{0}}$}.
		\end{align*}
	
 Furthermore, $s_V$ depends only on the isomorphism class of $V$.
\end{lem}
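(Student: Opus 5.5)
The plan is to show that the map $f\mapsto I(f)$ restricted to ${\rm End}_{\mathbb{K}}(V)_{\bar 0}$ factors through the trace. This is the standard Schur-element argument, adapted to the super setting.

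First, since $t$ is symmetric, $I(f)\in{\rm End}_{\mathcal{A}}(V)$ for every homogeneous $f$, and $I(f)$ is even whenever $f$ is even, because $b^\vee$ and $b$ have the same parity. By the bimodule property recorded above, $I(hfg)=hI(f)g$ for $g,h$ in ${\rm End}_{\mathcal{A}}(V)_{\bar0}$. Since $V$ is split simple, the super Schur lemma gives two cases: ${\rm End}_{\mathcal{A}}(V)_{\bar0}=\mathbb{K}\,{\rm id}_V$ in both type \texttt{M} and type \texttt{Q}, and in type \texttt{Q} the extra endomorphism is odd. Hence for even $f$ we get $I(f)=\lambda(f){\rm id}_V$ for a linear functional $\lambda$ on ${\rm End}_{\mathbb{K}}(V)_{\bar0}$.

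Next I show that $\lambda$ vanishes on even commutators, $\lambda(fg-gf)=0$. For this I rewrite $I(f)$ in matrix form, $I(f)=\sum_b\rho(b^\vee)f\rho(b)$. Using symmetry of $t$ and the independence of $I$ from the basis, I would establish the standard identity $\sum_b \rho(b^\vee)\otimes\rho(b)$ "commutes through". Concretely, ${\rm tr}(I(f)g)={\rm tr}(fI(g))$ for even $f,g$, which follows by cyclicity of trace after swapping the roles of $\mathcal{B}$ and $\mathcal{B}^\vee$. The swap is legitimate because $t(xy)=t(yx)$ implies that the dual basis of $\mathcal{B}^\vee$ is $\mathcal{B}$. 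This identity gives $\lambda(f){\rm tr}(g)=\lambda(g){\rm tr}(f)$, with traces taken on $|V|$. Taking $g={\rm id}_V$ and noting ${\rm tr}({\rm id}_V)=\dim V$ yields $\lambda(f)\dim V=\lambda({\rm id})\,{\rm tr}(f)$. If $\dim V\neq0$ in $\mathbb{K}$ this gives $s_V=\lambda({\rm id})/\dim V$. To avoid dividing by $\dim V$, I would instead use the elementary even maps $E_{ij}$ with $i,j$ in the same parity block. For $i\ne j$ we have $E_{ij}=E_{ii}E_{ij}-E_{ij}E_{ii}$, and $E_{ii}-E_{jj}$ is a commutator. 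Since ${\rm tr}(I(f)g)={\rm tr}(fI(g))$ and $I(\cdot)$ is scalar, we get $\lambda([f,g])\dim V=0$ only in general. So it is better to show directly that $I([f,g])=0$, using $I(hf)=hI(f)$, which needs $h$ to be $\mathcal{A}$-linear; that fails for general $h$.

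The main obstacle is therefore proving $\lambda$ is trace-like without dividing by $\dim V$. I would handle it by extending scalars to an algebraic closure, which is allowed since $V$ is split, and treating the types separately. For type \texttt{M}, $\rho:\mathcal{A}\to{\rm End}(V)$ is surjective. Any $f$ can be written as $\rho(a)$, and for $f=\rho(a)$ with $a$ even we have $I(\rho(a)g)=\sum_b b^\vee a g b$. Using basis-independence, replace $\mathcal{B}$ by the basis $\{ab\}$-type reparametrisation, namely $\sum_b b^\vee a\otimes b=\sum_b b^\vee\otimes ab$, which is the standard Casimir identity from symmetry of $t$. This gives $I(\rho(a)g)=I(g\rho(a))$. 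For type \texttt{Q}, the image is $\mathcal{Q}_n$, and its even part together with the same Casimir identity gives invariance under commutators with even elements of $\mathcal{Q}_n$. I then need a check that these commutators together with the identity span the even maps with trace determined, which follows since the even part of ${\rm End}(V)$ is generated as a bimodule over $\rho(\mathcal{A}_{\bar0})$. So $\lambda$ is a multiple of ${\rm tr}$, which gives $s_V$. Uniqueness is clear from ${\rm tr}(E_{11})=1$. Independence of the isomorphism class follows from $I(\phi f\phi^{-1})=\phi I(f)\phi^{-1}$ for an even isomorphism $\phi$.
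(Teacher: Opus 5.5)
There is a genuine gap in the route you finally commit to, in your last paragraph. There you only use $a\in\mathcal{A}_{\bar 0}$ and even $g$, so what you obtain is that $\lambda$ vanishes on $[{\rm End}_{\mathbb{K}}(V)_{\bar 0},{\rm End}_{\mathbb{K}}(V)_{\bar 0}]=\mathfrak{sl}(V_{\bar 0})\oplus\mathfrak{sl}(V_{\bar 1})$. In type \texttt{Q}, the commutators with $\rho(\mathcal{A}_{\bar 0})$ span this same space. This subspace has codimension $2$ whenever $V_{\bar 0}$ and $V_{\bar 1}$ are both nonzero, which is always the case in type \texttt{Q}. So all you can conclude is $\lambda(f)=\alpha\,{\rm tr}(f|_{V_{\bar 0}})+\beta\,{\rm tr}(f|_{V_{\bar 1}})$ for some scalars $\alpha,\beta$, and nothing forces $\alpha=\beta$. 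Your claim that these commutators together with ${\rm id}_V$ span ${\rm End}_{\mathbb{K}}(V)_{\bar 0}$ is false. Moreover, if ${\rm char}\,\mathbb{K}$ divides both $\dim V_{\bar 0}$ and $\dim V_{\bar 1}$, then ${\rm id}_V$ already lies in the commutator span. The remark that the even maps are generated as a bimodule over $\rho(\mathcal{A}_{\bar 0})$ does not address this. A quick way to see that even elements alone cannot finish the job: your even-element computation goes through verbatim for a supersymmetrizing form, since for even $a$ the extra sign $(-1)^{|b|}$ in $I$ causes no change. There, however, the correct answer (Lemma~\ref{sschur}) involves ${\rm suptr}$, not ${\rm tr}$; take e.g.\ $\mathcal{M}_{1,1}$ with its supertrace form. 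The difference between symmetric and supersymmetric forms is invisible on $\rho(\mathcal{A}_{\bar 0})$.

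The fix is already in your second paragraph, which you abandoned one step too early. You correctly derived ${\rm tr}(I(f)g)={\rm tr}(fI(g))$, and hence $\lambda(f){\rm tr}(g)=\lambda(g){\rm tr}(f)$ for all even $f,g$. This comes from the swap $\sum_b b^\vee\otimes b=\sum_b b\otimes b^\vee$, which is exactly where symmetry of $t$ genuinely enters. Instead of $g={\rm id}_V$, take $g=E_{11}$ with respect to a homogeneous basis. It is even and has trace $1$, so $\lambda(f)=\lambda(E_{11})\,{\rm tr}(f)$, and you may set $s_V:=\lambda(E_{11})$. This needs no division by $\dim V$, no extension to an algebraic closure, and no distinction between types. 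It is the standard argument of \cite[Lemma 7.1.10]{GF}: compare the entries of $I(E_{ij})$ and $I(E_{kk})$. If you prefer your commutator route, note that $\sum_b b^\vee a\otimes b=\sum_b b^\vee\otimes ab$ holds for every $a$, odd included, and does not even use symmetry. So you also get $I(\rho(a)g)=I(g\rho(a))$ for odd $a$ and odd $g$. In type \texttt{M}, the odd matrix units $E_{1k},E_{k1}$ then give $\lambda(E_{11})=\lambda(E_{kk})$ across the two parity blocks. Finally, in the paper's conventions isomorphisms may be odd (e.g.\ $V\cong\Pi V$). Your conjugation argument covers these too, because $|b|=|b^\vee|$.
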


\begin{lem}\cite[Lemma 2.7]{LS3}\label{sschur}
	Suppose $\mathcal{A}$ is supersymmetric with a supersymmetrizing form $t$.	Let $V$ be a split irreducible $\mathcal{A}$-module of {\em type }\texttt{M}. Then there exists a unique element $s_V\in \mathbb{K}$ such that \begin{align*}
		I(f)=s_V{\rm suptr}(f){\rm id}_V,\qquad\text{for $f\in{{\rm End}_{\mathbb{K}}(V)}_{\bar{0}}$}.
	\end{align*}Furthermore, up to sign, $s_V$ depends only on the isomorphism class of $V$.
	\end{lem}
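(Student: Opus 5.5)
We need to prove Lemma \ref{sschur}: for $t$ supersymmetric and $V$ split simple of type \texttt{M}, there is a unique $s_V$ with $I(f)=s_V\,{\rm suptr}(f)\,{\rm id}_V$ for all even $f$, and $s_V$ depends on the isomorphism class of $V$ only up to sign.

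The plan is to view $I$ as a linear map from ${\rm End}_{\mathbb{K}}(V)_{\bar 0}$ into ${\rm End}_{\mathcal{A}}(V)_{\bar 0}$. Since $V$ is split simple of type \texttt{M}, the super Schur lemma gives ${\rm End}_{\mathcal{A}}(V)_{\bar 0}=\mathbb{K}\,{\rm id}_V$. Hence there is a linear functional $\phi:{\rm End}_{\mathbb{K}}(V)_{\bar 0}\to\mathbb{K}$ with $I(f)=\phi(f){\rm id}_V$.

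First we identify $\phi$. Take $g,h\in {\rm End}_{\mathbb{K}}(V)_{\bar0}$ and compare $I(gh)$ with $I(hg)$. Expanding $I(gh)=\sum_b(-1)^{|b|+\dots}b^\vee g h b$ does not immediately commute $g$ past $b^\vee$, so a direct computation is awkward. Instead we use the Wedderburn structure: $V$ is a type \texttt{M} simple module, so the image of $\mathcal{A}$ in ${\rm End}_{\mathbb{K}}(V)$ is all of ${\rm End}_{\mathbb{K}}(V)\cong\mathcal{M}_{p,q}$ (split, Jacobson density). Write the even endomorphism $g$ as $\rho_V(a)$ with $a\in\mathcal{A}_{\bar0}$. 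Then $I(fg)=I(f)g$ and $I(gf)=gI(f)$; this uses basis-independence of $I$ and the fact that $I(f\rho(a))=I(f)\rho(a)$, which is the standard consequence of supersymmetry and is already noted before the lemma. It follows that $\phi(fg)=\phi(gf)$ for all even $f,g$. The even part ${\rm End}(V_{\bar0})\oplus{\rm End}(V_{\bar1})$ has its space of trace-like functionals spanned by ${\rm tr}|_{V_{\bar0}}$ and ${\rm tr}|_{V_{\bar1}}$. So $\phi=\alpha\,{\rm tr}_{\bar0}+\beta\,{\rm tr}_{\bar1}$.

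The main obstacle is showing $\beta=-\alpha$, which is where supersymmetry and the sign $(-1)^{|b|}$ in the definition of $I$ enter. For this we compute with odd elements. Pick odd $x\in\mathcal{A}_{\bar1}$, acting as an odd map $V_{\bar0}\to V_{\bar1}$, and an even $f$. We compare $I(\rho(x)f'\,)$-type expressions, i.e.\ the graded compatibility $I(\rho(x)u)=\pm\rho(x)I(u)$ for odd $u$. Choosing $f=E_{ij}$ to be a matrix unit inside $V_{\bar0}$ and $\rho(x)$ to move $e_j\in V_{\bar 0}$ to $e_k\in V_{\bar1}$, we write $E_{kk}=\rho(x)E_{jk}'$ with the product of two odd maps. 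The graded commutation picks up a sign $-1$, which gives $\beta=-\alpha$.

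If $V_{\bar1}=0$ or $V_{\bar0}=0$, then ${\rm suptr}$ is $\pm{\rm tr}$ and the claim is immediate. Setting $s_V=\alpha$, uniqueness follows because ${\rm suptr}$ is nonzero on ${\rm End}(V)_{\bar0}$, for example on ${\rm id}_{V_{\bar0}}$. For the last claim, an even isomorphism $\psi:V\to V'$ satisfies $I(\psi f\psi^{-1})=\psi I(f)\psi^{-1}$ and preserves the supertrace, so $s_V=s_{V'}$. An odd isomorphism, such as $V\cong \Pi V$, swaps the roles of $V_{\bar0}$ and $V_{\bar1}$ and flips the sign of ${\rm suptr}$, which explains why $s_V$ is determined only up to sign.
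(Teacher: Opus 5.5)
\textbf{There is a genuine gap.} Your architecture is right: Schur's lemma gives $I(f)=\phi(f)\,{\rm id}_V$, $\rho_V$ maps $\mathcal{A}$ onto ${\rm End}_{\mathbb{K}}(V)$ compatibly with the grading, you show $\phi$ is a supertrace-type functional, and matrix units then force $\phi=\alpha\,{\rm suptr}$. Your treatment of uniqueness, of the cases $V_{\bar 0}=0$ or $V_{\bar 1}=0$, and of the sign change under $V\cong\Pi V$ is also fine. The problem is that both identities you use to get the trace properties of $\phi$ are false.

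The remark $I(hfg)=hI(f)g$ before the lemma is for $g,h\in{\rm End}_{\mathcal{A}}(V)_{\bar 0}$. Such a $g$ commutes with every $\rho_V(b)$. It does not apply to $g=\rho_V(a)$. Indeed, $I(f\rho_V(a))=I(f)\rho_V(a)$ would say $\phi(f\rho_V(a))\,{\rm id}_V=\phi(f)\rho_V(a)$, forcing every $\rho_V(a)$ to be scalar once $\phi\neq 0$.

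Your ``graded compatibility'' $I(\rho_V(x)u)=\pm\rho_V(x)I(u)$ for odd $u$ is also false. Here $I(u)$ is an odd $\mathcal{A}$-endomorphism of a type \texttt{M} module, hence $0$. So you would get $\phi(\rho_V(x)u)=0$ for all odd $x,u$. Since $E_{jj}=E_{jk}E_{kj}$ for basis vectors $e_j\in V_{\bar0}$, $e_k\in V_{\bar1}$, this would give $\phi\equiv 0$. That already contradicts $\mathcal{A}=\mathcal{M}_{1,1}$ with $t={\rm suptr}$, where a direct computation gives $I(f)={\rm suptr}(f)\,{\rm id}$. So your conclusions $\phi(fg)=\phi(gf)$ and $\beta=-\alpha$ are true, but your argument does not derive them.

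\textbf{The fix.} You need a Casimir-type identity relating $I(\rho_V(a)f)$ to $I(f\rho_V(a))$, not to $I(f)\rho_V(a)$. By \eqref{dual}, $ab=\sum_{b'}t(b'^\vee ab)\,b'$ and $b'^\vee a=\sum_{b}t(b'^\vee ab)\,b^\vee$. Since $t$ is even, only terms with $|b'|=|a|+|b|$ survive. Hence for homogeneous $a$
\[
\sum_{b\in\mathcal{B}}(-1)^{|b|}\,b^\vee a\otimes b=(-1)^{|a|}\sum_{b\in\mathcal{B}}(-1)^{|b|}\,b^\vee\otimes ab .
\]
Applying the linear map $x\otimes y\mapsto\rho_V(x)f\rho_V(y)$ gives $I(\rho_V(a)f)=(-1)^{|a|}I(f\rho_V(a))$ whenever $|f|=|a|$, so that both products are even.

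With the graded surjectivity of $\rho_V$, this yields $\phi(gf)=\phi(fg)$ for even $f,g$, which gives $\phi=\alpha\,{\rm tr}|_{V_{\bar0}}+\beta\,{\rm tr}|_{V_{\bar1}}$. It also yields $\phi(gf)=-\phi(fg)$ for odd $f,g$. Taking $g=E_{kj}$ and $f=E_{jk}$ gives $\phi(E_{kk})=-\phi(E_{jj})$, i.e.\ $\beta=-\alpha$.

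Supersymmetry of $t$ enters only through the $\mathcal{A}$-linearity of $I(f)$, which you need for the Schur-lemma step. The displayed identity uses only that $t$ is even and non-degenerate.
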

The element $s_V$ is called the {\bf Schur element} of $V$ with respect to $t$. \label{pag:Schur element of V}

We denote by ${\rm Irr}(\mathcal{A})$ the complete set of non-isomorphic simple $\mathcal{A}$-modules. For any $V\in{\rm Irr}(\mathcal{A}),$ we write
$$
\delta(V):=\begin{cases}
	0, &\text{if $V$ is of {\em type }\texttt{M}};\\
	1, &\text{if $V$ is of {\em type }\texttt{Q}}.
	\end{cases}
$$

\begin{prop}\label{schur formula 1}\cite[Proposition 5.4]{WW2}
Suppose $\mathcal{A}$ is a split semisimple superalgebra over $\mathbb{K}$ and $\mathcal{A}$ is symmetric with a symmetrizing form $t$. Then the Schur element $s_V$ for every simple $\mathcal{A}$-module $V$ is non-zero. Moreover,
\begin{align*}
t=\sum_{V\in{\rm Irr}(\mathcal{A})}\frac{1}{2^{\delta_V}s_V}\chi_V.
\end{align*}
\end{prop}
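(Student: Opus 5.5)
We reduce to the simple components using the super Wedderburn theorem. Since $\mathcal{A}$ is split semisimple, $\mathcal{A}\cong\prod_{V\in{\rm Irr}(\mathcal{A})}\mathcal{A}_V$. Each block $\mathcal{A}_V$ is isomorphic to $\mathcal{M}_{m,n}$ when $V$ has type \texttt{M}, and to $\mathcal{Q}_n$ when $V$ has type \texttt{Q}; we may regard $\mathcal{A}_V$ as the image of $\rho_V$. Let $e_V$ be the central idempotents. The condition $t(xy)=t(yx)$ forces $t$ to vanish on commutators. We will show that on each block $t$ is a scalar multiple of $\chi_V$ restricted to $\mathcal{A}_V$.

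\textbf{Type \texttt{M}.} Here $|\mathcal{A}_V|=M_{m+n}$ as an ordinary algebra. Its commutator subspace is the trace-zero matrices, so $t|_{\mathcal{A}_V}=c_V\,{\rm tr}=c_V\chi_V$.

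\textbf{Type \texttt{Q}.} Here $|\mathcal{A}_V|\cong M_n\times M_n$ via $A\pm\sqrt{-1}B$, using the decomposition $|V|=V^+\oplus V^-$ of Lemma \ref{lem:type MQ}. Because $t$ is even (it is evenly non-degenerate, so it kills the odd part), $t$ vanishes on the odd matrices $\begin{pmatrix}0&B\\-B&0\end{pmatrix}$. A trace-class functional on $M_n\times M_n$ has the form $\alpha\,{\rm tr}_{V^+}+\beta\,{\rm tr}_{V^-}$. Under $\delta$ the two factors are swapped (the relation $(V^\pm)^\delta\cong V^\mp$), and $t\circ\delta=t$, so $\alpha=\beta$. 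Hence again $t|_{\mathcal{A}_V}=c_V\chi_V$. Non-degeneracy of $t$ then gives $c_V\neq0$ for every $V$, and we obtain $t=\sum_V c_V\chi_V$.

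It remains to compute $c_V$ by evaluating $I(f)$ directly. Take the basis $\mathcal{B}=\bigsqcup_V\mathcal{B}_V$ adapted to the blocks; the dual basis then respects the blocks. The following steps apply to each type.

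\begin{itemize}
\item \textbf{Type \texttt{M}.} Use the matrix units $E_{ij}$. Their duals for $c_V{\rm tr}$ are $c_V^{-1}E_{ji}$. For even $f$ we get $I(f)=\sum_{i,j}\pm c_V^{-1}E_{ji}fE_{ij}$. This collapses to $c_V^{-1}{\rm tr}(f){\rm id}_V$, with the signs $(-1)^{|f||b|}$ trivial because $f$ is even. So $s_V=c_V^{-1}$.
\item \textbf{Type \texttt{Q}.} The block $\mathcal{A}_V$ has dimension $2n^2$ and is spanned by $E_{ij}\otimes 1$ and $E_{ij}\otimes J$, where $J$ is the odd involution-type element. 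Compute the duals with respect to $c_V\chi_V$. Since $\chi_V(E_{ij}E_{ji}\otimes1)=2$, each dual carries a factor $\tfrac{1}{2c_V}$.
\item \textbf{Summing in type \texttt{Q}.} Summing over $b$, the terms $b^\vee fb$ for $b\in\{E_{ij},E_{ij}J\}$ give $\frac{1}{2c_V}\bigl(\sum E_{ji}fE_{ij}+\sum (E_{ij}J)^\vee fE_{ij}J\bigr)$. This equals $\frac{1}{c_V}\cdot\frac{1}{2}{\rm tr}(f){\rm id}_V\cdot(\text{correction})$. The main obstacle is tracking these constants. I expect the result $I(f)=\frac{1}{2c_V}{\rm tr}(f){\rm id}_V$ up to the factor $2$ coming from averaging $f$ with $JfJ^{-1}$. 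This would give $s_V=\frac{1}{2^{\delta_V}c_V}$, i.e.\ $c_V=\frac{1}{2^{\delta_V}s_V}$.
\end{itemize}

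I would verify the type \texttt{Q} constant with the test case $n=1$, $f={\rm id}_V$. There $\mathcal{A}_V=\mathbb{K}[J]$ with $J^2=-1$. With $t=c\chi$ we have $t(1)=2c$ and $t(J)=0$, so the dual basis is $1^\vee=\tfrac{1}{2c}$ and $J^\vee=-\tfrac{1}{2c}J$. Then $I({\rm id})=\tfrac{1}{2c}(1-J^2)=\tfrac1c$. Since ${\rm tr}({\rm id})=2$, this gives $s_V=\tfrac{1}{2c}$, confirming the claimed factor $2^{\delta_V}$. Finally, $s_V\neq0$ follows from $c_V$ being finite and nonzero.
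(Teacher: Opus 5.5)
Your overall strategy is sound: the block decomposition, the identification of $t|_{\mathcal{A}_V}$ as $c_V\chi_V$ (trace-zero commutators in type \texttt{M}; in type \texttt{Q}, $t\circ\delta=t$ forcing $\alpha=\beta$), $c_V\neq0$ from non-degeneracy, and $s_V=c_V^{-1}$ in type \texttt{M} are all correct. The paper itself gives no proof here; it quotes [WW2, Proposition 5.4], so your argument has to stand on its own.

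The gap is the type \texttt{Q} constant, which is exactly where the factor $2^{\delta_V}$ comes from. Your bullet ``Summing in type \texttt{Q}'' is a placeholder (``$\cdot(\text{correction})$'', ``I expect''). Your $n=1$ computation is correct, but it only treats $\mathcal{Q}_1$. You give no reduction from $\mathcal{Q}_n$ to $\mathcal{Q}_1$, for instance multiplicativity of Schur elements under $\mathcal{Q}_n\cong M_n\otimes\mathcal{Q}_1$. For general $n$ the test $f={\rm id}_V$ would not even suffice, since ${\rm tr}({\rm id}_V)=2n$ can vanish in $\mathbb{K}$.

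The missing computation is short. Put
\[
D_{ij}=\begin{pmatrix}E_{ij}&0\\0&E_{ij}\end{pmatrix}\ \text{(even)},\qquad J_{ij}=\begin{pmatrix}0&E_{ij}\\-E_{ij}&0\end{pmatrix}\ \text{(odd)}.
\]
Then $D_{ji}D_{kl}=\delta_{ik}D_{jl}$ and $J_{ji}J_{kl}=-\delta_{ik}D_{jl}$. Mixed products are odd, and $\chi_V(D_{jl})=2\delta_{jl}$. So with respect to $t=c_V\chi_V$ the dual basis is
\[
D_{ij}^\vee=\tfrac{1}{2c_V}D_{ji},\qquad J_{ij}^\vee=-\tfrac{1}{2c_V}J_{ji}.
\]
For even $f=\mathrm{diag}(F_1,F_2)$ all signs $(-1)^{|f||b|}$ are $1$, and a direct multiplication gives
\[
\sum_{i,j}D_{ji}fD_{ij}=\mathrm{diag}\bigl({\rm tr}(F_1)1_n,\,{\rm tr}(F_2)1_n\bigr),\qquad \sum_{i,j}J_{ji}fJ_{ij}=-\mathrm{diag}\bigl({\rm tr}(F_2)1_n,\,{\rm tr}(F_1)1_n\bigr).
\]
Hence
\[
I(f)=\tfrac{1}{2c_V}\bigl({\rm tr}(F_1)+{\rm tr}(F_2)\bigr){\rm id}_V=\tfrac{1}{2c_V}{\rm tr}(f)\,{\rm id}_V,
\]
so $s_V=\frac{1}{2c_V}$ for every $n$. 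The odd terms contribute the trace of the opposite diagonal block, which is the precise version of your ``averaging with $JfJ^{-1}$'' heuristic. With this inserted, $t=\sum_V\frac{1}{2^{\delta_V}s_V}\chi_V$ and $s_V\neq0$ follow as you say.

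A minor point: writing $|\mathcal{A}_V|\cong M_n\times M_n$ via $A\pm\sqrt{-1}B$ uses $\sqrt{-1}\in\mathbb{K}$. Splitness supplies this through Lemma~\ref{lem:type MQ}. You can avoid it altogether: the commutator space of $|\mathcal{Q}_n|=M_n\otimes\mathbb{K}[J]$ is $\mathfrak{sl}_n\otimes\mathbb{K}[J]$. So an even trace functional is $\alpha\,{\rm tr}(A)=\frac{\alpha}{2}\chi_V$.
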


\begin{prop}\cite[Proposition 2.9]{LS3}\label{schur formula 2}
	Suppose $\mathcal{A}$ is a split semisimple superalgebra over $\mathbb{K}$ and $\mathcal{A}$ is supersymmetric with a supersymmetrizing form $t$. Then every simple $\mathcal{A}$-module $V$ is of {\em type }\texttt{M}, and the Schur element $s_V$ is non-zero.
Moreover, we have \begin{align*}
		t=\sum_{\substack{V\in{\rm Irr}(\mathcal{A})}}\frac{1}{s_V}\chi'_V.
	\end{align*}
\end{prop}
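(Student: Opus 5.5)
This proposition is cited from \cite[Proposition 2.9]{LS3}; here is how I would prove it directly. By the super Wedderburn theorem \cite[Theorem 12.2.9]{K2}, write $\mathcal{A}\cong\prod_{i}\mathcal{A}_i$ with each $\mathcal{A}_i$ isomorphic to some $\mathcal{M}_{m,n}$ or $\mathcal{Q}_n$. The first step is to show that a supersymmetrizing form cannot exist when some factor is of type \texttt{Q}. Let $e_i$ be the central idempotent of $\mathcal{A}_i$; it is even. Supersymmetry gives $t(e_i x e_j)=t(x e_j e_i)=0$ for even $x$ and $i\neq j$. For odd $x$ a sign appears, but $e_j$ is even, so the sign is trivial and the same conclusion holds. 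Hence $t$ decomposes as $\sum_i t|_{\mathcal{A}_i}$, and each restriction is a nondegenerate supersymmetrizing form on $\mathcal{A}_i$.

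On $\mathcal{Q}_n$, take the odd central element $J=\begin{pmatrix}0&I\\-I&0\end{pmatrix}$, which satisfies $J^2=-1$. For odd $y$ we have $Jy=yJ$, since $J$ commutes with all of $\mathcal{Q}_n$. Supersymmetry gives $t(Jy)=-t(yJ)=-t(Jy)$, so $t(Jy)=0$. For even $y$, the product $Jy$ is odd, and $t$ is even, so again $t(Jy)=0$. Thus $t(J\mathcal{Q}_n)=0$, which contradicts nondegeneracy because $J$ is invertible. So every factor is some $\mathcal{M}_{m,n}$, and every simple module is of type \texttt{M}.

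Next I would classify the supersymmetric forms on $\mathcal{M}_{m,n}=\mathrm{End}(V)$. Such a form vanishes on supercommutators $[x,y]$. These span $\mathfrak{sl}(m|n)$, which has codimension one and is exactly the kernel of $\mathrm{suptr}$. Hence $t|_{\mathcal{A}_i}=c_i\,\mathrm{suptr}=c_i\chi'_{V_i}$, with $c_i\neq0$ by nondegeneracy. Since every other factor acts by zero on $V_i$, we get $\chi'_{V_i}$ vanishing on $\mathcal{A}_j$ for $j\neq i$, and so $t=\sum_i c_i\chi'_{V_i}$ on $\mathcal{A}$.

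It remains to show $c_i=1/s_{V_i}$, which also gives $s_{V_i}\neq0$. Take the homogeneous basis of matrix units $E_{kl}$ of $\mathcal{A}_i$, with parity $|E_{kl}|=|k|+|l|$. Using $\mathrm{suptr}(E_{kl}E_{l'k'})=\delta_{ll'}\delta_{kk'}(-1)^{|k|}$, the dual basis is $E_{kl}^\vee=c_i^{-1}(-1)^{|k|}E_{lk}$. The basis elements from other factors $\mathcal{A}_j$ act by zero on $V_i$, so they contribute nothing to $I(f)$. Then compute $I(f)$ for an even $f$ and a homogeneous $v$:
\[
I(f)v=\sum_{k,l}(-1)^{|v|+|k|+|l|}c_i^{-1}(-1)^{|k|}E_{lk}\,f(E_{kl}v).
\]
Take $v=v_p$ a basis vector; then only $l=p$ survives and the sum becomes $c_i^{-1}\sum_k(-1)^{|k|}f_{kk}\,v_p$ up to signs that cancel. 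This is $c_i^{-1}\mathrm{suptr}(f)v_p$, so $s_{V_i}=c_i^{-1}$.

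The main obstacle is getting the sign bookkeeping in this last computation right; the type \texttt{Q} exclusion and the identification of $t$ with the supertrace are routine.
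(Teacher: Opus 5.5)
The paper gives no proof of this statement; it is quoted from \cite[Proposition 2.9]{LS3}. Your route is sound and gives a self-contained argument: super Wedderburn, then excluding $\mathcal{Q}_n$ factors via the odd central unit $J$, then $t|_{\mathcal{A}_i}=c_i\,{\rm suptr}$ because supercommutators span $\ker{\rm suptr}$. In the $J$ step you rightly use that $t$ is even; this is essential, since $\mathcal{Q}_n$ does carry a nondegenerate \emph{odd} supersymmetric trace. The real problem is the final computation, which you flagged yourself. As written, it does not produce the supertrace.

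Two signs are wrong, and they compound rather than cancel.
\begin{itemize}
\item The paper's dual basis is defined by $t(b^\vee b')=\delta_{b,b'}$, with $b^\vee$ on the left. Since $t(E_{lk}E_{kl})=c_i(-1)^{|l|}$, this gives $E_{kl}^\vee=c_i^{-1}(-1)^{|l|}E_{lk}$. Your $c_i^{-1}(-1)^{|k|}E_{lk}$ is the right dual, satisfying $t(b\,b^\vee)=1$, and it differs by $(-1)^{|E_{kl}|}$.
\item For even $f$, the sign in $I(f)(v)$ is $(-1)^{|f||v|+|b|}=(-1)^{|k|+|l|}$. There is no factor $(-1)^{|v|}$.
\end{itemize}
With your two formulas and $v=v_p$ (so only $l=p$ survives), the $k$-th term carries $(-1)^{|p|+|k|+|p|}(-1)^{|k|}=1$. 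Your displayed sum therefore equals $c_i^{-1}{\rm tr}(f)\,v_p$, which is incompatible with Lemma \ref{sschur} as soon as $mn>0$. The phrase ``up to signs that cancel'' hides this.

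With the correct signs, the $k$-th term carries $(-1)^{|k|+|p|}(-1)^{|p|}=(-1)^{|k|}$, so
\[
I(f)v_p=c_i^{-1}\sum_k(-1)^{|k|}f_{kk}\,v_p=c_i^{-1}{\rm suptr}(f)\,v_p ,
\]
and $s_{V_i}=c_i^{-1}\neq 0$, as you claimed. You should also add one line on well-definedness. Replacing $V_i$ by $\Pi V_i$ flips the sign of both $s_{V_i}$ and $\chi'_{V_i}$, so the formula $t=\sum_V s_V^{-1}\chi'_V$ does not depend on the chosen representative in ${\rm Irr}(\mathcal{A})$. With these corrections the proof is complete.
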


\begin{prop}\label{proj-criterion}
Suppose $\mathcal{A}$ is (super)symmetric with a (super)symmetrizing form $t$. Let $V\in\mathcal{A}$-smod. Then $V$ is projective if and only if there exists $\phi\in{\rm End}_{\mathbb{K}}(V)_{\bar{0}}$ such that $I(\phi)={\rm id}_V$.
\end{prop}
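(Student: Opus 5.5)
We prove Proposition \ref{proj-criterion} by mimicking Higman's criterion for symmetric algebras. The key tool is the operator $I$ on ${\rm Hom}_{\mathbb{K}}(V,V')$. We use three of its properties. First, $I(f)$ is an $\mathcal{A}$-homomorphism. Second, $I(hfg)=hI(f)g$ for even $\mathcal{A}$-homomorphisms $g,h$. Third, $I$ is independent of the homogeneous basis $\mathcal{B}$. We also use the analogous compatibility when $g,h$ are even $\mathcal{A}$-maps between different modules, which follows directly from the definition of $I$ since $b^\vee$ and $b$ commute past even $\mathcal{A}$-linear maps.

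\emph{Sufficiency.} Suppose $\phi\in{\rm End}_{\mathbb{K}}(V)_{\bar 0}$ satisfies $I(\phi)={\rm id}_V$. Consider the multiplication map $\mu:\mathcal{A}\otimes_{\mathbb{K}}V\to V$, $a\otimes v\mapsto av$. This is an even surjective $\mathcal{A}$-homomorphism from a free (hence projective) $\mathcal{A}$-supermodule. We define a splitting $\sigma:V\to\mathcal{A}\otimes_{\mathbb{K}} V$ by
\[
\sigma(v):=\sum_{b\in\mathcal{B}}\epsilon_b\, b^\vee\otimes\phi(bv),
\]
where $\epsilon_b$ is the sign appearing in the definition of $I$ for the even map $\phi$. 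Then $\mu\circ\sigma=I(\phi)={\rm id}_V$. It remains to check that $\sigma$ is $\mathcal{A}$-linear. In fact $\sigma=I(\iota\circ\phi)$, where $\iota:V\to\mathcal{A}\otimes V$, $v\mapsto 1\otimes v$, is even and $\mathbb{K}$-linear. Since $I(\cdot)$ of any even $\mathbb{K}$-linear map between $\mathcal{A}$-modules is $\mathcal{A}$-linear, $\sigma$ is an even $\mathcal{A}$-homomorphism. Hence $V$ is a direct summand of a free module, so $V$ is projective.

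\emph{Necessity.} Suppose $V$ is projective. Since $V$ is a finitely generated supermodule, $V$ is an even direct summand of a free module $F=\bigoplus\mathcal{A}$, possibly with parity shifts. Write $\pi:F\to V$ and $\iota:V\to F$ for even $\mathcal{A}$-maps with $\pi\iota={\rm id}_V$. It suffices to find an even $\psi\in{\rm End}_{\mathbb{K}}(F)$ with $I(\psi)={\rm id}_F$. Indeed, setting $\phi:=\pi\psi\iota$ and using the compatibility of $I$ with even $\mathcal{A}$-maps gives
\[
I(\phi)=\pi I(\psi)\iota=\pi\iota={\rm id}_V .
\]
By additivity over direct summands, we reduce to $F=\mathcal{A}$ or $\Pi\mathcal{A}$. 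For $F=\mathcal{A}$, take $\psi(x):=t(x)\cdot 1$, the even projection onto $\mathbb{K}1$ along $\ker t$. Then
\[
I(\psi)(x)=\sum_{b\in\mathcal{B}}\pm\, b^\vee t(bx),
\]
and the dual basis relation \eqref{dual}, together with the (super)symmetry of $t$, identifies this sum with the expansion of $x$ in the basis $\mathcal{B}^\vee$. Hence $I(\psi)={\rm id}$. The case $\Pi\mathcal{A}$ is handled the same way, since parity shift only twists signs uniformly.

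\emph{Main obstacle.} The delicate step is the sign bookkeeping in the supersymmetric case. There the definition of $I$ involves $(-1)^{|f||v|+|b|}$, and we must verify that $\sum_b (-1)^{|b|}b^\vee t(bx)=x$. We will expand $x=\sum_c t(c^\vee x)\,c$, or its variant in the basis $\mathcal{B}^\vee$. Then we use $t(bx)=(-1)^{|b||x|}t(xb)$ together with the fact that $t$ vanishes off the even part, since $t$ is even. This forces $|b|=|x|$ in each nonzero term, so that $(-1)^{|b|+|b||x|}=1$. Equivalently, we can check the key identity $\sum_b (-1)^{|b|}b^\vee\otimes b$ versus $\sum_b b\otimes b^\vee$, which relates the dual basis of $\mathcal{B}$ to the one obtained by reversing the order in $t$. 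If the signs come out wrong for a given convention, the fallback is to replace $\psi$ by $\psi\circ\delta$ or to rescale on odd components. This still gives an even $\phi$ with $I(\phi)={\rm id}$.
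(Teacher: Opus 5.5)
Your proof is correct and follows essentially the paper's argument. For sufficiency you use $I(\iota\circ\phi)$ as an even $\mathcal{A}$-linear section; the paper does this for an arbitrary even surjection onto $V$, and you do it for the canonical one $\mathcal{A}\otimes_{\mathbb{K}}V\to V$. For necessity you transport $x\mapsto t(x)1$ (the paper's $\phi'=t\otimes{\rm id}_V$ on $\mathcal{A}\otimes_{\mathbb{K}}V$) through an even splitting, and your sign check $\sum_{b}(-1)^{|b|}b^\vee t(bx)=x$ uses the same ingredients as the paper's computation. One minor slip: $x\mapsto t(x)1$ is not a projection onto $\mathbb{K}1$ unless $t(1)=1$, but nothing in your argument uses idempotence. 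The sign fallback you mention is not needed.
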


\begin{proof}
Suppose there exists $\phi\in{\rm End}_{\mathbb{K}}(V)_{\bar{0}}$ such that $I(\phi)={\rm id}_V$. Let $M\in \mathcal{A}$-smod and $f\in {\rm Hom}_{\mathcal{A}}(M,V)_{\bar{0}}$ be a surjection. We can choose $g\in {\rm Hom}_{\mathbb{K}}(V,M)_{\bar{0}}$, such that $f\circ g={\rm id}_V$. Hence $$f\circ I(  g\circ \phi)=I(f\circ g\circ \phi)=I(\phi)={\rm id}_V.$$ This proves $V$ is projective.

Conversely, suppose $V$ is projective. We consider the tensor product $\mathcal{M}:=\mathcal{A} \otimes_{\mathbb{K}} V$, where the action of $\mathcal{A}$ on $\mathcal{M}$ is given by left multiplication. We define $$
\phi'(b\otimes v):=
	t(b)\otimes v
$$ for any homogeneous $b\in \mathcal{A}, v\in V$ and then extend $\phi'$ to $\mathcal{M}$. It is clear that $\phi'\in {\rm End}_{\mathbb{K}}(\mathcal{M})_{\bar{0}}$. We claim \begin{align}\label{I=id}
	I(\phi')={\rm id}_{\mathcal{M}}.\end{align}
	We only prove \eqref{I=id} in the case that $t$ is supersymmetric, the proof for the case that $t$ is symmetric is similar. For $b'^{\vee}\in \mathcal{B}^{\vee}$, we have \begin{align*}
		I(\phi')(b'^{\vee} \otimes v)&=\sum_{b\in \mathcal{B}}(-1)^{|b|}b^{\vee} \phi'(bb'^{\vee} \otimes v)\\
		&=\sum_{b\in \mathcal{B}}(-1)^{|b'|}b^{\vee} (t(bb'^{\vee} )\otimes v)\\
		&=(-1)^{|b'|}b'^{\vee} (-1)^{|b'|} \otimes v\\
		&=b'^{\vee}\otimes v,
		\end{align*} where the last second equation follows from \eqref{dual}. Since $\mathcal{B}^{\vee}$ forms a basis of $\mathcal{A}$, we deduce that \eqref{I=id} holds.
		Now define $\pi: \mathcal{M}\to V, \,b\otimes v\mapsto bv$. Then $\pi\in {\rm Hom}_{\mathcal{A}}(\mathcal{M},V)_{\bar{0}}$ and $\pi$ is surjective. Since $V$ is projective, we have $\iota\in {\rm End}_{\mathcal{A}}(V,\mathcal{M})_{\bar{0}}$ such that $\pi\circ\iota={\rm id}_V$. Now we let $\phi:=\pi\circ\phi'\circ\iota\in {\rm End}_{\mathbb{K}}(V)_{\bar{0}}$. It follows $$I(\phi)=I(\pi\circ\phi'\circ\iota)=\pi\circ I(\phi')\circ\iota=\pi \circ\iota={\rm id}_V.
		$$
	\end{proof}

\begin{cor}\label{simple-projection}
Let $V\in\mathcal{A}$-smod be a split simple module. We have the following.
	\begin{enumerate}
		\item If $t$ is symmetric, then $V$ is projective if and only if $s_V\neq 0$.
		\item If $t$ is supersymmetric and $V$ is of {\em type }\texttt{M}, then $V$ is projective if and only if $s_V\neq 0$.
		\end{enumerate}
	\end{cor}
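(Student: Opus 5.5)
This is a direct application of Proposition \ref{proj-criterion} combined with Lemmas \ref{schur} and \ref{sschur}. In both cases the hypotheses give a scalar $s_V\in\mathbb{K}$ with $I(f)=s_V\,\tau(f)\,{\rm id}_V$ for all $f\in{\rm End}_{\mathbb{K}}(V)_{\bar 0}$. Here $\tau$ denotes ${\rm tr}$ when $t$ is symmetric (Lemma \ref{schur}), and ${\rm suptr}$ when $t$ is supersymmetric and $V$ is of type \texttt{M} (Lemma \ref{sschur}). By Proposition \ref{proj-criterion}, $V$ is projective if and only if ${\rm id}_V$ lies in the image $I\big({\rm End}_{\mathbb{K}}(V)_{\bar 0}\big)$. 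So the task reduces to deciding when that image, which by the formula is contained in $\mathbb{K}\,{\rm id}_V$, actually contains ${\rm id}_V$.

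First suppose $s_V=0$. Then $I(f)=0$ for every even $f$, so no $\phi$ with $I(\phi)={\rm id}_V$ exists, since ${\rm id}_V\neq 0$ because $V\neq 0$. By Proposition \ref{proj-criterion}, $V$ is not projective.

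Conversely, suppose $s_V\neq 0$. It suffices to find an even $f$ with $\tau(f)\neq 0$, because then $\phi:=\big(s_V\tau(f)\big)^{-1}f$ satisfies $I(\phi)={\rm id}_V$. Choose a homogeneous basis of $V$ and a nonzero basis vector $v$, and let $f$ be the even endomorphism that is the projection onto $\mathbb{K}v$ along the span of the other basis vectors. Then ${\rm tr}(f)=1$, while ${\rm suptr}(f)=\pm1$ according to whether $v$ is even or odd. In either case $\tau(f)\neq 0$, so $V$ is projective.

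There is no real obstacle. The only points needing care are that the rank-one projection onto a homogeneous basis vector is even, and that its supertrace is $\pm1$, hence nonzero; this holds because ${\rm char}\,\mathbb{K}\neq 2$ is not even needed here, since $\pm1\neq 0$ in any field. The type \texttt{M} hypothesis in part (2) is used only to guarantee that $s_V$ exists via Lemma \ref{sschur}.
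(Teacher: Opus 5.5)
Your proof is correct and follows the paper's argument, which simply cites Lemmas \ref{schur}, \ref{sschur} and Proposition \ref{proj-criterion}. The one detail the paper leaves implicit is a choice of even endomorphism with nonzero (super)trace; your rank-one projection onto a homogeneous basis vector supplies it, and correctly avoids ${\rm id}_V$, whose trace $\dim V$ could vanish in positive characteristic.
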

	
	\begin{proof}
		This follows from Lemma \ref{schur},\,\ref{sschur} and Proposition \ref{proj-criterion}.
	\end{proof}
	
	\begin{thm}\label{thm:semisimplicity criterion 1}
		Suppose that $\mathcal{A}$ is (super)symmetric with a (super)symmetrizing form $t$ and $\mathcal{A}$ is split. We have the following.
			\begin{enumerate}
			\item If $t$ is symmetric, then $\mathcal{A}$ is semisimple if and only if all Schur elements of simple modules are non-zero.
			\item If $t$ is supersymmetric, then $\mathcal{A}$ is semisimple if and only if all simple modules of $\mathcal{A}$ are of {\em type }\texttt{M}, and all Schur elements of simple modules are non-zero.
		\end{enumerate}
		\end{thm}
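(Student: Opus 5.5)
The plan is to combine Corollary \ref{simple-projection} with the standard fact that a finite-dimensional (super)algebra is semisimple if and only if every simple module is projective, and to use Proposition \ref{schur formula 2} to rule out type \texttt{Q} in the supersymmetric case.

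\emph{Forward direction.} Suppose $\mathcal{A}$ is semisimple. Since $\mathcal{A}$ is split, it is split semisimple. If $t$ is symmetric, Proposition \ref{schur formula 1} gives $s_V\neq 0$ for all $V\in{\rm Irr}(\mathcal{A})$. If $t$ is supersymmetric, Proposition \ref{schur formula 2} gives both conclusions at once: every simple module is of type \texttt{M}, and every $s_V\neq 0$. This direction is immediate.

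\emph{Converse.} Assume every Schur element is nonzero, and in the supersymmetric case also that every simple module is of type \texttt{M}, so that $s_V$ is defined by Lemma \ref{sschur}. Corollary \ref{simple-projection} then shows that every simple $\mathcal{A}$-supermodule $V$ is projective in $\mathcal{A}$-smod. It remains to show that this forces $\mathcal{J}(\mathcal{A})=0$.

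\emph{Main step: all simples projective implies semisimple.} I would argue in $\mathcal{A}$-smod. The Jacobson radical $\mathcal{J}(\mathcal{A})$ is stable under the parity automorphism $\delta$, hence is $\Z_2$-graded, so the radical of a supermodule is a graded submodule. Let $M$ be a finite-dimensional supermodule and $N\subset M$ a graded submodule; I show $N$ has a graded complement, by induction on $\dim M/N$. If $M\neq N$, choose a graded submodule $N'\supseteq N$ with $M/N'$ simple in $\mathcal{A}$-smod. Since $M/N'$ is projective in $\mathcal{A}$-smod, the even surjection $M\to M/N'$ splits by an even homomorphism, so $M=N'\oplus S$ with $S$ a graded submodule isomorphic to $M/N'$. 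By induction $N$ has a graded complement $C$ in $N'$, and then $C\oplus S$ is a graded complement of $N$ in $M$. Thus every supermodule is completely reducible. Applying this to the regular module, $\mathcal{A}$ is a direct sum of simple supermodules, and $\mathcal{J}(\mathcal{A})$ annihilates every simple supermodule. A simple supermodule $V$ is $|\mathcal{A}|$-semisimple by Lemma \ref{lem:type MQ}, so $\mathcal{J}(\mathcal{A})V=0$. Hence $\mathcal{J}(\mathcal{A})\mathcal{A}=0$, and since $1\in\mathcal{A}$, we get $\mathcal{J}(\mathcal{A})=0$.

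The only points requiring care are that projectivity in Proposition \ref{proj-criterion} is meant in the super category, with even splittings, which is exactly what the induction uses, and the gradedness of $\mathcal{J}(\mathcal{A})$. Both are routine, so I expect no serious obstacle.
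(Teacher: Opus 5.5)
Your proposal is correct and follows the paper's proof: the forward direction comes from Propositions \ref{schur formula 1} and \ref{schur formula 2}, and the converse from Corollary \ref{simple-projection}. The paper leaves implicit the step that every simple supermodule being projective forces $\mathcal{J}(\mathcal{A})=0$; your induction with even splittings (which Proposition \ref{proj-criterion} provides, since $I(g\circ\phi)$ is even when $g\circ\phi$ is) is a valid way to fill it in.
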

		
		\begin{proof}
			The only if part is given by Proposition \ref{schur formula 1} and \ref{schur formula 2}. The if part follows from Corollary \ref{simple-projection}.			
			\end{proof}

\subsection{Comparing Schur elements}{\bf In this subsection, we assume $\mathcal{A}$ is (super)symmetric over $\mathbb{K}$ with a (super)symmetrizing form $t$.} Let $\mathcal{B}$ be a $\Z_2$-homogeneous basis of $\mathcal{A}$ and $\mathcal{B}^{\vee}=\{b^{\vee}\mid b\in \mathcal{B}\}$ be the dual basis satisfying \eqref{dual}.

The following orthogonality relations will be used later.

\begin{lem}\cite[Remark 5.5]{WW2}
	Suppose $\mathcal{A}$ is symmetric with a symmetrizing form $t$. Let $V,V'\in \mathcal{A}$-smod be split two simple modules. Then \begin{equation}\label{orth:symm}
	\sum_{b\in\mathcal{B}}(\rho_V(b^\vee))_{ij}(\rho_{V'}(b))_{ks}=\begin{cases}\delta_{is}\delta_{jk} 2^{\delta_V}s_V,\qquad&{\text{if $V\cong V'$}}\\
		0; \qquad&{\text{otherwise}.}	
	\end{cases}
	 \end{equation}
\end{lem}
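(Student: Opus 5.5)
We plan to apply the map $I$ to elementary matrix units and read off matrix entries, using Lemma \ref{schur} for $V\cong V'$ and a Schur-type vanishing for $V\not\cong V'$. Fix $\Z_2$-homogeneous bases of $V$ and $V'$, and let $E_{jk}\in{\rm Hom}_{\mathbb{K}}(V',V)$ be the matrix unit sending the $k$-th basis vector of $V'$ to the $j$-th basis vector of $V$. The definition of $I$ gives, for any homogeneous $f$,
\begin{align*}
I(f)=\sum_{b\in\mathcal{B}}(-1)^{|f||b|}\rho_V(b^\vee) f\rho_{V'}(b).
\end{align*}
With $f=E_{jk}$, the $(i,s)$-entry is $\sum_b (-1)^{|E_{jk}||b|}(\rho_V(b^\vee))_{ij}(\rho_{V'}(b))_{ks}$. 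So the left side of \eqref{orth:symm} is the $(i,s)$-entry of $I(E_{jk})$, up to the sign $(-1)^{|E_{jk}||b|}$, which we must handle.

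\textbf{Handling the sign.} When $E_{jk}$ is even the sign is trivial. When $E_{jk}$ is odd, each product $(\rho_V(b^\vee))_{ij}(\rho_{V'}(b))_{ks}$ vanishes unless $|b|$ has a prescribed parity. Homogeneity of $\rho$ forces $|b^\vee|=|b|$, with $|b|\equiv |i|+|j|\equiv |k|+|s|$. On the support of the sum the sign is therefore a global constant. When this constant is $-1$, we will instead compare with $I$ applied to the even map $E_{jk}\circ$(parity) or, more simply, treat $I$ as defined by the unsigned formula. That version is still $\mathcal{A}$-linear up to $\delta$-twist by the same independence-of-basis argument, and the global sign does not affect vanishing. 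We will check that in the case $V\cong V'$ the surviving terms have $i,s$ paired consistently so that the constant is $+1$.

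\textbf{Case $V\not\cong V'$.} $I(E_{jk})$ is a homogeneous $\mathcal{A}$-homomorphism $V'\to V$ between non-isomorphic simple supermodules. By the super Schur lemma it is zero, so every entry vanishes.

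\textbf{Case $V\cong V'$.} We may take $V'=V$ with the same basis. If $V$ is of type \texttt{M}, then ${\rm End}_{\mathcal{A}}(V)=\mathbb{K}$. For even $E_{jk}$, Lemma \ref{schur} gives $I(E_{jk})=s_V\delta_{jk}{\rm id}_V$, whose $(i,s)$ entry is $\delta_{is}\delta_{jk}s_V$. For odd $E_{jk}$ we have $j\neq k$, and $I(E_{jk})$ is an odd endomorphism, hence $0$ in type \texttt{M}; this matches. The type \texttt{Q} case is the main obstacle, since the odd endomorphism $J$ exists and Lemma \ref{schur} only controls even $f$. Here the plan is to pass to $|\mathcal{A}|$ and use Lemma \ref{lem:type MQ}: $|V|\cong V^+\oplus V^-$. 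The classical orthogonality for the symmetric algebra $|\mathcal{A}|$ (Geck–Pfeiffer) then gives entries $\delta\delta\,s_{V^\pm}$ on the diagonal blocks and $0$ across blocks, because $V^+\not\cong V^-$.

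We then relate the invariants by computing $I({\rm id})$ in two ways. On one side $I(E_{jj})=s_V{\rm id}$. On the other, in the block basis, the trace of the full identity gives $\dim V\cdot s_V$, while the blockwise Geck–Pfeiffer computation gives $\dim V^{\pm}\, s_{V^\pm}$. Since $(V^\pm)^\delta\cong V^\mp$, we also get $s_{V^+}=s_{V^-}$. This should identify $s_{V^\pm}=2s_V$, which yields the factor $2^{\delta_V}$. Finally we change basis back to a homogeneous one. The bilinear expression $\sum_b\rho(b^\vee)\otimes\rho(b)$ in the unsigned form is an element of ${\rm End}(V)^{\otimes2}$ equal to $2s_V$ times the flip tensor, and the flip tensor is basis-independent. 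This gives \eqref{orth:symm} in any basis.
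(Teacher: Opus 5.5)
Your treatment of the case $V\not\cong V'$ and of type \texttt{M} is sound. On the support of the sum, the sign $(-1)^{|b|}$ is the constant $(-1)^{|i|+|j|}$. So the unsigned sum is $\pm$ the $(i,s)$-entry of $I(E_{jk})$, which is $\delta_{is}\delta_{jk}s_V$ for even $E_{jk}$ and $0$ for odd $E_{jk}$. Your computation of $I(\mathrm{id}_V)$ in type \texttt{Q} also correctly gives $S_{V^\pm}=2s_V$.

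The gap is the final step of the type \texttt{Q} case. Your own block computation says the following: in a basis adapted to $|V|=V^+\oplus V^-$, the tensor $T=\sum_b\rho_V(b^\vee)\otimes\rho_V(b)$ has component $2s_V\delta_{is}\delta_{jk}$ when $i,j$ lie in the same block, and $0$ when they lie in different blocks. The flip tensor $\sum_{i,j}E_{ij}\otimes E_{ji}$, by contrast, has component $1$ at every index $(i,j,j,i)$, including mixed blocks. So $T$ is not $2s_V$ times the flip. Since both tensors are intrinsic, no change back to a homogeneous basis can make it so. What you have actually shown is
\[
\sum_{b\in\mathcal{B}}(\rho_V(b^\vee))_{ij}(\rho_V(b))_{ks}=s_V\bigl(\delta_{is}\delta_{jk}+K_{is}K_{kj}\bigr),
\]
where $K$ is the $|\mathcal{A}|$-linear involution of $|V|$ acting by $\pm1$ on $V^{\pm}$. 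For example, one can take $K=J\circ P$, with $J$ the odd endomorphism normalised by $J^2=-\mathrm{id}_V$ and $P$ the parity operator.

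Consequently the type \texttt{Q} case of the statement, as written, is false, and no argument can close this gap. (The paper gives no proof of its own; it only quotes [WW2].) Take $\mathcal{A}=\mathcal{C}_1$, spanned by $1$ and an odd $c$ with $c^2=1$. Let $t(1)=1$ and $t(c)=0$, so $\{1,c\}$ is self-dual. Let $V=\mathcal{A}$ with homogeneous basis $v_1=1$, $v_2=c$. Then $V$ is of type \texttt{Q}, $\rho_V(c)=E_{12}+E_{21}$, and $I(E_{11})=E_{11}+E_{22}$, so $s_V=1$. But
\[
\sum_{b\in\{1,c\}}(\rho_V(b^\vee))_{11}(\rho_V(b))_{11}=1\neq 2=2^{\delta_V}s_V,
\qquad
\sum_{b\in\{1,c\}}(\rho_V(b^\vee))_{11}(\rho_V(b))_{22}=1\neq 0 .
\]
What does hold in type \texttt{Q} are the contracted versions:
\begin{itemize}
\item $\sum_b\rho_V(b^\vee)\rho_V(b)=(\dim V)\,s_V\,\mathrm{id}_V$;
\item the character relation $\sum_b\chi_V(b^\vee)\chi_{V'}(b)=2^{\delta_V}(\dim V)\,s_V$ if $V\cong V'$, and $0$ otherwise;
\item $S_{V^\pm}=2s_V$.
\end{itemize}
The last of these is all that Proposition \ref{Prop:comparing schur} uses, and your $I(\mathrm{id}_V)$ argument proves it directly.
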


\begin{lem}\cite[proof of Lemma 2.7]{LS3}
	Suppose $\mathcal{A}$ is supersymmetric with a supersymmetrizing form $t$. Let $V,V'\in \mathcal{A}$-smod be two split simple modules of {\em type }\texttt{M}. Then  \begin{equation}\label{orth:supersymm}
	\sum_{b\in\mathcal{B}}(\rho_V(b^\vee))_{ij}(\rho_{V'}(b))_{ks}=\begin{cases}\delta_{is}\delta_{jk} \rm s_V,\qquad&{\text{if $V\cong V'$}}\\
		0; \qquad&{\text{otherwise}.}	
	\end{cases}
	 \end{equation}
\end{lem}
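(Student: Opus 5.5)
The plan is to obtain \eqref{orth:supersymm} by applying Lemma \ref{sschur} (and, for the vanishing case, Schur's lemma) to elementary matrix units.

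Fix homogeneous bases of $V$ and $V'$, both of type \texttt{M}. For indices $j,k$, let $E_{jk}\in{\rm Hom}_{\mathbb{K}}(V',V)$ be the map sending the $k$-th basis vector of $V'$ to the $j$-th basis vector of $V$ and killing the other basis vectors. It is homogeneous of parity $|v_j|+|v'_k|$. Computing the $(i,s)$ entry of $I(E_{jk})$ from the definition gives
\[
I(E_{jk})_{is}=\sum_{b\in\mathcal{B}}(-1)^{|E_{jk}||v'_s|+|b|}(\rho_V(b^\vee))_{ij}(\rho_{V'}(b))_{ks}.
\]
Only terms with $(\rho_{V'}(b))_{ks}\neq 0$ contribute. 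For those, $|b|=|v'_k|+|v'_s|$, and $b^\vee$ has the same parity, so $(\rho_V(b^\vee))_{ij}\neq0$ forces $|v_i|+|v_j|=|b|$. Hence on the contributing terms the sign is a fixed function of the indices.

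The first case is $V\not\cong V'$. Here $I(E_{jk})\in{\rm Hom}_{\mathcal{A}}(V',V)$ is a homogeneous homomorphism between non-isomorphic simple modules. Any homogeneous homomorphism between simple supermodules is zero or invertible, and the odd case would give $V\cong\Pi V'\cong V'$, which is excluded. So $I(E_{jk})=0$, and the sum vanishes entrywise.

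The second case is $V\cong V'$. After identifying $V'=V$ with the same basis, suppose first that $E_{jk}$ is even. Lemma \ref{sschur} gives $I(E_{jk})=s_V\,{\rm suptr}(E_{jk})\,{\rm id}_V=\delta_{jk}(-1)^{|v_j|}s_V\,{\rm id}_V$. Its $(i,s)$ entry is $\delta_{is}\delta_{jk}(-1)^{|v_j|}s_V$. With $s=i$ and $k=j$, the sign in the displayed sum is $(-1)^{|b|}$, where $|b|=|v_i|+|v_j|$. This has to be reconciled with the factor $(-1)^{|v_j|}$ coming from the supertrace. The careful point is to redo the computation with the basis-independent convention for $I$ and to absorb these signs, which is where the convention of \cite{LS3} (possibly up to the sign ambiguity of $s_V$ noted in Lemma \ref{sschur}) enters.

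If instead $E_{jk}$ is odd, then $I(E_{jk})$ is an odd endomorphism of a type \texttt{M} simple module. Such an endomorphism is zero, and correspondingly $\delta_{jk}=0$ in this case, so both sides vanish.

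The main obstacle is the sign bookkeeping in the second case: checking that the parity signs in $I$ and in ${\rm suptr}$ cancel exactly, so that the coefficient is $s_V$ and not $\pm s_V$. I would first verify this on $\mathcal{M}_{1,1}$ with an explicit supersymmetrizing form. If a residual sign remains, I would show it is uniform in $(i,j)$ and absorb it into the sign-ambiguous normalization of $s_V$.
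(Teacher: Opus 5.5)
Most of your outline is sound: the entry formula for $I(E_{jk})_{is}$, the observation that the sign $(-1)^{|E_{jk}||v'_s|+|b|}$ is constant on the contributing terms, the vanishing for $V\not\cong V'$ (with isomorphism understood to allow odd maps, so that $V\cong\Pi V$), and the odd $E_{jk}$ case are all fine. The gap is the final sign reconciliation, and it cannot be closed the way you propose. Carry your own computation through for $V=V'$, $j=k$, $i=s$. Lemma~\ref{sschur} gives $I(E_{jj})_{ii}=(-1)^{|v_j|}s_V$, and on the contributing terms $(-1)^{|b|}=(-1)^{|v_i|+|v_j|}$. So what you actually obtain is
\[
\sum_{b\in\mathcal{B}}(\rho_V(b^\vee))_{ij}(\rho_{V}(b))_{ks}=\delta_{is}\delta_{jk}(-1)^{|v_i|}s_V .
\]
The residual sign depends on the row index $i$, so it is not uniform and cannot be absorbed into $s_V$. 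The sign ambiguity of Lemma~\ref{sschur} does not help either. It comes from replacing $V$ by $\Pi V$, which flips $s_V$ and every $(-1)^{|v_i|}$ at once and leaves the left-hand side unchanged. Using the other-sided dual basis only moves the sign to $(-1)^{|v_j|}$.

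The test case you propose exposes this directly. Take $\mathcal{A}=\mathcal{M}_{1,1}$ with $t={\rm suptr}$, $v_1$ even and $v_2$ odd. The dual basis is $E_{11}^\vee=E_{11}$, $E_{22}^\vee=-E_{22}$, $E_{12}^\vee=-E_{21}$, $E_{21}^\vee=E_{12}$. One checks $I(E_{11})={\rm id}_V$, so $s_V=1$. However, $\sum_b(\rho_V(b^\vee))_{22}(\rho_V(b))_{22}=-1$, while $\sum_b(\rho_V(b^\vee))_{11}(\rho_V(b))_{11}=1$. Whichever of $V$ or $\Pi V$ you use, one of the two diagonal positions violates the displayed identity.

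So for a type \texttt{M} module with both parities present, the identity exactly as displayed cannot be obtained by any sign bookkeeping. The correct statement carries the factor $(-1)^{|v_i|}$ (equivalently $(-1)^{|v_s|}$). You should state and prove that signed version, which is what your argument actually gives. It is also all the paper uses afterwards. The integrality $s_V\in\mathscr{O}$ and the reduction ${\rm pr}(s_V)=s_{\mathbb{K}\tilde V}$ in the proof of Theorem~\ref{modular-semisimple 2}(2) go through unchanged, because the reduced homogeneous lattice basis of $\mathbb{K}\tilde V$ keeps the same parities.
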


{\bf In the rest of this subsection, we assume $\mathcal{A}$ is symmetric with a symmetrizing form $t$.} As a usual symmetric $\mathbb{K}$-algebra $|\mathcal{A}|$, we recall the Schur elements defined by Geck and Pfeiffer.

Suppose ${V,V'}$ are two $|\mathcal{A}|$-modules. For any linear map $f\in{\rm Hom}_{{\rm R}}(V, {V'})$, we define $I'(f)\in {\rm Hom}_{{\rm R}}(V, { V'})$ by letting $$
I'(f)(v):=\sum_{b\in \mathcal{B}}b^{\vee} f(bv), 
$$ for $v\in V$.

\begin{lem}\cite[Lemma 7.1.10]{GF}\label{schur'}
	Suppose $\mathcal{A}$ is symmetric with a symmetrizing form $t$.	Let $V$ be a split simple $|\mathcal{A}|$-module. Then there exists a unique element $S_V\in \mathbb{K}$ such that for $f\in{{\rm End}_{\mathbb{K}}(V)}$,\begin{align}\label{schur2}I'(f)=S_V{\rm tr}(f){\rm id}_V,
	\end{align} Furthermore, $S_V$ depends only on the isomorphism class of $V$.
\end{lem}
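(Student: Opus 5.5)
Since $|\mathcal{A}|$ is an ordinary symmetric $\mathbb{K}$-algebra with symmetrizing form $t$, this is the classical Geck--Pfeiffer statement. The plan is to reproduce its standard proof using only the dual-basis calculus, since $I'(f)=\sum_{b}b^\vee f b$ needs no sign conventions.

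\textbf{Step 1: $I'(f)$ is an endomorphism.} For $a\in\mathcal{A}$ write $ab=\sum_{b'}t(b'^\vee a b)\,b'$ and $b^\vee a=\sum_{b'}t(b^\vee a b')\,b'^\vee$. These expansions follow from $t(b^\vee b')=\delta_{b,b'}$ and the symmetry of $t$; the second also uses $t(x b'')=t(b''x)$. Substituting both into $I'(f)(av)=\sum_b b^\vee f(abv)$ and reindexing gives
\[
I'(f)(av)=\sum_{b,b'}t(b'^\vee a b)\,b^\vee f(b'v)=a\,I'(f)(v),
\]
so $I'(f)\in{\rm End}_{|\mathcal{A}|}(V)$.

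\textbf{Step 2: $I'(f)$ is a scalar.} Because $V$ is split simple, Schur's lemma for ordinary algebras gives ${\rm End}_{|\mathcal{A}|}(V)=\mathbb{K}\,{\rm id}_V$. Hence $I'(f)=\lambda(f)\,{\rm id}_V$ for a linear functional $\lambda$ on ${\rm End}_{\mathbb{K}}(V)$.

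\textbf{Step 3: $\lambda$ is a multiple of the trace.} This is the main step. By definition, $I'(gf)=gI'(f)$ and $I'(fg)=I'(f)g$ for any $g\in{\rm End}_{|\mathcal{A}|}(V)$, but those endomorphisms are only scalars, which is not enough. I would instead use that $\lambda$ vanishes on commutators $fh-hf$ with $h\in{\rm End}_{\mathbb{K}}(V)$. It is cleanest to work in matrix coefficients. Choose a basis of $V$; then
\[
\lambda(E_{jk})\,\delta_{is}=\sum_b\rho(b^\vee)_{ij}\,\rho(b)_{ks}.
\]
Split simplicity (Burnside) makes $\rho:\mathcal{A}\to M_d(\mathbb{K})$ surjective. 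From this surjectivity, show that $\sum_b\rho(b^\vee)_{ij}\rho(b)_{ks}$ is invariant under the simultaneous conjugation $X\mapsto P X P^{-1}$ on both factors. The tensor $\sum_b \rho(b^\vee)\otimes\rho(b)$ is the image of the Casimir element $\sum_b b^\vee\otimes b$, and the Casimir element commutes with $\Delta$-type actions: $\sum_b ab^\vee\otimes b=\sum_b b^\vee\otimes ba$, again from the expansions in Step 1. So the tensor $C=\sum_b\rho(b^\vee)\otimes\rho(b)\in M_d\otimes M_d$ satisfies $(X\otimes 1)C=C(1\otimes X)$ for all $X\in M_d$, by surjectivity. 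Such tensors are exactly the scalar multiples of $\sum_{i,j}E_{ij}\otimes E_{ji}$, which is a direct check on elementary matrices. Therefore
\[
\sum_b\rho(b^\vee)_{ij}\,\rho(b)_{ks}=S_V\,\delta_{is}\,\delta_{jk},
\]
that is, $I'(f)=S_V\,{\rm tr}(f)\,{\rm id}_V$.

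\textbf{Step 4: uniqueness and invariance.} Uniqueness of $S_V$ follows by taking $f=E_{11}$. Independence of the isomorphism class follows from conjugating by an isomorphism $\psi:V\to V'$: we have $I'(\psi f\psi^{-1})=\psi I'(f)\psi^{-1}$ and ${\rm tr}(\psi f\psi^{-1})={\rm tr}(f)$.
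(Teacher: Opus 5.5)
Steps 2 and 4 are fine, but Step 3 has a genuine gap, and Step 1 contains a slip that is closely tied to it.

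\textbf{The gap in Step 3.} You claim that the tensors $C\in M_d\otimes M_d$ with $(X\otimes 1)C=C(1\otimes X)$ for all $X$ are exactly the multiples of $P=\sum_{i,j}E_{ij}\otimes E_{ji}$. This is false for $d\ge 2$. Since $P$ is the flip, $P(1\otimes X)=(X\otimes 1)P$, so the solution space is $\{P(Y\otimes 1)\mid Y\in M_d\}$, which has dimension $d^2$. For instance, $C=\sum_i E_{i1}\otimes E_{1i}$ satisfies the relation. It corresponds to $\lambda(f)=$ the $(1,1)$-entry of $f$, which is not a multiple of the trace.

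Moreover, the relation coming from $\sum_b ab^\vee\otimes b=\sum_b b^\vee\otimes ba$ adds nothing beyond Steps 1--2. The tensors $\sum_{i,j,k}\lambda(E_{jk})\,E_{ij}\otimes E_{ki}$ produced by Step 2 are exactly the $P(Y\otimes 1)$, with $Y$ the transpose of $(\lambda(E_{jk}))_{j,k}$. So, as written, nothing forces $\lambda$ to be proportional to ${\rm tr}$. Simultaneous conjugation invariance would suffice, but it does not follow from the one-sided relation you derive.

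\textbf{The missing ingredient.} You need the other Casimir identity,
$\sum_b b^\vee a\otimes b=\sum_b b^\vee\otimes ab$.
It follows from $ab=\sum_{b'}t(b'^\vee ab)\,b'$ together with $\sum_b t(xb)\,b^\vee=x$ applied to $x=b'^\vee a$.

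On $V$ this identity says $I'(f\rho(a))=I'(\rho(a)f)$, equivalently $C(X\otimes 1)=(1\otimes X)C$. Combined with surjectivity of $\rho$, it gives exactly your unproved assertion that $\lambda$ vanishes on commutators. Commutators span the trace-zero matrices, since $[E_{ij},E_{jk}]=E_{ik}$ for $i\neq k$ and $[E_{ij},E_{ji}]=E_{ii}-E_{jj}$. Hence $\lambda=S_V\,{\rm tr}$. Equivalently, the two one-sided relations together force $YX=XY$ for all $X$, so $Y$ is a scalar.

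\textbf{The slip in Step 1.} Your Step 1 computation expands $ab$. What it actually proves is this identity, $\sum_b b^\vee f(abv)=\sum_{b'}b'^\vee a\,f(b'v)$, not the homomorphism property.

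For Step 1 you must start from $I'(f)(av)=\sum_b b^\vee f(bav)$ and expand $ba=\sum_{b'}t(b'^\vee ba)\,b'$. Then use symmetry, $t(b'^\vee ba)=t(ab'^\vee b)$, to reach $\sum_{b'}ab'^\vee f(b'v)=a\,I'(f)(v)$.

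With these two repairs the proof is complete.
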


Note that to distinguish these Schur elements, we use the notation $s_V$ for the Schur elements of simple module $V\in\mathcal{A}$-smod, while for simple module $V'\in|\mathcal{A}|$-mod, the Schur elements are denoted $S_{V'}$. We also have the orthogonality relations.

\begin{lem}\cite[Corollary 7.2.2]{GF}
	Suppose $\mathcal{A}$ is symmetric with a symmetrizing form $t$. Let $V,V'\in |\mathcal{A}|$-mod be two split simple modules. Then  \begin{equation}\label{orth:usual}
	\sum_{b\in\mathcal{B}}(\rho_V(b^\vee))_{ij}(\rho_{V'}(b))_{ks}=\begin{cases}\delta_{is}\delta_{jk} S_V,\qquad&{\text{if $V\cong V'$}.}\\
		0; \qquad&{\text{otherwise}}	
	\end{cases}
 \end{equation}
\end{lem}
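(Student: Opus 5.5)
This is the classical orthogonality relation for the ordinary symmetric algebra $|\mathcal{A}|$. Only $t(xy)=t(yx)$ and non-degeneracy are needed, so it is the non-super Geck--Pfeiffer statement, and the plan is to derive it from Lemma \ref{schur'} and Schur's lemma.

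First fix indices $j,k$ and consider the matrix unit $E_{jk}\in{\rm Hom}_{\mathbb{K}}(V',V)$, sending the $k$-th basis vector of $V'$ to the $j$-th basis vector of $V$. For arbitrary $|\mathcal{A}|$-modules, the map $I'(E_{jk})(v')=\sum_{b}b^\vee E_{jk}(bv')$ lies in ${\rm Hom}_{|\mathcal{A}|}(V',V)$. This is the usual check: write $ab=\sum_{b'}t(b'^\vee ab)\,b'$ and $b^\vee a=\sum_{b'} t(b^\vee a b')\,b'^\vee$, which follow from the dual basis relation \eqref{dual} together with the symmetry $t(xy)=t(yx)$. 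Reindexing then gives $aI'(f)=I'(f)a$; this is the same argument quoted for $I$, with no signs. In matrix form,
\[
\big(\rho_V(\cdot)\,I'(E_{jk})\big)_{is}=\sum_b(\rho_V(b^\vee))_{ij}(\rho_{V'}(b))_{ks}.
\]
So the left side of \eqref{orth:usual} is exactly the $(i,s)$ entry of the matrix of $I'(E_{jk})$.

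Next split into the two cases of \eqref{orth:usual}. If $V\not\cong V'$, both modules are simple, so Schur's lemma gives ${\rm Hom}_{|\mathcal{A}|}(V',V)=0$, hence $I'(E_{jk})=0$ and every entry vanishes. If $V\cong V'$, the choice of bases does not affect the claim: conjugating by the isomorphism $P$ gives $\rho_{V'}=P^{-1}\rho_VP$, and replacing $E_{jk}$ by $E_{jk}P^{-1}$-type maps preserves the form of the identity. So we may assume $V'=V$ with the same representation. Then Lemma \ref{schur'} gives $I'(E_{jk})=S_V\,{\rm tr}(E_{jk})\,{\rm id}_V=S_V\delta_{jk}{\rm id}_V$, whose $(i,s)$ entry is $\delta_{is}\delta_{jk}S_V$.

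The only point needing care is the index convention: one must verify that $(\rho_V(b^\vee)E_{jk}\rho_{V}(b))_{is}=(\rho_V(b^\vee))_{ij}(\rho_V(b))_{ks}$, which holds directly from matrix multiplication. One must also ensure that "$V\cong V'$" is interpreted with $\rho_{V'}=\rho_V$, or equivalently use the basis-independence argument above. Both are routine, and I expect no genuine obstacle.
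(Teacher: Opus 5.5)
Your argument is correct and is the standard derivation. You identify the sum as the $(i,s)$-entry of the matrix of $I'(E_{jk})\in{\rm Hom}_{|\mathcal{A}|}(V',V)$. In the non-isomorphic case Schur's lemma kills it. In the isomorphic case Lemma~\ref{schur'} with ${\rm tr}(E_{jk})=\delta_{jk}$ gives the value. The paper gives no proof of its own and simply cites \cite[Corollary 7.2.2]{GF}, so there is no alternative route to compare.

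One claim needs to be removed: in the case $V\cong V'$ the identity is \emph{not} independent of the choice of bases. If $\rho_{V'}=P^{-1}\rho_VP$, your own computation gives
\[
\sum_{b\in\mathcal{B}}(\rho_V(b^\vee))_{ij}(\rho_{V'}(b))_{ks}=S_V\,P_{is}(P^{-1})_{kj},
\]
which in general differs from $\delta_{is}\delta_{jk}S_V$ unless $P$ is a scalar matrix. So the first case of the identity must be read with $\rho_{V'}=\rho_V$. This is how Geck--Pfeiffer state it, and it is exactly how the paper uses it, for instance on the diagonal blocks of $\rho_{V^-}\oplus\rho_{V^+}$ in Proposition~\ref{Prop:comparing schur}. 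Drop the ``or equivalently'' basis-independence alternative in your last paragraph and keep only the reading $\rho_{V'}=\rho_V$.
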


\begin{cor}\label{the same schur}
	Let $V\in |\mathcal{A}|$-mod be a split simple $|\mathcal{A}|$-module. Then we have $S_{V}=S_{V^\delta}$.
	\end{cor}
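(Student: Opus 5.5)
Since $t$ is symmetric, $t(xy)=t(yx)$ for all $x,y\in\mathcal{A}$. The plan is to compute $I'$ on $V^\delta$ directly in terms of $I'$ on $V$, using the fact that $\delta$ is an algebra automorphism that is isometric for $t$, and then compare with Lemma \ref{schur'}.

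\textbf{Step 1: $\delta$ preserves $t$.} Because $t$ is a symmetrizing form on a superalgebra, it should vanish on $\mathcal{A}_{\bar 1}$. The cleanest argument avoids relying on the convention "evenly non-degenerate": it is enough that $t\circ\delta=t$, i.e.\ that $t$ kills odd elements, and this is how I read that phrase. If necessary, I would justify it by noting that the dual basis is homogeneous, as stated after \eqref{dual}, which forces $t(b^\vee b)$ to be computed on even products. So $t(\delta(x)\delta(y))=t(\delta(xy))=t(xy)$.

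\textbf{Step 2: transport the dual bases.} Set $b_1:=\delta(b)=(-1)^{|b|}b$ and $b_1^\vee:=\delta(b^\vee)=(-1)^{|b^\vee|}b^\vee$. By Step 1, $\{b_1\}$ is again a basis with dual basis $\{b_1^\vee\}$. Now take $f\in{\rm End}_{\mathbb{K}}(V)$ and view it as an endomorphism of $V^\delta$, which has the same underlying space. Then
\[
I'_{V^\delta}(f)(v)=\sum_{b}\delta(b^\vee)\, f(\delta(b)v)=\sum_{b} b_1^\vee f(b_1 v)=I'_V(f)(v),
\]
where the last equality uses that $I'$ is independent of the choice of basis. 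That independence is the standard fact that $\sum_b b^\vee\otimes b$ is the canonical Casimir element. Alternatively, in the symmetric case one can check directly that $\sum_b (-1)^{|b|+|b^\vee|} b^\vee\otimes b=\sum_b b^\vee\otimes b$, because $|b^\vee|=|b|$ whenever $t$ is even.

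\textbf{Step 3: compare Schur elements.} Combining Step 2 with Lemma \ref{schur'} gives $S_{V^\delta}{\rm tr}(f)=S_V{\rm tr}(f)$ for every $f$, since the trace of $f$ does not depend on the module structure. Choosing $f$ with ${\rm tr}(f)=1$ yields $S_V=S_{V^\delta}$. I also need $V^\delta$ to be split simple, which is clear because twisting by an automorphism preserves both simplicity and splitness.

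The main obstacle is the parity bookkeeping in Step 1, namely confirming that $|b^\vee|=|b|$ (equivalently, $t(\mathcal{A}_{\bar1})=0$). Once that is in place, the rest is formal.
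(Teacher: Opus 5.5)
Your proof is correct and is essentially the paper's argument: the paper computes $I'$ on $V^\delta$ as $\sum_{b}(-1)^{|b|}b^\vee f((-1)^{|b|}bv)=I'(f)$, which is your ``alternative'' sign cancellation using $|b^\vee|=|b|$ (evenness of $t$), and then reads off $S_V=S_{V^\delta}$ from \eqref{schur2}; your transport-of-dual-basis version just repackages the same fact $t\circ\delta=t$. One caveat: your fallback justification of evenness via homogeneity of the dual basis does not work by itself, because an odd form also has a homogeneous dual basis and would give $S_{V^\delta}=-S_V$; this does not matter here, since evenness of $t$ is built into the definition.
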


\begin{proof}
	For any $f\in{\rm End}_{{\mathbb{K}}}(V)$, we use $I'(f)$ and ${I'}^\delta(f)$ to distinguish to maps $I'(f) \in{\rm End}_{|\mathcal{A}|}(V)$ with ${I'}^\delta(f) \in{\rm End}_{|\mathcal{A}|}(V^\delta)$. By definition, for any $f\in{\rm End}_{{\mathbb{K}}}(V)$, we can check that\begin{align*}
		{I'}^\delta(f)&=\sum_{b\in \mathcal{B}}(-1)^{\bar{b}}b^{\vee} f((-1)^{\bar{b}}bv)\\
		&=\sum_{b\in \mathcal{B}}b^{\vee} f(bv)={I'}(f)
		\end{align*} as linear transformations of $V$. Hence, by \eqref{schur2}, we have $S_{V}=S_{V^\delta}$.
	\end{proof}
	
Recall that any split simple module $V\in \mathcal{A}$-smod of {\em type }\texttt{Q} can be decomposed as $|V|\cong V^-\oplus V^+$, where $V^\pm\in |\mathcal{A}|$-mod are two non-isomorphic simple $|\mathcal{A}|$-modules.

\begin{prop}\label{Prop:comparing schur}
Let $V\in \mathcal{A}$-smod be a split simple module. We have the following.
\begin{enumerate}	
	\item If $V$ is of {\em type }\texttt{M}, then $s_V=S_{|V|}$.
	\item If $V$ is of {\em type }\texttt{Q}, then $2s_{V}=S_{V^-}=S_{V^+}$.
	\end{enumerate}
\end{prop}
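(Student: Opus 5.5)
The plan is to compare the two operators $I$ (super) and $I'$ (non-super) directly on suitably chosen even endomorphisms, and then read off the scalars from Lemma \ref{schur} and Lemma \ref{schur'}. Since $t$ is symmetric, for an even $f$ the sign $(-1)^{|f||b|}$ in the definition of $I(f)$ is $1$. Hence $I(f)=I'(f)$ for every $f\in{\rm End}_{\mathbb{K}}(V)_{\bar 0}$, viewed as linear maps of $|V|$. Everything then reduces to comparing traces.

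For part (1), $V$ is of type \texttt{M}, so $|V|$ is simple by Lemma \ref{lem:type MQ}. It is also split: forgetting the grading commutes with field extension, and a split type \texttt{M} module stays type \texttt{M}, hence simple after forgetting the grading. We take $f$ even with ${\rm tr}(f)\neq 0$, for instance the projection onto a one-dimensional subspace of $V_{\bar 0}$ (or of $V_{\bar 1}$ if $V_{\bar 0}=0$). Then
\[
s_V{\rm tr}(f){\rm id}_V=I(f)=I'(f)=S_{|V|}{\rm tr}(f){\rm id}_V,
\]
and so $s_V=S_{|V|}$.

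For part (2), $V$ is of type \texttt{Q}, so $|V|\cong V^+\oplus V^-$ with $(V^+)^\delta\cong V^-$ by Lemma \ref{lem:type MQ}. Corollary \ref{the same schur} then gives $S_{V^+}=S_{V^-}$, which is the second equality. For the first equality I will not choose a concrete even $f$; instead I use the orthogonality relations, which avoids worrying about how $V^\pm$ sit inside $V$. Fix a homogeneous basis of $V$, and a basis of $|V|$ adapted to the decomposition $V^+\oplus V^-$. Apply \eqref{orth:symm} and take the trace over the index pairs $(i,j)=(k,s)$, that is, sum over $i=s$ and $j=k$. This gives
\[
\sum_{b}\chi_V(b^\vee)\chi_V(b)=2^{\delta_V}s_V\dim V=2s_V\dim V.
\]
Now $\chi_V=\chi_{V^+}+\chi_{V^-}$, because the trace of $\rho_V(a)$ does not depend on the basis. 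Applying \eqref{orth:usual} in the same traced form, the cross terms vanish since $V^+\not\cong V^-$, and
\[
\sum_b\chi_V(b^\vee)\chi_V(b)=S_{V^+}\dim V^++S_{V^-}\dim V^-.
\]
Since $S_{V^+}=S_{V^-}$ and $\dim V^++\dim V^-=\dim V$, this equals $S_{V^+}\dim V$. Hence $2s_V=S_{V^+}$, provided $\dim V\neq 0$ in $\mathbb{K}$.

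The main obstacle is exactly this division: in positive characteristic $\dim V$ may vanish in $\mathbb{K}$, and the traced identity then carries no information. To get around it I would return to the untraced relations. Take an entry of $\rho_V$ indexed by basis vectors lying in $V^+$, after passing to a basis adapted to $V^+\oplus V^-$. Then $\rho_V(b)_{ks}=\rho_{V^+}(b)_{ks}$. Relation \eqref{orth:symm} holds in any basis, since it is equivalent to the identity $I(E_{ji})=2s_V\,\delta\,{\rm id}$, which is basis-free once we use $I=I'$.

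Concretely, choose $f=E$ to be a rank-one map on $V^+$, extended by zero on $V^-$. Then $I'(E)$ restricts to $S_{V^+}{\rm tr}(E)$ on $V^+$. However, $E$ need not be even, so $I(E)$ is not directly given by Lemma \ref{schur}. The fix is to use the even part $E_{\bar 0}=\tfrac12(E+\delta_VE\delta_V)$, where $\delta_V$ is the grading operator. Since $\delta_V$ swaps $V^+$ and $V^-$, we have ${\rm tr}(E_{\bar 0})={\rm tr}(E)$. Moreover $I(E_{\bar 0})=I'(E_{\bar 0})=\tfrac12\bigl(I'(E)+\delta_V I'(E)\delta_V\bigr)$; here $I'(\delta_V E\delta_V)=\delta_V I'(E)\delta_V$ because $\delta_V b=(-1)^{|b|}b\,\delta_V$ and the two signs cancel. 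On $V^+$ this restricts to $\tfrac12\bigl(S_{V^+}+0\bigr){\rm tr}(E)$, because $I'(E)$ vanishes on $V^-$ by orthogonality. On the other hand it equals $s_V{\rm tr}(E)$. Choosing ${\rm tr}(E)=1$ gives $2s_V=S_{V^+}$, with no division by $\dim V$.
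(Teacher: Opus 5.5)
Your argument is correct. For part (2) it follows a genuinely different route from the paper.

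\textbf{Part (1).} The two proofs amount to the same thing. The paper reads $s_V=S_{|V|}$ off the orthogonality relations \eqref{orth:symm} and \eqref{orth:usual}. You note instead that $I(f)=I'(f)$ for even $f$ (because $t$ is symmetric) and evaluate on an even map of trace $1$.

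\textbf{Part (2).} The paper compares the conjugate matrices $\sum_b\rho(b^\vee)\rho(b)$ for $\rho_V$ and for $\rho_{V^-}\oplus\rho_{V^+}$. These matrices are the action of the central element $\sum_b b^\vee b$. Write $m=\dim V^\pm$. Then the two sides are $I(\mathrm{id}_V)=2m\,s_V\,\mathrm{id}$ and $I'(\mathrm{id}_{V^\pm})=m\,S_{V^\pm}\,\mathrm{id}$. Restoring these dimension factors, which are missing from \eqref{orth-matrix1} and \eqref{orth-matrix2}, the comparison yields $2m\,s_V=m\,S_{V^\pm}$. It therefore needs $m\neq 0$ in $\mathbb{K}$, which is exactly the obstacle you flagged.

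Your device avoids this division:
\begin{itemize}
\item For a trace-one $E$ supported on $V^+$, you get $I'(E)=S_{V^+}\pi_+$, where $\pi_\pm$ are the projections of $|V|=V^+\oplus V^-$.
\item The grading operator $\delta_V$ interchanges $V^+$ and $V^-$. These are the two isotypic components of $|V|$, and $\delta_V(V^+)\cong (V^+)^\delta\cong V^-$.
\item Hence $I(E_{\bar 0})=\tfrac12 S_{V^+}(\pi_++\pi_-)=\tfrac12 S_{V^+}\,\mathrm{id}_V$, and Lemma \ref{schur} gives $2s_V=S_{V^+}$.
\end{itemize}
This works in every characteristic $\neq 2$. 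It uses only Lemmas \ref{schur} and \ref{schur'}, not the type \texttt{Q} case of \eqref{orth:symm}. That is relevant because the proposition is applied over $\mathscr{K}$ in Theorem \ref{modular-semisimple 2}, and $\mathscr{K}$ may have positive characteristic.

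\textbf{Two cleanups.}
\begin{itemize}
\item The vanishing of $I'(E)$ on $V^-$ needs no orthogonality. It holds because $V^-$ is a submodule and $E$ kills it.
\item Delete the traced first attempt, and delete the sentence claiming that \eqref{orth:symm} holds in any basis and is equivalent to $I(E_{ji})=2s_V\delta_{ij}\,\mathrm{id}$. For a homogeneous basis vector $v_j$ the map $E_{jj}$ is even, so Lemma \ref{schur} gives $I(E_{jj})=s_V\,\mathrm{id}$, not $2s_V\,\mathrm{id}$. In particular \eqref{orth:symm}, as quoted, cannot hold entrywise for type \texttt{Q} when $s_V\neq 0$. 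Also, $I=I'$ holds only on even maps.
\end{itemize}
Your final argument never uses that sentence, so nothing is lost by removing it.
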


\begin{proof}
	The first statement follows from the orthogonality relations \eqref{orth:symm} and \eqref{orth:usual}. We only consider the second statement. By Lemma \ref{lem:type MQ}, $V$ decomposes as $|V|\cong V^-\oplus V^+$, where $V^\pm\in |\mathcal{A}|$-mod are two non-isomorphic simple $|\mathcal{A}|$-modules and $(V^\pm)^\delta\cong V^\mp$. Now let $\rho_{V^\pm}$ be the matrix representation of $V^\pm$ with respect to any basis of $V^\pm$ and apply \eqref{orth:usual} and Corollary \ref{the same schur}, we have \begin{equation}\label{orth-matrix1}
	\sum_{b\in\mathcal{B}} \left (\left(\rho_{V^-}\oplus	\rho_{V^+}\right)(b^\vee)\cdot \left(\rho_{V^-}\oplus	\rho_{V^+}\right)(b)\right)=S_{V^-}\cdot {\rm id}_{2m},
	\end{equation} where $m=\dim V^-=\dim V^+$. Let $\rho_V$ be the matrix representation of $V$ with respect to any $\Z_2$-graded basis of $V$. Similarly, by \eqref{orth:symm}, we have \begin{equation}\label{orth-matrix2}
	\sum_{b\in\mathcal{B}} \left (\left(\rho_{V}\right)(b^\vee)\cdot \left(\rho_{V}\right)(b)\right)=2s_{V}\cdot {\rm id}_{2m}.
	\end{equation} Since the two representations $\rho_{V^-}\oplus	\rho_{V^+}$ and $\rho_V$ are equivalent, there exists an invertible $2m\times 2m$ matrix $P$, such that $\left(\rho_{V^-}\oplus	\rho_{V^+}\right)(a)=P^{-1}\rho_V(a)P$, for any $a\in \mathcal{A}$. This, combining with \eqref{orth-matrix1}, \eqref{orth-matrix2} and Corollary \ref{the same schur} implies $2s_{V}=S_{V^-}=S_{V^+}$.
	\end{proof}

\subsection{Modular Reduction}\label{Modular Reduction}
{\bf Throughout this subsection, $\mathscr{O}$ is a discrete valuation ring, $\mathscr{K}$ is the fraction field of $\mathscr{O}$ and $\mathbb{K}=\mathscr{O}/\mathfrak{m}$, where $\mathfrak{m}$ is the maximal ideal of $\mathscr{O}$. $\mathcal{A}$ is an $\hO$-algebra which is a free $\hO$-module of finite rank.} Recall the following notations $$
\mathscr{K}\mathcal{A}:=\mathscr{K}\otimes_{\mathscr{O}}\mathcal{A},\qquad\mathbb{K}\mathcal{A}:=\mathbb{K}\otimes_{\mathscr{O}}\mathcal{A}.
$$ 


 Let $V$ be a $|\mathscr{K}\mathcal{A}|$-module. As explained in \cite[7.3.7]{GF}, there is an $|\mathcal{A}|$-submodule $\tilde{V}\subset V$ such that 
 $\mathscr{K}\otimes_{\mathscr{O}}\tilde{V}\cong V$ and $\tilde{V}$ is free of finite rank as $\mathscr{O}$-module. In particular, if $V$ admits a $\Z_2$-grading, we can choose $\tilde{V}$ being $\Z_2$-graded too. The choice of $\tilde{V}$ might not be unique. {\bf From now on, we fix one choice of $\tilde{V}\in|\mathcal{A}|$-mod for each $V\in|\mathscr{K}\mathcal{A}|$-mod and we require $\tilde{V}$ being $\Z_2$-graded when $V$ admits a $\Z_2$-grading.}
Clearly, we have $
V= \mathscr{K}\tilde{V}$. We choose an $\mathscr{O}$-basis $\{v_1,\cdots,v_s\}$ of $\tilde{V}$, and denote by $\rho_V$ the corresponding matrix representation with respect to that basis. By our construction, for any $a\in\mathcal{A}$, $\rho_V(a)$ is a matrix whose entries belong to $\mathscr{O}$, i.e. \begin{align}\label{integral-matrix}\rho_V|_{\mathcal{A}}: \mathcal{A}\to \mathcal{M}_s(\mathscr{O}).\end{align} 
In conclusion, for any $a\in\mathcal{A}$, we have $$\chi_V(a)\in \mathscr{O}$$ and when $V$ is $\Z_2$-graded, we also have 
\begin{align}\label{integral-chi'}\chi'_V(a)\in \mathscr{O}.
	\end{align} 

 \begin{lem}\label{lem:super tits's}
 	Suppose $\mathbb{K}\mathcal{A}$ and $\mathscr{K}\mathcal{A}$ are split and $\mathbb{K}\mathcal{A}$ is semisimple. Then we have the following.
 	\begin{enumerate}
 		\item $\mathscr{K}\mathcal{A}$ is also semisimple and there is a bijection $${\rm Irr}(|\mathscr{K}\mathcal{A}|)\to{ \rm Irr}(|\mathbb{K}\mathcal{A}|),\, V\mapsto \mathbb{K}\tilde{V}.$$
 		\item  Suppose $V\in\mathscr{K}\mathcal{A}$-smod is a simple module of {\em type }\texttt{M}, then $\mathbb{K}\tilde{V}\in\mathbb{K}\mathcal{A}$-smod is also a simple module of {\em type }\texttt{M}.
 		\item Suppose $V\in\mathscr{K}\mathcal{A}$-smod is a simple module of {\em type }\texttt{Q}, then either $\mathbb{K}\tilde{V}$ is a simple  $\mathbb{K}\mathcal{A}$-module of {\em type }\texttt{Q} or both $\mathbb{K}\tilde{V^-}$ and $\mathbb{K}\tilde{V^+}$ have $\Z_2$-graded lift which are both simple of {\em type }\texttt{M}.
 		\end{enumerate}
 	
 	\end{lem}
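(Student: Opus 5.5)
Part (1) is essentially the classical Tits deformation theorem applied to the ungraded algebra $|\mathcal{A}|$; the plan is to deduce it from \cite[Theorem 7.4.6]{GF}, or re-prove it via decomposition maps. Since $\mathbb{K}\mathcal{A}$ is semisimple and split, $\dim_{\mathbb{K}}\mathbb{K}\mathcal{A}=\sum_{W\in{\rm Irr}(|\mathbb{K}\mathcal{A}|)}(\dim W)^2$. Also $\dim_{\mathscr{K}}\mathscr{K}\mathcal{A}=\dim_{\mathbb{K}}\mathbb{K}\mathcal{A}$, since $\mathcal{A}$ is free over $\hO$.

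Semisimplicity of $\mathscr{K}\mathcal{A}$ should follow from the discriminant of the trace form $(a,b)\mapsto \chi_{\rm reg}(ab)$. More robustly, I would argue via lattices. Any surjection of $|\mathscr{K}\mathcal{A}|$-modules restricts to one of suitable full lattices, and reduction modulo $\mathfrak{m}$ splits because $\mathbb{K}\mathcal{A}$ is semisimple. Lifting idempotents (or using that $\mathcal{J}(\mathscr{K}\mathcal{A})\cap\mathcal{A}$ reduces into a nilpotent ideal of $\mathbb{K}\mathcal{A}$, hence to zero, so it lies in $\mathfrak{m}(\mathcal{J}\cap\mathcal{A})$, and Nakayama gives $\mathcal{J}=0$) then shows $\mathscr{K}\mathcal{A}$ is semisimple.

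For the bijection, I use the decomposition map $d:R_0(|\mathscr{K}\mathcal{A}|)\to R_0(|\mathbb{K}\mathcal{A}|)$, $[V]\mapsto[\mathbb{K}\tilde V]$, which is well defined by Brauer–Nesbitt since characters lie in $\hO$ by \eqref{integral-matrix}. The key steps are:
\begin{itemize}
\item[(a)] The characters of $|\mathbb{K}\mathcal{A}|$-simples are linearly independent, and the projective indecomposables of $\mathbb{K}\mathcal{A}$ (which are the simples) lift to $\hO$-free $\mathcal{A}$-lattices by idempotent lifting over the completion. This gives that $d$ is surjective onto simples with unitriangular behaviour.
\item[(b)] Dimension counting, $\sum(\dim V)^2=\sum(\dim W)^2$, together with $\mathbb{K}\tilde V$ being a sum of simples, forces each $\mathbb{K}\tilde V$ to be simple and $V\mapsto\mathbb{K}\tilde V$ to be bijective.
\end{itemize}
This is the main obstacle. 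Working directly over a DVR that need not be complete requires care, so I would cite \cite[Theorem 7.4.6]{GF} for the non-super algebra $|\mathcal{A}|$ rather than redo it.

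For (2) and (3) I work with supermodules. Let $V$ be a simple $\mathscr{K}\mathcal{A}$-supermodule with a graded lattice $\tilde V$. Then $\mathbb{K}\tilde V$ is a $\mathbb{K}\mathcal{A}$-supermodule whose ungraded reduction is given by (1).

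\textbf{Type M.} Here $|V|$ is simple, so by (1) $|\mathbb{K}\tilde V|$ is simple as an ungraded module. Hence $\mathbb{K}\tilde V$ is a simple supermodule, and it is of type M by Lemma \ref{lem:type MQ}, since forgetting the grading keeps it simple.

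\textbf{Type Q.} Here $|V|\cong V^+\oplus V^-$, and by (1) $|\mathbb{K}\tilde V|$ has composition factors $\mathbb{K}\tilde{V^+}\not\cong\mathbb{K}\tilde{V^-}$, both simple; it is semisimple, so it is their direct sum. Moreover $(\mathbb{K}\tilde{V^{\pm}})^\delta\cong\mathbb{K}\tilde{V^{\mp}}$, because $\delta$ commutes with reduction and the bijection in (1) is injective. Take a simple graded submodule $W\subseteq\mathbb{K}\tilde V$. Then $|W|$ is either the whole of $|\mathbb{K}\tilde V|$, in which case $W=\mathbb{K}\tilde V$ is simple with $|W|$ a sum of two non-isomorphic simples, so of type Q by Lemma \ref{lem:type MQ}; or $|W|$ is one of $\mathbb{K}\tilde{V^\pm}$, in which case $W$ is of type M. In the latter case $\mathbb{K}\tilde V=W\oplus W'$ as supermodules, using the semisimplicity of $\mathbb{K}\mathcal{A}$ and the fact that graded complements exist by averaging with $\delta$. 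So $W$ and $W'$ give graded lifts of $\mathbb{K}\tilde{V^+}$ and $\mathbb{K}\tilde{V^-}$, both simple of type M.
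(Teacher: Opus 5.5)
Your proposal is correct and follows the paper's proof. Part (1) is taken from \cite[Theorem 7.4.6]{GF}, and parts (2) and (3) are deduced from (1) together with Lemma \ref{lem:type MQ}, with semisimplicity of $\mathbb{K}\mathcal{A}$ used to split a reducible $\mathbb{K}\tilde V$ into graded pieces in the type \texttt{Q} case. Your extra steps only fill in details the paper leaves implicit: you obtain $|\mathbb{K}\tilde V|\cong\mathbb{K}\tilde{V^-}\oplus\mathbb{K}\tilde{V^+}$ from composition factors plus semisimplicity rather than ``by construction'', and you build graded complements by averaging with the grading operator.
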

 	
 	\begin{proof}
 	(1) follows from \cite[Theorem 7.4.6]{GF}. 
 	
 	(2). Suppose $V\in\mathscr{K}\mathcal{A}$-smod is a simple module of {\em type }\texttt{M}, then $V$ is already a $\Z_2$-graded simple module. Hence $\tilde{V}$ is also $\Z_2$-graded, so is $\mathbb{K}\tilde{V}$. Now apply (1), $|\mathbb{K}\tilde{V}|$ is already a simple $|\mathbb{K}\mathcal{A}|$-module. It follows that $\mathbb{K}\tilde{V}\in\mathbb{K}\mathcal{A}$-smod is also a simple module of {\em type }\texttt{M}. Hence we prove (2). 
 	
 	(3). Suppose $V\in\mathscr{K}\mathcal{A}$-smod is a simple module of {\em type }\texttt{Q}. Then we have \begin{align}\label{decomp1}
 	|\mathbb{K}\tilde{V}|=\mathbb{K}\tilde{V^-}\oplus \mathbb{K}\tilde{V^+}
 	\end{align}by construction. Moreover, $\mathbb{K}\tilde{V}\in\mathbb{K}\mathcal{A}$-smod and both $\mathbb{K}\tilde{V^-}$ and $\mathbb{K}\tilde{V^+}$ are two non-isomorphic simple $|\mathbb{K}\mathcal{A}|$-modules by (1). If $\mathbb{K}\tilde{V}$ is a simple $\mathbb{K}\mathcal{A}$-module, then it must be a simple module of {\em type }\texttt{Q} by \eqref{decomp1}. Otherwise, $\mathbb{K}\tilde{V}$ is a reducible $\mathbb{K}\mathcal{A}$-module since $\mathbb{K}\mathcal{A}$ is semisimple. Combining with \eqref{decomp1}, we conclude that both $\mathbb{K}\tilde{V^-}$ and $\mathbb{K}\tilde{V^+}$ have $\Z_2$-graded lifts. Clearly, these two graded lifts are simple of {\em type }\texttt{M}. This proves (3).

 		\end{proof}
 		
 		The following theorem is an analogue of Tits's deformation theorem in the super case.
 		
 		\begin{thm}\label{super tits's}Suppose $\mathbb{K}\mathcal{A}$ and $\mathscr{K}\mathcal{A}$ are split and $\mathbb{K}\mathcal{A}$ is semisimple. Then we have the following.
 			\begin{enumerate}
 				
 				\item Suppose all the simple $\mathscr{K}\mathcal{A}$-modules are of {\em type }\texttt{M}.  Then we have a bijection $${\rm Irr}(\mathscr{K}\mathcal{A})\to{ \rm Irr}(\mathbb{K}\mathcal{A}),\, V\mapsto \mathbb{K}\tilde{V}$$ and all the simple $\mathbb{K}\mathcal{A}$-modules are of {\em type }\texttt{M}.
 				\item Suppose $\mathscr{O}$ is complete. Then we have a bijection $${\rm Irr}(\mathscr{K}\mathcal{A})\to{ \rm Irr}(\mathbb{K}\mathcal{A}),\, V\mapsto \mathbb{K}\tilde{V}$$ which sends simple $\mathscr{K}\mathcal{A}$-module of {\em type }\texttt{M} to  simple $\mathbb{K}\mathcal{A}$-module of {\em type }\texttt{M} and simple $\mathscr{K}\mathcal{A}$-module of {\em type }\texttt{Q} to simple $\mathbb{K}\mathcal{A}$-module of {\em type }\texttt{Q}.
 				
 				\end{enumerate}
 			\end{thm}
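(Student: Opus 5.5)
The plan is to deduce both parts from Lemma \ref{lem:super tits's}, combined with the non-super bijection ${\rm Irr}(|\mathscr{K}\mathcal{A}|)\to{\rm Irr}(|\mathbb{K}\mathcal{A}|)$, $V\mapsto\mathbb{K}\tilde V$, and the bookkeeping of Lemma \ref{lem:type MQ}. Throughout, simple supermodules are considered up to isomorphism, where $V\cong\Pi V$.

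For (1), suppose every simple $\mathscr{K}\mathcal{A}$-module is of type \texttt{M}. By Lemma \ref{lem:super tits's}(2), $V\mapsto \mathbb{K}\tilde V$ sends ${\rm Irr}(\mathscr{K}\mathcal{A})$ to simple $\mathbb{K}\mathcal{A}$-modules of type \texttt{M}.

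\emph{Injectivity.} By Lemma \ref{lem:type MQ}, the modules $|V|$ with $V\in{\rm Irr}(\mathscr{K}\mathcal{A})$ are exactly ${\rm Irr}(|\mathscr{K}\mathcal{A}|)$ and are pairwise non-isomorphic. Lemma \ref{lem:super tits's}(1) then shows that the $|\mathbb{K}\tilde V|$ are pairwise non-isomorphic.

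\emph{Surjectivity.} The same bijection shows that every simple $|\mathbb{K}\mathcal{A}|$-module is of the form $|\mathbb{K}\tilde V|$. Let $W$ be any simple $\mathbb{K}\mathcal{A}$-supermodule and pick a simple summand of $|W|$; it is isomorphic to $|\mathbb{K}\tilde V|$ for some $V$. If $W$ were of type \texttt{Q}, then $W^{+}\cong|\mathbb{K}\tilde V|$ for the type \texttt{M} module $\mathbb{K}\tilde V$. This contradicts the pairwise non-isomorphism statement of Lemma \ref{lem:type MQ} applied to the family $\{\mathbb{K}\tilde V, W\}$. Hence $W$ is of type \texttt{M}, $|W|\cong|\mathbb{K}\tilde V|$, and a simple supermodule is determined up to $\Pi$ by its underlying module, again via Lemma \ref{lem:type MQ}. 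So $W\cong \mathbb{K}\tilde V$.

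For (2), by Lemma \ref{lem:super tits's} it suffices to rule out the second alternative in part (3). So suppose $V$ is of type \texttt{Q} and $\mathbb{K}\tilde{V^+}$ admits a $\mathbb{Z}_2$-graded lift $U$, simple of type \texttt{M}. I would argue as follows.
\begin{enumerate}
\item Since $\mathbb{K}\mathcal{A}$ is semisimple and $U$ is a graded simple module, $U\cong \mathbb{K}\mathcal{A}\bar e$ for an \emph{even} primitive idempotent $\bar e\in(\mathbb{K}\mathcal{A})_{\bar 0}$.
\item Because $\mathscr{O}$ is complete and $\mathcal{A}_{\bar 0}$ is a finite free $\mathscr{O}$-algebra, idempotents lift. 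So $\bar e$ lifts to an idempotent $e\in\mathcal{A}_{\bar 0}$, and $\mathcal{A}e$ is a $\mathbb{Z}_2$-graded $\mathscr{O}$-free $\mathcal{A}$-module with $\mathbb{K}\mathcal{A}e\cong U$.
\item $\mathscr{K}\mathcal{A}e$ is then a $\mathbb{Z}_2$-graded $\mathscr{K}\mathcal{A}$-module of the same rank. By the decomposition-map bijection of Lemma \ref{lem:super tits's}(1), $|\mathscr{K}\mathcal{A}e|$ is simple and isomorphic to $V^+$.
\item Hence $V^+$ carries a $\mathbb{Z}_2$-grading, i.e.\ $(V^+)^\delta\cong V^+$. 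This contradicts $(V^\pm)^\delta\cong V^\mp$ with $V^+\not\cong V^-$ from Lemma \ref{lem:type MQ}.
\end{enumerate}
Therefore $\mathbb{K}\tilde V$ is simple of type \texttt{Q}. Type \texttt{M} modules go to type \texttt{M} by Lemma \ref{lem:super tits's}(2).

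Injectivity and surjectivity in (2) then follow as in (1). Underlying modules of distinct simple supermodules are disjoint, by Lemma \ref{lem:type MQ}, and the non-super decomposition map is a bijection, so every simple $|\mathbb{K}\mathcal{A}|$-module occurs in exactly one $|\mathbb{K}\tilde V|$. A simple $\mathbb{K}\mathcal{A}$-supermodule $W$ shares a simple constituent with some $|\mathbb{K}\tilde V|$. It is therefore isomorphic to $\mathbb{K}\tilde V$, since both are simple supermodules whose underlying modules overlap.

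The main obstacle is the lifting step (step 2 above). We need an \emph{even} lift of an even idempotent, which is why the lifting is done inside $\mathcal{A}_{\bar 0}$ and uses completeness. We then need to identify $\mathscr{K}\mathcal{A}e$ with a single simple $|\mathscr{K}\mathcal{A}|$-module. The cleanest route is to compare ranks and use that, in the semisimple situation, the decomposition matrix is the identity.
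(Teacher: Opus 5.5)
Your proposal is correct. For part (1) it is an expanded version of the paper's one-line deduction from Lemma \ref{lem:super tits's}(1),(2). For part (2) you take a genuinely different route.

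The paper argues globally by counting. By completeness of $\mathscr{O}$ and the idempotent-lifting results of Curtis--Reiner, the number of blocks of $\mathbb{K}\mathcal{A}$ is at most that of $\mathscr{K}\mathcal{A}$, which gives $|{\rm Irr}(\mathscr{K}\mathcal{A})|\geq|{\rm Irr}(\mathbb{K}\mathcal{A})|$. A type \texttt{Q} module whose reduction splits into two type \texttt{M} modules (Lemma \ref{lem:super tits's}(3)) would make this inequality fail.

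You argue locally instead. You lift the even idempotent cutting out the putative type \texttt{M} constituent to $e\in\mathcal{A}_{\bar 0}$. Then $\mathcal{A}e$ is a graded lattice whose generic fibre is forced to be $V^+$, which gives $(V^+)^\delta\cong V^+$, a contradiction.

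What each buys:
\begin{enumerate}
\item The paper's version is shorter. However, it tacitly has to count \emph{even} central idempotents, since the ungraded counts are equal by Lemma \ref{lem:super tits's}(1) and give nothing. That in turn uses that a central idempotent lift is unique and hence $\delta$-stable, which the paper does not say.
\item Your version avoids block theory and needs only idempotent lifting in the complete ring $\mathcal{A}_{\bar 0}$. It also shows directly why splitting is impossible: any type \texttt{M} constituent of the reduction would lift to a graded module.
\end{enumerate}

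Your argument has two inputs worth stating explicitly. First, a simple supermodule of the semisimple superalgebra $\mathbb{K}\mathcal{A}$ is, up to $\Pi$, an even direct summand of the regular module; average an ungraded projection with the grading operator, using ${\rm char}\,\mathbb{K}\neq 2$. Second, step 3 relies on lattice-independence of reduction (Brauer--Nesbitt). This is what lets you compare $\mathbb{K}\otimes\mathcal{A}e$ with $\bigoplus_i\mathbb{K}\tilde W_i$ for the simple summands $W_i$ of $|\mathscr{K}\mathcal{A}e|$.

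Your closing bijectivity argument in (2) also makes explicit what the paper leaves implicit.
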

 			
 			\begin{proof}
 			(1) follows from Lemma \ref{lem:super tits's} (1), (2).
 			
 			(2). Since $\mathscr{O}$ is complete. Applying \cite[Proposition 5.22, Theorem 6.7, \S6, Exercise 8]{CR}, we deduce that the number of primitive central idempotents in $\mathbb{K}\mathcal{A}$ is less than or equal to the number of primitive central idempotents in $\mathscr{K}\mathcal{A}$. Since both $\mathbb{K}\mathcal{A}$  and $\mathscr{K}\mathcal{A}$ are semisimple, we obtain $|{\rm Irr}(\mathscr{K}\mathcal{A})|\geq |{\rm Irr}(\mathbb{K}\mathcal{A})|$.  Suppose there is a simple  $\mathscr{K}\mathcal{A}$-module $V$ of {\em type }\texttt{Q} and $\mathbb{K}\tilde{V}$ is not simple. By Lemma \ref{lem:super tits's} (3), we have $|{\rm Irr}(\mathscr{K}\mathcal{A})|< |{\rm Irr}(\mathbb{K}\mathcal{A})|$. This leads to a contradiction and completes the proof of (2).
 				\end{proof}
 				
 			{\bf 	In the rest of this subsection, we assume that $\mathcal{A}$ is (super)symmetric with a (super)symmetrizing form $t$.} For simplicity, we denote $$
 		t^{\mathscr{K}}:=1_{\mathscr{K}}\otimes t: \mathscr{K}\mathcal{A}\to  \mathscr{K},\qquad	t^{\mathbb{K}}:=1_{\mathbb{K}}\otimes t: \mathbb{K}\mathcal{A}\to  \mathbb{K}.
 			$$  Then $\mathscr{K}\mathcal{A}$ (resp. $\mathbb{K}\mathcal{A}$) is (super)symmetric with a (super)symmetrizing form $t^{\mathscr{K}}$ (resp. $t^{\mathbb{K}}$). Let $\mathcal{B}$ be a $\Z_2$-homogeneous basis of $\mathcal{A}$ and $\mathcal{B}^{\vee}=\{b^{\vee}\mid b\in \mathcal{B}\}$ be the dual basis satisfying \eqref{dual}. 
 			
For any $V\in { \rm Irr}(\mathscr{K}\mathcal{A})$, we use $s_V$ to denote the Schur elements for $V$. 

The following Lemma gives integrality of the Schur elements.
 
 \begin{lem}
 	For any $V\in { \rm Irr}(\mathscr{K}\mathcal{A})$, we have $s_V\in \mathscr{O}$.
 	\end{lem}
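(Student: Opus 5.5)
We will show that $s_V$ is a coefficient of the ``Casimir'' element attached to $t$, and that this coefficient lies in $\mathscr{O}$ because every ingredient in its expression does. First, since $t$ is (super)symmetrizing over $\mathscr{O}$, the Gram matrix $\left(t(b_1b_2)\right)_{b_1,b_2\in\mathcal{B}}$ is invertible over $\mathscr{O}$. Hence the dual basis $\mathcal{B}^\vee$ defined by \eqref{dual} already lies in $\mathcal{A}$ and is an $\mathscr{O}$-basis of $\mathcal{A}$, not merely of $\mathscr{K}\mathcal{A}$. The same pair $\mathcal{B},\mathcal{B}^\vee$ therefore serves as dual bases for $t^{\mathscr{K}}$ on $\mathscr{K}\mathcal{A}$.

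Next, let $V\in{\rm Irr}(\mathscr{K}\mathcal{A})$. Fix the $\Z_2$-graded $\mathscr{O}$-lattice $\tilde{V}$ chosen above, and let $\rho_V$ be the matrix representation with respect to a homogeneous $\mathscr{O}$-basis of $\tilde V$. By \eqref{integral-matrix}, every entry of $\rho_V(b)$ and of $\rho_V(b^\vee)$ lies in $\mathscr{O}$ for $b\in\mathcal{B}$.

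Now apply the orthogonality relations. In the symmetric case \eqref{orth:symm} with $V=V'$, $i=s$, $j=k$ gives
\[2^{\delta_V}s_V=\sum_{b\in\mathcal{B}}(\rho_V(b^\vee))_{ij}(\rho_V(b))_{ji}\in\mathscr{O}.\]
In the supersymmetric case $V$ must be of type \texttt{M} for $s_V$ to be defined (Lemma \ref{sschur}), and \eqref{orth:supersymm} gives
\[s_V=\sum_{b}(\rho_V(b^\vee))_{ij}(\rho_V(b))_{ji}\in\mathscr{O}.\]
Alternatively, the whole argument can be phrased as $I(E_{ij})=s_V\,{\rm tr}(E_{ij}){\rm id}_V$ (or with ${\rm suptr}$) for an even elementary matrix $E_{ii}$. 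Since $I(E_{ii})$ preserves $\tilde V$, its scalar $s_V$ is integral; here we choose $E_{ii}$ with $v_i$ in the even part, so that the supertrace is $1$.

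The only delicate point is the factor $2^{\delta_V}$ for type \texttt{Q} modules in the symmetric case. Since $2\in\mathscr{O}^\times$ (the residue field $\mathbb{K}$ has characteristic $\neq 2$), dividing by $2$ is harmless and $s_V\in\mathscr{O}$. If one prefers not to rely on that, Proposition \ref{Prop:comparing schur} gives $2s_V=S_{V^\pm}$, and Geck–Pfeiffer integrality of $S_{V^\pm}$ reduces to the same observation. A secondary point to check is that the orthogonality relations are valid over $\mathscr{K}$, which requires $V$ to be split simple; this holds by the standing assumption that $\mathscr{K}\mathcal{A}$ is split.
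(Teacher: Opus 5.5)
Your proposal is correct and follows the paper's own route: the paper's proof simply cites the orthogonality relations \eqref{orth:symm}, \eqref{orth:supersymm} together with the integrality \eqref{integral-matrix}. You also make explicit what the paper leaves implicit, namely that $\mathcal{B}^\vee\subset\mathcal{A}$ because the Gram matrix of $t$ lies in ${\rm GL}(\mathscr{O})$, and that the factor $2^{\delta_V}$ is harmless because $2\in\mathscr{O}^\times$ (your lattice argument via $I(E_{ii})$ sidesteps that factor entirely).
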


 \begin{proof}
 This follows from \eqref{orth:symm}, \eqref{orth:supersymm} and \eqref{integral-matrix}.
 	\end{proof}
 
We denote ${\rm pr}:\mathscr{O}\to \mathbb{K}$ the natural projection. When $t$ is symmetric, we use $S_{V}$ to denote the Schur elements for any $V\in { \rm Irr}(|\mathscr{K}\mathcal{A}|)$. The following theorem was proved by Geck and Pfeiffer.
\begin{thm}\cite[Proposition 7.3.9, Theorem 7.4.7]{GF}\label{modular-semisimple 1}
	Suppose $\mathcal{A}$ is symmetric with a symmetrizing form $t$, and $|\mathbb{K}\mathcal{A}|$, $|\mathscr{K}\mathcal{A}|$ are split. Then for any $V\in { \rm Irr}(|\mathscr{K}\mathcal{A}|)$, we have $S_V\in \mathscr{O}$. Moreover, $|\mathbb{K}\mathcal{A}|$ is semisimple if and only if ${\rm pr}(S_V)\neq 0$ for any $V\in { \rm Irr}(|\mathscr{K}\mathcal{A}|)$.
	\end{thm}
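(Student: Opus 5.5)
The statement is \cite[Proposition 7.3.9, Theorem 7.4.7]{GF}. If I had to prove it directly, I would run the argument of Subsection \ref{Semisimple Criterion} for the ungraded algebra $|\mathcal{A}|$ and add a lattice-reduction step. Since $t$ is non-degenerate over $\mathscr{O}$ with unit Gram determinant, the dual basis $\mathcal{B}^\vee$ lies in $\mathcal{A}$. For $V\in{\rm Irr}(|\mathscr{K}\mathcal{A}|)$, \eqref{integral-matrix} says $\rho_V(b),\rho_V(b^\vee)$ have entries in $\mathscr{O}$. The orthogonality relation \eqref{orth:usual} gives $S_V=\sum_b(\rho_V(b^\vee))_{11}(\rho_V(b))_{11}$, so $S_V\in\mathscr{O}$. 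Reducing the same identity mod $\mathfrak{m}$ shows that $I'$ computed on $\tilde V$ commutes with reduction: for $\phi\in{\rm End}_{\mathscr{O}}(\tilde V)$ we have $I'(\phi)=S_V{\rm tr}(\phi){\rm id}_{\tilde V}$, hence $I'(\mathbb{K}\phi)={\rm pr}(S_V){\rm tr}(\mathbb{K}\phi){\rm id}_{\mathbb{K}\tilde V}$.

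\textbf{Only if.} Assume $|\mathbb{K}\mathcal{A}|$ is semisimple. By \cite[Theorem 7.4.6]{GF} (cf.\ Lemma \ref{lem:super tits's}(1)), each $\mathbb{K}\tilde V$ is simple. By the displayed identity, its Schur element with respect to $t^{\mathbb{K}}$ is ${\rm pr}(S_V)$. The ungraded analogue of Proposition \ref{schur formula 1} says Schur elements of a split semisimple symmetric algebra are nonzero, so ${\rm pr}(S_V)\neq0$.

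\textbf{If.} Assume ${\rm pr}(S_V)\neq 0$ for all $V$.
\begin{enumerate}
\item Then every $S_V\neq0$, so $|\mathscr{K}\mathcal{A}|$ is semisimple by \eqref{semisimplicity-non-super} (the projectivity argument of Corollary \ref{simple-projection}).
\item $S_V$ is a unit in $\mathscr{O}$. Pick $\phi\in{\rm End}_{\mathscr{O}}(\tilde V)$ with ${\rm tr}\,\phi=1$. Then $I'(S_V^{-1}\phi)={\rm id}$ integrally, and after reduction the ungraded version of Proposition \ref{proj-criterion} shows $\mathbb{K}\tilde V$ is projective.
\item Every simple $|\mathbb{K}\mathcal{A}|$-module $L$ is a composition factor of some $\mathbb{K}\tilde V$. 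Indeed, $\mathscr{K}\mathcal{A}\cong\bigoplus V^{\oplus\dim V}$, and the lattices $\mathcal{A}$ and $\bigoplus\tilde V^{\oplus \dim V}$ have reductions with the same composition factors (Brauer--Nesbitt).
\item Because $\tilde V$ is a lattice with projective reduction, $\tilde V$ is itself projective over $\mathcal{A}$ (lift idempotents, or use Nakayama). Hence ${\rm Hom}_{\mathcal{A}}(\tilde V,\tilde V)$ is $\mathscr{O}$-free and commutes with base change. This gives $\dim{\rm End}_{\mathbb{K}\mathcal{A}}(\mathbb{K}\tilde V)=\dim{\rm End}_{\mathscr{K}\mathcal{A}}(V)=1$.
\item So $\mathbb{K}\tilde V$ is an indecomposable projective $P(L)$ with $[P(L):L]=\dim{\rm Hom}(P(L),P(L))=1$. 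Since $|\mathbb{K}\mathcal{A}|$ is symmetric, ${\rm soc}\,P(L)\cong L$. If $P(L)$ had length $>1$, $L$ would occur twice as a composition factor, so $\mathbb{K}\tilde V=L$ is simple and projective.
\item Combining with step 3, every simple $|\mathbb{K}\mathcal{A}|$-module is projective, so $|\mathbb{K}\mathcal{A}|$ is semisimple.
\end{enumerate}

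The main obstacle is step 4, the base-change claim for ${\rm Hom}$, which is where the projectivity of $\tilde V$ over $\mathcal{A}$ must be justified. If that becomes delicate, I would instead use the central primitive idempotents $e_V=S_V^{-1}\sum_b\chi_V(b)b^\vee$. These lie in $\mathcal{A}$ because $S_V\in\mathscr{O}^\times$ and $\chi_V(b)\in\mathscr{O}$, and they reduce to orthogonal central idempotents of $|\mathbb{K}\mathcal{A}|$. I would then identify $e_V\mathcal{A}$ with ${\rm End}_{\mathscr{O}}(\tilde V)$ by comparing ranks and using non-degeneracy of $t$ restricted to $e_V\mathcal{A}$. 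This would make $|\mathbb{K}\mathcal{A}|$ a product of matrix algebras directly.
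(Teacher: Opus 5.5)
Your proof is correct, but in the ``if'' direction it takes a genuinely different route from the one behind the cited result.

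The paper gives no proof here; it quotes Geck--Pfeiffer. It does, however, reproduce their argument for the super analogue, Theorem \ref{modular-semisimple 2}(2). That argument runs as follows. Write $t^{\mathscr{K}}=\sum_V S_V^{-1}\chi_V$, restrict it to $\mathcal{A}$, and reduce modulo $\mathfrak{m}$; this is legitimate because $S_V\in\mathscr{O}^\times$ and $\chi_V(\mathcal{A})\subseteq\mathscr{O}$. Each reduced character is the character of $\mathbb{K}\tilde V$, so it vanishes on $\mathcal{J}(|\mathbb{K}\mathcal{A}|)$. Hence $t^{\mathbb{K}}(\mathcal{J})=0$, and non-degeneracy of $t^{\mathbb{K}}$ forces the ideal $\mathcal{J}$ to be zero. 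That takes a few lines and needs no lattice theory beyond integrality.

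Your chain is longer: Higman's criterion, $\mathcal{A}$-projectivity of $\tilde V$ with base change of ${\rm End}$, the socle argument for symmetric algebras, and Brauer--Nesbitt for exhaustion. In exchange it proves more. Without invoking Tits's theorem, it shows that $V\mapsto\mathbb{K}\tilde V$ is a bijection onto the simple $|\mathbb{K}\mathcal{A}|$-modules, all of them projective.

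On the step you flag as the obstacle: do not lift idempotents, since $\mathscr{O}$ is only a discrete valuation ring and need not be complete. Instead apply Higman's criterion over $\mathscr{O}$ to the integral identity $I'(S_V^{-1}\phi)={\rm id}_{\tilde V}$ you already have. Given an $\mathcal{A}$-surjection $f\colon M\to\tilde V$, choose an $\mathscr{O}$-linear section $g$, which exists because $\tilde V$ is $\mathscr{O}$-free. Since $\mathcal{B}^\vee\subseteq\mathcal{A}$, the map $I'(g\circ S_V^{-1}\phi)$ is $\mathcal{A}$-linear, and $f\circ I'(g\circ S_V^{-1}\phi)=I'(S_V^{-1}\phi)={\rm id}$. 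So $\tilde V$ is projective over $\mathcal{A}$.

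The same identity also gives an ``only if'' that avoids Tits. If $|\mathbb{K}\mathcal{A}|$ is semisimple, then $\mathbb{K}\tilde V$ is projective, so $I'(\psi)={\rm id}$ for some $\psi$. This is impossible when $I'={\rm pr}(S_V)\,{\rm tr}(\cdot)\,{\rm id}$ with ${\rm pr}(S_V)=0$.
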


Now we can prove Theorem \ref{modular-semisimple 2}, which can be viewed as the super analogue of Theorem \ref{modular-semisimple 1}.

	\medskip
{\bf Proof of Theorem  \ref{modular-semisimple 2}}: 
(1) We have the following. \begin{align*}
\mathbb{K}\mathcal{A} \text { is semisimple} &\Leftrightarrow |\mathbb{K}\mathcal{A}| \text{ is semisimple}  \\
&\xLeftrightarrow{\text{Theorem \ref{modular-semisimple 1}}}{\rm pr}(S_V)\neq 0 \text{ for any } V\in { \rm Irr}(|\mathscr{K}\mathcal{A}|)\,\\
&\xLeftrightarrow{\text{Lemma \ref{lem:type MQ} and Proposition \ref{Prop:comparing schur}}} {\rm pr}(s_V)\neq 0 \text{ for any } V\in { \rm Irr}(\mathscr{K}\mathcal{A}).
\end{align*} This proves (1).

(2). The proof is similar to Theorem \ref{modular-semisimple 1}. Suppose ${\rm pr}(s_V)\neq 0$ for any $V\in { \rm Irr}(\mathscr{K}\mathcal{A})$ and all simple modules of $\mathscr{K}\mathcal{A}$ are of {\em type }\texttt{M}. Then in particular, $s_V\neq 0$ for any $V\in { \rm Irr}(\mathscr{K}\mathcal{A})$. This implies $\mathscr{K}\mathcal{A}$ is semisimple by Theorem \ref{thm:semisimplicity criterion 1} (2). Applying Proposition \ref{schur formula 2}, we have  \begin{align*}
	t^{\mathscr{K}}=\sum_{\substack{V\in{\rm Irr}(\mathcal{A})}}\frac{1}{s_V}\chi'_V.
\end{align*} In particular, we have the following by restricting both sides on $\mathcal{A}$, \begin{align*}
t=\sum_{\substack{V\in{\rm Irr}(\mathcal{A})}}\frac{1}{s_V}(\chi'_V)|_{\mathcal{A}}.
\end{align*}By assumption, for any $V\in { \rm Irr}(\mathscr{K}\mathcal{A})$, we have $s_V\in \mathscr{O}^*$. Combining this with \eqref{integral-chi'}, we can reduce the above equation to $\mathbb{K}$:\begin{align}\label{modulo 1}
t^{\mathbb{K}}=\sum_{\substack{V\in{\rm Irr}(\mathcal{A})}}\frac{1}{{\rm pr}(s_V)}{\chi_V^{\mathbb{K}}}'
\end{align}
where ${\chi_V^{\mathbb{K}}}':=1_{\mathbb{K}}\otimes(\chi'_V)|_{\mathcal{A}}:\mathbb{K}\mathcal{A}\to \mathbb{K}.$ Clearly, ${\chi_V^{\mathbb{K}}}'=\chi'_{\mathbb{K}\tilde{V}}$. For any $x\in\mathcal{J}(\mathbb{K}\mathcal{A})_{\bar{0}}$ and $V\in{\rm Irr}(\mathcal{A})$, we have ${\chi_V^{\mathbb{K}}}'(x)=0$ since $x$ acts nilpotently on $\mathbb{K}\tilde{V}$. It follows from \eqref{modulo 1} and $t^{\mathbb{K}}(\mathbb{K}\mathcal{A}_{\bar{1}})=0$ that $t^{\mathbb{K}}(\mathcal{J}(\mathbb{K}\mathcal{A}))=0$. Note that $t^{\mathbb{K}}$ is non-degenerate and $\mathcal{J}(\mathbb{K}\mathcal{A})$ is a two-sided ideal of $\mathbb{K}\mathcal{A}$. We deduce $\mathcal{J}(\mathbb{K}\mathcal{A})=0$ and $\mathbb{K}\mathcal{A}$ is semisimple.

Conversely, assume $\mathbb{K}\mathcal{A}$ is semisimple. Applying Lemma \ref{lem:super tits's} (1) and Theorem \ref{thm:semisimplicity criterion 1} (2), we obtain $\mathscr{K}\mathcal{A}$ is also semisimple and all simple modules of $\mathscr{K}\mathcal{A}$ are of {\em type }\texttt{M}. Hence Theorem \ref{super tits's} gives a bijection $${\rm Irr}(\mathscr{K}\mathcal{A})\to{ \rm Irr}(\mathbb{K}\mathcal{A}),\, V\mapsto \mathbb{K}\tilde{V}.$$ 

Let $V,V'\in {\rm Irr}(\mathscr{K}\mathcal{A})$. By \eqref{integral-matrix}, we can similarly reduce the orthogonality relations \eqref{orth:supersymm} to $\mathbb{K}$: $$
	\sum_{b\in\mathcal{B}}(\rho_V^{\mathbb{K}}(b^\vee))_{ij}(\rho_{V'}^{\mathbb{K}}(b))_{ks}=\begin{cases}\delta_{is}\delta_{jk} \rm {\rm pr}(s_V),\qquad&{\text{if $V\cong V'$}}\\
		0; \qquad&{\text{otherwise},}	
	\end{cases} 
$$ where $\rho_V^{\mathbb{K}}:=1_{\mathbb{K}}\otimes (\rho_V)|_\mathcal{A}: \mathbb{K}\mathcal{A}\to \mathcal{M}_s(\mathbb{K}).$
It is clear that $\rho^{\mathbb{K}}_V$ is the matrix representation of $\mathbb{K}\tilde{V}$. Comparing with the orthogonality relations \eqref{orth:supersymm} between $\mathbb{K}\tilde{V}$ and $\mathbb{K}\tilde{V'}$, we deduce that for any $V\in{\rm Irr}(\mathscr{K}\mathcal{A})$, ${\rm pr}(s_V)=s_{\mathbb{K}\tilde{V}}$. Hence ${\rm pr}(s_V)\neq 0$ due to Theorem \ref{thm:semisimplicity criterion 1} (2) (apply to $\mathbb{K}\mathcal{A}$). This completes the proof of (2).

\qed
\medskip

\section{Cyclotomic Hecke-Clifford superalgebra}\label{Cyclotomic Hecke-Clifford superalgebra}
\subsection{Combinatorics}
\label{pag:The types of combinatorics}
In this subsection, we shall introduce combinatorics we will use in this paper. For $n\in \N$, let $\mathscr{P}_n$ be the set of partitions of $n$. For each $\lambda\in\mathscr{P}_n$, we denote by $\ell(\lambda)$ the number of nonzero parts in the partition $\lambda=(\lambda_1,\lambda_2,\cdots,\lambda_{\ell(\lambda)})$ and we set 
$$|\lambda|:=\sum_{i\geq 1}\lambda_i,\qquad n(\lambda):=\sum_{i\geq 1}(i-1)\lambda_i$$
and $$\lambda'_k:=\sharp \{i|i\geq 1 \text{ such that } \lambda_i\geq k\} \qquad \text{ for any $k\in \N$}.$$

Let $\mathscr{P}^m_n$ be the set of all $m$-multipartitions of $n$ for $m\geq 0$, where we use convention that $\mathscr{P}^0_n=\{\emptyset\}$. Let $\mathscr{P}^\mathsf{s}_n$ be the set of strict partitions of $n$. Then for $m\geq 0$, set
$$
\mathscr{P}^{\mathsf{s},m}_{n}:=
\cup_{a=0}^{n}( \mathscr{P}^{\mathsf{s}}_a\times \mathscr{P}^{m}_{n-a}),\qquad \mathscr{P}^{\mathsf{ss}, m}_{n}:=
\cup_{a+b+c=n}(\mathscr{P}^{\mathsf{s}}_a \times \mathscr{P}^{\mathsf{s}}_b\times \mathscr{P}^{m}_{c}).$$
We will formally write  $\mathscr{P}^{\mathsf{0},m}_{n}=\mathscr{P}^m_n$.  In convention, for any \label{pag:multipartition} $\undla\in  \mathscr{P}^{\mathsf{0},m}_{n}$, we write  $\undla=(\lambda^{(1)},\cdots,\lambda^{(m)}),$ while for any $\undla\in  \mathscr{P}^{\mathsf{s},m}_{n}$, we write  $\undla=(\lambda^{(0)},\lambda^{(1)},\cdots,\lambda^{(m)})$, i.e., we shall put the strict partition in the $0$-th component. Moreover, for any $\undla\in  \mathscr{P}^{\mathsf{ss},m}_{n}$, we write  $\undla=(\lambda^{(0_-)},\lambda^{(0_+)},\lambda^{(1)},\cdots,\lambda^{(m)})$, i.e., we shall put two strict partitions in the $0_-$-th component and the $0_+$-th component.

We will also identify the (strict) partition with the corresponding (shifted) young diagram.  For any  $\undla\in\mathscr{P}^{\bullet,m}_{n}$ with $\bullet\in\{\mathsf{0},\mathsf{s},\mathsf{ss}\}$ and $m\in \N$, the box in the $l$-th component with row $i$, column $j$ will be denoted by $(i,j,l)$  with $l\in\{1,2,\ldots,m\},$ or $l\in\{0,1,2,\ldots,m\}$ or $l\in\{0_-,0_+,1,2,\ldots,m\}$ in the case $\bullet=\mathsf{0},\mathsf{s},\mathsf{ss},$ respectively. We also use the notation $\alpha=(i,j,l)\in \undla$ if the diagram of $\undla$ has a box $\alpha$ on the $l$-th component of row $i$ and column $j$. We use $\Std(\undla)$ \label{pag: standard tableaux} to denote the set of standard tableaux of shape $\undla$. One can also regard each $\mathfrak{t}\in\Std(\undla)$ as a bijection $\mathfrak{t}:\undla\rightarrow \{1,2,\ldots, n\}$ satisfying $\mathfrak{t}((i,j,l))=k$ if the box occupied by $k$ is located in the $i$-th row, $j$-th column in the $l$-th component $\lambda^{(l)}$. We use $\mathfrak{t}^{\undla}$ (resp. $\mathfrak{t}_{\undla}$) to denote the standard tableaux obtained by inserting the symbols $1,2,\ldots,n$ consecutively by rows (resp. column) from the first (resp. last) component of $\undla$.

We use $\Add(\undla)$ and $\Rem(\undla)$ to denote the set of addable boxes of $\undla$ and the set of removable boxes of $\undla$, respectively. For $\mt\in\Std(\undla),$ we define $\Add(\mt):=\Add(\undla)$ and $\Rem(\mt):=\Rem(\undla).$ \label{pag:Add and Rem}

\label{pag:diag of undlam}
\begin{defn}(\cite[Definition 2.5]{SW})
	Let $\undla\in\mathscr{P}^{\bullet,m}_{n}$ with $\bullet\in\{\mathsf{0},\mathsf{s},\mathsf{ss}\}$.  We define
	$$
	\mathcal{D}_{\undla}:=\begin{cases} \emptyset,&\text{if $\undla\in\mathscr{P}^{\mathsf{0},m}_n$,}\\
		\{(a,a,0)|(a,a,0)\in \undla,\,a\in\N\}, &\text{if $\undla\in\mathscr{P}^{\mathsf{s},m}_{n}$,}\\
		\big\{(a,a,l)|(a,a,l)\in \undla,\,a\in\N, l\in\{0_-,0_+\}\big\}, &\text{if $\undla\in\mathscr{P}^{\mathsf{ss}, m}_{n}.$}
	\end{cases}
	$$
	For any $\mathfrak{t}\in\Std(\undla), $ we define
	\begin{align}\label{Dt}
		\mathcal{D}_{\mt}:=\{\mathfrak{t}(a,a,l)|(a,a,l)\in\mathcal{D}_{\undla}\}.
	\end{align}
\end{defn}

\begin{example} Let $\undla=(\lambda^{(0)},\lambda^{(1)})\in \mathscr{P}^{\mathsf{s},1}_{5}$, where via the identification with strict Young diagrams and Young diagrams:
	$$
	\lambda^{(0)}=\young(\,\, ,:\,), \lambda^{(1)}=\young(\,,\,).
	$$
	Then
	$$
	\mathfrak{t}^{\undla}=\Biggl(\young(12,:3), \young(4,5)\Biggr).
	$$
	and  an example of standard tableau is as follows:
	$$
	\mathfrak{t}=\Biggl(\young(13,:5), \young(2,4)\Biggr)\in \Std(\undla).
	$$ We have $$\mathcal{D}_{\undla}=\{(1,1,0),(2,2,0)\},\qquad \mathcal{D}_{\mathfrak{t}}=\{1,5\}.
	$$
\end{example}

	\subsection{Affine Hecke-Clifford superalgebra ${\mathcal{H}_{\rm R}(n)} $ }\label{basic-Non-dege}
	
{\bf In this subsection, we fix $q\in{\rm R^\times}\setminus\{\pm 1\}$ such that $q+q^{-1}\in{\rm R^\times}.$}
    We define $\epsilon:=q-q^{-1}\in{\rm R}\setminus\{0\}.$
    The non-degenerate affine Hecke-Clifford superalgebra $\mathcal{H}_{\rm R}(n)$ \label{pag:AHCA} is
	the superalgebra over ${\rm R}$ generated by even generators
	$T_1,\ldots,T_{n-1},$ $X_1^{\pm 1},\ldots,X_n^{\pm 1}$ and odd generators
	$C_1,\ldots,C_n$ subject to the following relations
	\begin{align}
		T_i^2=\epsilon T_i +1,\quad T_iT_j =T_jT_i, &\quad
		T_iT_{i+1}T_i=T_{i+1}T_iT_{i+1}, \quad|i-j|>1,\label{Braid}\\
		X_iX_j&=X_jX_i, X_iX^{-1}_i=X^{-1}_iX_i=1, \quad 1\leq i,j\leq n, \label{Poly}\\
		C_i^2=1,C_iC_j&=-C_jC_i, \quad 1\leq i\neq j\leq n, \label{Clifford}\\
		T_iX_i&=X_{i+1}T_i-\epsilon(X_{i+1}+C_iC_{i+1}X_i),\label{PX1}\\
		T_iX_{i+1}&=X_iT_i+\epsilon(1-C_iC_{i+1})X_{i+1},\label{PX2}\\
		T_iX_j&=X_jT_i, \quad j\neq i, i+1, \label{PX3}\\
		T_iC_i=C_{i+1}T_i, T_iC_{i+1}&=C_iT_i-\epsilon(C_i-C_{i+1}),T_iC_j=C_jT_i,\quad j\neq i, i+1, \label{PC}\\
		X_iC_i=C_iX^{-1}_i, X_iC_j&=C_jX_i,\quad 1\leq i\neq j\leq n.
		\label{XC}
	\end{align}
	
	For $\alpha=(\alpha_1,\ldots,\alpha_n)\in\mathbb{Z}^n$ and
	$\beta=(\beta_1,\ldots,\beta_n)\in\mathbb{Z}_2^n$, we set
	$X^{\alpha}=X_1^{\alpha_1}\cdots X_n^{\alpha},$
	$C^{\beta}=C_1^{\beta_1}\cdots C_n^{\beta_n}$ and define
	$\supp(\beta):=\{1 \leq k \leq n:\beta_{k}=\bar{1}\},$ $|\beta|:=\Sigma_{i=1}^{n}\beta_i \in \mathbb{Z}_2.$ \label{pag:suppot and sum}
	Then we have the
	following.
	\begin{lem}\cite[Theorem 2.2]{BK}\label{lem:PBWNon-dege}
		The set $\{X^{\alpha}C^{\beta}T_w~|~ \alpha\in\mathbb{Z}^n,
		\beta\in\mathbb{Z}_2^n, w\in \mathfrak{S}_n\}$ forms an ${\rm R}$-basis of $\mathcal{H}_{\rm R}(n).$
	\end{lem}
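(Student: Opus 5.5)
The plan is to reduce the statement to a Gröbner-type straightening argument in two stages: first show that the set spans $\mathcal{H}_{\rm R}(n)$, then show that it is linearly independent.

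\textbf{Spanning.} Let $M$ denote the ${\rm R}$-span of $\{X^{\alpha}C^{\beta}T_w\}$. Since $M$ contains $1$, it is enough to show that $M$ is stable under left multiplication by every generator.
\begin{enumerate}
\item Stability under the $X_i^{\pm1}$ is immediate.
\item For the $C_i$, relation \eqref{XC} gives $C_iX^{\alpha}=X^{\alpha'}C_i$, where $\alpha'$ is $\alpha$ with its $i$-th entry negated. Relation \eqref{Clifford} then absorbs $C_i$ into $C^{\beta}$ up to a sign.
\item For the $T_i$, I will use \eqref{PX1}, \eqref{PX2}, \eqref{PX3}, \eqref{PC} to move $T_i$ rightward past $X^{\alpha}C^{\beta}$. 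The resulting commutators $T_iX_j^{\pm1}$ have the form $X^{s_i\cdot}T_i$ plus terms in the subalgebra generated by the $X^{\pm1}$ and the $C$'s. For inverse powers, this follows from the relations applied to $X_i^{-1}$, derived by inverting.
\end{enumerate}
Finally, $T_iT_w$ lies in the span of the $T_{w'}$ by \eqref{Braid}. By induction on $|\alpha|$ and the length of $w$, this gives spanning.

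\textbf{Linear independence.} This is the main obstacle. I would construct a faithful representation. Let
\[
\mathcal{P}:={\rm R}[X_1^{\pm1},\dots,X_n^{\pm1}]\otimes \mathcal{C}_n ,
\]
where $\mathcal{C}_n$ is the Clifford superalgebra. Consider the induced module $\mathcal{H}_{\rm R}(n)\otimes_{\mathcal{HC}_{\rm R}(n)}\mathbf{1}$, and realize it concretely on $\mathcal{P}$. Here $X_i$ and $C_i$ act by multiplication (twisted by \eqref{XC}), and $T_i$ acts by an explicit Demazure--Lusztig-type operator
\[
T_i\mapsto q\,s_i+\epsilon\,\frac{\ldots}{1-X_iX_{i+1}^{-1}}(1-s_i)+\cdots
\]
with Clifford corrections. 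These corrections are chosen so that \eqref{PX1}--\eqref{XC} hold. The key checks are that these operators preserve Laurent polynomials, which holds because the divided differences are polynomial, and that they satisfy the braid and quadratic relations; this is a routine but long computation. Alternatively, I would avoid the explicit operators by building the algebra as a tensor product $\mathcal{P}\otimes \mathcal{HC}_{\rm R}(n)$ with a twisted multiplication and verifying associativity via a diamond-lemma argument on the rewriting rules \eqref{PX1}--\eqref{XC}.

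Given the representation, I would act on $\mathcal{P}\otimes \mathcal{HC}_{\rm R}(n)$ (the regular-type module). Leading terms in a suitable filtration by $\ell(w)$ show that the images of the $X^{\alpha}C^{\beta}T_w$ are independent. This uses that $\mathcal{HC}_{\rm R}(n)$ has basis $C^{\beta}T_w$, which is itself a standard fact proved the same way. Base change from $\Z[q^{\pm1}]$-type generic rings then handles arbitrary ${\rm R}$, since freeness of the generic form implies freeness after tensoring.

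I expect the verification of the braid relation for the $T_i$-operators in the presence of the Clifford terms to be the hardest step. To handle it, I would first try reducing to $n=3$ and checking it on the generators $X_j^{\pm1}$ and $C_j$ of $\mathcal{P}$ by a direct computation.
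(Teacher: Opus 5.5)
The paper gives no argument of its own here; it quotes \cite[Theorem 2.2]{BK}. Your spanning half is fine, and no induction is even needed: the relations give $T_iX^{\alpha}C^{\beta}\in X^{s_i\alpha}C^{s_i\beta}T_i+\mathcal{A}$, where $\mathcal{A}$ is the span of the $X^{\gamma}C^{\delta}$, which is closed under products by \eqref{Clifford} and \eqref{XC}; and $T_iT_w\in\{T_{s_iw},\,\epsilon T_w+T_{s_iw}\}$. The gap is linear independence, which is the entire content of the lemma. There the proposal does not yet contain an argument.

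\textbf{The module you start from.} $\mathcal{HC}_{\rm R}(n)$ has no trivial module $\mathbf{1}$: for $n\ge 2$, $C_i^2=1$ and $C_iC_j=-C_jC_i$ would force $2c_ic_j=0$. What you mean is induction of $T_i\mapsto q$ from the finite Hecke subalgebra, whose underlying space is your $\mathcal{P}$. But you cannot ``realize'' an induced module without already knowing PBW, so the action must be defined directly. The relations force, for $f\in{\rm R}[X^{\pm1}]$,
\[
T_i\cdot f=q\,{}^{s_i}\!f+\epsilon\,\frac{f-{}^{s_i}\!f}{1-X_iX_{i+1}^{-1}}+\epsilon\,\frac{{}^{c_ic_{i+1}}\!f-{}^{s_i}\!f}{X_iX_{i+1}-1}\,C_iC_{i+1},
\]
where ${}^{c_ic_{i+1}}$ inverts $X_i$ and $X_{i+1}$. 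You must then check the quadratic, braid and $T$--$C$ relations for these operators, and this is precisely the step you defer.

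\textbf{Why it does not give independence.} Even granting that module, ``leading terms in a filtration by $\ell(w)$'' cannot separate the monomials. $\mathcal{P}$ is free of rank $2^n$ over ${\rm R}[X^{\pm1}]$, while the lemma predicts rank $2^nn!$, so no single vector separates the $X^\alpha C^\beta T_w$; already $T_w\cdot 1=q^{\ell(w)}$ for every $w$. You need one of two different completions, and your text slides from the first to the second as if one produced the other.

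(a) Faithfulness. Pass to rational functions and show $X^\alpha C^\beta T_w$ acts as $X^\alpha C^\beta\phi_w\,{}^{w}(-)$ plus terms whose permutation part is Bruhat-smaller. Clifford-twisted (hyperoctahedral) automorphisms also occur and must be tracked by their $C$-component. Then prove $\phi_w$ is invertible and apply Dedekind--Artin independence of distinct automorphisms on Laurent monomials.

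(b) A module of the right size, ${\rm R}[X^{\pm1}]\otimes\mathcal{HC}_{\rm R}(n)$, with $T_i$ defined by straightening. Note it is not $\mathcal{P}\otimes\mathcal{HC}_{\rm R}(n)$, which counts the Clifford factor twice. Here $X^\alpha C^\beta T_w\cdot(1\otimes1)=X^\alpha\otimes C^\beta T_w$ gives independence with no filtration at all. But this works only after every defining relation of $\mathcal{H}_{\rm R}(n)$ is verified on the module, and only if the PBW basis of $\mathcal{HC}_{\rm R}(n)$ is known.

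Your diamond-lemma variant has the same content: resolving all overlaps, including those involving $X_i^{-1}$. Until one of these routes is actually carried out, independence is asserted rather than proved.
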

	
%

For any invertible element $\iota \in {\rm R}$, we define
\label{pag:q-function and b-function}
$$
	\mathtt{q}(\iota):=2\frac{q\iota+(q\iota)^{-1}}{q+q^{-1}}.
	$$

	\subsection{Cyclotomic Hecke-Clifford superalgebra $\mathcal{H}^f_{\rm R}(n)$}\label{basic-cyc-Non-dege}

	To define the cyclotomic Hecke-Clifford superalgebra $\mathcal{H}^f_{\rm R}(n)$ over ${\rm R},$ \label{pag:CHCA} we fix $m\geq 0,$ $\underline{Q}=(Q_1,Q_2,\ldots,Q_m)\in({\rm R^\times})^m$ \label{pag:Q-parameters} and take a $f=f(X_1)\in {\rm R}[X_1^\pm]$ satisfying \cite[(3.2)]{BK}.
It is noted in \cite{SW} that we only need to consider $f(X_1)\in {\rm R}[X_1^\pm]$ to be one of the following three forms:
 $$\begin{aligned}
 f=\begin{cases}
     f^{\mathsf{(0)}}_{\underline{Q}}=\prod_{i=1}^m \biggl(X_1+X^{-1}_1-\mathtt{q}(Q_i)\biggr), \\ 
      f^{\mathsf{(s)}}_{\underline{Q}}=(X_1-1)\prod_{i=1}^m \biggl(X_1+X^{-1}_1-\mathtt{q}(Q_i)\biggr), \\  
     f^{\mathsf{(ss)}}_{\underline{Q}} = (X_1-1)(X_1+1)\prod_{i=1}^m \biggl(X_1+X^{-1}_1-\mathtt{q}(Q_i)\biggr).
    \end{cases}
    \end{aligned}$$
In each case, the degree $r$ of the polynomial $f$ is $2m,\,2m+1,\,2m+2$ respectively. We will see that the different choices of $f\in \{f^{\mathsf{(0)}}_{\underline{Q}},\,f^{\mathsf{(s)}}_{\underline{Q}},\,f^{\mathsf{(ss)}}_{\underline{Q}}\}$ corresponds to different combinatorics $\mathscr{P}^{\mathsf{0},m}_{n},\mathscr{P}^{\mathsf{s},m}_{n},\mathscr{P}^{\mathsf{ss},m}_{n}$ respectively in the representation theory of $\mathcal{H}^f_{\rm R}(n)$.
	
	The non-degenerate cyclotomic Hecke-Clifford superalgebra $\mathcal{H}^f_{\rm R}(n)$ is defined as $$\mathcal{H}^f_{\rm R}(n):=\mathcal{H}_{\rm R}(n)/\mathcal{I}_f,
	$$ where $\mathcal{I}_f$ is the two sided ideal of $\mathcal{H}_{\rm R}(n)$ generated by $f(X_1)$. The degree $r$ \label{pag:nondege level} of $f$ is called the level of $\mathcal{H}^f_{\rm R}(n).$ We shall denote the images of $X^{\alpha}, C^{\beta}, T_w$ in the cyclotomic quotient $\mathcal{H}^f_{\rm R}(n)$ still by the same symbols. Then we have the following due to \cite{BK}.
	
	\begin{lem}\cite[Theorem 3.6]{BK}\label{basis}
		The set $\{X^{\alpha}C^{\beta}T_w~|~ \alpha\in\{0,1,\cdots,r-1\}^n,
		\beta\in\mathbb{Z}_2^n, w\in {\mathfrak{S}_n}\}$ forms an ${\rm R}$-basis of $\mathcal{H}^f_{\rm R}(n)$.
	\end{lem}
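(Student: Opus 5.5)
We will prove Lemma \ref{basis} in two halves, spanning and linear independence, by following the strategy of \cite[Theorem 3.6]{BK}.

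\textbf{Spanning.} By Lemma \ref{lem:PBWNon-dege}, $\mathcal{H}_{\rm R}(n)$ is spanned by the elements $X^{\alpha}C^{\beta}T_w$, and hence so is $\mathcal{H}^f_{\rm R}(n)$. We must reduce exponents into $\{0,\dots,r-1\}$.

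1. Since $f$ satisfies \cite[(3.2)]{BK}, its leading and constant coefficients are units. In $\mathcal{H}^f_{\rm R}(n)$ we can therefore write $X_1^{r}$ and $X_1^{-1}$ as ${\rm R}$-combinations of $1,X_1,\dots,X_1^{r-1}$.

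2. We proceed by induction on $k$. Using $X_{k+1}=T_kX_kT_k$ plus lower terms, which comes from \eqref{PX1}, \eqref{PX2} and \eqref{PC}, we show that $X_{k+1}^{a}$ for arbitrary $a\in\mathbb{Z}$ is a combination of monomials $X^{\alpha}C^{\beta}T_w$ with $\alpha_{k+1}\le r-1$, whose earlier exponents are bounded by induction. The filtration by the degree in $X_{k+1}$ controls the commutation terms.

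3. We then check that the ${\rm R}$-span of these monomials is a left ideal containing $1$.

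\textbf{Linear independence.} This is the main obstacle. Here we will construct a faithful representation, namely the induced module
$$M:=\mathcal{H}_{\rm R}(n)\otimes_{\mathcal{H}_{\rm R}(1)\otimes \mathcal{HC}}\big({\rm R}[X_1^{\pm1}]\otimes \text{Clifford}/(f)\big),$$
or equivalently $\mathcal{H}_{\rm R}(n)/\mathcal{H}_{\rm R}(n)f(X_1)\mathcal{H}_{\rm R}(n)$ realized concretely.

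1. We show that $M$ is free of rank $r^n2^n n!$. To do this, we realize it as a tensor of $n$ copies of the quotient ${\rm R}[X^{\pm1}]\langle C\rangle/(f)$, each free of rank $2r$, twisted by $\mathfrak{S}_n$.

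2. We verify that the ideal $\mathcal{I}_f$ acts by zero on $M$, so that $\mathcal{H}^f_{\rm R}(n)$ acts on $M$.

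3. We show that the images of the proposed basis elements applied to a generator are linearly independent. The cleanest route is a rank count: over the fraction field, the spanning set has exactly $r^n2^nn!$ elements and surjects onto $M$, so it must be a basis. Base change back to ${\rm R}$ then gives freeness.

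The delicate points are the following. First, $\mathcal{I}_f$ is two-sided while the module is only induced from a left ideal. Second, $f(X_1)$ must be $C_1$-compatible, that is, $C_1f(X_1)=f(X_1^{-1})C_1$ with $f(X_1^{-1})$ a unit multiple of $f(X_1)$. This second condition is exactly (3.2), and it should be the key computation we expect to spend the most effort on.
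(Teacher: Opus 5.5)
The paper gives no proof of its own here; the lemma is quoted from \cite[Theorem 3.6]{BK}, so your argument has to stand on its own. Your spanning half is the standard reduction and is fine in outline. Indeed, $X_1^{m}f$ is monic of degree $r$ with unit constant term, and \eqref{PX1} together with $T_k^2=\epsilon T_k+1$ gives $X_{k+1}=T_kX_kT_k+\epsilon C_kC_{k+1}X_kT_k$. You should handle negative powers via $X_{k+1}^{-a}=C_{k+1}X_{k+1}^{a}C_{k+1}$: a filtration by the exponent of $X_{k+1}$ is not compatible with \eqref{XC}, whereas one by its absolute value is.

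The genuine gap is in linear independence. None of your descriptions of $M$ gives an ${\rm R}$-free module of rank $(2r)^nn!$ that is annihilated by $\mathcal{I}_f$.

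\emph{Reading 1.} Suppose you induce from a subalgebra whose only $X$-generator is $X_1^{\pm1}$. By Lemma \ref{lem:PBWNon-dege}, the powers of $X_2,\dots,X_n$ survive, so the induced module has infinite rank. Moreover, only $f(X_1)(1\otimes N)=0$ is automatic, not $\mathcal{I}_fM=0$.

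\emph{Reading 2.} Suppose you mean ``$n$ copies of $\mathcal{H}^f_{\rm R}(1)$ twisted by $\mathfrak{S}_n$'', i.e.\ $M=\mathcal{H}_{\rm R}(n)\otimes_{\mathcal{P}}N^{\otimes n}$ with $\mathcal{P}$ generated by the $X_i^{\pm1},C_i$ and $N=\mathcal{H}^f_{\rm R}(1)$. Then the rank is right, but $\mathcal{I}_f$ does not act by zero. Already for $f=X_1-1$ and $n=2$, relation \eqref{PX2} gives $(X_1-1)T_1=T_1(X_2-1)-\epsilon(1-C_1C_2)X_2$. Since $X_2$ acts as the identity on $N^{\otimes 2}$, you get $f(X_1)\cdot(T_1\otimes v)=-\epsilon\otimes(1-C_1C_2)v$. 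This is nonzero for $v\neq 0$, because $(1-C_1C_2)(1+C_1C_2)=2$. The failure is unavoidable. The generator of this $M$ is killed by every $f(X_k)$, whereas in $\mathcal{H}^f_{\rm R}(2)$ with $f=X_1-1$ one has $f(X_2)=X_2-1=\epsilon(1+C_1C_2)T_1\neq 0$. So your own rank count rules out any such $M$ being an $\mathcal{H}^f_{\rm R}(n)$-module.

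\emph{Reading 3.} ``$\mathcal{H}_{\rm R}(n)/\mathcal{I}_f$ realized concretely'' is the algebra itself, so invoking it is circular.

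Your step~3 is fine, and it needs no fraction field: a surjection ${\rm R}^{N}\to M\cong{\rm R}^{N}$ is automatically bijective. But step~2, which you treat as a verification, is the entire content of the theorem.

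You have also placed the difficulty in the wrong spot. That $f(X_1^{-1})$ is a unit multiple of $f(X_1)$ is immediate here: $f^{(\mathsf{0})}_{\undQ}$ is invariant under $X_1\mapsto X_1^{-1}$, and $X_1^{-1}-1=-X_1^{-1}(X_1-1)$. This only settles $n=1$, namely that $\mathcal{H}^f_{\rm R}(1)$ is free of rank $2r$. What is missing for $n\geq 2$ is an honest lower bound on the rank. One workable route runs as follows.
\begin{enumerate}
\item Pass to a universal coefficient ring with indeterminate $q,Q_1,\dots,Q_m$ (a domain mapping to ${\rm R}$).
\item Construct the seminormal simple modules $\mathbb{D}(\undla)$ directly by explicit formulas over an algebraic closure of its fraction field.
\item Use the super Wedderburn bound $\dim\geq\sum_{\undla}2^{-\delta(\mathbb{D}(\undla))}(\dim\mathbb{D}(\undla))^2=(2r)^nn!$ together with spanning to get a basis there, and hence over the universal ring.
\item Specialize to ${\rm R}$, using that the presentation of $\mathcal{H}^f$ commutes with base change.
\end{enumerate}
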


We set $Q_0=Q_{0_+}=1, Q_{0_-}=-1$. Now we can define the residues of boxes.
\begin{defn}
	Suppose  $\undla\in\mathscr{P}^{\bullet,m}_{n}$ with $\bullet\in\{\mathsf{0},\mathsf{s},\mathsf{ss}\}$ and $(i,j,l)\in \undla,$ we define the residue of box $(i,j,l)$ with respect to the parameter $\undQ=(Q_1,Q_2,\ldots,Q_m)\in({\rm R^\times})^m$ as follows:
	\label{pag:nondeg residue}
	\begin{equation}\label{eq:residue}
		\res(i,j,l):=Q_lq^{2(j-i)}\in {\rm R^\times}.
	\end{equation}
	If $\mathfrak{t}\in \Std(\undla)$ and $\mathfrak{t}(i,j,l)=k$, we set
	\begin{align}
		\res_\mathfrak{t}(k)&:=Q_lq^{2(j-i)}\in {\rm R^\times},\label{resNon-dege-1}\\
		\res(\mathfrak{t})&:=(\res_\mathfrak{t}(1),\cdots,\res_\mathfrak{t}(n))\in ({\rm R^\times})^m,\label{resNon-dege-2}
	\end{align}
	then the $\mathtt{q}$-sequence of $\mt$ is
	\begin{align}\label{resNon-dege-3}
		\mathtt{q}(\res(\mathfrak{t})):=(\mathtt{q}(\res_{\mathfrak{t}}(1)), \mathtt{q}(\res_{\mathfrak{t}}(2)),\ldots, \mathtt{q}(\res_{\mathfrak{t}}(n)))\in {\rm R}^m.
	\end{align}
\end{defn}

The following Frobenius form is due to \cite{BK}.

\begin{prop}\cite[Corollary 3.14]{BK}, \cite[Proposition 5.4]{LS3}\label{trace formula}
Let $\alpha=(\alpha_1,\ldots,\alpha_n)\in [0,r-1]^n,$ $\beta=(\beta_1,\ldots,\beta_n)\in\mathbb{Z}_2^n$ and $w\in \mathfrak{S}_n,$
then the map
\begin{align}\label{closed formula frob}
	\tau^{\rm R}_{r,n}( X^\alpha C^\beta T_w ):=\delta_{(\alpha,\beta,w),(0,0,1)}
\end{align}
is a Frobenius form on $\mathcal{H}^f_{\rm R}(n).$
\end{prop}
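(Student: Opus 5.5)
The plan is to prove non-degeneracy of $\tau^{\rm R}_{r,n}$ by induction on $n$. The key tool is a relative (Mackey-type) decomposition of $\mathcal{H}^f_{\rm R}(n)$ as an $(\mathcal{H}^f_{\rm R}(n-1),\mathcal{H}^f_{\rm R}(n-1))$-bimodule, in the spirit of \cite[\S3]{BK}. Here a Frobenius form means an ${\rm R}$-linear map whose Gram matrix on a homogeneous basis is invertible over ${\rm R}$; no (super)symmetry is claimed. Evenness of $\tau^{\rm R}_{r,n}$ is clear from \eqref{closed formula frob}, since $C^\beta$ with $|\beta|=\bar 1$ is killed.

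\emph{Base case $n=1$.} Here $\mathcal{H}^f_{\rm R}(1)$ has basis $\{X_1^aC_1^b\mid 0\le a\le r-1,\ b\in\Z_2\}$ by Lemma~\ref{basis}, and $\tau$ extracts the coefficient of $1$.
\begin{enumerate}
\item Write $f=X_1^{-m}g(X_1)$ with $g$ a polynomial of degree $r$. Its leading coefficient is $1$ and its constant coefficient is $\pm1$ (using $Q_i\in{\rm R}^\times$ and the shape of $f^{(\bullet)}_{\undQ}$). Consequently $X_1^{-1}=\pm X_1^{r-1}+(\text{lower powers})$ in the quotient, and more generally $X_1^{-a}=u_aX_1^{r-a}+\cdots$ with $u_a\in{\rm R}^\times$.
\item Pair $X_1^aC_1^b$ with $C_1^{b}X_1^{a'}$. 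Using $X_1C_1=C_1X_1^{-1}$ from \eqref{XC}, the product becomes $X_1^{a}X_1^{\pm a'}$ times $C_1^{b+b'}$.
\item Order the basis by degree in $X_1$. The resulting Gram matrix is block triangular (anti-triangular in $a\leftrightarrow r-1-a$ after the reflection above), with unit entries on the relevant diagonal, so its determinant lies in ${\rm R}^\times$.
\end{enumerate}

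\emph{Inductive step.} I will use the decomposition
$$\mathcal{H}^f_{\rm R}(n)=\bigoplus_{0\le a\le r-1,\ b\in\Z_2}X_n^aC_n^b\,\mathcal{H}^f_{\rm R}(n-1)\ \oplus\ \mathcal{H}^f_{\rm R}(n-1)T_{n-1}\mathcal{H}^f_{\rm R}(n-1),$$
which follows from Lemma~\ref{basis} together with the standard coset decomposition of $\mathfrak{S}_n$ by $\mathfrak{S}_{n-1}$.
\begin{enumerate}
\item Define $\pi_n:\mathcal{H}^f_{\rm R}(n)\to\mathcal{H}^f_{\rm R}(n-1)$ as projection onto the summand $X_n^0C_n^0\mathcal{H}^f_{\rm R}(n-1)$.
\item Check that $\pi_n$ is an $\mathcal{H}^f_{\rm R}(n-1)$-bimodule map. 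This uses that $X_n$ and $C_n$ commute or supercommute with $\mathcal{H}^f_{\rm R}(n-1)$, by \eqref{PX3}, \eqref{PC} and \eqref{XC}.
\item Verify on PBW monomials that $\tau^{\rm R}_{r,n}=\tau^{\rm R}_{r,n-1}\circ\pi_n$.
\end{enumerate}
Non-degeneracy of $\tau^{\rm R}_{r,n}$ then reduces, by the usual transitivity of Frobenius extensions, to two facts: $\tau^{\rm R}_{r,n-1}$ is non-degenerate (the induction hypothesis), and the bimodule pairing $(x,y)\mapsto\pi_n(xy)$ is non-degenerate, i.e. $\mathcal{H}^f_{\rm R}(n)$ is a Frobenius extension of $\mathcal{H}^f_{\rm R}(n-1)$ with respect to $\pi_n$. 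Concretely, I will exhibit dual bases over $\mathcal{H}^f_{\rm R}(n-1)$:
\begin{enumerate}
\item The elements $X_n^aC_n^b$ are paired with suitable $C_n^{b}X_n^{-a}$-type elements, exactly as in the base case.
\item The element $T_{n-1}$ is paired with $T_{n-1}^{-1}=T_{n-1}-\epsilon$, with the $T_{n-1}$-summand handled through the relation $T_{n-1}^2=\epsilon T_{n-1}+1$.
\end{enumerate}

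The main obstacle is the triangularity check in the inductive step. Products such as $X_n^aC_n^b\,h\,T_{n-1}h'$ must be re-expanded in the decomposition, and the relations \eqref{PX1}, \eqref{PX2}, \eqref{PC} produce correction terms carrying factors of $\epsilon$. These corrections must be shown to land either in strictly lower filtration pieces or in summands killed by $\pi_n$. I would first filter by total $X$-degree together with Bruhat length of $w$, show that all correction terms strictly decrease this filtration, and then read off an invertible leading block. If this proves messy, the fallback is the argument of \cite[Corollary 3.14]{BK}: compute $\tau$ on the pair (basis element, reversed basis element) directly and show the Gram matrix is unitriangular up to signs and powers of the unit constant coefficient of $g$.
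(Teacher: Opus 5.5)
The paper gives no argument for this statement; it quotes \cite[Corollary 3.14]{BK} and \cite[Proposition 5.4]{LS3}. Your outline is the natural route behind those citations: the Mackey decomposition, the relative trace $\pi_n$ onto the $X_n^0C_n^0\,\mathcal{H}^f_{\rm R}(n-1)$-summand, and composition of relative traces. The identification step really is routine. From \eqref{PX1} one gets
\[
X_n^aT_{n-1}=T_{n-1}X_{n-1}^a+\epsilon\sum_{k=1}^{a}\bigl(X_n^kX_{n-1}^{a-k}-X_n^{k-1}C_nC_{n-1}X_{n-1}^{a-k+1}\bigr),
\]
so $\pi_n(X_n^aT_{n-1})=0$ for $0\le a\le r-1$. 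Together with $C_nT_{n-1}=T_{n-1}C_{n-1}$, this gives $\tau^{\rm R}_{r,n}=\tau^{\rm R}_{r,n-1}\circ\pi_n$ on PBW monomials.

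Two smaller points. First, the decomposition is not just a regrouping of the PBW basis: for $r\ge 2$, $X_nT_{n-1}$ has nonzero components outside $\mathcal{H}^f_{\rm R}(n-1)T_{n-1}\mathcal{H}^f_{\rm R}(n-1)$. Directness therefore needs a rank count, using freeness of $\mathcal{H}^f_{\rm R}(n-1)$ over $\mathcal{H}^f_{\rm R}(n-2)$; this is fixable. Second, in the base case the claim $X_1^{-a}=u_aX_1^{r-a}+(\text{lower})$ fails for $a\ge 2$: writing $g=\sum_j g_jX_1^j$, the $X_1^{r-1}$-coefficient of $X_1^{-2}$ is $g_1g_0^{-2}$. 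Use instead that $\tau(X_1^aX_1^{-a'})=\delta_{a,a'}$ for $a\ge a'$.

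The genuine gap is the step that carries all the content: showing that $\mathcal{H}^f_{\rm R}(n)$ is a Frobenius extension of $\mathcal{H}^f_{\rm R}(n-1)$ via $\pi_n$.

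\emph{Your dual bases are too small.} $\mathcal{H}^f_{\rm R}(n)$ has rank $2rn$ as a one-sided $\mathcal{H}^f_{\rm R}(n-1)$-module. The summand $\mathcal{H}^f_{\rm R}(n-1)T_{n-1}\mathcal{H}^f_{\rm R}(n-1)\cong\mathcal{H}^f_{\rm R}(n-1)\otimes_{\mathcal{H}^f_{\rm R}(n-2)}\mathcal{H}^f_{\rm R}(n-1)$ alone has rank $2r(n-1)$, so pairing the single element $T_{n-1}$ with $T_{n-1}^{-1}$ cannot handle it. Its dual bases have to be $\{u_iT_{n-1}\}$ and $\{T_{n-1}v_i\}$, where $\{u_i\},\{v_i\}$ are dual bases of $\mathcal{H}^f_{\rm R}(n-1)$ over $\mathcal{H}^f_{\rm R}(n-2)$ with respect to $\pi_{n-1}$. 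What makes this work is the identity $\pi_n(T_{n-1}hT_{n-1})=\pi_{n-1}(h)$ for $h\in\mathcal{H}^f_{\rm R}(n-1)$. So the induction naturally runs on the relative Frobenius property itself, not on non-degeneracy of $\tau^{\rm R}_{r,n-1}$ as you set it up.

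\emph{Your triangularity mechanism also fails.} Total $X$-degree is not compatible with the relations: $C_1X_1C_1=X_1^{-1}$ has degree $r-1$ in $\mathcal{H}^f_{\rm R}(1)$. Cross terms do not drop out either; for example $\pi_n(T_{n-1}^{-1})=-\epsilon$ and $\pi_n(T_{n-1}^{-1}C_n)=-\epsilon C_{n-1}$.

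\emph{What does reduce the problem.} Take the left basis $\{X_n^aC_n^b\}\cup\{T_{n-1}v_i\}$ against the right basis $\{X_n^aC_n^b\}\cup\{u_iT_{n-1}\}$. Since $\pi_n(X_n^aC_n^b\,u\,T_{n-1})=0$, the Gram matrix is block triangular with identity $T$-block. Everything then rests on invertibility over $\mathcal{H}^f_{\rm R}(n-1)$ of the $2r\times 2r$ block $\bigl(\pi_n(X_n^aC_n^bX_n^{a'}C_n^{b'})\bigr)$. Its entries involve $\pi_n(X_n^k)$ and $\pi_n(X_n^kC_n)$ for $k\notin[0,r-1]$, and computing these is where the real work lies. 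Your proposal neither does this computation nor reduces it to something checkable. The stated fallback only restates the desired conclusion (a unitriangular Gram matrix) without an argument.
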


\begin{prop}\cite[Theorem 1.2]{LS3}\label{Nondengerate}
	
	(i) If $\bullet=\mathtt{0},$ then the cyclotomic Hecke-Clifford superalgebra $\mathcal{H}^f_{\rm R}(n)$ is supersymmetric with the supersymmetrizing form
	$$t^{\rm R}_{r,n}:=\tau^{\rm R}_{r,n}\Bigl(-\cdot (X_1X_2\cdots X_n)^{m} \Bigr);$$
	
   (ii) If $\bullet=\mathtt{s},$ and $(1+X_1)(1+X_2)\cdots (1+X_n)$ is invertible in $\mathcal{H}^f_{\rm R}(n),$ then the cyclotomic Hecke-Clifford superalgebra $\mathcal{H}^f_{\rm R}(n)$ is symmetric with the symmetrizing form
	$$t^{\rm R}_{r,n}:=\tau^{\rm R}_{r,n}\Bigl(-\cdot (X_1X_2\cdots X_n)^{m} (1+X_1)(1+X_2)\cdots (1+X_n)\Bigr).$$
\end{prop}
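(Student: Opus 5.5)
We plan to prove both parts by the same two-step scheme: first reduce non-degeneracy of $t^{\rm R}_{r,n}$ to that of the Frobenius form $\tau^{\rm R}_{r,n}$ of Proposition \ref{trace formula}, and then verify the (super)symmetry relation on a set of algebra generators. Write $z$ for the modifying element, namely $z=(X_1\cdots X_n)^m$ if $\bullet=\mathtt{0}$ and $z=(X_1\cdots X_n)^m(1+X_1)\cdots(1+X_n)$ if $\bullet=\mathtt{s}$, so that $t^{\rm R}_{r,n}(h)=\tau^{\rm R}_{r,n}(hz)$.

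\emph{Step 1: evenness and non-degeneracy.} The element $z$ is even, and it is a unit in both cases. Indeed, each $X_i$ is invertible, and in case (ii) the product $(1+X_1)\cdots(1+X_n)$ is invertible by hypothesis. The form $\tau^{\rm R}_{r,n}$ vanishes on the odd part, because every odd basis element $X^\alpha C^\beta T_w$ of Lemma \ref{basis} has $\beta\neq 0$. Hence $t^{\rm R}_{r,n}$ also vanishes on the odd part. Fix a homogeneous basis $\mathcal{B}$. The Gram matrix $\bigl(t(b_1b_2)\bigr)$ equals $\bigl(\tau(b_1\cdot b_2z)\bigr)$, which is the Gram matrix of $\tau$ multiplied on the right by the matrix of right multiplication by $z$ in the basis $\mathcal{B}$. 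That matrix is invertible over ${\rm R}$ because $z$ is a unit. Since $\tau$ is a Frobenius form, its Gram determinant lies in ${\rm R}^\times$, and therefore so does that of $t$.

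\emph{Step 2: reduction to generators.} Put $\varepsilon=1$ in the supersymmetric case and $\varepsilon=0$ in the symmetric case. It suffices to prove $t(gh)=(-1)^{\varepsilon|g||h|}t(hg)$ for every generator $g\in\{T_i,X_i^{\pm1},C_i\}$ and every homogeneous $h$. The identity then propagates to arbitrary products: if it holds for $g$ and $g'$, then
\[
t(gg'h)=\pm t(g'hg)=\pm t(hgg'),
\]
with the signs combining correctly. For each generator $g$ we rewrite the required identity as a statement about $\tau$:
\[
\tau(g\,h\,z)=\pm\tau(h\,z\,z^{-1}gz).
\]
So the whole task is to compute the Nakayama automorphism $\nu$ of $\tau$, defined by $\tau(ab)=\tau(b\,\nu(a))$, on the generators. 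We then need to check that $\nu(g)=z^{-1}gz$ up to the super sign.

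\emph{Step 3: computing $\nu$ on generators.} For the $T_i$, and for the $X_i$ with $i\ge2$, we expect to use the Mackey-type decomposition
\[
\mathcal{H}^f(n)=\bigoplus \mathcal{H}^f(n-1)\,X_n^aC_n^b\,T_{n-1}\cdots T_j
\]
together with induction on $n$, exactly as in the Brundan–Kleshchev proof that $\tau$ is Frobenius. The symmetric functions in the $X_i$ commute with the $T_i$, so $z$ commutes with each $T_i$.

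The generators $X_1$ and $C_1$ are where the factor $z$ is genuinely needed, and we expect this to be the main obstacle. Reducing $X_1^{r}$ modulo $f$ and using the palindromic shape of $f$ (up to a power of $X_1$) should give $\nu(C_1)=X_1^{\,2m}C_1$ in case (i), and $\nu(C_1)=X_1^{2m}(1+X_1)(1+X_1^{-1})^{-1}C_1$ (up to sign) in case (ii). These expressions are consistent with the relations
\[
C_i(X_1\cdots X_n)^m=X_i^{-2m}(X_1\cdots X_n)^mC_i,\qquad C_i(1+X_i)=X_i^{-1}(1+X_i)C_i .
\]
The extra factor $X_i^{-1}$ produced by $(1+X_i)$ in the $\mathtt{s}$ case is what converts the super sign into an honest sign, and so turns a supersymmetric form into a symmetric one. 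We would first verify this on $\mathcal{H}^f(1)$ by a direct computation with the basis $X_1^aC_1^b$, $0\le a<r$. We would then transport the result to $C_i$ and $X_i$ for larger $i$ via $C_{i+1}=T_iC_iT_i^{-1}$ and the relations \eqref{PX1}–\eqref{PC}, using the already established $T_i$-symmetry.
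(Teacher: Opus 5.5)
Your Steps 1 and 2 are sound. The Gram matrix of $t$ is that of $\tau$ times the matrix of right multiplication by the unit $z$. Moreover, (super)symmetry of $t$ is equivalent to the Nakayama automorphism of $\tau$ being $\nu=\mathrm{Ad}(z^{-1})$, composed with the parity twist in the super case, and this can be checked on generators. (The paper gives no argument of its own here; the proposition is quoted from \cite[Theorem 1.2]{LS3}.)

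The gap is in Step 3, at exactly the generators you treat as routine. The claim that symmetric functions in the $X_i$ commute with the $T_i$ is true in the affine Hecke algebra of type $A$, but not here, because $C_iX_i=X_i^{-1}C_i$. In the affine Hecke--Clifford superalgebra it is the symmetric polynomials in the $X_i+X_i^{-1}$ that are central. From \eqref{PX1}, \eqref{PX2} and \eqref{XC} one gets
\[
T_1X_1X_2=X_1X_2T_1-\epsilon\,C_1C_2(1+X_1X_2),\qquad T_1(1+X_1)(1+X_2)=(1+X_1)(1+X_2)\bigl(T_1-\epsilon X_1^{-1}X_2^{-1}C_1C_2\bigr),
\]
so $z^{-1}T_iz\neq T_i$.

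Consequently, what you plan to prove for $T_i$ via the Mackey decomposition and induction is false. That statement is $\nu(T_i)=T_i$, i.e. $\tau(T_ia)=\tau(aT_i)$ for all $a$. By \eqref{PC}, $T_1C_1C_2=C_2C_1T_1-\epsilon C_2C_1+\epsilon$. Hence $\tau(T_1C_1C_2)=\epsilon\neq 0=\tau(C_1C_2T_1)$, for every $f$ and every $n\geq 2$.

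So the factor $z$ is genuinely needed at the $T_i$, not only at $X_1$ and $C_1$. What must be shown is $\nu(T_i)=z^{-1}T_iz$, which carries correction terms. For $\bullet=\mathtt{s}$, $m=0$, $n=2$ the required value is $T_1-\epsilon X_1^{-1}X_2^{-1}C_1C_2$, and one checks that this reproduces $\tau(T_1C_1C_2)=\epsilon$, since $\tau(X_2)=1$ there. Your plan contains no mechanism for computing such correction terms for $\tau$. The final step, which transports results to $C_i$ and $X_i$ ``using the already established $T_i$-symmetry'', rests on the false identity, so it fails as well.

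A smaller slip concerns your formulas for $\nu(C_1)$. With your convention $\tau(ab)=\tau(b\,\nu(a))$ and your own displayed commutation relations, the required values are $\nu(C_1)=\pm z^{-1}C_1z=\pm X_1^{-2m}C_1$ in case (i) and $\pm X_1^{-2m-1}C_1$ in case (ii). These are the inverses of what you wrote.
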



\begin{cor}\label{symmetric over ring}
	Let $\bullet=\mathtt{s}$ and $\mathscr{K}$ be the fraction field of ${\rm R}$. Suppose for any $k\in[n]$, the following hold.
	\begin{enumerate}
		\item The characteristic polynomial of $X_k$ on $\mathcal{H}^f_{\mathscr{K}}(n)$ splits.
		
		\item Each eigenvalue of $1+X_k$ on $\mathcal{H}^f_{\mathscr{K}}(n)$ is an invertible element in ${\rm R}$.
		\end{enumerate} Then the cyclotomic Hecke-Clifford superalgebra $\mathcal{H}^f_{\rm R}(n)$ is symmetric with the symmetrizing form $t^{\rm R}_{r,n}$.
		\end{cor}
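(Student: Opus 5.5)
We reduce Corollary \ref{symmetric over ring} to Proposition \ref{Nondengerate}(ii). That result says: if $(1+X_1)(1+X_2)\cdots(1+X_n)$ is invertible in $\mathcal{H}^f_{\rm R}(n)$, then $t^{\rm R}_{r,n}$ is a symmetrizing form. Since the $X_k$ pairwise commute, it suffices to show that each $1+X_k$ is invertible in $\mathcal{H}^f_{\rm R}(n)$. By Lemma \ref{basis}, $\mathcal{H}^f_{\rm R}(n)$ is a free ${\rm R}$-module of finite rank. It embeds in $\mathcal{H}^f_{\mathscr{K}}(n)=\mathscr{K}\otimes_{\rm R}\mathcal{H}^f_{\rm R}(n)$, because ${\rm R}$ is a domain and the module is free.

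Fix $k$ and consider left multiplication $L_k$ by $1+X_k$ on $\mathcal{H}^f_{\rm R}(n)$. In the ${\rm R}$-basis of Lemma \ref{basis} it is a matrix $M$ with entries in ${\rm R}$. Its characteristic polynomial $\chi(x)\in{\rm R}[x]$ equals the characteristic polynomial of $1+X_k$ acting on $\mathcal{H}^f_{\mathscr{K}}(n)$.

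By hypothesis (1), the characteristic polynomial of $X_k$ splits over $\mathscr{K}$. Hence $\chi(x)=\prod_j(x-\mu_j)$, where the $\mu_j$ are the eigenvalues of $1+X_k$. By hypothesis (2), each $\mu_j$ lies in ${\rm R}^\times$. Therefore $\det M=\prod_j\mu_j\in{\rm R}^\times$, so $L_k$ is bijective on $\mathcal{H}^f_{\rm R}(n)$.

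Put $y:=L_k^{-1}(1)$. Then $(1+X_k)y=1$, so $1+X_k$ has a right inverse. Since $\mathcal{H}^f_{\rm R}(n)$ is finitely generated over ${\rm R}$, a one-sided inverse is two-sided. Concretely, apply Cayley–Hamilton to $M$: $\chi(1+X_k)$ acts as zero on the faithful left regular module, so $\chi(1+X_k)=0$. The constant term of $\chi$ is $\pm\det M$, which is a unit. We can therefore rearrange $\chi(1+X_k)=0$ to express $(1+X_k)^{-1}$ as a polynomial in $1+X_k$ with coefficients in ${\rm R}$. This inverse is two-sided and commutes with all the $X_j$.

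It follows that $(1+X_1)\cdots(1+X_n)$ is invertible in $\mathcal{H}^f_{\rm R}(n)$, and Proposition \ref{Nondengerate}(ii) gives the conclusion.

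The only delicate point is interpreting hypothesis (2), that the eigenvalues lie in ${\rm R}^\times$ rather than merely in $\mathscr{K}$. This is exactly what makes $\det M$ a unit of ${\rm R}$ and not just a nonzero element. Once this is granted, the argument is the Cayley–Hamilton step above.
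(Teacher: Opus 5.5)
Your proposal is correct and follows the paper's proof. Both arguments show that the characteristic polynomial of $1+X_k$ under the regular action has coefficients in ${\rm R}$ and a unit constant term, conclude that $1+X_k$ is invertible in $\mathcal{H}^f_{\rm R}(n)$, and then apply Proposition~\ref{Nondengerate}(ii); you spell out the integrality of the matrix and the explicit Cayley--Hamilton inverse, which the paper compresses into one line.
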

\begin{proof}
Let $k\in[n]$. We deduce from the conditions above that the coefficients of the characteristic polynomial of $1+X_k$ on $\mathcal{H}^f_{\mathscr{K}}(n)$ belong to ${\rm R}$ and that the constant coefficient is invertible in ${\rm R}$. Hence $1+X_k$ is invertible in $\mathcal{H}^f_{\rm R}(n)$. The corollary follows from Proposition \ref{Nondengerate} (ii). 
	\end{proof}
	
\subsection{Separate condition and semisimple representation}
Recall $[n]=\{1,2,\ldots,n\}$ for $n\in\mathbb{N}.$ {\bf In this subsection, ${\rm R}=\mathbb{K}$ is the algebraically closed field of characteristic different from $2$.} For any $a\in \mathbb{K}$, we fix a solution of the equation $x^2=a$ and denote it by $\sqrt{a}$.

	For  $\iota\neq 0 \in \mathbb{K}$, we define
$$
\mathtt{b}_{\pm}(\iota):=\frac{\mathtt{q}(\iota)}{2}\pm \sqrt{\frac{\mathtt{q}(\iota)^2}{4}-1}.
$$
Clearly, $\mathtt{b}_{\pm}(\iota)$ is the solution of equation $x+x^{-1}=\mathtt{q}(\iota),$ thus $\mathtt{b}_{+}(\iota)\mathtt{b}_{-}(\iota)=1.$
We define the deformed quantum integers  as\begin{align}\label{deformed quantum number}
	\left[ m \right]_{l,q^2}:=\frac{Q_lq^{2m}-Q_l^{-1}q^{-2m}}{q^{2}-q^{-2}},\quad m\in \mathbb{Z}.\end{align}
Then
\begin{align}
	\mb_\pm(\res_\mathfrak{t}(k))=[j-i+1]_{l,q^2}-[j-i]_{l,q^2}\pm\epsilon\sqrt{[j-i+1]_{l,q^2}[j-i]_{l,q^2}}\label{eigenvalues}
\end{align}
for each $k=\mt(i,j,l).$

%

Recall that $\undQ=(Q_1,\ldots,Q_m)\in (\mathbb{K}^*)^n$ and  $\pm 1\neq q \in \mathbb{K}^*$. Then for any $n\in \N$, we define $P^{(\bullet)}_{n}(q^2,\undQ)$ as follows:

	$$\begin{aligned}
		P_n^{(\bullet)}(q^2,\undQ):=\begin{cases}
			\prod\limits_{t=1}^{n}\bigl(q^{2t}-1\bigr)\prod\limits_{i=1}^m\biggl(\prod\limits_{t=3-n}^{n-1}\bigl(Q^2_i-q^{-2t}\bigr)\prod\limits_{t=1-n}^{n}\bigl(Q^2_i-q^{-4t}\bigr)\biggr)&\\
			\cdot \prod\limits_{1\leq i<i'\leq m}\biggl(\prod\limits_{t=1-n}^{n-1}\bigl({Q_i}-Q_{i'}q^{-2t}\bigr)
			\bigl(Q_iQ_{i'}-q^{-2(t+1)}\bigr)\biggr), \quad \mbox{if $\bullet=\mathsf{0}$ };  \\
			&\\
			\prod\limits_{t=1}^{n}\biggl(\bigl(q^{2t}-1\bigr)\bigl(q^{2t}+1\bigr)\biggr)\prod\limits_{i=1}^m\biggl(\prod\limits_{t=3-n}^{n-1}\bigl(Q^2_i-q^{-2t}\bigr)\prod\limits_{t=1-n}^{n}\bigl(Q^2_i-q^{-4t}\bigr)\biggr)&\\
			\cdot \prod\limits_{1\leq i<i'\leq m}\biggl(\prod\limits_{t=1-n}^{n-1}\bigl({Q_i}-Q_{i'}q^{-2t}\bigr)
			\bigl(Q_iQ_{i'}-q^{-2(t+1)}\bigr)\biggr),  \quad \mbox{if $\bullet=\mathsf{s}$ or  $\mathsf{ss},$}\\
		\end{cases} 
	\end{aligned}$$ where for  $n=1,$  the product $\prod\limits_{t=3-n}^{n-1}\bigl(Q^2_i-q^{-2t}\bigr)$ is understood to be $1$.

%

	\begin{thm}\label{semisimple:non-dege}\cite[Theorem 4.10]{SW}
	Let $\undQ=(Q_1,Q_2,\ldots,Q_m)\in(\mathbb{K}^*)^m$.  Assume $f=f^{(\bullet)}_{\undQ}$ with $\bullet\in\{\mathtt{0},\mathtt{s},\mathtt{ss}\}$. We have the following. \begin{enumerate}
		\item  If $P^{(\bullet)}_{n}(q^2,\undQ)\neq 0$,  then $\mathcal{H}^f_{\mathbb{K}}(n)$ is a (split) semisimple superalgebra. 
\item 	If $P^{(\bullet)}_{n}(q^2,\undQ)\neq 0$,  then $$
	\{\mathbb{D}(\undla)\mid \undla\in\mathscr{P}^{\bullet,m}_{n}\}$$ forms a complete set of pairwise non-isomorphic irreducible $\mathcal{H}^f_{\mathbb{K}}(n)$-modules. 
Moreover, $\mathbb{D}(\undla)$ is of type  $\texttt{M}$ if and only if $\sharp \mathcal{D}_{\undla}$  is even and is of type  $\texttt{Q}$ if and only if $\sharp \mathcal{D}_{\undla}$  is odd.
	\end{enumerate}
\end{thm}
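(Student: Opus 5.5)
The plan is to follow the seminormal-form strategy: build explicit modules from standard tableaux, and show they exhaust the algebra by a dimension count against Lemma \ref{basis}. Throughout I assume $P^{(\bullet)}_{n}(q^2,\undQ)\neq 0$.

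\textbf{Step 1 (separation).} First I would translate $P^{(\bullet)}_{n}(q^2,\undQ)\neq 0$ into a \emph{separate condition} on $\mathtt{q}$-sequences. This condition should say two things.
\begin{enumerate}
\item For $\undla\in\mathscr{P}^{\bullet,m}_{n}$ and $\mt,\mathfrak{s}\in\Std(\undla)$, and also across different shapes, $\mathtt{q}(\res(\mt))=\mathtt{q}(\res(\mathfrak{s}))$ forces $\mt=\mathfrak{s}$.
\item $\mathtt{q}(\res_\mt(k))\neq \mathtt{q}(\res_\mt(k+1))$, and these two values are not ``adjacent'' unless $k,k+1$ lie in the same row or column.
\end{enumerate}
The factors $q^{2t}-1$ (and $q^{2t}+1$) handle coincidences within a single component. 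The factors $Q_i^2-q^{-2t}$ and $Q_i^2-q^{-4t}$ handle the symmetry $\iota\leftrightarrow\iota^{-1}$ built into $\mathtt{q}$. The factors $Q_i-Q_{i'}q^{-2t}$ and $Q_iQ_{i'}-q^{-2(t+1)}$ separate different components. This is a case-by-case content computation, since $\mathtt{q}(\iota)=\mathtt{q}(\iota')$ exactly when $\iota'=\iota$ or $\iota'=q^{-2}\iota^{-1}$.

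\textbf{Step 2 (construction of $\mathbb{D}(\undla)$).} For each $\undla$ I would take a superspace with basis $v_\mt\otimes$ (a Clifford module), for $\mt\in\Std(\undla)$. On each $v_\mt$:
\begin{itemize}
\item $X_k$ acts with eigenvalues $\mb_\pm(\res_\mt(k))$, as in \eqref{eigenvalues};
\item $C_k$ swaps the $\pm$ eigenvectors;
\item for $k\in\mathcal{D}_\mt$ one has $\res=1$, so $\mb_\pm=1$ and the action there is a genuine Clifford generator.
\end{itemize}
Thus each weight space is a module for a Clifford superalgebra on $n-\sharp\mathcal{D}_\mt$ pairs of generators and $\sharp\mathcal{D}_\mt$ single generators. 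The $T_k$ act by explicit seminormal formulas. When $s_k\mt$ is standard, $T_k$ mixes $v_\mt$ and $v_{s_k\mt}$ with coefficients rational in $\mb(\res_\mt(k))$ and $\mb(\res_\mt(k+1))$; otherwise it acts by a scalar on the Clifford factor. The nonvanishing from Step 1 makes all denominators nonzero. The bulk of the work is checking relations \eqref{Braid}--\eqref{XC} together with $f(X_1)=0$. The cyclotomic relation holds because $\mathtt{q}(\res_\mt(1))\in\{\mathtt{q}(Q_l)\}$, plus the value $\mathtt{q}(1)$ in case $\mathtt{s}$.

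\textbf{Step 3 (simplicity, distinctness, types).} By Step 1, the commuting family $X_k+X_k^{-1}$ has simultaneous eigenspaces indexed exactly by the tableaux $\mt$. Consequently:
\begin{itemize}
\item Any submodule is a sum of weight spaces. The $T_k$ connect all $\mt\in\Std(\undla)$, and each weight space is irreducible over the Clifford part, so $\mathbb{D}(\undla)$ is simple.
\item Different shapes have disjoint weight sets, so the $\mathbb{D}(\undla)$ are pairwise non-isomorphic.
\item For the type, the even endomorphisms are governed by a single Clifford weight space. That space is a tensor product of $2$-dimensional modules and one Clifford module on $\sharp\mathcal{D}_\mt$ odd generators. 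Such a module has a $1$- or $2$-dimensional super-endomorphism algebra according as $\sharp\mathcal{D}_\mt$ is even or odd.
\end{itemize}

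\textbf{Step 4 (dimension count).} Let $d_\mt=n-\lfloor\sharp\mathcal{D}_\mt/2\rfloor$, so that $\dim\mathbb{D}(\undla)=\sharp\Std(\undla)\cdot 2^{d_\mt}$. For a type \texttt{M} simple the matrix block has dimension $\dim^2$; for type \texttt{Q} (Example \ref{simple algebra}) it has dimension $\dim^2/2$. I would then verify
\[
\sum_{\undla}2^{-\delta(\mathbb{D}(\undla))}(\dim\mathbb{D}(\undla))^2=2^n r^n n!=\dim\mathcal{H}^f_{\mathbb{K}}(n).
\]
This reduces to a combinatorial identity, namely the multipartition and shifted analogue of $\sum(\sharp\Std)^2=m^n n!$ (resp. counts with $2^{-\sharp\mathcal{D}}$ weights), which I would prove by induction on $n$ via branching on removable boxes. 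Since the pairwise non-isomorphic simple modules are split, the image of $\mathcal{H}^f_{\mathbb{K}}(n)$ in $\prod\mathrm{End}$ is the full product of simple superalgebras by density. Equal dimensions then give that the map is an isomorphism, which yields semisimplicity and completeness.

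I expect the main obstacle to be Step 2, namely verifying the relations \eqref{PX1}, \eqref{PX2} and \eqref{PC} and the braid relation for the seminormal $T_k$, especially near diagonal boxes in the $\mathtt{s}$ case. The dimension identity in Step 4 is the secondary difficulty.
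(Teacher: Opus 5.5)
\textbf{Where the proposal goes wrong.} The paper does not prove this statement; it quotes it from \cite[Theorem 4.10]{SW}. Your overall route is the natural one: separation from $P^{(\bullet)}_{n}(q^2,\undQ)\neq0$, explicit modules on $\mathtt{q}$-weight spaces, then density plus a count against Lemma \ref{basis}. Steps~3--4 are essentially sound. In particular your identity is right: $2^{-\delta}(\dim\mathbb{D}(\undla))^2=2^{2n-\sharp\mathcal{D}_{\undla}}(\sharp\Std(\undla))^2$, and the sum factorizes through multinomial coefficients and Schur's identity $\sum_{\lambda\in\mathscr{P}^{\mathsf{s}}_a}2^{a-\ell(\lambda)}(\sharp\Std(\lambda))^2=a!$.

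The genuine problem is in Step~2. When $s_k\mt$ is not standard, $T_k$ cannot act by a scalar $\gamma$ on the weight space $V_\mt$. If it did, \eqref{PX1} would give $\gamma(X_k-X_{k+1})+\epsilon X_{k+1}=-\epsilon\,C_kC_{k+1}X_k$ on $V_\mt$. The left side preserves the joint eigenspaces of $(X_k,X_{k+1})$. The right side is invertible and sends the joint eigenspace for $(b_1,b_2)$ to the one for $(b_1^{-1},b_2^{-1})$. These differ, because $k,k+1$ are adjacent in a row or column, so at most one of them is a diagonal box. The scalar shortcut is specific to the Ariki--Koike seminormal form. Here what vanishes is the intertwiner. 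From \eqref{PX1}, \eqref{PX2} and \eqref{PC} one checks that
\[
\Phi_k:=T_k+\frac{\epsilon}{X_kX_{k+1}^{-1}-1}-\frac{\epsilon}{X_kX_{k+1}-1}\,C_kC_{k+1}
\]
satisfies $\Phi_kX_k=X_{k+1}\Phi_k$, $\Phi_kX_{k+1}=X_k\Phi_k$, $\Phi_kC_k=C_{k+1}\Phi_k$ and $\Phi_kC_{k+1}=C_k\Phi_k$. Moreover
\[
\Phi_k^2=1-\epsilon^2\Big(\frac{u}{(u-1)^2}+\frac{v}{(v-1)^2}\Big),\qquad u=X_kX_{k+1}^{-1},\quad v=X_kX_{k+1}.
\]
This acts on $V_\mt$ by a scalar depending only on $\mathtt{q}(\res_\mt(k))$ and $\mathtt{q}(\res_\mt(k+1))$, and the scalar vanishes when $\res_\mt(k+1)=\res_\mt(k)q^{\pm2}$.

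So you must define $T_k:=\Phi_k-\frac{\epsilon}{X_kX_{k+1}^{-1}-1}+\frac{\epsilon}{X_kX_{k+1}-1}C_kC_{k+1}$, where:
\begin{itemize}
\item $\Phi_k=0$ on $V_\mt$ if $s_k\mt\notin\Std(\undla)$;
\item otherwise $\Phi_k\colon V_\mt\to V_{s_k\mt}$ is an isomorphism swapping the Clifford tensor factors $k,k+1$, normalized so that $\Phi_k^2$ equals the scalar above. That scalar is nonzero by your condition~(2). Diagonal boxes need care here.
\end{itemize}
Then $T_k^2=\epsilon T_k+1$ and the braid relations reduce to identities for the $\Phi_k$. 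In the non-standard case the quadratic relation holds precisely because that scalar vanishes. As written, your Step~2 does not produce modules.

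\textbf{Smaller points.}
\begin{itemize}
\item Step~2 also needs $\mb_+(\res(\alpha))\neq\mb_-(\res(\alpha))$, i.e. $\mathtt{q}(\res(\alpha))\neq\pm2$, for every non-diagonal box $\alpha$. This is not among your conditions (1), (2); for example $m=n=1$, $Q_1=1$ satisfies (1) vacuously but fails it. It is exactly what the factors $Q_i^2-q^{-4t}$, and $q^{2t}\pm1$ in component~$0$, guarantee.
\item Restricting endomorphisms to one weight space only gives $\dim\mathrm{End}(\mathbb{D}(\undla))\le\dim\mathrm{End}(V_\mt)$. This settles type \texttt{M} for even $\sharp\mathcal{D}_{\undla}$. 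For odd $\sharp\mathcal{D}_{\undla}$ you still need a global odd endomorphism. The easiest fix is to let Step~4 decide: if such a $\mathbb{D}(\undla)$ were of type \texttt{M}, the image of $\mathcal{H}^f_{\mathbb{K}}(n)$ in $\prod_{\undla}\mathrm{End}_{\mathbb{K}}(\mathbb{D}(\undla))$ would have dimension exceeding $2^nr^nn!$.
\item The statement includes $\bullet=\mathtt{ss}$. There the diagonal boxes of $\lambda^{(0_-)}$ have residue $-1$, so $X_k=-1$ on them, and the factor $X_1+1$ of $f$ must also be checked.
\end{itemize}
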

We shall prove the converse of Theorem \ref{semisimple:non-dege} (1) when $\mathcal{H}^f_{\mathbb{K}}(n)$ is (super)symmetric in Section \ref{Semisimplicity criterion for cyclotomic Hecke-Clifford superalgebra}.

\section{Some formulae for Schur elements}\label{Some formulae for Schur elements}
{\bf Throughout this section, ${\rm R}=\mathbb{K}$ is the algebraically closed field of characteristic different from $2$. We shall fix the parameter $\undQ=(Q_1,Q_2,\ldots,Q_m)\in(\mathbb{K}^*)^m$ and $f=f^{(\bullet)}_{\undQ}$ with $P^{(\bullet)}_{n}(q^2,\undQ)\neq 0$ for $\bullet\in\{\mathtt{0},\mathtt{s}\}.$} Accordingly, we define the residues of boxes in the young diagram $\undla$ via \eqref{eq:residue} as well as $\res(\mathfrak{t})$ for each $\mathfrak{t}\in\Std(\undla)$ with $\undla\in\mathscr{P}^{\bullet,m}_{n}$ with $m\geq 0.$

\subsection{Recursive formula for Schur elements}

\begin{defn}\label{diagonalnodes}
	For any $\bullet\in\{\mathsf{0},\mathsf{s}\},$ we denote the subset of boxes
	\begin{align}
		\mathcal{D}
		=\mathcal{D}^{(\bullet)}
		:=
		\begin{cases}
			\emptyset, \quad &\text{ if $\bullet=\mathsf{0},$}\\
			\{(i,i,0)\mid i\in\mathbb{Z}_{>0}\}, \quad &\text{ if $\bullet=\mathsf{s},$}
		\end{cases}
	\end{align}
\end{defn}
\begin{lem}\cite[Corollary 3.25]{LS3}
	Let $\undla\in\mathscr{P}^{\mathsf{\bullet},m}_{n}$ for $\bullet\in\{\mathsf{0},\mathsf{s}\}$ and $\mt\in\Std(\undla).$
	Then the following element
	\begin{align}\label{def qt}
		q(\undla)
		:=\prod\limits_{k=1}^{n}\frac{\prod\limits_{\beta\in\Rem(\mt\downarrow_{k-1})\setminus \mathcal{D}}\left(\mathtt{q}(\res_{\mt}(k))-\mathtt{q}(\res(\beta))\right)}{
			\prod\limits_{\alpha\in\Add(\mt\downarrow_{k-1})\setminus \{\mt^{-1}(k)\}}\left(\mathtt{q}(\res_{\mt}(k))-\mathtt{q}(\res(\alpha))\right)}\in\mathbb{K}^*
	\end{align}
	is independent to the choice of $\mt.$
\end{lem}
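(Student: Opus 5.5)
The statement to prove is that $q(\undla)$ in \eqref{def qt} lies in $\mathbb{K}^*$ and is independent of the choice of $\mt\in\Std(\undla)$. Write $F(\mt)$ for the right-hand side of \eqref{def qt}.

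\emph{Nonvanishing.} Under $P^{(\bullet)}_{n}(q^2,\undQ)\neq 0$ the $\mathtt{q}$-values of the relevant distinct residues are separated. This is the separate condition used in \cite{SW} to build $\mathbb{D}(\undla)$. Each numerator and denominator factor is a difference $\mathtt{q}(\res(\gamma))-\mathtt{q}(\res(\gamma'))$ for distinct boxes, with one added and one either removable or addable at the same step. So each factor is nonzero and $F(\mt)\in\mathbb{K}^*$. The diagonal removable boxes in $\mathcal{D}$ are excluded from the numerator. This is presumably because for them the difference can vanish: a diagonal box $(i,i,0)$ has residue $1$, and $\mathtt{q}(\iota)=\mathtt{q}(q^{-2}\iota^{-1})$, so residue collisions near the diagonal are possible.

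\emph{Independence.} Any two standard tableaux of shape $\undla$ are connected by a chain of simple transpositions $\mt\mapsto s_k\mt$, where $k,k+1$ lie in neither the same row nor the same column (and, on the $0$-th shifted component, are not in adjacent diagonal positions). So it suffices to show $F(s_k\mt)=F(\mt)$ for such an admissible swap. Only the factors with indices $k$ and $k+1$ change, since $\mt\downarrow_{j}$ is unchanged for $j\neq k$.

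Put $\alpha=\mt^{-1}(k)$, $\beta=\mt^{-1}(k+1)$ and $\mu=\mt\downarrow_{k-1}$, and write $a=\mathtt{q}(\res(\alpha))$, $b=\mathtt{q}(\res(\beta))$. For $\mt$ the two factors are:
\begin{itemize}
\item at step $k$: add $\alpha$ to $\mu$, with $\beta$ among the other addable boxes;
\item at step $k+1$: add $\beta$ to $\mu\cup\{\alpha\}$, with $\alpha$ now removable.
\end{itemize}
Since $\alpha,\beta$ are not adjacent, adding one does not change the addability of the other, apart from the box set data near $\alpha$.

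I will compare the two contributions multiset by multiset. The common addable and removable boxes of $\mu$ (other than $\alpha,\beta$) appear symmetrically, so their contributions multiply to the same total for $\mt$ and for $s_k\mt$. Adding $\alpha$ also creates new addable boxes and destroys removable boxes adjacent to $\alpha$; these appear only in the step $k+1$ factor with value $b$ in one ordering. The extra factors are then:
\begin{itemize}
\item for $\mt$: $(a-b)^{-1}$ from $\beta$ being addable at step $k$, and $(b-a)$ from $\alpha$ being removable at step $k+1$ (when $\alpha\notin\mathcal{D}$);
\item for $s_k\mt$: the same with $a$ and $b$ exchanged.
\end{itemize}
These combine to $-1$ on each side, so they agree.

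The main obstacle is this local bookkeeping: the new addable or removable boxes created near $\alpha$ when it is added, and the diagonal cases on the shifted component where $\alpha\in\mathcal{D}$ removes a numerator factor asymmetrically. I would handle it by exhibiting, for each such neighbouring box $\gamma$, a partner box $\gamma'$ with $\mathtt{q}(\res\gamma)=\mathtt{q}(\res\gamma')$. Two facts supply these identities:
\begin{itemize}
\item residues along a row or column change by $q^{\pm2}$;
\item $\mathtt{q}(q^{-2}\iota^{-1})=\mathtt{q}(\iota)$, which handles the diagonal.
\end{itemize}
With them, the products $\prod_{\text{rem}}(\cdot)/\prod_{\text{add}}(\cdot)$ at step $k+1$ coincide for the two orders, finishing the check case by case. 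The cases are: $\alpha$ and $\beta$ in different components; in the same non-shifted component; or in the shifted component with $\alpha$ and/or $\beta$ diagonal.

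Alternatively, and as a cross-check, one could identify $F(\mt)$ via \cite{LS3} with a ratio of Schur elements or eigenvalue data of the seminormal form. That would make independence automatic from the fact that $s_{\undla}$ depends only on $\mathbb{D}(\undla)$. I would present the combinatorial argument as the main proof.
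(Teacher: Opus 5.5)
The paper does not prove this lemma itself; it quotes it from \cite{LS3}. Your direct route is therefore a reasonable alternative: reduce to admissible transpositions $\mt\mapsto s_k\mt$ and compare steps $k$ and $k+1$. Your handling of the common addable/removable boxes is correct, and so is your treatment of the $\alpha$--$\beta$ factors. The gap is in the step you flag as the main obstacle, and the mechanism you propose there would fail.

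Put $a=\mathtt{q}(\res\alpha)$, $b=\mathtt{q}(\res\beta)$, $c_\gamma=\mathtt{q}(\res\gamma)$. Let $N_\alpha$ be the boxes of $\Rem(\mu)\setminus\mathcal{D}$ that stop being removable in $\mu\cup\{\alpha\}$, together with the boxes of $\Add(\mu\cup\{\alpha\})\setminus\Add(\mu)$. Your own bookkeeping then gives, for $\alpha,\beta\notin\mathcal{D}$,
\[
\frac{F(\mt)}{F(s_k\mt)}=\frac{\prod_{\gamma\in N_\beta}(a-c_\gamma)}{\prod_{\gamma\in N_\alpha}(b-c_\gamma)}.
\]
The two products are evaluated at different points $a\neq b$. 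So no pairing of boxes with equal $\mathtt{q}$-values can identify them factor by factor, and it is false that the step-$(k+1)$ quotients coincide.

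Take one ordinary component with $\mu=(1)$, $\alpha=(1,2)$, $\beta=(2,1)$, $k=2$. Write $c_{\pm}=\mathtt{q}(Q_1q^{\pm2})$, $c_{++}=\mathtt{q}(Q_1q^{4})$, $c_{--}=\mathtt{q}(Q_1q^{-4})$. The step-$3$ quotients are $\frac{c_{-}-c_{+}}{c_{-}-c_{++}}$ and $\frac{c_{+}-c_{-}}{c_{+}-c_{--}}$. These agree only if $c_{+}+c_{-}=c_{++}+c_{--}$, which fails for generic $q$. Only the two-step products agree.

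The missing ingredient has two parts.

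\emph{Structure of $N_\alpha$.} For $\alpha\notin\mathcal{D}$, $N_\alpha$ consists of exactly two boxes, with residues $\res(\alpha)q^{2}$ and $\res(\alpha)q^{-2}$. Exactly one of ``the left neighbour stops being removable'' and ``the box below becomes addable'' happens, and likewise for the box above versus the box to the right. The one exception is $\alpha=(i,i+1,0)$: there both happen, but the left neighbour is $(i,i,0)\in\mathcal{D}$. This is the real reason $\mathcal{D}$ is removed: it restores the two-box pattern. It is not there to avoid vanishing factors, as you guessed.

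\emph{The identity.} The proof then closes with
\[
\bigl(\mathtt{q}(u)-\mathtt{q}(vq^{2})\bigr)\bigl(\mathtt{q}(u)-\mathtt{q}(vq^{-2})\bigr)=\frac{4\bigl(u^2-(q^2+q^{-2})uv+v^2\bigr)(q^4uv-1)(uv-1)}{(q^2+1)^2u^2v^2}.
\]
This follows from \eqref{difference} and is symmetric in $u,v$. With $u=\res\alpha$, $v=\res\beta$ it says exactly that the numerator and denominator above are equal.

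\emph{Diagonal case.} If $\alpha=(i,i,0)$ is diagonal (then $\beta$ is not), $N_\alpha$ is a single box of residue $q^2$. One factor $(b-a)$ is also missing, because $\alpha\in\mathcal{D}$ is not counted as removable at step $k+1$. Since $\mathtt{q}(q^{-2})=\mathtt{q}(1)=a$, we have $\frac{b-a}{b-\mathtt{q}(q^{2})}=\frac{(b-a)^2}{(b-\mathtt{q}(q^{2}))(b-\mathtt{q}(q^{-2}))}$. So this case reduces to the same identity with $u=1$.

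That is the only place where your reflection $\mathtt{q}(q^{-2}\iota^{-1})=\mathtt{q}(\iota)$ enters. For the non-diagonal neighbours, what you need is the symmetric identity above, not a matching of boxes.
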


\begin{prop}\cite[Theorem 1.4]{LS3}\label{Schur-Non-dengerate}
	Suppose that $\bullet\in\{\mathsf{0},\mathsf{s}\}$ and  $P^{(\bullet)}_{n}(q^2,\undQ)\neq 0$ holds. Let $\undla\in\mathscr{P}^{\bullet,m}_{n},$ then the Schur element $s_{\undla}$ of simple $\mathcal{H}^f_{\mathbb{K}}(n)$-module $\mathbb{D}(\undla)$ with respect to $t^{\mathbb{K}}_{r,n}$ is given by
	\begin{align*}
		s_{\undla}
		=\begin{cases}
			\prod\limits_{\alpha\in\undla}\left(\mathtt{b}_{-}(\res(\alpha))-\mathtt{b}_{+}(\res(\alpha))\right)
			\cdot q(\undla)^{-1},& \text{ if } \bullet=\mathsf{0},\\
			2^{-\lceil \sharp\mathcal{D}_{\undla}/2 \rceil}	\cdot q(\undla)^{-1},& \text{ if } \bullet=\mathsf{s}.
		\end{cases}
	\end{align*}
\end{prop}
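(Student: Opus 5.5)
The plan is to prove the formula by induction on $n$, computing the Schur element through the value of $t^{\mathbb{K}}_{r,n}$ on a primitive idempotent. The idempotent is built from the seminormal (Jucys–Murphy) basis of $\mathbb{D}(\undla)$ constructed in the semisimple setting of Theorem \ref{semisimple:non-dege}.

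The starting point is Propositions \ref{schur formula 1} and \ref{schur formula 2}. If $F$ is an even idempotent of $\mathcal{H}^f_{\mathbb{K}}(n)$ that acts as a rank-one projection on $\mathbb{D}(\undla)$ and as zero on every other simple module, then $t^{\mathbb{K}}_{r,n}(F)=\chi_{\mathbb{D}(\undla)}(F)/(2^{\delta}s_{\undla})$, and similarly with $\chi'$ in the supersymmetric case. So it suffices to compute $t_{r,n}(F_{\mt})$ for one standard tableau $\mt$, say $\mt=\mt^{\undla}$.

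For such $\mt$ I take $F_{\mt}$ to be the product of two factors. The first is the simultaneous generalized-eigenspace projector for $X_k+X_k^{-1}$ with eigenvalues $\mathtt{q}(\res_{\mt}(k))$; it is available because $P^{(\bullet)}_n\neq 0$ separates the $\mathtt{q}$-sequences. The second is a Clifford idempotent that cuts the eigenspace down to one dimension, built from $X_k$ and $C_k$ on the $\mathtt{b}_\pm$ eigenvectors. The inductive step uses Lagrange interpolation in the last variable:
\[
F_{\mt}=F_{\mt\downarrow_{n-1}}\cdot\prod_{\alpha\in\Add(\mt\downarrow_{n-1})\setminus\{\mt^{-1}(n)\}}\frac{X_n+X_n^{-1}-\mathtt{q}(\res(\alpha))}{\mathtt{q}(\res_{\mt}(n))-\mathtt{q}(\res(\alpha))}\cdot(\text{Clifford factor at }n).
\]

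Next I would compare $t_{r,n}$ with $t_{r,n-1}$ via the Mackey-type decomposition $\mathcal{H}^f(n)=\bigoplus_{a,\beta}\mathcal{H}^f(n-1)X_n^aC_n^{\beta}\oplus\mathcal{H}^f(n-1)T_{n-1}\mathcal{H}^f(n-1)$, which follows from Lemma \ref{basis}. This gives $\tau_{r,n}=\tau_{r,n-1}\circ\Phi$, where $\Phi$ extracts the coefficient of $X_n^{0}C_n^0$. The twisting factor $X_n^m$, or $X_n^m(1+X_n)$ when $\bullet=\mathsf{s}$, moves into $\Phi$. Since $F_{\mt\downarrow_{n-1}}$ commutes with $X_n$ and $C_n$, I get
\[
t_{r,n}(F_{\mt})=t_{r,n-1}\bigl(F_{\mt\downarrow_{n-1}}\cdot\Phi_n(\text{last factor})\bigr).
\]
The scalar $\Phi_n(\ldots)$ acting on the image of $F_{\mt\downarrow_{n-1}}$ is computed by reducing $(X_n+X_n^{-1})^j$ modulo the minimal polynomial of $X_n$ over that block. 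The roots of that polynomial are the $\mathtt{q}$-values of the addable boxes and of the removable boxes off $\mathcal{D}$, which explains the numerator and denominator of \eqref{def qt}. Multiplying the ratios for $k=1,\dots,n$ yields $q(\undla)$. The remaining local factor $\mathtt{b}_-(\res)-\mathtt{b}_+(\res)$, or the power of $2$ coming from diagonal boxes in the $\mathsf{s}$ case, comes from the Clifford idempotent together with the type change $\texttt{M}\leftrightarrow\texttt{Q}$ recorded by $\sharp\mathcal{D}_{\undla}$ and the factor $2^{\delta_V}$ in Proposition \ref{schur formula 1}.

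The main obstacle is the evaluation of $\Phi_n$. One needs a residue-type identity: the constant term of a rational function in $X_n$ with the twisting factor, reduced modulo $f$ and the relations with $T_{n-1}$, equals $\prod_{\beta}(\cdot)/\prod_{\alpha}(\cdot)$. My first attempt would be to prove this via partial fractions, using the fact that the sum of residues of $\frac{(\ldots)}{\prod_{\alpha}(z-\mathtt{q}(\res\alpha))\prod_\beta(\ldots)}$ vanishes. The independence of $q(\undla)$ from $\mt$ is then automatic, because the Schur element does not depend on $\mt$.
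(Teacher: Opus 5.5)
The paper gives no proof of this statement; it is quoted from \cite[Theorem 1.4]{LS3}, so your argument has to stand on its own. The framework is sound: the trace of a primitive idempotent via Propositions \ref{schur formula 1} and \ref{schur formula 2}, the conditional expectation $\Phi_n$ coming from the bimodule decomposition, and Lagrange interpolation in $X_n$. However, the step you call the main obstacle is the entire content of the theorem, and nothing in the proposal handles it.

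By construction you only know $\Phi_n(X_n^a)=\delta_{a,0}$ for $0\le a\le r-1$. Let $\undmu$ be the shape of $\mt\downarrow_{n-1}$. Your last factor, times the twist, must equal $1$ at one $X_n$-eigenvalue $\mathtt{b}_\pm(\res(\alpha))$, $\alpha\in\Add(\undmu)$, and $0$ at all the others. There are $2|\Add(\undmu)|$ such eigenvalues, up to one coincidence on the diagonal in case $\mathsf{s}$. On the other hand, $|\Add(\undmu)|=m+|\Rem(\undmu)|$ for $\bullet=\mathsf{0}$ and $m+1+|\Rem(\undmu)\setminus\mathcal{D}|$ for $\bullet=\mathsf{s}$, while $r=2m$, resp. $2m+1$. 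So once $\Rem(\undmu)\setminus\mathcal{D}\neq\emptyset$, any Laurent polynomial doing this interpolation has exponents spread over more than $r$ consecutive integers. It therefore involves powers $X_n^j$ with $j\notin[0,r-1]$.

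For such $j$, $\Phi_n(X_n^j)$ is a central element of $\mathcal{H}^f_{\mathbb{K}}(n-1)$ that is not fixed by the definition of $\Phi_n$. It comes from the $\mathcal{H}^f(n-1)T_{n-1}\mathcal{H}^f(n-1)$-component of $X_n^j$, and its scalar on $\mathbb{D}(\undmu)$ is exactly what produces the removable-box numerator in \eqref{def qt}. Equivalently, restrict $t_{r,n}=\sum_{\undla}(2^{\delta}s_{\undla})^{-1}\chi_{\undla}$ (resp. $\sum_{\undla}s_{\undla}^{-1}\chi'_{\undla}$) to $\mathcal{H}^f(n-1)[X_n^{\pm1}]$. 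The known values then give at most $m$ (resp. $m+1$) independent linear conditions on the $|\Add(\undmu)|$ unknowns $s_{\undmu}/s_{\undmu+\alpha}$. At least $|\Rem(\undmu)\setminus\mathcal{D}|$ of them remain undetermined.

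Your proposed fix does not supply this missing information. A partial-fraction or vanishing-sum-of-residues identity is a statement about the target function $\prod_{\beta}(z-\mathtt{q}(\res\beta))/\prod_{\alpha}(z-\mathtt{q}(\res\alpha))$. It only re-encodes the low-degree values you already know and never touches $T_{n-1}$.

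Your description of the roots is also off. The minimal polynomial of right multiplication by $X_n+X_n^{-1}$ on $\mathcal{H}^f(n)e_{\undmu}$ has roots only at addable $\mathtt{q}$-values. The removable values are zeros, not poles, of the generating function $\Phi_n\bigl((z-X_n-X_n^{-1})^{-1}\cdot\text{twist}\bigr)|_{\mathbb{D}(\undmu)}$. Proving those zeros is the super-cyclotomic analogue of Kerov's transition-measure identity.

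To close the gap you need a genuine computation involving $T_{n-1}$. One option is a recursion in $n$ for this generating function, derived from \eqref{PX1}, \eqref{PX2}, \eqref{PC} and checked against the product formula. Another is a seminormal-form computation with intertwining elements.

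Two smaller points.
\begin{enumerate}
\item When $\sharp\mathcal{D}_{\undla}$ is odd, $\mathbb{D}(\undla)$ is of type $\texttt{Q}$. Its odd automorphism commutes with even elements, so no even idempotent acts on it with rank one; for $n=1$, $m=0$ the only even idempotents are $0$ and $1$. Use an inhomogeneous rank-one idempotent, which is harmless for a symmetric form, or an even rank-two one.
\item For $\bullet=\mathsf{0}$ the idempotent must be even with homogeneous image, since for example $(1+C_1)/2$ has $\chi'=0$. So the factor at $n$ should be $(X_n-\mathtt{b}_-)/(\mathtt{b}_+-\mathtt{b}_-)$, with no $C_n$.
\end{enumerate}
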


\subsection{Closed formula for Schur elements}\label{Closed formula for Schur elements}

In this subsection, we deduce a closed formula for Schur elements. To this end, by Proposition \ref{Schur-Non-dengerate}, we only need to give a closed formula for $q(\undla)^{-1}$.
We first recall the (generalized and shifted) hook lengths for partitions and strict partitions.

Let $\lambda\in\mathscr{P}_n$, $x=(i,j)\in \lambda$ and $\mu$ be another partition of $n'\in \N$. We define the generalized hook length
of $x$ with respect to $(\lambda,\mu)$ to be the integer: $$
h^{\lambda,\mu}_{i,j}:=\lambda_i-i+\mu'_j-j+1.
$$For $\lambda=\mu$, the above formula becomes the classical hook length formula and will be denoted $h^{\lambda}_{i,j}$.

Following \cite{WW2}, for any partition $\lambda\in\mathscr{P}_n$ whose main diagonal of the Young
diagram $\lambda$ contains $r$ cells. Let $\alpha_i=\lambda_i-i$ be the
number of cells in the $i$th row of $\lambda$ strictly to the right of
$(i,i)$, and let $\beta_i=\lambda_i'-i$ be the number of cells in the
$i$th column of $\lambda$ strictly below $(i,i)$, for $1\leq i\leq r$.
We have $\alpha_1>\alpha_2>\cdots>\alpha_r\geq0$ and
$\beta_1>\beta_2>\cdots>\beta_r\geq0$. Then the {\bf Frobenius notation}
for a partition is
$\lambda=(\alpha_1,\ldots,\alpha_r|\beta_1,\ldots,\beta_r)$. For
example, if $\lambda=(4,3,2,1)$ whose corresponding Young diagram is
$$
\mu =\young(\,\,\,\,,\,\,\,,\,\,,\,)
$$
then $\alpha =(3,1),
\beta=(3,1)$ and hence $\mu=(3,1|3,1)$ in Frobenius
notation.

Let $\mu$ be a strict partition of $n$. We have identified $\mu$ with its shifted young diagram. Denoting
$\ell(\mu)=\ell$, we define the {\bf double partition}
$\widetilde{\mu}$ to be $\widetilde{\mu}=(\mu_1,\ldots,\mu_\ell|
\mu_1-1,\mu_2-1,\ldots,\mu_\ell-1)$ in Frobenius notation. Then we can
identify the shifted diagram $\mu$ with the part of
$\widetilde{\mu}$ that lies strictly above the main diagonal. For each cell
$x=(i,j)\in \mu$, denote by $h^{\widetilde{\mu}}_{ij}$ the associated hook length in
the Young diagram $\widetilde{\mu}$. $h^{\widetilde{\mu}}_{ij}$ is called the shifted hook length for box $x$ in $\mu$.

\begin{example}
	Let $\mu= (3,2)$. The corresponding shifted diagram $\mu$ and
	double diagram $\widetilde\mu$ are
	$$
	\mu=\young(\,\,\,,:\,\,)
	\qquad \qquad
	\widetilde{\mu}=\young(\,\,\,\,,\,\,\,\,,\,\,)
	$$
	
	Then by definition,  the shifted hook lengths for each cell in $\mu$ can be computed as follows $$h^{\widetilde{\mu}}_{1,1}=5,\,h^{\widetilde{\mu}}_{1,2}=3,\,h^{\widetilde{\mu}}_{1,3}=2,\,h^{\widetilde{\mu}}_{2,2}=2,\,h^{\widetilde{\mu}}_{2,3}=1,$$
	which are  the
	usual hook lengths for the corresponding cell in the double diagram $\widetilde \mu.$
\end{example}

\begin{defn}
	Let $\undla\in\mathscr{P}^{\mathsf{\bullet},m}_{n}$, where $\bullet\in\{\mathtt{0},\mathtt{s}\}$. For $1\leq c\leq m$, we define \begin{align*}
	\mathbb{Y}^{\undla}_{c}:&=
		\frac{1}{q^{2n\left(\lambda^{(c)}\right)}}\cdot\left( \prod_{\substack{1\leq i<i'\leq \ell(\lambda^{(c)})}}\left(\frac{Q_c^2q^{2\left(\lambda^{(c)}_i-i+\lambda^{(c)}_{i'}-i'+2\right)}-1}{Q_c^2q^{2\left(2-i-i'\right)}-1}\right)\right)\\
		&\qquad\qquad\cdot \prod_{(a,b,c)\in\lambda^{(c)}}\left(\frac{Q_c^2q^{2\left(b-a-\ell(\lambda^{(c)})+1\right)}-1}{Q_c^2q^{4\left(b-a\right)}-1}
		\cdot \frac{q^{2h^{\lambda^{(c)}}_{a,b}}-1}{q^2-1}\right).
	\end{align*}
	When $\bullet=\mathtt{s}$, then we additionally define \begin{align*}
	\mathbb{Y}^{\undla}_{0}:=2^{\ell(\lambda^{(0)})}\cdot \frac{\left(\prod\limits_{1\leq i<i'\leq \ell(\lambda^{(0)})+1}\left(q^{2\left(\lambda^{(0)}_i+\lambda^{(0)}_{i'}\right)}-1\right)\right)\cdot \left(\prod\limits_{j=\ell(\lambda^{(0)})+1}^{\lambda^{(0)}_1}\prod\limits_{(i,j,0)\in\lambda^{(0)}}\left(q^{2h_{i,j}^{\widetilde{\lambda^{(0)}}}}-1\right)\right)}{q^{2n\left(\lambda^{(0)}\right)}\cdot (q^2-1)^n\cdot\prod\limits_{(i,j,0)\in\lambda^{(0)}}\left(q^{2(j-i)}+1\right) }.
\end{align*} 
\end{defn}

\begin{defn}
	Let $\undla\in\mathscr{P}^{\mathsf{\bullet},m}_{n}$, where $\bullet\in\{\mathtt{0},\mathtt{s}\}$. 
	For $1\leq c_1<{c_2}\leq m$, we define \begin{align*}
	 \mathbb{X}^{\undla}_{{c_1},{c_2}}:&=\prod_{\substack{(a,b,{c_1})\in\lambda^{({c_1})}}}\prod_{k=1}^{\lambda^{(c_2)}_1}\left(\frac{Q_{c_1}q^{2(b-a)}-Q_{{c_2}}q^{2(k-{\lambda^{(c_2)}_k}'-1)}}{Q_{c_1}q^{2(b-a)}-Q_{{c_2}}q^{2(k-{\lambda^{(c_2)}_k}')}} \cdot\frac{Q_{c_1}Q_{c_2}q^{2(b-a+k-{\lambda^{(c_2)}_k}')}-1}{Q_{c_1}Q_{c_2}q^{2(b-a+k-{\lambda^{(c_2)}_k}'+1)}-1}\right) \\
	 &\qquad\qquad\cdot\prod_{\substack{(a,b,c_1)\in\lambda^{(c_1)}}}\left(\frac{2\left(Q_{c_1}Q^{-1}_{c_2}q^{2(b-a-\lambda_1^{(c_2)})}-1\right)\left(Q_{c_1}Q_{{c_2}}q^{2(b-a+\lambda^{(c_2)}_1+1)}-1\right)}{\left(q^2+1\right)Q_{c_1}q^{2(b-a-\lambda_1^{(c_2)})}}\right)\\
	&\qquad\qquad\qquad  \cdot\prod_{\substack{(a,b,c_2)\in\lambda^{(c_2)}}}\left(\frac{2\left(Q_{c_2}Q^{-1}_{c_1}q^{2(b-a)}-1\right)\left(Q_{c_1}Q_{{c_2}}q^{2(b-a+1)}-1\right)}{\left(q^2+1\right)Q_{c_2}q^{2(b-a)}}\right).
	 \end{align*}
	When $\bullet=\mathtt{s}$, then for $1\leq {c}\leq m$, we additionally define
\begin{align*}
	 	\mathbb{X}^{\undla}_{{0},{c}}:&=\prod_{\substack{(a,b,{0})\in\lambda^{({0})}}}\prod_{k=1}^{\lambda^{(c)}_1}\left(\frac{q^{2(b-a)}-Q_{{c}}q^{2(k-{\lambda^{(c)}_k}'-1)}}{q^{2(b-a)}-Q_{{c}}q^{2(k-{\lambda^{(c)}_k}')}} \cdot\frac{Q_{c}q^{2(b-a+k-{\lambda^{(c)}_k}')}-1}{Q_{c}q^{2(b-a+k-{\lambda^{(c)}_k}'+1)}-1}\right) \\
	 	&\qquad\qquad\cdot\prod_{\substack{(a,b,0)\in\lambda^{(0)}}}\left(\frac{2\left(Q^{-1}_{c}q^{2(b-a-\lambda_1^{(c)})}-1\right)\left(Q_{{c}}q^{2(b-a+\lambda^{(c)}_1+1)}-1\right)}{\left(q^2+1\right)q^{2(b-a-\lambda_1^{(c)})}}\right)\\
	 	&\qquad\qquad\qquad  \cdot\prod_{\substack{(a,b,c)\in\lambda^{(c)}}}\left(\frac{2\left(Q_{c}q^{2(b-a)}-1\right)\left(Q_{{c}}q^{2(b-a+1)}-1\right)}{\left(q^2+1\right)Q_{c}q^{2(b-a)}}\right).
	 \end{align*}
	\end{defn}

	The following Lemma is a direct computation.
\begin{lem}Let $\iota_1,\iota_2\in\mathbb{K}^*$, we have
	\begin{align}\label{difference}
		\mathtt{q}(\iota_1)-\mathtt{q}(\iota_2)=2\frac{(\iota_1-\iota_2)(q^2\iota_1\iota_2-1)}{(q^2+1)\iota_1\iota_2}.
	\end{align}
\end{lem}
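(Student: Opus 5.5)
This identity is verified directly from the definition $\mathtt{q}(\iota)=2\frac{q\iota+(q\iota)^{-1}}{q+q^{-1}}$ by clearing denominators. First factor out the common constant and write
\[
\mathtt{q}(\iota_1)-\mathtt{q}(\iota_2)=\frac{2}{q+q^{-1}}\Bigl(q(\iota_1-\iota_2)+q^{-1}(\iota_1^{-1}-\iota_2^{-1})\Bigr).
\]

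Next use $\iota_1^{-1}-\iota_2^{-1}=-(\iota_1-\iota_2)/(\iota_1\iota_2)$ to pull out the common factor $(\iota_1-\iota_2)$:
\[
\mathtt{q}(\iota_1)-\mathtt{q}(\iota_2)=\frac{2(\iota_1-\iota_2)}{q+q^{-1}}\Bigl(q-\frac{q^{-1}}{\iota_1\iota_2}\Bigr)=\frac{2(\iota_1-\iota_2)}{q+q^{-1}}\cdot\frac{q^2\iota_1\iota_2-1}{q\,\iota_1\iota_2}.
\]

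Finally, $(q+q^{-1})\,q=q^2+1$, so the right-hand side equals $2\frac{(\iota_1-\iota_2)(q^2\iota_1\iota_2-1)}{(q^2+1)\iota_1\iota_2}$, which is \eqref{difference}. The division is legitimate because $q+q^{-1}\in\mathbb{K}^\times$ by the standing assumption of Subsection \ref{basic-Non-dege}, and $q,\iota_1,\iota_2\in\mathbb{K}^*$. The only point needing care is this bookkeeping of the factor $q$ when combining denominators.
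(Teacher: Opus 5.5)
Your proposal is correct and is essentially the paper's route: the paper simply calls this lemma ``a direct computation'' from the definition of $\mathtt{q}$, and your three steps (factor out $\frac{2}{q+q^{-1}}$, pull out $\iota_1-\iota_2$ via $\iota_1^{-1}-\iota_2^{-1}=-(\iota_1-\iota_2)/(\iota_1\iota_2)$, then use $(q+q^{-1})q=q^2+1$) carry out exactly that computation, with all the algebra checking out.
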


	By \eqref{difference}, for any two boxes $\alpha=(a,b,c),\beta=(i,j,c')$, we have 
\begin{align}
	\mathtt{q}(\res(\alpha))-\mathtt{q}(\res(\beta))&=2\frac{\left(Q_cq^{2(b-a)}-Q_{c'}q^{2(j-i)}\right)\left(Q_cQ_{c'}q^{2(b-a+j-i+1)}-1\right)}{(q^2+1)Q_cQ_{c'}q^{2(b-a+j-i)}}\nonumber\\
	&=2\frac{\left(Q_cQ^{-1}_{c'}q^{2(b-a-j+i)}-1\right)\left(Q_cQ_{c'}q^{2(b-a+j-i+1)}-1\right)}{(q^2+1)Q_cq^{2(b-a)}}.\label{difference-residue}
\end{align}

The following Proposition is key to this subsection.

\begin{prop}\label{closed q}
		Let $\undla\in\mathscr{P}^{\mathsf{\bullet},m}_{n}$, where $\bullet\in\{\mathtt{0},\mathtt{s}\}$. We have $$
		q(\undla)^{-1}=\begin{cases}
			\left(\prod\limits_{c=1}^m 	\mathbb{Y}^{\undla}_{c}\right)\cdot \left(\prod\limits_{1\leq c_1<c_2\leq m}	\mathbb{X}^{\undla}_{c_1,c_2}\right),&\qquad\text{if $\bullet=\mathtt{0}$};\\
				\left(\prod\limits_{c=0}^m 	\mathbb{Y}^{\undla}_{c}\right)\cdot \left(\prod\limits_{0\leq c_1<c_2\leq m}	\mathbb{X}^{\undla}_{c_1,c_2}\right),&\qquad\text{if $\bullet=\mathtt{s}$}.
			\end{cases}
		$$
	\end{prop}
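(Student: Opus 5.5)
Since $q(\undla)$ does not depend on the choice of $\mt\in\Std(\undla)$, we compute it with a convenient tableau and then group the resulting factors according to the components in which the boxes lie. We take $\mt=\mt^{\undla}$, which fills the components in order $0,1,\dots,m$ (or $1,\dots,m$ when $\bullet=\mathtt{0}$) row by row. When box $k=\mt(a,b,c)$ is added, $\Add(\mt\downarrow_{k-1})$ and $\Rem(\mt\downarrow_{k-1})$ split into three kinds of boxes:
\begin{enumerate}
\item boxes of the component $c$ currently being filled;
\item boxes of the components $c'<c$, which are already complete, so their addable and removable boxes are those of $\lambda^{(c')}$;
\item boxes of the components $c'>c$, which are still empty; each contributes only its addable box $(1,1,c')$.
\end{enumerate}
Accordingly, $q(\undla)^{-1}$ factors as a product over $c$ of intra-component terms and a product over pairs $c_1<c_2$ of interaction terms. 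Each factor is rewritten with \eqref{difference-residue}, which turns every difference $\mathtt{q}(\res(\alpha))-\mathtt{q}(\res(\beta))$ into the product of $Q_cQ_{c'}^{-1}q^{2(\cdots)}-1$ and $Q_cQ_{c'}q^{2(\cdots)}-1$ with the prefactor $2/((q^2+1)Q_cq^{2(b-a)})$.

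\textbf{Interaction factor $\mathbb{X}^{\undla}_{c_1,c_2}$.} For a box $(a,b,c_2)$ added while $\lambda^{(c_1)}$ is complete, the ratio of the products over $\Rem(\lambda^{(c_1)})$ and $\Add(\lambda^{(c_1)})$ is a telescoping product over columns. Concretely, the addable and removable contents of a partition interlace, and one rewrites the ratio as a product over $k=1,\dots,\lambda_1$ of $(x-q^{2(k-\lambda'_k-1)})/(x-q^{2(k-\lambda'_k)})$, and similarly for the "$Q_cQ_{c'}$" factors. The remaining boundary contributions give the two single-product lines of $\mathbb{X}$:
\begin{enumerate}
\item the addable box $(1,1,c_2)$ seen by the boxes of $\lambda^{(c_1)}$;
\item the extreme box $(1,\lambda^{(c_1)}_1+1)$ left over from the telescoping.
\end{enumerate}
The roles of $c_1$ and $c_2$ in the formula should be matched against this ordering, and possibly reversed by using the column-reading tableau $\mt_{\undla}$ instead.

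\textbf{Intra-component factor $\mathbb{Y}^{\undla}_c$.} Here one inducts on the number of boxes of $\lambda^{(c)}$, adding a row at a time, and compares with the claimed formula. The factors $Q_l q^{2m}-1$ split into the $Q_c^2$-type terms, coming from the second bracket of \eqref{difference-residue} with $c=c'$, and the pure $q$-type terms, coming from the first bracket. The pure $q$-parts reproduce the classical $q$-hook formula $\prod_{(a,b)}(q^{2h_{a,b}}-1)/(q^2-1)$ up to the power $q^{2n(\lambda)}$, as in the computation of Hecke algebra Schur elements (cf.\ \cite[Corollary 6.3]{Ma2}). The $Q_c^2$-terms produce the product over pairs of rows and the factor $(Q_c^2q^{2(b-a-\ell+1)}-1)/(Q_c^2q^{4(b-a)}-1)$. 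The latter denominator arises because $\mathtt{q}(\res(\alpha))$ is invariant under $\iota\mapsto q^{-2}\iota^{-1}$, so the "self" factor with $\beta=\alpha$ is excluded.

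\textbf{Case $\bullet=\mathtt{s}$, component $0$.} The diagonal removable boxes in $\mathcal{D}$ are omitted from the numerator, and $\lambda^{(0)}$ is a shifted diagram. I would use the double diagram $\widetilde{\lambda^{(0)}}$ to convert the resulting products into shifted hook lengths, following the analogous computation in \cite{WW2}. The factors $q^{2(j-i)}+1$ and the power of $2$ then come from $Q_0=1$, which makes $Q_0^2q^{2t}-1$ factor as $(q^{t}-1)(q^{t}+1)$, together with the prefactor $2/(q^2+1)$.

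I expect this last step, matching the shifted-diagram bookkeeping exactly to $\mathbb{Y}^{\undla}_0$, including the $2^{\ell(\lambda^{(0)})}$ and the extra pair index $i'=\ell(\lambda^{(0)})+1$, to be the main obstacle. I would first verify it on one-row and two-row strict partitions and then induct by adding a row.
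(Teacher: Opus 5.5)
Your architecture is the paper's: evaluate $q(\undla)$ on one fixed tableau, split $\Add$ and $\Rem\setminus\mathcal{D}$ by component, rewrite every factor with \eqref{difference-residue}, telescope, and induct on $n$ by removing the last box. One point needs settling first. You should commit to $\mt_{\undla}$, which fills the components in the order $m,m-1,\dots$. This is forced by the shape of $\mathbb{X}^{\undla}_{c_1,c_2}$: boxes of $\lambda^{(c_1)}$ see the completed $\lambda^{(c_2)}$ (the product over $k\le\lambda^{(c_2)}_1$ and the extreme addable box $(1,\lambda^{(c_2)}_1+1,c_2)$), while boxes of $\lambda^{(c_2)}$ see only $(1,1,c_1)$. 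With $\mt^{\undla}$ you get the transposed expression. For $\bullet=\mathtt{s}$ you would also have to telescope against a completed shifted $\lambda^{(0)}$ with its diagonal removable boxes deleted, and identifying that with $\mathbb{X}^{\undla}_{0,c}$ needs a separate argument. With $\mt_{\undla}$ none of this arises. The last box $\alpha=(a,b,c)$ lies in the nonempty component of smallest index, so components of smaller index are empty and those of larger index are complete. Component $0$ is filled last, so the other components only ever see its box $(1,1,0)$, which is the third line of $\mathbb{X}^{\undla}_{0,c}$. Each $\mathbb{X}$ involving $c$ then changes by exactly one ratio. For empty components this is immediate. For completed ones it follows from the column telescoping $(i,d,c_2)\mapsto(i+1,d',c_2)$, which also produces the extra power $q^{2\lambda^{(c_2)}_1}$ in the prefactor.

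The genuine gap is the intra-component identities $q(\lambda)^{-1}=\mathbb{Y}^{\lambda}_1$ (one component, $\bullet=\mathtt{0}$) and $q(\lambda)^{-1}=\mathbb{Y}^{\lambda}_0$ (strict $\lambda$). The paper isolates these as Lemma~\ref{closed q-special}, and they contain nearly all of the work. Your paragraphs restate the target rather than derive it, and the heuristics you offer are wrong:
\begin{itemize}
\item By \eqref{difference-residue}, the excluded self-term $\beta=\alpha$ would give $Q_c^2q^{4(b-a)+2}-1$, not $Q_c^2q^{4(b-a)}-1$. The latter appears only as a boundary term of a telescoping.
\item Inside one component, $|\Add\setminus\{\alpha\}|=|\Rem\setminus\mathcal{D}|$ by \eqref{equal}, so the prefactors $2/((q^2+1)\cdots)$ cancel. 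The factor $2^{\ell(\lambda^{(0)})}$ merely offsets the factors $q^{0}+1=2$ at the diagonal boxes.
\end{itemize}
The missing idea is the paper's choice of induction step: remove the last box of the column-reading tableau $\mt_\lambda$, i.e.\ the bottom box $(a,b)$ of the last column. Then $\ell(\lambda)$ does not change unless $\lambda$ is a single column (for $\bullet=\mathtt{s}$, the staircase $(a,a-1,\dots,1)$, i.e.\ $a=b$), and that case is checked directly. Otherwise two explicit bijections from removable to addable boxes, each after discarding a few boundary boxes, make the ratio telescope. These are $(i,d)\mapsto(i+1,d')$ for the $q$-factors and $(e,j)\mapsto(e',j+1)$ for the $Q^2$-factors. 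The result is the hook ratios along row $a$ and column $b$ times $q^{-2(a-1)}$, the change in the pair-of-rows product for pairs involving row $a$, and the single new factor $(Q_c^2q^{2(b-a+1-\ell)}-1)/(Q_c^2q^{4(b-a)}-1)$ for $\alpha$. Your row-at-a-time induction changes $\ell(\lambda^{(c)})$ at every step, and hence every box factor of $\mathbb{Y}_c$. It might be pushed through, but as written nothing in this part has been verified.
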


	We start the proof of Proposition \ref{closed q} from two special cases.
	
	\begin{lem}\label{closed q-special} We have the following.
		\begin{enumerate}
		\item Suppose $m=1$ and $\bullet=\mathtt{0}$. Then for $\lambda\in\mathscr{P}_{n}$, we have $
		q(\lambda)^{-1}=\mathbb{Y}^{\lambda}_{1}.$	
		\item Suppose $m=0$ and $\bullet=\mathtt{s}$. Then for $\lambda\in\mathscr{P}^{\mathtt{s}}_{n}$, we have $
		q(\lambda)^{-1}=\mathbb{Y}^{\lambda}_{0}.$	
		\end{enumerate}
	\end{lem}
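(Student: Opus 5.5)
Both identities will be proved by induction on $n$. The induction is driven by the independence of $q(\undla)$ from the choice of $\mt$, which is recorded in \eqref{def qt}. We choose $\mt$ so that $n$ occupies a removable box $\gamma=(a_0,b_0)$ of $\lambda$, and we set $\mu:=\lambda\setminus\{\gamma\}$ with $\mt\downarrow_{n-1}\in\Std(\mu)$. Then \eqref{def qt} factors as
\[
q(\lambda)=q(\mu)\cdot \frac{\prod_{\beta\in\Rem(\mu)\setminus\mathcal{D}}\bigl(\mathtt{q}(\res(\gamma))-\mathtt{q}(\res(\beta))\bigr)}{\prod_{\alpha\in\Add(\mu)\setminus\{\gamma\}}\bigl(\mathtt{q}(\res(\gamma))-\mathtt{q}(\res(\alpha))\bigr)}.
\]
The statement therefore reduces to the ratio identity
\[
\mathbb{Y}^{\lambda}_{c}\big/\mathbb{Y}^{\mu}_{c}=\bigl(\text{the above ratio}\bigr)^{-1}.
\]
Here $c=1$ in case (1) and $c=0$ in case (2). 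The base case $n=0$ is immediate, since both sides equal $1$.

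For case (1), each factor $\mathtt{q}(\res(\gamma))-\mathtt{q}(\res(\beta))$ is rewritten by \eqref{difference-residue} with $Q_c=Q_{c'}=Q_1$. Each factor then splits into a ``difference'' part $Q_1Q_1^{-1}q^{2(\cdot)}-1=q^{2(\text{content difference})}-1$ and a ``product'' part $Q_1^2q^{2(\text{content sum}+1)}-1$, together with a monomial and a factor $2/(q^2+1)$. On the $\mathbb{Y}$ side, removing $\gamma$ has three effects. It changes the hook lengths in row $a_0$ and column $b_0$. It changes the factor $\prod_{(a,b)}(Q_1^2q^{2(b-a-\ell+1)}-1)/(Q_1^2q^{4(b-a)}-1)$. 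It also changes the row-pair product over $i<i'$, through $\lambda_{a_0}$ and possibly through $\ell(\lambda)$.

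The difference parts will be matched with the hook-length ratio $\prod (q^{2h}-1)$ using the classical telescoping argument. Concretely, the hooks in row $a_0$ and column $b_0$ shrink by one, and the ratio of $\prod_{x}(q^{2h_x^\lambda}-1)/\prod_x(q^{2h_x^\mu}-1)$ telescopes to the ratio of products over the addable and removable contents. This is the $q$-analogue of the standard proof of the hook length formula via $\dim$ recursion. The product parts, together with the factor $Q_1^2q^{4(b_0-a_0)}-1$ of the new box, will be matched with the $\ell$-dependent factors and the $i<i'$ row-pair products. Here the key observation is that the addable and removable contents interlace, so their Vandermonde-type products in $Q_1^2q^{2(\cdot)}$ telescope to expressions in $\lambda_i-i$. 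The monomial factors and the $2/(q^2+1)$ factors cancel in number, because $\sharp\Add(\mu)-1=\sharp\Rem(\mu)$ in the non-strict case, and their exponents yield $q^{-2n(\lambda)}$.

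Case (2) is handled the same way for shifted diagrams, with residues $q^{2(j-i)}$. Now $\mathcal{D}$ removes diagonal removable boxes from the numerator, so the count of $2/(q^2+1)$ factors no longer balances. This imbalance is the source of the factors $2^{\ell(\lambda^{(0)})}$ and $\prod(q^{2(j-i)}+1)$. The diagonal contribution arises because $\mathtt{q}(1)-\mathtt{q}(q^{2k})$ has product part $q^{2(k+1)}-1$, which merges with the difference part. The shifted hook lengths of $\widetilde{\lambda^{(0)}}$ will be handled by running the non-strict telescoping inside the double diagram. The columns $j\le\ell$ have been absorbed into the pair product $\prod_{i<i'\le\ell+1}(q^{2(\lambda_i+\lambda_{i'})}-1)$, which follows the classical formula $\prod_{i<i'}(\lambda_i+\lambda_{i'})/(\lambda_i-\lambda_{i'})$ for shifted hooks. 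I expect case (2) to be the main obstacle. The boundary cases, where $\gamma$ is the last box of a row and $\ell$ drops by one, or where $\gamma$ is diagonal, need separate bookkeeping. I would first test the identity on $\lambda=(1),(2),(2,1)$ to fix conventions before doing the general telescoping.
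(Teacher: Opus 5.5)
Your strategy is the paper's: induction on $n$ by deleting the box $\gamma$ containing $n$, rewriting every factor with \eqref{difference-residue}, and telescoping the ``difference'' parts against the $q$-hook factors and the ``product'' parts against the row-pair and $\ell$-dependent factors of $\mathbb{Y}$. The only structural difference is the choice of $\gamma$. The paper always deletes $\gamma=\mt_\lambda^{-1}(n)$, the bottom box of the last column. Apart from $(a-1,b)$ and $(1,b+1)$, every other addable or removable box of $\mu$ then lies weakly below and strictly to the left of $\gamma$, so the telescoping is one-sided. The boundary cases also shrink to $\lambda$ a single column in case (1), or a staircase $(a,a-1,\dots,1)$ in case (2). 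Your freedom in choosing $\gamma$ gives the same simplification: a removable box other than $(\ell(\lambda),1)$ exists unless $\lambda$ is a column, and an off-diagonal one exists unless $\lambda$ is a staircase.

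There is, however, a concrete error in your plan for case (2). The prefactors $2/\bigl((q^2+1)q^{2(b-a)}\bigr)$ do balance, exactly as in case (1). For every strict partition $\mu$ one has $\sharp\Add(\mu)=\sharp\bigl(\Rem(\mu)\setminus\mathcal{D}\bigr)+1$. To see this, note that each maximal run of consecutive parts of $\mu$ contributes one addable box (at its top row) and one removable box (at its bottom row). Besides these, $(\ell+1,\ell+1)$ is addable iff $\mu_\ell\geq 2$, while the only possible diagonal removable box $(\ell,\ell)$ is removable iff $\mu_\ell=1$. So excluding $\mathcal{D}$ restores the balance rather than breaking it. This is the paper's \eqref{equal}, used for both claims, and bookkeeping built on an imbalance would leave spurious powers of $2/(q^2+1)$.

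The factors in $\mathbb{Y}_0$ have a different origin. The denominator $\prod_{(i,j,0)\in\lambda}(q^{2(j-i)}+1)$ contains exactly $\ell(\lambda)$ diagonal factors $q^0+1=2$, so $2^{\ell(\lambda)}$ merely cancels them. In a diagonal step the ratio of $2^{\ell}/\prod(q^{2(j-i)}+1)$ is therefore $2/2=1$. When $\gamma=(a,b)$ is off the diagonal, the factor $1/(q^{2(b-a)}+1)$ comes from the product parts, whose telescoping leaves $\frac{q^{2(b-a)}-1}{q^{4(b-a)}-1}$ (compare the factor $1/(Q_1^2q^{4(b-a)}-1)$ in case (1), and \eqref{induction 3'}). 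Your own test case confirms this. For $\lambda=(2,1)$ and $\mu=(2)$ one has $\Add(\mu)\setminus\{\gamma\}=\{(1,3)\}$ and $\Rem(\mu)\setminus\mathcal{D}=\{(1,2)\}$, and
\[
\frac{\mathtt{q}(1)-\mathtt{q}(q^4)}{\mathtt{q}(1)-\mathtt{q}(q^2)}=\frac{q^6-1}{q^2(q^2-1)}=\frac{\mathbb{Y}^{(2,1)}_0}{\mathbb{Y}^{(2)}_0},
\]
with no leftover $2/(q^2+1)$.

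A smaller slip of the same kind occurs in case (1). In the form of \eqref{difference-residue} that your difference part uses, the monomial $Q_1q^{2(b-a)}$ depends only on $\gamma$ and cancels outright. The step factor $q^{-2(a-1)}=q^{-2n(\lambda)}/q^{-2n(\mu)}$ comes instead from rewriting negative-exponent difference factors, e.g.\ $\frac{q^{-2a}-1}{q^{-2}-1}=q^{-2(a-1)}\frac{q^{2a}-1}{q^2-1}$ in \eqref{induction 2}.
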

	
	\begin{proof}
		In both cases, we shall use the special tableaux $\mt_{\lambda}$ to compute $q(\lambda)^{-1}$ in the formula \eqref{def qt}, i.e. $$q(\lambda)^{-1}
		=\prod\limits_{k=1}^{n}\frac{
			\prod\limits_{\beta\in\Add(\mt_{\lambda}\downarrow_{k-1})\setminus \{\mt_{\lambda}^{-1}(k)\}}\left(\mathtt{q}(\res_{\mt_{\lambda}}(k))-\mathtt{q}(\res(\beta))\right)}{\prod\limits_{\beta\in\Rem(\mt_{\lambda}\downarrow_{k-1})\setminus \mathcal{D}}\left(\mathtt{q}(\res_{\mt_{\lambda}}(k))-\mathtt{q}(\res(\beta))\right)}$$ and we prove the Lemma by induction on $n$.
		We assume $\alpha=(a,b,c)=\mt^{-1}_{\lambda}(n)$ and set $\mu=\lambda\backslash \{\alpha\}$. It follows that$$
		q(\lambda)^{-1}=q(\mu)^{-1} \frac{
			\prod\limits_{\beta\in\Add(\mt_{\lambda}\downarrow_{n-1})\setminus \{\alpha\}}\left(\mathtt{q}(\res(\alpha))-\mathtt{q}(\res(\beta))\right)}{\prod\limits_{\beta\in\Rem(\mt_{\lambda}\downarrow_{n-1})\setminus \mathcal{D}}\left(\mathtt{q}(\res(\alpha))-\mathtt{q}(\res(\beta))\right)}.
		$$ We claim the following.
		 \begin{align}
		 	\text{ For $\lambda\in\mathscr{P}_{n}$, we have } &\mathbb{Y}^{\lambda}_{1}=\mathbb{Y}^{\mu}_{1}\cdot \frac{
				\prod\limits_{\beta=(a',b')\in\Add(\mt_{\lambda}\downarrow_{n-1})\setminus \{\alpha\}}\left(\mathtt{q}(\res(\alpha))-\mathtt{q}(\res(\beta))\right)}{\prod\limits_{\beta=(a',b')\in\Rem(\mt_{\lambda}\downarrow_{n-1})\setminus \mathcal{D}}\left(\mathtt{q}(\res(\alpha))-\mathtt{q}(\res(\beta))\right)}.\label{claim 1}\\
			\text{ For $\lambda\in\mathscr{P}^{\mathtt{s}}_{n}$, we have }&\mathbb{Y}^{\lambda}_{0}=\mathbb{Y}^{\mu}_{0}\cdot \frac{
					\prod\limits_{\beta=(a',b')\in\Add(\mt_{\lambda}\downarrow_{n-1})\setminus \{\alpha\}}\left(\mathtt{q}(\res(\alpha))-\mathtt{q}(\res(\beta))\right)}{\prod\limits_{\beta=(a',b')\in\Rem(\mt_{\lambda}\downarrow_{n-1})\setminus \mathcal{D}}\left(\mathtt{q}(\res(\alpha))-\mathtt{q}(\res(\beta))\right)}.\label{claim 2}
		\end{align}
		It is clear our Lemma follows from above claim \eqref{claim 1}, \eqref{claim 2} and induction hypothesis. Hence we only need to prove  \eqref{claim 1}, \eqref{claim 2}.
		
	Note that in our situation, the following holds \begin{align}\label{equal}|\Add(\mt_{\lambda}\downarrow_{n-1})\setminus \{\alpha\}|=|\Rem(\mt_{\lambda}\downarrow_{n-1})\setminus \mathcal{D}|.
		\end{align}
		
	(i). We first prove the claim \eqref{claim 1}. Applying \eqref{difference-residue} and \eqref{equal}, we have\begin{align}
			&\qquad\frac{
				\prod\limits_{\beta=(a',b')\in\Add(\mt_{\lambda}\downarrow_{n-1})\setminus \{\alpha\}}\left(\mathtt{q}(\res(\alpha))-\mathtt{q}(\res(\beta))\right)}{\prod\limits_{\beta=(a',b')\in\Rem(\mt_{\lambda}\downarrow_{n-1})\setminus \mathcal{D}}\left(\mathtt{q}(\res(\alpha))-\mathtt{q}(\res(\beta))\right)}\nonumber\\
			&=\frac{
				\prod\limits_{\beta=(a',b')\in\Add(\mt_{\lambda}\downarrow_{n-1})\setminus \{\alpha\}}\left(q^{2(b-a-b'+a')}-1\right)\left(Q_1^2q^{2(b-a+b'-a'+1)}-1\right)}{\prod\limits_{\beta=(a',b')\in\Rem(\mt_{\lambda}\downarrow_{n-1})\setminus \mathcal{D}}\left(q^{2(b-a-b'+a')}-1\right)\left(Q_1^2q^{2(b-a+b'-a'+1)}-1\right)}	\label{induction 1}			
		\end{align}
	
	Let $i>a$. Suppose $(i,d)\in \Rem(\mt_{\lambda}\downarrow_{n-1})\setminus \mathcal{D}$, then there exists $d'\leq d<b$, such that $(i+1,d')\in\Add(\mt_{\lambda}\downarrow_{n-1})\setminus \{\alpha\}$. This gives a bijection \begin{align*}\Rem(\mt_{\lambda}\downarrow_{n-1})\setminus\left(\mathcal{D}\cup \{(a-1,b)\}\right)&\to \Add(\mt_{\lambda}\downarrow_{n-1})\setminus \{\alpha,(1,b+1)\},\\
		(i,d)&\mapsto (i+1,d').
	\end{align*} We can compute \begin{align*}
		\frac{q^{2(b-a-d'+i+1)}-1}{q^{2(b-a-d+i)}-1}&=\prod_{\substack{j=d'}}^d \frac{q^{2(b-a-j+i+1)}-1}{q^{2(b-a-j+i)}-1}\\
		&=\prod_{\substack{j=d'}}^d \frac{q^{2(\lambda_a-a-j+\lambda'_j+1)}-1}{q^{2(\lambda_a-a-j+\lambda'_j)}-1}\\
		&=\prod_{\substack{j=d'}}^d\left(\frac{q^{2h^\lambda_{a,j}}-1}{q^{2h^\mu_{a,j}}-1}\right).
		\end{align*} 
		
		Therefore, if $a>1$, then \begin{align}
	&\qquad \frac{
	 		\prod\limits_{\beta=(a',b')\in\Add(\mt_{\lambda}\downarrow_{n-1})\setminus \{\alpha\}}\left(q^{2(b-a-b'+a')}-1\right)}{\prod\limits_{\beta=(a',b')\in\Rem(\mt_{\lambda}\downarrow_{n-1})\setminus \mathcal{D}}\left(q^{2(b-a-b'+a')}-1\right)}=\left(\prod_{k=1}^{b-1}\frac{q^{2h^\lambda_{a,k}}-1}{q^{2h^\mu_{a,k}}-1}\right) \cdot \frac{q^{-2a}-1}{q^{-2}-1}\nonumber\\
	 		&=\left(\prod_{k=1}^{b-1}\frac{q^{2h^\lambda_{a,k}}-1}{q^{2h^\mu_{a,k}}-1}\right) \cdot \frac{q^{2a}-1}{q^{2}-1}\cdot q^{-2(a-1)}\nonumber\\
	 		&=\left(\prod\limits_{k=1}^{b-1}\frac{q^{2h^\lambda_{a,k}}-1}{q^{2h^\mu_{a,k}}-1}\right) \cdot \frac{\prod\limits_{k=1}^a\left(q^{2h^\lambda_{k,b}}-1\right)}{(q^2-1)\cdot\prod\limits_{k=1}^{a-1}\left(q^{2h^\mu_{k,b}}-1\right)}\cdot q^{-2(a-1)}\label{induction 2}.
	 		\end{align} It is easy to check \eqref{induction 2} also holds when $a=1$.
	 		
	 		Next we deal with another part on the right-hand-side of \eqref{induction 1}, i.e. $$\frac{
	 			\prod\limits_{\beta=(a',b')\in\Add(\mt_{\lambda}\downarrow_{n-1})\setminus \{\alpha\}}\left(Q_1^2q^{2(b-a+b'-a'+1)}-1\right)}{\prod\limits_{\beta=(a',b')\in\Rem(\mt_{\lambda}\downarrow_{n-1})\setminus \mathcal{D}}\left(Q_1^2q^{2(b-a+b'-a'+1)}-1\right)}	.$$	
	 			{\bf Case 1:} $b>1$. Let $j<b-1$. Suppose $(e,j)\in \Rem(\mt_{\lambda}\downarrow_{n-1})\setminus \mathcal{D}$, then there exists $a<e'\leq e$, such that $(e',j+1)\in \Add(\mt_{\lambda}\downarrow_{n-1})\setminus \{\alpha\}$. This again gives rise to a bijection \begin{align*}\Rem(\mt_{\lambda}\downarrow_{n-1})\setminus\left(\mathcal{D}\cup \{(a-1,b),(\lambda'_{b-1},b-1)\}\right)&\to \Add(\mt_{\lambda}\downarrow_{n-1})\setminus \{\alpha,(1,b+1),(\ell(\lambda)+1,1)\},\\
	 				(e,j)&\mapsto (e',j+1).
	 			\end{align*} We can compute \begin{align*}
	 			\frac{Q_1^2q^{2(b-a+j-e'+2)}-1}{Q_1^2q^{2(b-a+j-e+1)}-1}&=\prod_{\substack{i=e'}}^e \frac{Q_1^2q^{2(b-a+j-i+2)}-1}{Q_1^2q^{2(b-a+j-i+1)}-1}\\
	 			&=\prod_{\substack{i=e'}}^e \frac{Q_1^2q^{2(\lambda_a-a+\lambda_i-i+2)}-1}{Q_1^2q^{2(\mu_a-a+\mu_i-i+2)}-1}.
	 			\end{align*} 
	 				Therefore, if $a>1$, then \begin{align}	
	 					&\quad\frac{
	 					\prod\limits_{\beta=(a',b')\in\Add(\mt_{\lambda}\downarrow_{n-1})\setminus \{\alpha\}}\left(Q_1^2q^{2(b-a+b'-a'+1)}-1\right)}{\prod\limits_{\beta=(a',b')\in\Rem(\mt_{\lambda}\downarrow_{n-1})\setminus \mathcal{D}}\left(Q_1^2q^{2(b-a+b'-a'+1)}-1\right)}\nonumber\\
	 						&=\left(\prod_{i=\lambda'_{b-1}+1}^{\ell(\lambda)}\frac{Q_1^2q^{2(\lambda_a-a+\lambda_i-i+2)}-1}{Q_1^2q^{2(\mu_a-a+\mu_i-i+2)}-1}\right)\cdot\frac{Q_1^2q^{2(b-a+b+1)}-1}{Q_1^2q^{2(b-a+1+b-a+1)}-1}\cdot\frac{Q_1^2q^{2(b-a+1-\ell(\lambda))}-1}{Q_1^2q^{2(b-a+b-\lambda'_{b-1})}-1}\nonumber\\
	 							&=\left(\prod_{i=\lambda'_{b-1}+1}^{\ell(\lambda)}\frac{Q_1^2q^{2(\lambda_a-a+\lambda_i-i+2)}-1}{Q_1^2q^{2(\mu_a-a+\mu_i-i+2)}-1}\right)\cdot\left(\prod_{i=1}^{a-1}\frac{Q_1^2q^{2(b-a+b-i+2)}-1}{Q_1^2q^{2(b-1-a+b-i+2)}-1}\right)\cdot\frac{Q_1^2q^{2(b-a+1-\ell(\lambda))}-1}{Q_1^2q^{2(b-a+b-\lambda'_{b-1})}-1}\nonumber\\
	 						&=\left(\prod_{i=\lambda'_{b-1}+1}^{\ell(\lambda)}\frac{Q_1^2q^{2(\lambda_a-a+\lambda_i-i+2)}-1}{Q_1^2q^{2(\mu_a-a+\mu_i-i+2)}-1}\right)\cdot\left(\prod_{i=1}^{a-1}\frac{Q_1^2q^{2(\lambda_a-a+\lambda_i-i+2)}-1}{Q_1^2q^{2(\mu_a-a+\mu_i-i+2)}-1}\right)\cdot\frac{Q_1^2q^{2(b-a+1-\ell(\lambda))}-1}{Q_1^2q^{2(b-a+b-\lambda'_{b-1})}-1}\nonumber\\
	 						&=\left(\prod_{i=\lambda'_{b-1}+1}^{\ell(\lambda)}\frac{Q_1^2q^{2(\lambda_a-a+\lambda_i-i+2)}-1}{Q_1^2q^{2(\mu_a-a+\mu_i-i+2)}-1}\right)\cdot\left(\prod_{i=1}^{a-1}\frac{Q_1^2q^{2(\lambda_a-a+\lambda_i-i+2)}-1}{Q_1^2q^{2(\mu_a-a+\mu_i-i+2)}-1}\right)\nonumber\\
	 						&\qquad\qquad\qquad\qquad\qquad\qquad\qquad\cdot\frac{Q_1^2q^{4(b-a)}-1}{Q_1^2q^{2(b-a+b-\lambda'_{b-1})}-1}\cdot \frac{Q_1^2q^{2(b-a+1-\ell(\lambda))}-1}{Q_1^2q^{4(b-a)}-1}\nonumber\\
	 						&=\left(\prod_{i=\lambda'_{b-1}+1}^{\ell(\lambda)}\frac{Q_1^2q^{2(\lambda_a-a+\lambda_i-i+2)}-1}{Q_1^2q^{2(\mu_a-a+\mu_i-i+2)}-1}\right)\cdot\left(\prod_{i=1}^{a-1}\frac{Q_1^2q^{2(\lambda_a-a+\lambda_i-i+2)}-1}{Q_1^2q^{2(\mu_a-a+\mu_i-i+2)}-1}\right)\nonumber\\
	 						&\qquad\qquad\qquad\qquad\qquad\qquad\qquad\cdot\left(\prod_{i=a+1}^{\lambda_{b-1}'}\frac{Q_1^2q^{2(b-a+b-1-i+2)}-1}{Q_1^2q^{2(b-1-a+b-1-i+2)}-1}\right)\cdot \frac{Q_1^2q^{2(b-a+1-\ell(\lambda))}-1}{Q_1^2q^{4(b-a)}-1}\nonumber\\
	 						&=\left(\prod_{i=\lambda'_{b-1}+1}^{\ell(\lambda)}\frac{Q_1^2q^{2(\lambda_a-a+\lambda_i-i+2)}-1}{Q_1^2q^{2(\mu_a-a+\mu_i-i+2)}-1}\right)\cdot\left(\prod_{i=1}^{a-1}\frac{Q_1^2q^{2(\lambda_a-a+\lambda_i-i+2)}-1}{Q_1^2q^{2(\mu_a-a+\mu_i-i+2)}-1}\right)\nonumber\\
	 						&\qquad\qquad\qquad\qquad\qquad\qquad\qquad\cdot\left(\prod_{i=a+1}^{\lambda_{b-1}'}\frac{Q_1^2q^{2(\lambda_a-a+\lambda_i-i+2)}-1}{Q_1^2q^{2(\mu_a-a+\mu_i-i+2)}-1}\right)\cdot \frac{Q_1^2q^{2(b-a+1-\ell(\lambda))}-1}{Q_1^2q^{4(b-a)}-1}\nonumber\\
	 						&=\left(\prod_{\substack{1\leq i\leq \ell(\lambda)\\i\neq a}}\frac{Q_1^2q^{2(\lambda_a-a+\lambda_i-i+2)}-1}{Q_1^2q^{2(\mu_a-a+\mu_i-i+2)}-1}\right)\cdot \frac{Q_1^2q^{2(b-a+1-\ell(\lambda))}-1}{Q_1^2q^{4(b-a)}-1}.\label{induction 3}
	 										\end{align}
	 					It is easy to check \eqref{induction 3} also holds when $a=1,b>1$.  Now \eqref{induction 2} and \eqref{induction 3} imply claim \eqref{claim 1} in this case.
	 					
	 		{\bf Case 2:} $b=1$. Note that in this case, we have $\lambda=(\underbrace{1,1,\cdots,1}_{b})$.  If $a>1$, we can compute \begin{align}
	 			\frac{
	 				\prod\limits_{\beta=(a',b')\in\Add(\mt_{\lambda}\downarrow_{n-1})\setminus \{\alpha\}}\left(Q_1^2q^{2(b-a+b'-a'+1)}-1\right)}{\prod\limits_{\beta=(a',b')\in\Rem(\mt_{\lambda}\downarrow_{n-1})\setminus \mathcal{D}}\left(Q_1^2q^{2(b-a+b'-a'+1)}-1\right)}&=\frac{Q_1^2q^{2(1-a+1+1)}-1}{Q_1^2q^{2(1-a+1-(a-1)+1)}-1}\nonumber\\
	 				&=\frac{Q_1^2q^{2(3-a)}-1}{Q_1^2q^{2(4-2a)}-1}\label{induction 4}.
	 			\end{align}  On the other hand, we have \begin{align}
	 		&\qquad	\prod\limits_{1\leq i<a}\left(\frac{Q_1^2q^{2\left(\lambda_i-i+\lambda_{a}-a+2\right)}}{Q_1^2q^{2\left(2-i-a\right)}-1}\right)=\prod_{\substack{1\leq i\leq a-1}}\frac{Q_1^2q^{2\left(4-i-a\right)}-1}{Q_1^2q^{2\left(2-i-a\right)}-1}\nonumber\\
	 		&=\frac{ \prod\limits_{\substack{-1\leq i\leq a-3}}\left(Q_1^2q^{2\left(2-i-a\right)}-1\right)}{\prod\limits_{\substack{1\leq i\leq a-1}}\left(Q_1^2q^{2\left(2-i-a\right)}-1\right)}\nonumber\\
	 		&=\frac{\left(Q_1^2q^{2(2-a)}-1\right)\left(Q_1^2q^{2(3-a)}-1\right)}{\left(Q_1^2q^{2(3-2a)}-1\right)\left(Q_1^2q^{2(4-2a)}-1\right)}\label{remaining 1}
	 			\end{align} and 
	 			\begin{align}\label{remaining 2}
	 	&\qquad	\frac{	 \prod\limits_{(i,1)\in\lambda}	\left(Q_1^2q^{2\left(1-i-\ell(\lambda)+1\right)}-1\right)}{\prod\limits_{(i,1)\in\mu}\left( Q_1^2q^{2\left(1-i-\ell(\mu)+1\right)}-1\right)}	\cdot \frac{1}{Q_1^2q^{4\left(b-a\right)}-1}\nonumber\\	 				 
	 			&=	 \frac{\prod\limits_{k=1}^{a}\left(Q_1^2q^{2\left(1-k-a+1\right)}-1\right)}{\prod\limits_{k=1}^{a-1}\left(Q_1^2q^{2\left(1-k-(a-1)+1\right)}-1\right)}
	 			\cdot \frac{1}{Q_1^2q^{4\left(1-a\right)}-1}\nonumber\\
	 			&=\frac{\prod\limits_{k=1}^{a}\left(Q_1^2q^{2\left(2-a-k\right)}-1\right)}{\prod\limits_{k=0}^{a-2}\left(Q_1^2q^{2\left(2-a-k\right)}-1\right)}
	 			\cdot \frac{1}{Q_1^2q^{4\left(1-a\right)}-1}\nonumber\\
	 			&=\frac{Q_1^2q^{2\left(3-2a\right)}-1}{Q_1^2q^{2\left(2-a\right)}-1}.
	 			\end{align}  It is easy to check \eqref{induction 4},\eqref{remaining 1}, \eqref{remaining 2} also hold when $a=b=1$. Combining \eqref{remaining 1}, \eqref{remaining 2} with the definition of $\mathbb{Y}$, we deduce that in this case, \begin{align*}
	 			\frac{\mathbb{Y}^{\lambda}_{1}}{\mathbb{Y}^{\mu}_{1}}=\prod_{k=1}^{b-1}\left(\frac{q^{2h^\lambda_{a,k}}-1}{q^{2h^\mu_{a,k}}-1}\right) \cdot \frac{\prod\limits_{k=1}^a\left(q^{2h^\lambda_{k,b}}-1\right)}{(q^2-1)\cdot\prod\limits_{k=1}^{a-1}\left(q^{2h^\mu_{k,b}}-1\right)}\cdot q^{-2(a-1)}\cdot\frac{Q_1^2q^{2(3-a)}-1}{Q_1^2q^{2(4-2a)}-1}.
	 			\end{align*}
	 			
	 		 Hence, we obtain our claim \eqref{claim 1} from \eqref{induction 2} and \eqref{induction 4} in this case. This completes the proof of claim \eqref{claim 1}.

	(ii). Then we prove the claim \eqref{claim 2}. Applying \eqref{difference-residue} and \eqref{equal}, we also have\begin{align}
	 	&\qquad\frac{
	 		\prod\limits_{\beta=(a',b')\in\Add(\mt_{\lambda}\downarrow_{n-1})\setminus \{\alpha\}}\left(\mathtt{q}(\res(\alpha))-\mathtt{q}(\res(\beta))\right)}{\prod\limits_{\beta=(a',b')\in\Rem(\mt_{\lambda}\downarrow_{n-1})\setminus \mathcal{D}}\left(\mathtt{q}(\res(\alpha))-\mathtt{q}(\res(\beta))\right)}\nonumber\\
	 	&=\frac{
	 		\prod\limits_{\beta=(a',b')\in\Add(\mt_{\lambda}\downarrow_{n-1})\setminus \{\alpha\}}\left(q^{2(b-a-b'+a')}-1\right)\left(q^{2(b-a+b'-a'+1)}-1\right)}{\prod\limits_{\beta=(a',b')\in\Rem(\mt_{\lambda}\downarrow_{n-1})\setminus \mathcal{D}}\left(q^{2(b-a-b'+a')}-1\right)\left(q^{2(b-a+b'-a'+1)}-1\right)}	\nonumber\\
	 		&=\frac{
	 			\prod\limits_{\beta=(a',b')\in\Add(\mt_{\lambda}\downarrow_{n-1})\setminus \{\alpha\}}\left(q^{2(\lambda_a-\lambda_{a'}-1)}-1\right)\left(q^{2(\lambda_a+\lambda_{a'})}-1\right)}{\prod\limits_{\beta=(a',b')\in\Rem(\mt_{\lambda}\downarrow_{n-1})\setminus \mathcal{D}}\left(q^{2(\lambda_a-\lambda_{a'})}-1\right)\left(q^{2(\lambda_a+\lambda_{a'}-1)}-1\right)}.
	 		\label{induction 1'}			
	 \end{align}
	 
	 {\bf Case 1:} $a<b.$ We can use completely similar computations as \eqref{induction 2} and \eqref{induction 3} to obtain that\begin{align}
	 	\frac{
	 		\prod\limits_{\beta=(a',b')\in\Add(\mt_{\lambda}\downarrow_{n-1})\setminus \{\alpha\}}\left(q^{2(\lambda_a-\lambda_{a'}-1)}-1\right)}{\prod\limits_{\beta=(a',b')\in\Rem(\mt_{\lambda}\downarrow_{n-1})\setminus \mathcal{D}}\left(q^{2(\lambda_a-\lambda_{a'})}-1\right)}=\left(\prod_{k=\ell(\lambda)+1}^{b-1}\frac{q^{2h^{\tilde{\lambda}}_{a,k}}-1}{q^{2h^{\tilde{\mu}}_{a,k}}-1}\right)\cdot\frac{\prod\limits_{k=1}^a\left( q^{2h^{\tilde{\lambda}}_{k,b}}-1\right)}{(q^2-1)\cdot\prod\limits_{k=1}^{a-1}\left(q^{2h^{\tilde{\mu}}_{k,b}}-1\right)}\cdot q^{-2(a-1)}\label{induction 2'}
	 \end{align} and \begin{align}
	 \frac{
	 	\prod\limits_{\beta=(a',b')\in\Add(\mt_{\lambda}\downarrow_{n-1})\setminus \{\alpha\}}\left(q^{2(\lambda_a+\lambda_{a'})}-1\right)}{\prod\limits_{\beta=(a',b')\in\Rem(\mt_{\lambda}\downarrow_{n-1})\setminus \mathcal{D}}\left(q^{2(\lambda_a+\lambda_{a'}-1)}-1\right)}&=\left(\prod_{\substack{1\le i\leq \ell(\lambda)\\ i\neq a}}\frac{q^{2(\lambda_a+\lambda_i)}-1}{q^{2(\mu_a+\mu_i)}-1}\right) \cdot \frac{q^{2(\lambda_a-1)-1}}{q^{4(\lambda_a-1)-1}}\nonumber\\
	 	&=\left(\prod_{\substack{1\le i\leq \ell(\lambda)\\ i\neq a}}\frac{q^{2(\lambda_a+\lambda_i)}-1}{q^{2(\mu_a+\mu_i)}-1}\right) \cdot \frac{1}{q^{2(\lambda_a-1)+1}}.	\label{induction 3'}
	 	\end{align} 
	 Then our claim \eqref{claim 2} follows from \eqref{induction 2'} and \eqref{induction 3'} in this case.
	 
	 {\bf Case 2:} $a=b.$ Note that in this case, $\lambda=(a,a-1,\cdots,1)$. If $a>1$, by \eqref{induction 1'}, we have \begin{align}
	 	\qquad\frac{
	 		\prod\limits_{\beta=(a',b')\in\Add(\mt_{\lambda}\downarrow_{n-1})\setminus \{\alpha\}}\left(\mathtt{q}(\res(\alpha))-\mathtt{q}(\res(\beta))\right)}{\prod\limits_{\beta=(a',b')\in\Rem(\mt_{\lambda}\downarrow_{n-1})\setminus \mathcal{D}}\left(\mathtt{q}(\res(\alpha))-\mathtt{q}(\res(\beta))\right)}&=\frac{\left(q^{-2a}-1\right)\left(q^{2(a+1)}-1\right)}{\left(q^{-2}-1\right)\left(q^{4}-1\right)}\nonumber\\
	 		&=q^{-2(a-1)}\cdot\frac{\left(q^{2a}-1\right)\left(q^{2(a+1)}-1\right)}{\left(q^{2}-1\right)\left(q^{4}-1\right)}\label{induction 4'}.
	 \end{align}  
	 On the other hand, we have \begin{align}\label{remaining 1'}
	 \frac{\prod\limits_{1\leq i<i'\leq \ell(\lambda^{(0)})+1}\frac{q^{2\left(\lambda^{(0)}_i+\lambda^{(0)}_{i'}\right)}-1}{q^2-1}}{\prod\limits_{1\leq i<i'\leq \ell(\mu^{(0)})+1}\frac{q^{2\left(\mu^{(0)}_i+\mu^{(0)}_{i'}\right)}-1}{q^2-1}}= \prod_{k=3}^{a+1}\frac{q^{2k}-1}{q^2-1}
	 \end{align} and  \begin{align}\label{remaining 2'}
	 \prod\limits_{i=1}^{a-1}\left(\frac{q^2-1}{q^{2h_{i,a}^{\widetilde{\mu^{(0)}}}}-1}\right)=\prod\limits_{k=1}^a\left(\frac{q^2-1}{ q^{2k}-1}\right)
	 	 \end{align} It is easy to check \eqref{induction 4'},\eqref{remaining 1'}, \eqref{remaining 2'} also hold when $a=b=1$. Combining \eqref{remaining 1'}, \eqref{remaining 2'} with the definition of $\mathbb{Y}$, we deduce that in this case, \begin{align*}
	 	 \frac{\mathbb{Y}^{\lambda}_{0}}{\mathbb{Y}^{\mu}_{0}}=q^{-2(a-1)}\cdot\frac{\left(q^{2a}-1\right)\left(q^{2(a+1)}-1\right)}{\left(q^{2}-1\right)\left(q^{4}-1\right)}.	 	 
	 	 \end{align*}
		 Hence, we obtain our claim \eqref{claim 2} from \eqref{induction 4'} in this case. This completes the proof of \eqref{claim 2} and the whole Lemma.
		\end{proof}

		\medskip
		{\bf Proof of Proposition \ref{closed q}}:
			We shall again use the special tableaux $\mt_{\undla}$ to compute $q(\undla)^{-1}$ in the formula \eqref{def qt}, i.e. $$q(\undla)^{-1}
			=\prod\limits_{k=1}^{n}\frac{
					\prod\limits_{\beta\in\Add(\mt_{\undla}\downarrow_{k-1})\setminus \{\mt_{\undla}^{-1}(k)\}}\left(\mathtt{q}(\res_{\mt_{\undla}}(k))-\mathtt{q}(\res(\beta))\right)}{\prod\limits_{\beta\in\Rem(\mt_{\undla}\downarrow_{k-1})\setminus \mathcal{D}}\left(\mathtt{q}(\res_{\mt_{\undla}}(k))-\mathtt{q}(\res(\beta))\right)}$$ and we prove the proposition by induction on $n$.
				We assume $\alpha=(a,b,c)=\mt^{-1}_{\undla}(n)$ and $\undmu=\undla\backslash \{\alpha\}$. It follows that$$
				q(\undla)^{-1}=q(\undmu)^{-1} \frac{
						\prod\limits_{\beta\in\Add(\mt_{\undla}\downarrow_{n-1})\setminus \{\alpha\}}\left(\mathtt{q}(\res(\alpha))-\mathtt{q}(\res(\beta))\right)}{\prod\limits_{\beta\in\Rem(\mt_{\undla}\downarrow_{n-1})\setminus \mathcal{D}}\left(\mathtt{q}(\res(\alpha))-\mathtt{q}(\res(\beta))\right)}.
				$$
				
				For $0 \leq c'\leq m$, we denote 	\begin{align*}
				\mathscr{R}_{c'}:=	\{(a',b',c')\in\Rem(\mt_{{\undla}}\downarrow_{n-1})\setminus\mathcal{D}\},\qquad \mathscr{A}_{c'}:=\left\{(a',b',c')\in\Add(\mt_{{\undla}}\downarrow_{n-1})\setminus \{\alpha\}\right\}.
					\end{align*} Then we have \begin{align*}
					\Rem(\mt_{{\undla}}\downarrow_{n-1})\setminus\mathcal{D}=\begin{cases}\bigsqcup\limits_{1\leq c'\leq m} \mathscr{R}_{c'}\qquad&\text{ if $\bullet=\mathtt{0},$}\\
						\bigsqcup\limits_{0\leq c'\leq m} \mathscr{R}_{c'}\qquad&\text{ if $\bullet=\mathtt{s},$}
						\end{cases}
						\end{align*} and  \begin{align*}
						\Add(\mt_{\undla}\downarrow_{n-1})\setminus \{\alpha\}=\begin{cases}\bigsqcup\limits_{1\leq c'\leq m} \mathscr{A}_{c'}\qquad&\text{ if $\bullet=\mathtt{0},$}\\
							\bigsqcup\limits_{0\leq c'\leq m} \mathscr{A}_{c'}\qquad&\text{ if $\bullet=\mathtt{s}.$}
						\end{cases}
						\end{align*} 
			We claim the following.
			\begin{enumerate}
		\item\label{claim 1'}	\begin{align*}
				\mathbb{Y}^{\undla}_{c}&=\mathbb{Y}^{\undmu}_{c}\cdot \frac{
								\prod\limits_{\beta=(a',b',c)\in\mathscr{A}_{c}}\left(\mathtt{q}(\res(\alpha))-\mathtt{q}(\res(\beta))\right)}{\prod\limits_{\beta=(a',b',c)\in\mathscr{R}_{c}}\left(\mathtt{q}(\res(\alpha))-\mathtt{q}(\res(\beta))\right)};
		\end{align*}
			\item\label{claim 2'} 			For any $1\leq c_1<c<c_2\leq m$,	\begin{align*}
				\mathbb{X}^{\undla}_{{c_1},{c}}=\mathbb{X}^{\undmu}_{{c_1},{c}}\cdot \frac{
					\prod\limits_{\beta=(a',b',c_1)\in\mathscr{A}_{c_1}}\left(\mathtt{q}(\res(\alpha))-\mathtt{q}(\res(\beta))\right)}{\prod\limits_{\beta=(a',b',c_1)\in\mathscr{R}_{c_1}}\left(\mathtt{q}(\res(\alpha))-\mathtt{q}(\res(\beta))\right)},	
			\end{align*} and \begin{align*}
							\mathbb{X}^{\undla}_{{c},{c_2}}=\mathbb{X}^{\undmu}_{{c},{c_2}}\cdot \frac{
								\prod\limits_{\beta=(a',b',c_2)\in\mathscr{A}_{c_2}}\left(\mathtt{q}(\res(\alpha))-\mathtt{q}(\res(\beta))\right)}{\prod\limits_{\beta=(a',b',c_2)\in\mathscr{R}_{c_2}}\left(\mathtt{q}(\res(\alpha))-\mathtt{q}(\res(\beta))\right)},
								\end{align*}

			\item \label{claim 3'} 
					When $\bullet=\mathtt{s}$. If $c>0$, then\begin{align*}
						\mathbb{X}^{\undla}_{{0},{c}}&=\mathbb{X}^{\undmu}_{{0},{c}}\cdot \frac{
						\prod\limits_{\beta=(a',b',0)\in\mathscr{A}_{0}}\left(\mathtt{q}(\res(\alpha))-\mathtt{q}(\res(\beta))\right)}{\prod\limits_{\beta=(a',b',0)\in\mathscr{R}_{0}}\left(\mathtt{q}(\res(\alpha))-\mathtt{q}(\res(\beta))\right)};
					\end{align*}
			 if $c=0$, then for any $1\leq c'\leq m$\begin{align*}
				\mathbb{X}^{\undla}_{{0},{c'}}&=\mathbb{X}^{\undmu}_{{0},{c'}}\cdot \frac{
					\prod\limits_{\beta=(a',b',c')\in\mathscr{A}_{c'}}\left(\mathtt{q}(\res(\alpha))-\mathtt{q}(\res(\beta))\right)}{\prod\limits_{\beta=(a',b',c')\in\mathscr{R}_{c'}}\left(\mathtt{q}(\res(\alpha))-\mathtt{q}(\res(\beta))\right)}.
			\end{align*}			
		\end{enumerate}
				It is obvious that the proposition follows from \eqref{claim 1'},\eqref{claim 2'},\eqref{claim 3'} and induction hypothesis. Hence we only need to prove \eqref{claim 1'},\eqref{claim 2'},\eqref{claim 3'}.
					
		\eqref{claim 1'} follows from Lemma \ref{closed q-special}. We prove \eqref{claim 2'}.

					Let $c<c_2\leq m$. 
					
					Suppose $(i,d,c_2)\in \mathscr{R}_{c_2}$, then there exists $d'\leq d<b$, such that $(i+1,d',c_2)\in\mathscr{A}_{c_2}$. This gives a bijection \begin{align*}\mathscr{R}_{c_2}&\to \mathscr{A}_{c_2}\setminus \{(1,\lambda_{1}^{(c_2)}+1,c_2)\},\\
						(i,d,c_2)&\mapsto (i+1,d',c_2).
					\end{align*} 
In particular, the following holds \begin{align}\label{equal'}|\mathscr{A}_{c_2}|=|\mathscr{R}_{c_2}|+1.
	\end{align} 
	
	Applying \eqref{difference-residue} and \eqref{equal'}, we have\begin{align}
		&\qquad	\frac{
			\prod\limits_{\beta=(a',b',c_2)\in\mathscr{A}_{c_2}}\left(\mathtt{q}(\res(\alpha))-\mathtt{q}(\res(\beta))\right)}{\prod\limits_{\beta=(a',b',c_2)\in\mathscr{R}_{c_2}}\left(\mathtt{q}(\res(\alpha))-\mathtt{q}(\res(\beta))\right)}\nonumber\\
		&=	 2\frac{\left(Q_cQ^{-1}_{c_2}q^{2(b-a-\lambda_{1}^{(c_2)})}-1\right)\left(Q_cQ_{c_2}q^{2(b-a+\lambda_{1}^{(c_2)}+1)}-1\right)}{(q^2+1)Q_cq^{2(b-a)}}\nonumber\\
		&\qquad\qquad\cdot\frac{
			\prod\limits_{\beta=(a',b',c_2)\in\mathscr{A}_{c_2}\setminus \{(1,\lambda_{1}^{(c_2)}+1,c_2)\}}\left(\mathtt{q}(\res(\alpha))-\mathtt{q}(\res(\beta))\right)}{\prod\limits_{\beta=(a',b',c_2)\in \mathscr{R}_{c_2}}\left(\mathtt{q}(\res(\alpha))-\mathtt{q}(\res(\beta))\right)}.\label{induction 11}			
	\end{align}				
We can compute by \eqref{difference-residue} \begin{align}
&\qquad\qquad	\frac{\mathtt{q}(\res(\alpha))-\mathtt{q}(\res(i+1,d',c_2))}{\mathtt{q}(\res(\alpha))-\mathtt{q}(\res(i,d,c_2))}\nonumber\\
	&=\frac{Q_cQ_{c_2}q^{2(b-a+d-i)}}{Q_cQ_{c_2}q^{2(b-a+d'-i-1)}}\cdot\frac{\left(Q_cq^{2(b-a)}-Q_{c_2}q^{2(d'-i-1)}\right)\left(Q_cQ_{c_2}q^{2(b-a+d'-i)}-1\right)}{\left(Q_cq^{2(b-a)}-Q_{c_2}q^{2(d-i)}\right)\left(Q_cQ_{c_2}q^{2(b-a+d-i+1)}-1\right)}\nonumber\\
	&=q^{2(d-d'+1)}
	\cdot\frac{\left(Q_cq^{2(b-a)}-Q_{c_2}q^{2(d'-i-1)}\right)\left(Q_cQ_{c_2}q^{2(b-a+d'-i)}-1\right)}{\left(Q_cq^{2(b-a)}-Q_{c_2}q^{2(d-i)}\right)\left(Q_cQ_{c_2}q^{2(b-a+d-i+1)}-1\right)}
	\nonumber\\
	&=q^{2(d-d'+1)}
	\cdot\prod_{k=d'}^d\frac{\left(Q_cq^{2(b-a)}-Q_{c_2}q^{2(k-i-1)}\right)\left(Q_cQ_{c_2}q^{2(b-a+k-i)}-1\right)}{\left(Q_cq^{2(b-a)}-Q_{c_2}q^{2(k-i)}\right)\left(Q_cQ_{c_2}q^{2(b-a+k-i+1)}-1\right)}
	\nonumber\\
	&=q^{2(d-d'+1)}
	\cdot\prod_{k=d'}^d\frac{\left(Q_cq^{2(b-a)}-Q_{c_2}q^{2(k-{\lambda_k^{(c_2)}}'-1)}\right)\left(Q_cQ_{c_2}q^{2(b-a+k-{\lambda_k^{(c_2)}}')}-1\right)}{\left(Q_cq^{2(b-a)}-Q_{c_2}q^{2(k-{\lambda_k^{(c_2)}}')}\right)\left(Q_cQ_{c_2}q^{2(b-a+k-{\lambda_k^{(c_2)}}'+1)}-1\right)}.\label{induction 22}
	\end{align}
		
Combining \eqref{induction 11} with \eqref{induction 22}, we conclude \begin{align*}
		&\qquad\qquad\frac{
		\prod\limits_{\beta=(a',b',c_2)\in\mathscr{A}_{c_2}}\left(\mathtt{q}(\res(\alpha))-\mathtt{q}(\res(\beta))\right)}{\prod\limits_{\beta=(a',b',c_2)\in\mathscr{R}_{c_2}}\left(\mathtt{q}(\res(\alpha))-\mathtt{q}(\res(\beta))\right)}\\
		&=2\frac{\left(Q_cQ^{-1}_{c_2}q^{2(b-a-\lambda_{1}^{(c_2)})}-1\right)\left(Q_cQ_{c_2}q^{2(b-a+\lambda_{1}^{(c_2)}+1)}-1\right)}{(q^2+1)Q_cq^{2(b-a-\lambda_1^{(c_2)})}}\\
	&\qquad
	\cdot\prod_{k=1}^{\lambda^{(c_2)}_1}\frac{\left(Q_cq^{2(b-a)}-Q_{c_2}q^{2(k-{\lambda_k^{(c_2)}}'-1)}\right)\left(Q_cQ_{c_2}q^{2(b-a+k-{\lambda_k^{(c_2)}}')}-1\right)}{\left(Q_cq^{2(b-a)}-Q_{c_2}q^{2(k-{\lambda_k^{(c_2)}}')}\right)\left(Q_cQ_{c_2}q^{2(b-a+k-{\lambda_k^{(c_2)}}'+1)}-1\right)}=\frac	{\mathbb{X}^{\undla}_{{c},{c_2}}}{\mathbb{X}^{\undmu}_{{c},{c_2}}}.
	\end{align*} The argument for $1\leq c_1<c$ is quite similar and easier and we shall omit it. Hence, \eqref{claim 2'} holds. Similarly, we can prove \eqref{claim 3'}. As we have explained before, once claims \eqref{claim 1'},\eqref{claim 2'},\eqref{claim 3'} are true, the proof of the proposition is completed. 
		\qed
		\medskip
		
		We are now in the position to prove Theorem \ref{closed-schur}.
		
	\medskip
	{\bf Proof of Theorem \ref{closed-schur}}:

			This follows from \eqref{eigenvalues}, Proposition \ref{Schur-Non-dengerate} and Proposition \ref{closed q}.
	\qed
	\medskip

\subsection{Cancellation-free formula for some special shapes}\label{Cancellation-free formula for some special shapes}

In this subsection, we shall further simplify the closed formula obtained in Theorem \ref{closed-schur} for some special choices of $\undla\in\mathscr{P}^{\bullet,m}_{n}$.

\begin{defn}
	Let $1\leq u\leq n$, we define $\Gamma_{u,n-u}=(u,\underbrace{1,1,\cdots,1}_{n-u})$.
	\end{defn}

\begin{cor}\label{special shape 1}
	Suppose $\undla\in\mathscr{P}^{\bullet,m}_{n}$, where $\bullet\in\{\mathtt{0},\mathtt{s}\}$ such that $\lambda^{(c)}=\Gamma_{u,n-u}$ for some $1\leq c\leq m$ and $1\leq u \leq n$.
	Then we have the following.
	\begin{enumerate}	
		\item\label{special shape-0} If $\bullet=\mathsf{0}$, \begin{align*}
		s_{\undla}&=\left(\prod\limits_{k=u-n}^{u-1}\prod_{\substack{1\leq c'\leq m\\c\neq c'}}\frac{2\left(Q_{c}Q^{-1}_{c'}q^{2k}-1\right)\left(Q_{c}Q_{{c'}}q^{2(k+1)}-1\right)}{\left(q^2+1\right)Q_{c}q^{2k}}\right)\cdot \left( \frac{q^{2n}-1}{q^2-1}\cdot\prod\limits_{k=1}^{u-1} \frac{q^{2k}-1}{q^2-1}\cdot \prod\limits_{k=1}^{n-u} \frac{q^{2k}-1}{q^2-1}\right)\\
		&\qquad \qquad\cdot\frac{(-2\epsilon)^n\cdot\sqrt{\left(Q_c^2q^{4(u-n)}-1\right)\left(Q_c^2q^{4u}-1\right)}}{q^{2n\left(\lambda^{(c)}\right)}\cdot (q^2-q^{-2})^n\cdot \left(Q_cq^{2u-n}\right)\cdot\prod\limits_{k=u-n+1}^{u-1}\left(Q_cq^{2k}\right)}\cdot \prod_{\substack{u-n+1\leq k\leq u\\k\neq 2u-n}}\left(Q_c^2q^{2k}-1\right).
			\end{align*}
			
		\item\label{special shape-s1}  If $\bullet=\mathsf{s}$, \begin{align*}
			s_{\undla}=&\left(\prod\limits_{k=u-n}^{u-1}\prod_{\substack{1\leq c'\leq m\\c\neq c'}}\frac{2\left(Q_{c}Q^{-1}_{c'}q^{2k}-1\right)\left(Q_{c}Q_{{c'}}q^{2(k+1)}-1\right)}{\left(q^2+1\right)Q_{c}q^{2k}}\right)\cdot \left( \frac{q^{2n}-1}{q^2-1}\cdot\prod\limits_{k=1}^{u-1} \frac{q^{2k}-1}{q^2-1}\cdot \prod\limits_{k=1}^{n-u} \frac{q^{2k}-1}{q^2-1}\right)\\
			&\qquad\cdot\frac{ \left(Q_{c}q^{2u}-1\right)}{q^{2n\left(\lambda^{(c)}\right)}\cdot\prod\limits_{k=u-n+1}^{u-1}\left(Q_c^2q^{2k}+1\right)}\cdot \left(\prod_{\substack{u-n+1\leq k\leq u\\k\neq 2u-n}}\left(Q_c^2q^{2k}-1\right)\right) \cdot\prod\limits_{k=u-n}^{u-1}\left(\frac{2\left(Q_{c}q^{2k}-1\right)}{\left(q^2+1\right)Q_{c}q^{2k}}\right).
	\end{align*}
	\end{enumerate}
	\end{cor}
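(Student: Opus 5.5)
The plan is to specialize the closed formula of Theorem \ref{closed-schur}, in which $s_{\undla}$ is given through Proposition \ref{closed q} and Proposition \ref{Schur-Non-dengerate}, to the case at hand. Since $|\lambda^{(c)}|=|\Gamma_{u,n-u}|=n$, every other component of $\undla$ is empty; when $\bullet=\mathsf{s}$ this includes $\lambda^{(0)}$. The boxes of $\lambda^{(c)}$ are $(1,j,c)$ for $1\le j\le u$ and $(i,1,c)$ for $2\le i\le n-u+1$. Hence the contents $b-a$ run through $u-n,\dots,u-1$, each exactly once. The whole computation reduces to evaluating each factor $\mathbb{Y}$ and $\mathbb{X}$ on this shape and then telescoping.

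First I dispose of the factors involving empty components.
\begin{itemize}
\item For an empty component $c'$, $\mathbb{Y}^{\undla}_{c'}=1$ directly from the definition. For $c'=0$ and $\lambda^{(0)}=\emptyset$, the product over $1\le i<i'\le 1$ is empty, as are the other products, so $\mathbb{Y}^{\undla}_0=1$.
\item For $c'\neq c$ both empty, $\mathbb{X}^{\undla}_{c',c''}=1$.
\item For $c<c'$ with $\lambda^{(c')}=\emptyset$, we have $\lambda^{(c')}_1=0$. The double product over $k$ is then empty, as is the product over boxes of $\lambda^{(c')}$. The middle product leaves $\prod_{(a,b,c)}2(Q_cQ_{c'}^{-1}q^{2(b-a)}-1)(Q_cQ_{c'}q^{2(b-a+1)}-1)/((q^2+1)Q_cq^{2(b-a)})$.
\item For $c'<c$, only the last product of $\mathbb{X}^{\undla}_{c',c}$ survives, and it has the same shape.
\end{itemize}
Reindexing by $k=b-a\in[u-n,u-1]$ gives the first big factor in both (1) and (2). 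When $\bullet=\mathsf{s}$, the factor $\mathbb{X}^{\undla}_{0,c}$ similarly reduces to $\prod_{k=u-n}^{u-1}2(Q_cq^{2k}-1)(Q_cq^{2(k+1)}-1)/((q^2+1)Q_cq^{2k})$. Its second factors telescope partially against $\mathbb{Y}_c$, which I expect produces the isolated $(Q_cq^{2u}-1)$ and the last product in (2). Also $2^{-\lceil\sharp\mathcal{D}_{\undla}/2\rceil}=1$, since $\mathcal{D}_{\undla}=\emptyset$.

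Next I compute $\mathbb{Y}^{\undla}_c$ for the hook, with $\ell=n-u+1$.
\begin{itemize}
\item The hook lengths are $n$ at $(1,1)$, then $1,\dots,u-1$ along the arm and $1,\dots,n-u$ along the leg. This gives the $q$-factorial factor $\frac{q^{2n}-1}{q^2-1}\prod_{k=1}^{u-1}\frac{q^{2k}-1}{q^2-1}\prod_{k=1}^{n-u}\frac{q^{2k}-1}{q^2-1}$.
\item The prefactor $q^{-2n(\lambda^{(c)})}$ is kept as is.
\item The pair product over $i<i'$ involves $\lambda_1=u$ and $\lambda_i=1$ for $i\ge2$. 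It splits into the pairs $(1,i')$ and the pairs $2\le i<i'$, and each family telescopes in the exponent of $Q_c^2q^{2(\cdot)}-1$.
\item The same telescoping applies to the box factor $\prod(Q_c^2q^{2(b-a-\ell+1)}-1)/(Q_c^2q^{4(b-a)}-1)$, whose exponents form an arithmetic run of length $n$.
\end{itemize}
I expect the net numerator to be $\prod_{u-n+1\le k\le u,\,k\neq 2u-n}(Q_c^2q^{2k}-1)$, possibly together with leftover denominators $Q_c^2q^{4k}-1$.

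For $\bullet=\mathsf{0}$ there is one more piece, $\prod_\alpha(-2\epsilon\sqrt{[b-a+1]_{c,q^2}[b-a]_{c,q^2}})$. Over consecutive contents $k=u-n,\dots,u-1$ it equals $(-2\epsilon)^n\sqrt{[u]_{c,q^2}[u-n]_{c,q^2}}\prod_{k=u-n+1}^{u-1}[k]_{c,q^2}$. Writing $[k]_{c,q^2}=(Q_c^2q^{4k}-1)/(Q_cq^{2k}(q^2-q^{-2}))$, these factors cancel the leftover denominators $Q_c^2q^{4k}-1$ from $\mathbb{Y}_c$. This produces the $(q^2-q^{-2})^n$, $Q_cq^{2u-n}$ and $\prod Q_cq^{2k}$ factors and the surviving square root in (1).

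The main obstacle is the exact bookkeeping of the $Q_c^2$-telescoping in $\mathbb{Y}_c$, namely identifying precisely which exponents survive and why $k=2u-n$ is excluded. For (2) there is the further task of matching the $\mathbb{X}_{0,c}$ factors against $\prod(Q_c^2q^{2k}+1)$. I would carry this out by listing the multisets of exponents in numerator and denominator explicitly, first for the pair product and then for the box product. I would check the result against the small cases $u=n$ (a single row) and $u=1$ (a single column) before doing the general cancellation.
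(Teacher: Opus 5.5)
Your route is the paper's: specialize Theorem~\ref{closed-schur} to the hook, note that all other components (including $\lambda^{(0)}$) are empty, evaluate $\mathbb{Y}^{\undla}_c$, $\mathbb{X}^{\undla}_{c',c}$, $\mathbb{X}^{\undla}_{c,c'}$ and $\mathbb{X}^{\undla}_{0,c}$, and telescope. Part (1) comes out exactly as you describe. Your identity for $\prod_k\sqrt{[k+1]_{c,q^2}[k]_{c,q^2}}$ holds only up to sign, which is harmless since $s_{\undla}$ is only defined up to sign for supersymmetric $t$. For $\mathbb{Y}^{\undla}_0=1$ you must read the factor $(q^2-1)^n$ in its definition as $(q^2-1)^{|\lambda^{(0)}|}$, which is how Lemma~\ref{closed q-special} uses it.

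The bookkeeping you defer is short. The pair product equals $\prod_{k=2u-n+1}^{u}(Q_c^2q^{2k}-1)\big/\prod_{k=2u-2n+1}^{u-n}(Q_c^2q^{2k}-1)$. The box product equals $\prod_{k=2u-2n}^{2u-n-1}(Q_c^2q^{2k}-1)\big/\prod_{k=u-n}^{u-1}(Q_c^2q^{4k}-1)$. The two numerator runs leave a gap exactly at $k=2u-n$. The pair-product denominator cancels the part $2u-2n+1\le k\le u-n$ of the box-product numerator. The remaining term $k=2u-2n$ cancels the $k=u-n$ term of $\prod(Q_c^2q^{4k}-1)$. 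Hence
\[
\mathbb{Y}^{\undla}_c=\frac{H}{q^{2n(\lambda^{(c)})}}\cdot\frac{\prod_{u-n+1\le k\le u,\ k\neq 2u-n}(Q_c^2q^{2k}-1)}{\prod_{k=u-n+1}^{u-1}(Q_c^2q^{4k}-1)},
\]
where $H$ is the $q$-hook factor.

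The step that fails is your last one for (2): you cannot match against $\prod(Q_c^2q^{2k}+1)$, because the displayed formula (2) is misprinted at that point. We have
\[
\mathbb{X}^{\undla}_{0,c}=\prod_{k=u-n}^{u-1}\frac{2(Q_cq^{2k}-1)}{(q^2+1)Q_cq^{2k}}\cdot\prod_{k=u-n+1}^{u}(Q_cq^{2k}-1).
\]
The first factor is already the last product in (2); no telescoping is involved there. For the second factor, since $Q_c^2q^{4k}-1=(Q_cq^{2k}-1)(Q_cq^{2k}+1)$,
\[
\frac{\prod_{k=u-n+1}^{u}(Q_cq^{2k}-1)}{\prod_{k=u-n+1}^{u-1}(Q_c^2q^{4k}-1)}=\frac{Q_cq^{2u}-1}{\prod_{k=u-n+1}^{u-1}(Q_cq^{2k}+1)}.
\]
So the denominator in (2) should be $\prod_{k=u-n+1}^{u-1}(Q_cq^{2k}+1)$.

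As a check, take $n=2$, $u=1$, $m=1$ and compute $q(\undla)^{-1}$ directly from \eqref{def qt}. This gives
\[
s_{\undla}=\frac{4(Q_1-1)(Q_1q^{2}-1)(Q_1q^{-2}-1)(Q_1^2q^{2}-1)}{(q^2+1)\,Q_1^2\,(Q_1+1)},
\]
which has $Q_1+1$ where the printed formula has $Q_1^2+1$. With this correction your plan proves (2). The paper's proof is the same computation and simply asserts the final combination without writing it out.
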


	\begin{proof}
		We can compute		\begin{align*}
			&\qquad\prod_{\substack{1\leq i<i'\leq \ell(\lambda^{(c)})}}\left(\frac{Q_c^2q^{2\left(\lambda^{(c)}_i-i+\lambda^{(c)}_{i'}-i'+2\right)}-1}{Q_c^2q^{2\left(2-i-i'\right)}-1}\right)\\
			&=\frac{\prod\limits_{k=u-(n-u-1)}^u\left(Q_c^2q^{2k}-1\right)\cdot\prod\limits_{2\leq i<i'\leq n-u+1}\left(Q_c^2q^{2\left(4-i-i'\right)}-1\right)}{\prod\limits_{1\leq i<i'\leq n-u+1}\left(Q_c^2q^{2\left(2-i-i'\right)}-1\right)}\\
			&=\frac{\prod\limits_{k=2u-n+1}^u\left(Q_c^2q^{2k}-1\right)\cdot\prod\limits_{1\leq i<i'\leq n-u}\left(Q_c^2q^{2\left(2-i-i'\right)}-1\right)}{\prod\limits_{1\leq i<i'\leq n-u+1}\left(Q_c^2q^{2\left(2-i-i'\right)}-1\right)}\\
			&=\frac{\prod\limits_{k=2u-n+1}^u\left(Q_c^2q^{2k}-1\right)}{\prod\limits_{k=2u-2n+1}^{u-n}\left(Q_c^2q^{2k}-1\right)}
			\end{align*}
			and
			\begin{align*}
				\prod_{(a,b,c)\in\lambda^{(c)}}\left(\frac{Q_c^2q^{2\left(b-a-\ell(\lambda^{(c)})+1\right)}-1}{Q_c^2q^{4\left(b-a\right)}-1}\right)&=\frac{\prod\limits_{k=2u-2n}^{2u-n-1}\left(Q_c^2q^{2k}-1\right)}{\prod\limits_{k=u-n}^{u-1}\left(Q_c^2q^{4k}-1\right)}.
				\end{align*}
		Therefore, by definition, we have
		\begin{align}
			\mathbb{Y}^{\undla}_{c}&=
			\frac{1}{q^{2n\left(\lambda^{(c)}\right)}}\cdot\left( \prod_{\substack{1\leq i<i'\leq \ell(\lambda^{(c)})}}\left(\frac{Q_c^2q^{2\left(\lambda^{(c)}_i-i+\lambda^{(c)}_{i'}-i'+2\right)}-1}{Q_c^2q^{2\left(2-i-i'\right)}-1}\right)\right)\nonumber\\
			&\qquad\qquad\cdot \prod_{(a,b,c)\in\lambda^{(c)}}\left(\frac{Q_c^2q^{2\left(b-a-\ell(\lambda^{(c)})+1\right)}-1}{Q_c^2q^{4\left(b-a\right)}-1}
			\cdot \frac{q^{2h^{\lambda^{(c)}}_{a,b}}-1}{q^2-1}\right)\nonumber\\
			&=\frac{1}{q^{2n\left(\lambda^{(c)}\right)}}\cdot\left(\prod_{(a,b,c)\in\lambda^{(c)}} \frac{q^{2h^{\lambda^{(c)}}_{a,b}}-1}{q^2-1}\right)\cdot \frac{\prod\limits_{k=2u-n+1}^u\left(Q_c^2q^{2k}-1\right)}{\prod\limits_{k=2u-2n+1}^{u-n}\left(Q_c^2q^{2k}-1\right)}\cdot \frac{\prod\limits_{k=2u-2n}^{2u-n-1}\left(Q_c^2q^{2k}-1\right)}{\prod\limits_{k=u-n}^{u-1}\left(Q_c^2q^{4k}-1\right)}\nonumber\\
			&=\frac{1}{q^{2n\left(\lambda^{(c)}\right)}}\cdot\left(\prod_{(a,b,c)\in\lambda^{(c)}} \frac{q^{2h^{\lambda^{(c)}}_{a,b}}-1}{q^2-1}\right)\cdot \frac{ \prod_{\substack{2u-2n\leq k\leq u\\k\neq 2u-n}}\left(Q_c^2q^{2k}-1\right)}{\prod\limits_{k=2u-2n+1}^{u-n}\left(Q_c^2q^{2k}-1\right)\cdot\prod\limits_{k=u-n}^{u-1}\left(Q_c^2q^{4k}-1\right)}\nonumber\\
			&=\frac{1}{q^{2n\left(\lambda^{(c)}\right)}}\cdot\left(\prod_{(a,b,c)\in\lambda^{(c)}} \frac{q^{2h^{\lambda^{(c)}}_{a,b}}-1}{q^2-1}\right)\cdot \frac{ \prod_{\substack{u-n+1\leq k\leq u\\k\neq 2u-n}}\left(Q_c^2q^{2k}-1\right)}{\prod\limits_{k=u-n+1}^{u-1}\left(Q_c^2q^{4k}-1\right)}\nonumber\\
		&=\frac{1}{q^{2n\left(\lambda^{(c)}\right)}}\cdot\left( \frac{q^{2n}-1}{q^2-1}\cdot\prod\limits_{k=1}^{u-1} \frac{q^{2k}-1}{q^2-1}\cdot \prod\limits_{k=1}^{n-u} \frac{q^{2k}-1}{q^2-1}\right)\cdot \frac{ \prod_{\substack{u-n+1\leq k\leq u\\k\neq 2u-n}}\left(Q_c^2q^{2k}-1\right)}{\prod\limits_{k=u-n+1}^{u-1}\left(Q_c^2q^{4k}-1\right)}	\label{Y-reduced}.
		\end{align}
		For $1\leq c'\neq c\leq m$, we have $\lambda^{(c')}=\emptyset$ by condition. It follows that for $1\leq c_1<c$, we have 
		\begin{align}\label{X-reduced1}
			\mathbb{X}^{\undla}_{{c_1},{c}}&=\prod_{\substack{(a,b,c)\in\lambda^{(c)}}}\left(\frac{2\left(Q_{c}Q^{-1}_{c_1}q^{2(b-a)}-1\right)\left(Q_{c}Q_{{c_1}}q^{2(b-a+1)}-1\right)}{\left(q^2+1\right)Q_{c}q^{2(b-a)}}\right)\nonumber\\
			&=\prod\limits_{k=u-n}^{u-1}\left(\frac{2\left(Q_{c}Q^{-1}_{c_1}q^{2k}-1\right)\left(Q_{c}Q_{{c_1}}q^{2(k+1)}-1\right)}{\left(q^2+1\right)Q_{c}q^{2k}}\right).
			\end{align}
	Similarly, for $c<c_2\leq m$, we have
				\begin{align}\label{X-reduced2}
				\mathbb{X}^{\undla}_{{c},{c_2}}=\prod\limits_{k=u-n}^{u-1}\left(\frac{2\left(Q_{c}Q^{-1}_{c_2}q^{2k}-1\right)\left(Q_{c}Q_{{c_2}}q^{2(k+1)}-1\right)}{\left(q^2+1\right)Q_{c}q^{2k}}\right).
			\end{align}
			
	We first prove \eqref{special shape-0}. In this case, we can compute \begin{align}\prod\limits_{(a,b,c)\in\undla}\left(\sqrt{[b-a+1]_{c,q^2}[b-a]_{c,q^2}}\right)
		&=\prod\limits_{k=u-n}^{u-1}\left(\sqrt{[k+1]_{c,q^2}[k]_{c,q^2}}\right)\nonumber\\
		&=\sqrt{[u-n]_{c,q^2}}\sqrt{[u]_{c,q^2}}\left(\prod\limits_{k=u-n+1}^{u-1}[k]_{c,q^2}\right)\nonumber\\
	&=	\frac{\sqrt{\left(Q_c^2q^{4(u-n)}-1\right)\left(Q_c^2q^{4u}-1\right)}}{(q^2-q^{-2})Q_cq^{2u-n}}\cdot \left(\prod\limits_{k=u-n+1}^{u-1}\frac{Q_c^2q^{4k}-1}{(q^2-q^{-2})Q_cq^{2k}}\right)\label{reduced difference b}.
		\end{align} Now \eqref{special shape-0} follows from Theorem \ref{closed-schur}, \eqref{Y-reduced}, \eqref{X-reduced1}, \eqref{X-reduced2} and \eqref{reduced difference b}.
	
	Suppose $\bullet=\mathsf{s}$. Then $\lambda^{(0)}=\emptyset$ by condition and we can compute \begin{align}
			\mathbb{X}^{\undla}_{{0},{c}}&=\prod_{\substack{(a,b,c)\in\lambda^{(c)}}}\left(\frac{2\left(Q_{c}q^{2(b-a)}-1\right)\left(Q_{{c}}q^{2(b-a+1)}-1\right)}{\left(q^2+1\right)Q_{c}q^{2(b-a)}}\right)\nonumber\\
			&=\prod\limits_{k=u-n}^{u-1}\left(\frac{2\left(Q_{c}q^{2k}-1\right)\left(Q_{{c}}q^{2(k+1)}-1\right)}{\left(q^2+1\right)Q_{c}q^{2k}}\right)\nonumber\\
			&=\prod\limits_{k=u-n}^{u-1}\left(\frac{2\left(Q_{c}q^{2k}-1\right)}{\left(q^2+1\right)Q_{c}q^{2k}}\right)\cdot \prod\limits_{k=u-n+1}^{u}\left(Q_{c}q^{2k}-1\right)\label{X-reduced3}.
		\end{align}
	Hence, \eqref{special shape-s1} follows from Theorem \ref{closed-schur}, \eqref{Y-reduced}, \eqref{X-reduced1}, \eqref{X-reduced2} and \eqref{X-reduced3}.
		\end{proof}

\begin{cor}\label{special shape 2}
	Suppose $\undla\in\mathscr{P}^{\mathtt{s},m}_{n}$ satisfying $\lambda^{(c)}=\emptyset$ for any $1\leq c\leq m$. Then we have
	\begin{align*}s_{\undla}=&2^{\lfloor \sharp\mathcal{D}_{\undla}/2 \rfloor}\cdot \frac{\left(\prod\limits_{1\leq i<i'\leq \ell(\lambda^{(0)})+1}\left(q^{2\left(\lambda^{(0)}_i+\lambda^{(0)}_{i'}\right)}-1\right)\right)\cdot \left(\prod\limits_{j=\ell(\lambda^{(0)})+1}^{\lambda^{(0)}_1}\prod\limits_{(i,j,0)\in\lambda^{(0)}}\left(q^{2h_{i,j}^{\widetilde{\lambda^{(0)}}}}-1\right)\right)}{q^{2n\left(\lambda^{(0)}\right)}\cdot (q^2-1)^n\cdot\prod\limits_{(i,j,0)\in\lambda^{(0)}}\left(q^{2(j-i)}+1\right) }\\
	&\qquad\qquad\cdot\prod\limits_{c=1}^m\prod_{\substack{(a,b,0)\in\lambda^{(0)}}}\left(\frac{2\left(Q^{-1}_{c}q^{2(b-a)}-1\right)\left(Q_{{c}}q^{2(b-a+1)}-1\right)}{\left(q^2+1\right)q^{2(b-a)}}\right).
	\end{align*} In particular, if $\lambda^{(0)}=(n)$,	\begin{align*}s_{\undla}=& \frac{\prod\limits_{k=1}^n\left(q^{2k}-1\right)}{q^{2n\left(\lambda^{(0)}\right)}\cdot (q^2-1)^n\cdot\prod\limits_{k=0}^{n-1}\left(q^{2k}+1\right)}
	\cdot\prod\limits_{c=1}^m\prod\limits_{k=0}^{n-1}\left(\frac{2\left(Q^{-1}_{c}q^{2k}-1\right)\left(Q_{{c}}q^{2(k+1)}-1\right)}{\left(q^2+1\right)q^{2k}}\right).
	\end{align*} 
	\end{cor}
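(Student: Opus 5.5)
This is a direct specialization of Theorem \ref{closed-schur} in the case $\bullet=\mathsf{s}$, with the same bookkeeping as in the proof of Corollary \ref{special shape 1}. Since $\lambda^{(c)}=\emptyset$ for every $1\leq c\leq m$, we have $n(\lambda^{(c)})=0$ and all the products defining $\mathbb{Y}^{\undla}_c$ are empty. Hence $\mathbb{Y}^{\undla}_c=1$ for $1\leq c\leq m$. For the same reason, every factor $\mathbb{X}^{\undla}_{c_1,c_2}$ with $1\leq c_1<c_2\leq m$ is an empty product, so equals $1$. The closed formula therefore collapses to
\begin{align*}
s_{\undla}=2^{-\lceil \sharp\mathcal{D}_{\undla}/2\rceil}\cdot \mathbb{Y}^{\undla}_0\cdot\prod_{c=1}^m\mathbb{X}^{\undla}_{0,c}.
\end{align*}

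Next I will evaluate $\mathbb{X}^{\undla}_{0,c}$. Because $\lambda^{(c)}=\emptyset$, we have $\lambda^{(c)}_1=0$. The first product (over $k=1,\dots,\lambda^{(c)}_1$) and the third product (over boxes of $\lambda^{(c)}$) are then empty. Only the middle factor survives, with $\lambda^{(c)}_1=0$ substituted. This gives exactly
\begin{align*}
\prod_{(a,b,0)\in\lambda^{(0)}}\frac{2\left(Q_c^{-1}q^{2(b-a)}-1\right)\left(Q_cq^{2(b-a+1)}-1\right)}{(q^2+1)q^{2(b-a)}}.
\end{align*}

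It remains to combine the powers of $2$. The factor $\mathbb{Y}^{\undla}_0$ carries $2^{\ell(\lambda^{(0)})}$. For a shifted strict partition, the diagonal boxes $(a,a,0)$ are in bijection with the rows, so $\ell(\lambda^{(0)})=\sharp\mathcal{D}_{\undla}$. Since $N-\lceil N/2\rceil=\lfloor N/2\rfloor$, the two powers combine to $2^{\lfloor\sharp\mathcal{D}_{\undla}/2\rfloor}$. This yields the first displayed formula.

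For $\lambda^{(0)}=(n)$, I will evaluate each piece directly. We have $\sharp\mathcal{D}_{\undla}=1$, so the power of $2$ is $1$. The boxes are $(1,j,0)$ for $1\le j\le n$, so $b-a$ runs over $0,\dots,n-1$; this gives the product $\prod_{k=0}^{n-1}(q^{2k}+1)$ and the product of $\mathbb{X}$-factors in the displayed form. The first product in the numerator runs over $1\leq i<i'\leq 2$ with $\lambda_2=0$, so it equals $q^{2n}-1$. The shifted-hook product runs over the columns $j=2,\dots,n$. The double diagram $\widetilde{(n)}=(n\,|\,n-1)$ has first row of length $n+1$, and in it the hook lengths at $(1,j)$ for $j\ge2$ are $n+1-j$, i.e.\ $1,\dots,n-1$. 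The numerator is therefore $\prod_{k=1}^{n}(q^{2k}-1)$.

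The main obstacle is this last hook-length evaluation in the double diagram, in particular the range convention $j\geq\ell(\lambda^{(0)})+1$ and the matching of indices. I will check it against the small case $n=1$ and against the example $\mu=(3,2)$ computed earlier in the paper.
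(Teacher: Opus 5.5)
Your proposal is correct and is the paper's argument (the paper simply says the corollary follows directly from Theorem \ref{closed-schur}): specialize that theorem with $\mathbb{Y}^{\undla}_c=\mathbb{X}^{\undla}_{c_1,c_2}=1$ for $c,c_1,c_2\geq 1$, keep only the middle factor of each $\mathbb{X}^{\undla}_{0,c}$, and use $\ell(\lambda^{(0)})=\sharp\mathcal{D}_{\undla}$ to turn $2^{\ell(\lambda^{(0)})}\cdot 2^{-\lceil \sharp\mathcal{D}_{\undla}/2\rceil}$ into $2^{\lfloor\sharp\mathcal{D}_{\undla}/2\rfloor}$. One wording fix for the special case: the values $n+1-j$ are the shifted hook lengths $h^{\widetilde{(n)}}_{1,j}$, i.e.\ the hooks of the double-diagram cells $(1,j+1)$; the literal cell $(1,j)$ of $(n+1,1^{n-1})$ has hook $n+2-j$, and the paper's example $\mu=(3,2)$ confirms this shift convention.
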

\begin{proof}
This directly follows from Theorem \ref{closed-schur}.
\end{proof}

\section{On two trace functions of Hecke-Clifford superalgebra}\label{On two trace functions of Hecke-Clifford superalgebra}

 The Hecke-Clifford superalgebra $\mathcal{HC}_{\rm R}(n)$ over ${\rm R}$ is defined as $\mathcal{H}^f_{\rm R}(n)$,  where $f=f^{(\mathtt{s})}_{\emptyset}=X_1-1$. Following \cite{JN}, $\mathcal{HC}_{\rm R}(n)$ can be viewed as a subalgebra of $\mathcal{H}_{\rm R}(n)$ generated by $T_1,\cdots, T_{n-1}, C_1,\cdots,C_n$. 

Let $\mathcal{OP}_{n}$ be the set of odd partition of $n$, i.e., each non-zero component is odd. For each $\nu\in\mathcal{OP}_{n}$, we can associate an element $T_{w_\nu}\in \mathcal{HC}_{\rm R}(n)$\cite[(3.2)]{WW2}. 

\begin{thm}\cite[Theorem 4.8]{WW2}\label{cocenter}
Every trace function $\phi:\mathcal{HC}_{\rm R}(n)\rightarrow {\rm R}$ is uniquely
determined by its values on the elements $T_{w_{\nu}}$ for
$\nu\in\mathcal{OP}_{n}$.
\end{thm}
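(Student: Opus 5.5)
The plan is to show that the cocenter is spanned by the images of the $T_{w_\nu}$. Here the cocenter means the quotient of $\mathcal{HC}_{\rm R}(n)$ by the ${\rm R}$-span $\mathcal{C}$ of $\mathcal{HC}_{\rm R}(n)_{\bar 1}$ and of all commutators $ab-ba$ with $a,b\in\mathcal{HC}_{\rm R}(n)_{\bar 0}$. A trace function vanishes on $\mathcal{C}$, so uniqueness follows once we know that every element of $\mathcal{HC}_{\rm R}(n)$ is congruent modulo $\mathcal{C}$ to an ${\rm R}$-linear combination of the $T_{w_\nu}$, $\nu\in\mathcal{OP}_n$. We do not need linear independence of these images, since only uniqueness is claimed.

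\emph{Reduction to a spanning set.} By Lemma \ref{basis} with $f=X_1-1$ (so $r=1$), $\mathcal{HC}_{\rm R}(n)$ has basis $\{C^\beta T_w\}$. Odd elements lie in $\mathcal{C}$, so we may take $|\beta|=\bar 0$.

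\emph{Geck--Pfeiffer style reduction on the Hecke part.} For the Hecke part, we mimic the Geck--Pfeiffer argument for Iwahori--Hecke algebras. Conjugating by the invertible even element $T_i$ gives $C^\beta T_w\equiv T_i^{-1}C^\beta T_wT_i$ modulo $\mathcal{C}$. We expand this using \eqref{PC} and the quadratic relation. The $\epsilon$-correction terms involve elements $C^{\beta'}T_{w'}$ with $\ell(w')<\ell(w)$, or of the same length but with smaller Clifford support, so they are handled by induction on $(\ell(w),|\supp\beta|)$. By cyclic shifts and the Geck--Pfeiffer theorem on minimal length elements in conjugacy classes of $\mathfrak{S}_n$, we reduce $w$ to a fixed minimal length representative $w_\mu$ of each cycle type $\mu\vdash n$. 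Up to lower terms, this representative is a product of Coxeter elements on consecutive blocks.

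\emph{Reduction of the Clifford part.} Next we use conjugation by the even elements $C_iC_j$, and also by $C_i$ itself (legitimate because $C_iyC_i-y$ is a super commutator of odd elements, which we check lies in $\mathcal{C}$ via $C_i\cdot(C_iy)$). Since $C_i$ is carried along cycles by $T_w$ through \eqref{PC}, this lets us move Clifford generators within a cycle block. The aim is to reduce to at most one $C$ per block, and then to show that with no $C$'s on a block one gets the standard element. This produces $T_{w_\mu}$ together with elements $C^\beta T_{w_\mu}$ with $\beta$ supported on block-starting positions.

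\emph{Vanishing for even cycles (main obstacle).} The hardest step is to show that every such element vanishes modulo $\mathcal{C}$ whenever $\mu$ has an even part. This is the analogue of the fact that spin characters vanish off odd and strict classes. I would try conjugating by $C_j$ running over all positions of an even-length cycle, or more precisely by the product of the $C$'s on that block. Using $T_iC_i=C_{i+1}T_i$, the element should be sent to $-1$ times itself plus $\epsilon$-lower terms, and since $2\in{\rm R}^\times$ this gives the desired vanishing modulo lower terms. Similarly, odd parts carrying a $C$ should be eliminated by the analogous sign trick. The delicate point is controlling the $\epsilon$-corrections from $T_iC_{i+1}=C_iT_i-\epsilon(C_i-C_{i+1})$. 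I would first verify the cases $n=2$ and $n=3$ by hand, then set up a double induction on length and Clifford support to absorb these corrections. The remaining odd-partition classes give the $T_{w_\nu}$, $\nu\in\mathcal{OP}_n$.
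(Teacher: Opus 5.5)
The paper does not prove this statement; it only quotes it from Wan--Wang. So your outline has to stand on its own, and as written it has two genuine gaps.

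\textbf{The cocenter you use is the wrong one.} Your $\mathcal{C}$ is spanned by $\mathcal{HC}_{\rm R}(n)_{\bar 1}$ and commutators of \emph{even} elements, matching the wording of the definition in the introduction. Your Clifford reduction, however, conjugates by a single $C_i$, i.e.\ it uses $C_iyC_i-y=(C_iy)C_i-C_i(C_iy)\in\mathcal{C}$. That is a commutator of two odd elements, and it does not lie in your $\mathcal{C}$.

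This cannot be patched, because for that $\mathcal{C}$ the statement is false. Take $n=3$ over $\mathbb{F}$. Then $\mathcal{HC}_{\mathbb{F}}(3)$ is split semisimple, with $\mathbb{D}((3))$ of type \texttt{Q} and $\mathbb{D}((2,1))$ of type \texttt{M}. Hence $\mathcal{HC}_{\mathbb{F}}(3)\cong\mathcal{Q}_4\oplus\mathcal{M}_{2,2}$, and its even part is $M_4\oplus M_2\oplus M_2$. The functionals vanishing on your $\mathcal{C}$ therefore form a $3$-dimensional space, spanned by $\chi_{\mathbb{D}((3))}$, $\chi_{\mathbb{D}((2,1))}$ and the supertrace $\chi'_{\mathbb{D}((2,1))}$. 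But $|\mathcal{OP}_3|=2$, so two values cannot determine them.

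The theorem must be read with $\phi(ab)=\phi(ba)$ for \emph{all} $a,b$; this is also what is needed for $\gimel_n$ to be a symmetrizing form. With that reading, $\mathcal{C}$ contains all commutators and conjugation by $C_i$ is legitimate.

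\textbf{The sign trick does not cover all cases.} Let $x=C^\beta T_{w_\mu}$ with at most one Clifford generator per block. Let $c$ be the product of the $C_j$ over a block of length $k$ carrying $a\in\{0,1\}$ generators of $\beta$. Since $|\beta|=\bar 0$, one finds $cxc^{-1}=(-1)^{k-1+a}x$ modulo lower-length terms. So the trick kills even blocks with $a=0$ and odd blocks with $a=1$, but it does nothing for even blocks carrying a $C$.

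For example, $x=C_1C_3T_1T_3$ with $\mu=(2,2)$ is fixed, up to lower terms, by every such conjugation. Single $C_j$-conjugations only move a $C$ around an even cycle, and it returns with sign $+1$ after a full turn.

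The missing idea is to conjugate by the $T$-element of one block alone. It commutes with $T_{w_\mu}$ and moves that block's $C$ one step \emph{without} a sign. Here $T_1xT_1^{-1}=C_2C_3T_1T_3$ exactly, whereas $C_1xC_1=-C_2C_3T_1T_3$. Hence $2x\in\mathcal{C}$, and $x\in\mathcal{C}$ since $2$ is invertible.

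With these two fixes your induction on $\ell(w)$ is viable, because every $\epsilon$-correction from moving a $C$ past a $T_i$ deletes that $T_i$. One caution remains. Reordering blocks needs Geck--Pfeiffer strong conjugation by some $T_v$ with $\ell(v)>1$, and $T_vC^\beta T_v^{-1}$ produces terms of larger $T$-length. So carry out the Clifford reductions first, for an arbitrary minimal-length element. Then apply strong conjugation only to Clifford-free $T_w$, where it is an exact identity in the Hecke subalgebra.
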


By Theorem \ref{cocenter}, Wan and Wang defined a trace function $\gimel^{\rm R}_n: \mathcal{HC}_{\rm R}(n)\rightarrow {\rm R}$ which is
characterized by the conditions
\begin{align*}
	\gimel^{\rm R}_n(T_{w_{\nu}})&= \Big(\frac{q^2-1}{2}\Big)^{n-\ell(\nu)},
	\qquad \text{ for }\nu\in\mathcal{OP}_{n}, \\
	\gimel^{\rm R}_n(z)&=0, \qquad \text{ for }z\in\mathcal{HC}_{\rm R}(n)_{\bar{1}}.
\end{align*}

Let ${\bf A}:=\Z[\frac{1}{2}][q,q^{-1}],$ where $q$ is an indeterminate over $\Z$ and $\mathcal{HC}_{\bf A}(n)$ be the generic Hecke-Clifford superalgebra defined over ${\bf A}$. We denote $\mathbb{F}$ the algebraic closure of $\C(q)$.

\begin{thm} \cite[Theorem 5.8]{WW2}\label{th:spinSchur}
	$\mathbb{F}\otimes_{\bf A}\gimel_n$ is a symmetrizing trace form on $\mathcal{HC}_{\mathbb{F}}(n)$. For
	$\lambda\in\mathscr{P}^{\mathtt{s}}_{n}$, the Schur element $c^{\lambda}$ of the simple
	$\mathcal{HC}_{\mathbb{F}}(n)$-module  $\mathbb{D}_{\lambda}$ with respect to $\mathbb{F}\otimes_{\bf A}\gimel_n$ is given by
	\begin{equation}\label{eq:elemSchur}
		c^{\lambda}=2^{n+\lfloor \sharp\mathcal{D}_{\undla}/2 \rfloor} \frac{\prod\limits_{(i,j)\in
				\lambda}(q^{2h^{\tilde{\lambda}}_{i,j}}-1)}{q^{2n(\lambda)}(q^2-1)^n\prod\limits_{(i,j)\in
				\lambda}(1+q^{2(j-i)})}.
	\end{equation}
\end{thm}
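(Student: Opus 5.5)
The plan is to work over $\mathbb{F}$, where $\mathcal{HC}_{\mathbb{F}}(n)$ is split semisimple, and to expand $\mathbb{F}\otimes_{\bf A}\gimel_n$ in the irreducible characters. Proposition \ref{schur formula 1} then identifies both the non-degeneracy of the form and its Schur elements from the coefficients of this expansion.

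\emph{Semisimplicity.} Since $q$ is an indeterminate, the polynomial $P^{(\mathtt{s})}_{n}(q^2,\emptyset)=\prod_{t=1}^n(q^{2t}-1)(q^{2t}+1)$ is non-zero in $\mathbb{F}$. Theorem \ref{semisimple:non-dege} with $m=0$ then gives the following.
\begin{itemize}
\item $\mathcal{HC}_{\mathbb{F}}(n)$ is split semisimple.
\item Its simple modules are the $\mathbb{D}_\lambda$ for $\lambda\in\mathscr{P}^{\mathtt{s}}_n$.
\item $\mathbb{D}_\lambda$ has type $\texttt{Q}$ exactly when $\sharp\mathcal{D}_\lambda=\ell(\lambda)$ is odd.
\end{itemize}

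\emph{Expanding the trace.} On a split semisimple superalgebra, every trace function in the sense of Section \ref{On two trace functions of Hecke-Clifford superalgebra} is a linear combination of the ordinary characters $\chi_{\mathbb{D}_\lambda}$. Such a function is a trace on the even part and vanishes on the odd part. By Wedderburn, each simple factor $\mathcal{M}_{a,b}$ or $\mathcal{Q}_a$ carries a one-dimensional space of such functions, and that space is spanned by the character. Hence we may write
\[
\mathbb{F}\otimes_{\bf A}\gimel_n=\sum_{\lambda\in\mathscr{P}^{\mathtt{s}}_n} d_\lambda\,\chi_{\mathbb{D}_\lambda}, \qquad d_\lambda\in\mathbb{F}.
\]
This function vanishes on the Jacobson radical, which is zero here. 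It is non-degenerate precisely when every $d_\lambda\neq0$: on the block of $\lambda$ the bilinear form $(x,y)\mapsto d_\lambda\chi_{\mathbb{D}_\lambda}(xy)$ is non-degenerate if and only if $d_\lambda\neq 0$.

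If all $d_\lambda\neq 0$, the form is symmetrizing. Comparing the expansion with Proposition \ref{schur formula 1} then gives $c^\lambda=1/(2^{\delta(\mathbb{D}_\lambda)}d_\lambda)$. So the whole statement reduces to computing the $d_\lambda$ explicitly and checking that they are non-zero.

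\emph{Determining the $d_\lambda$.} By Theorem \ref{cocenter}, the $d_\lambda$ are determined by the equations
\[
\sum_\lambda d_\lambda\,\chi_{\mathbb{D}_\lambda}(T_{w_\nu})=\Big(\frac{q^2-1}{2}\Big)^{n-\ell(\nu)}, \qquad \nu\in\mathcal{OP}_n.
\]
To solve them I would use the Frobenius characteristic map for $\mathcal{HC}(n)$, as in \cite{WW1,WW2}. It sends $\chi_{\mathbb{D}_\lambda}$, up to a power of $2$ depending on $\ell(\lambda)$, to a Schur $Q$-function $Q_\lambda$, and it sends the class of $T_{w_\nu}$ to a $q$-deformed power sum. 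The right-hand side $\big(\frac{q^2-1}{2}\big)^{n-\ell(\nu)}$ factorises over the parts of $\nu$. It should therefore be identified with the principal specialization $x_i=q^{2(i-1)}$ of the odd power sums $p_\nu$, suitably normalised.

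The Cauchy identity $\sum_\lambda 2^{-\ell(\lambda)}Q_\lambda(x)Q_\lambda(y)=\sum_\nu 2^{\ell(\nu)}z_\nu^{-1}p_\nu(x)p_\nu(y)$ then expresses $d_\lambda$ through $Q_\lambda(1,q^2,q^4,\dots)$. The known hook-length formula for this principal specialization (Macdonald, III.8, Ex. 13) is a product over the cells of the shifted diagram involving the factors $q^{2h^{\tilde\lambda}_{i,j}}-1$ and $1+q^{2(j-i)}$. Inverting and tracking the powers of $2$ should give exactly \eqref{eq:elemSchur}. Since $c^\lambda$ is visibly a non-zero element of $\mathbb{F}$, every $d_\lambda\neq0$, and non-degeneracy follows at once.

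\emph{Main obstacle.} The hard step is the character formula. One needs the values $\chi_{\mathbb{D}_\lambda}(T_{w_\nu})$ in terms of $Q$-functions with the correct powers of $2$ and the correct type-$\texttt{Q}$ normalisations. One also needs to match the chosen representatives $T_{w_\nu}$ with the specialized power sums. I would first test the normalisation on $\lambda=(n)$ and on $n\le 3$. These small cases should fix the constant $2^{n+\lfloor\sharp\mathcal{D}_\lambda/2\rfloor}$ before running the general Cauchy-identity argument.
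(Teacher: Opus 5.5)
The paper contains no proof of this statement. It is imported from Wan--Wang \cite[Theorem 5.8]{WW2} and then used as an input, via Lemma \ref{schur=schur}, to prove Proposition \ref{trace function}. So later results of the paper cannot be fed back into it without circularity.

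Your plan is, in outline, the route of the cited source: expand $\mathbb{F}\otimes_{\bf A}\gimel_n$ in irreducible characters, read off $c^\lambda=1/(2^{\delta(\mathbb{D}_\lambda)}d_\lambda)$ from Proposition \ref{schur formula 1}, and determine the $d_\lambda$ through a spin Frobenius character formula, Cauchy's identity and principal specialization of Schur $Q$-functions. Your target is also correct. By the hook formula for $Q_\lambda(1,q^2,q^4,\dots)$, the asserted value is exactly
\[
c^\lambda=\frac{2^{\,n+\lfloor \ell(\lambda)/2\rfloor}}{(1-q^2)^n\,Q_\lambda(1,q^2,q^4,\dots)},\qquad\text{equivalently}\qquad d_\lambda=2^{-n-\lceil \ell(\lambda)/2\rceil}(1-q^2)^n\,Q_\lambda(1,q^2,q^4,\dots).
\]

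There are, however, two gaps.

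First, your one-dimensionality claim is false for the notion of trace you state. Every type \texttt{M} simple factor of $\mathcal{HC}_{\mathbb{F}}(n)$ is some $\mathcal{M}_{a,a}$ with $a\geq 1$, because the odd unit $C_1$ swaps parities. On such a factor, the functions that vanish on the odd part and are tracial on the even part $M_a\times M_a$ form a two-dimensional space, spanned by $\chi_V$ and the supertrace $\chi'_V$. To isolate $\chi_V$ you need $\phi(xy)=\phi(yx)$ for odd $x,y$ as well. Taking $x=E_{ij}$, $y=E_{ji}$ with $i\le a<j$ then forces equal weights on the two diagonal blocks. The even-only reading is in fact incompatible with Theorem \ref{cocenter} for $n\geq 3$: the space it defines has dimension $2\sharp\{\texttt{M}\}+\sharp\{\texttt{Q}\}$, which exceeds $|\mathcal{OP}_n|=|\mathscr{P}^{\mathtt{s}}_n|$. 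So you must invoke the full trace property explicitly.

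Second, and more seriously, the step that carries all the content is only asserted. You need two things:
\begin{enumerate}
\item the precise spin Frobenius character formula, giving the values $\chi_{\mathbb{D}_\lambda}(T_{w_\nu})$ as coefficients of $Q_\lambda$ in a basis of $q$-deformed odd power sums, with the exact powers of $2$ and of $q^2-1$;
\item an actual computation showing that the principal specialization $x_i=q^{2(i-1)}$ of the image of $T_{w_\nu}$ equals $\big(\frac{q^2-1}{2}\big)^{n-\ell(\nu)}$ up to an explicit global constant.
\end{enumerate}
Only after both does Cauchy's identity produce the $d_\lambda$ above. That in turn gives $d_\lambda\neq 0$, which is your only source of non-degeneracy. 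Checking $\lambda=(n)$ or $n\leq 3$ can suggest the constant $2^{n+\lfloor\sharp\mathcal{D}_\lambda/2\rfloor}$ but cannot prove it. As written, the proposal reduces the theorem to the character formula of \cite{WW2} rather than proving it.
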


\begin{prop}\cite[Proposition 1.3]{LS3}\label{Hecke-Cliffod sym}
	Suppose that $q$ is not a primitive $4l$-th root of unity for $l\in [n],$ then $\mathcal{HC}_{\mathbb{K}}(n)$ is a symmetric superalgebra with the symmetrizing form $t^{\mathbb{K}}_{1,n}$.
\end{prop}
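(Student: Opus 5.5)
The statement is a special case of Proposition \ref{Nondengerate} (ii). Take $m=0$ and $f=f^{(\mathtt{s})}_{\emptyset}=X_1-1$, so that $r=1$ and $\mathcal{HC}_{\mathbb{K}}(n)=\mathcal{H}^f_{\mathbb{K}}(n)$. The form in that proposition is then $t^{\mathbb{K}}_{1,n}=\tau^{\mathbb{K}}_{1,n}\bigl(-\cdot(1+X_1)\cdots(1+X_n)\bigr)$. So the plan is to show that, under the hypothesis on $q$, each $1+X_k$ is invertible in $\mathcal{HC}_{\mathbb{K}}(n)$; Proposition \ref{Nondengerate} (ii) then gives the claim. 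Since the $X_k$ commute, it suffices to show that $\det(1+X_k)\neq 0$ for left multiplication on $\mathcal{HC}_{\mathbb{K}}(n)$, for each $k\in[n]$.

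\textbf{Step 1: compute the determinant generically.} Work over ${\bf A}=\Z[\frac12][q,q^{-1}]$ with $\mathcal{HC}_{\bf A}(n)$ free with the basis of Lemma \ref{basis}. Left multiplication by $X_k$ has a characteristic polynomial $\chi_k(x)\in{\bf A}[x]$, and $\det(1+X_k)=\pm\chi_k(-1)\in{\bf A}$. To determine $\chi_k$, pass to $\mathbb{F}=\overline{\C(q)}$. There $P^{(\mathtt{s})}_n(q^2,\emptyset)=\prod_{t=1}^n(q^{4t}-1)\neq0$, so Theorem \ref{semisimple:non-dege} applies: $\mathcal{HC}_{\mathbb{F}}(n)$ is semisimple with simples $\mathbb{D}(\lambda)$, $\lambda\in\mathscr{P}^{\mathsf{s}}_n$. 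The action of $X_k$ on these modules is diagonalizable, with eigenvalues $\mb_\pm(\res_\mt(k))$ as recalled in \eqref{eigenvalues}, where $\res_\mt(k)=q^{2c}$ for a box of content $c=j-i\in\{0,1,\dots,k-1\}$. On diagonal boxes one has $\mb_\pm(1)=1$ (or $q^{\pm2}$ up to the convention in use). In every case each root $x$ of $\chi_k$ satisfies $x+x^{-1}=\mathtt{q}(q^{2c})$ for some $0\le c\le n-1$.

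\textbf{Step 2: the key factorization.} Since $X_k$ is invertible, $1+x=0$ is equivalent to $x+x^{-1}=-2$, that is, $\mathtt{q}(q^{2c})=-2$. Clearing denominators in the definition of $\mathtt{q}$,
\[
q^{2c+1}+q^{-2c-1}+q+q^{-1}=q^{-2c-1}\bigl(q^{2c}+1\bigr)\bigl(q^{2c+2}+1\bigr).
\]
Pairing the roots $x,x^{-1}$, $\chi_k(-1)$ becomes, up to a unit of ${\bf A}$ (powers of $2$, $q$ and $q^2+1$), a product of factors $(q^{2c}+1)(q^{2c+2}+1)$ with $0\le c\le n-1$. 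For $c=0$ the factor $q^0+1=2$ is a unit, and the diagonal eigenvalues $1$ contribute $1+1=2$, also a unit. Hence $\det(1+X_k)$ is, up to a unit of ${\bf A}$, a product of factors $q^{2l}+1$ with $l\in[n]$.

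\textbf{Step 3: specialize.} Specialize $q$ to its value in $\mathbb{K}$. Since $q^{2l}+1=0$ forces $q^{4l}=1$ and $q^{2l}\neq1$, the order of $q$ is $4d$ for some $d\mid l$ with $l/d$ odd. Then $q$ is a primitive $4d$-th root of unity with $d\le n$, which the hypothesis excludes. Therefore $\det(1+X_k)\neq0$ in $\mathbb{K}$ for every $k$, each $1+X_k$ is invertible, and Proposition \ref{Nondengerate} (ii) finishes the proof.

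The main obstacle is Step 1: justifying rigorously that every root of the generic characteristic polynomial is of the form $\mb_\pm(q^{2c})$ with $0\le c\le n-1$, with the diagonal cases giving only units. I would first try to extract this from the eigenvalue description \eqref{eigenvalues} of $X_k$ on the semisimple modules over $\mathbb{F}$, together with the fact that the regular representation is a sum of these modules. If that bookkeeping proves awkward, the alternative is a direct inductive argument showing that $\prod_{c=0}^{k-1}\bigl(X_k+X_k^{-1}-\mathtt{q}(q^{2c})\bigr)$ annihilates $\mathcal{HC}_{\bf A}(n)$, in the spirit of Corollary \ref{symmetric over ring}.
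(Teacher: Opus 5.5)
Your proposal is correct and follows essentially the same route as the paper. The paper, via \cite[Proposition 7.9]{LS3} (compare Proposition \ref{Gamma condition} and Lemma \ref{eigenvalue}), also uses that the hypothesis is equivalent to $\Gamma^{(\mathsf{s})}_n(q^2,\emptyset)=\prod_{k=1-n}^{n}(q^{2k}+1)\neq 0$, deduces from the generic eigenvalues $\mathtt{b}_{\pm}(q^{2c})$ of $X_k$ that $-1$ is never an eigenvalue, so each $1+X_k$ is invertible, and then applies Proposition \ref{Nondengerate} (ii).

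Two small repairs are needed. First, $q^2+1$ is not a unit of ${\bf A}=\mathbb{Z}[\frac12][q,q^{-1}]$, so either adjoin its inverse or carry it as an explicit denominator; this is harmless because $q^2\neq -1$ in $\mathbb{K}$ by the case $l=1$ of the hypothesis. Second, the equal multiplicity of $x$ and $x^{-1}$ that your pairing uses follows from $C_kX_kC_k=X_k^{-1}$.
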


\begin{cor}
	Suppose that $q$ is not a primitive $4l$-th root of unity for $l\in [n].$ Then  $\{C^{\beta}T_w(1+X_1)^{-1}(1+X_2)^{-1}\cdots (1+X_n)^{-1}~|~ 
	\beta\in\mathbb{Z}_2^n, w\in {\mathfrak{S}_n}\}$ forms an $\mathbb{K}$-basis of $\mathcal{HC}_{\mathbb{K}}(n)$. Moreover, we have
	\begin{align*}
		t^{\mathbb{K}}_{1,n}\left( C^{\beta}T_w(1+X_1)^{-1}(1+X_2)^{-1}\cdots (1+X_n)^{-1} \right)=\delta_{(\beta,w),(0,1)}.
	\end{align*}
\end{cor}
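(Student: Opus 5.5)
The corollary has two parts: the elements $C^{\beta}T_w(1+X_1)^{-1}\cdots(1+X_n)^{-1}$ form a basis, and $t^{\mathbb{K}}_{1,n}$ takes the stated values on them. Here $m=0$ and $f=X_1-1$, so the level is $r=1$. By Lemma \ref{basis} the set $\{C^{\beta}T_w\mid \beta\in\mathbb{Z}_2^n, w\in\mathfrak{S}_n\}$ is a $\mathbb{K}$-basis of $\mathcal{HC}_{\mathbb{K}}(n)=\mathcal{H}^f_{\mathbb{K}}(n)$, since the only allowed exponent is $\alpha=0$. By Proposition \ref{trace formula}, the Frobenius form is $\tau^{\mathbb{K}}_{1,n}(C^{\beta}T_w)=\delta_{(\beta,w),(0,1)}$. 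Since $m=0$, Proposition \ref{Nondengerate}(ii) gives
$$t^{\mathbb{K}}_{1,n}=\tau^{\mathbb{K}}_{1,n}\bigl(-\cdot(1+X_1)(1+X_2)\cdots(1+X_n)\bigr).$$
This formula is meaningful once we know that $P:=(1+X_1)\cdots(1+X_n)$ is invertible in $\mathcal{HC}_{\mathbb{K}}(n)$.

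\textbf{Invertibility of $P$.} Under the hypothesis that $q$ is not a primitive $4l$-th root of unity for $l\in[n]$, Proposition \ref{Hecke-Cliffod sym} says $t^{\mathbb{K}}_{1,n}$ is a symmetrizing form. Since $t^{\mathbb{K}}_{1,n}$ is defined through Proposition \ref{Nondengerate}(ii), this already presupposes that $P$ is invertible, and that is what I will use. If a direct argument is wanted, one can instead argue that $X_1=1$ and that each $X_k$ is obtained recursively via $T_{k-1}$, so the eigenvalues of $X_k$ are of the form $q^{\pm 2j}$-type residues with $|j|<n$. Then $1+X_k$ is invertible unless $q^{2j}=-1$, and that case is exactly the excluded primitive $4l$-th roots of unity. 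I expect this eigenvalue bookkeeping to be the only delicate point. I would cite it from the proof of \cite[Proposition 1.3]{LS3} rather than redo it.

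\textbf{Basis.} Right multiplication by the invertible element $P^{-1}=(1+X_1)^{-1}\cdots(1+X_n)^{-1}$ is a $\mathbb{K}$-linear automorphism of $\mathcal{HC}_{\mathbb{K}}(n)$. It therefore carries the basis $\{C^{\beta}T_w\}$ to the basis $\{C^{\beta}T_wP^{-1}\}$.

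\textbf{Values of the form.} Because the $X_k$ commute, $P^{-1}P=1$. Hence
$$t^{\mathbb{K}}_{1,n}(C^{\beta}T_wP^{-1})=\tau^{\mathbb{K}}_{1,n}(C^{\beta}T_wP^{-1}P)=\tau^{\mathbb{K}}_{1,n}(C^{\beta}T_w)=\delta_{(\beta,w),(0,1)},$$
which is the stated formula.
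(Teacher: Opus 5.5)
Your proof is correct and is essentially the paper's argument. Invertibility of $(1+X_1)\cdots(1+X_n)$ is imported from \cite{LS3}, the basis is obtained from Lemma~\ref{basis} with $r=1$ by right multiplication with the inverse, and the values follow from Proposition~\ref{trace formula} together with Proposition~\ref{Nondengerate}(ii) at $m=0$. (One correction to your optional direct argument: the eigenvalues of $X_k$ are not the residues $q^{2j}$ themselves but the roots $\mathtt{b}_{\pm}(\res)$ of $x+x^{-1}=\mathtt{q}(\res)$, as in \eqref{eigenvalues}. However, $-1$ is such a root exactly when $\res\in\{-1,-q^{-2}\}$, so your conclusion that the only obstruction is $q^{2j}=-1$ still holds.)
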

\begin{proof}
It follows from the proof of \cite[Proposition 7.9]{LS3} and \cite[Proposition 1.3]{LS3} that $(1+X_1)(1+X_2)\cdots (1+X_n)$ is invertible in $\mathcal{HC}_{\mathbb{K}}(n)$. Combining with Lemma \ref{basis}, we prove the first statement. The second part follows from Proposition \ref{trace formula} and Proposition \ref{Nondengerate} (2).
	\end{proof}
Recall that $s_{\lambda}$ is the  Schur element of simple $\mathcal{HC}_{\mathbb{F}}(n)$-module $\mathbb{D}(\lambda)$ with respect to $t^{\mathbb{F}}_{1,n}$, for $\lambda\in\mathscr{P}^{\mathtt{s}}_{n}$. 

\begin{lem}\label{schur=schur}
	Let $\lambda\in\mathscr{P}^{\mathtt{s}}_{n}$. Then $c^{\lambda}=2^n\cdot s_\lambda$.
	\end{lem}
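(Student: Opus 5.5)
Since $\mathcal{HC}_{\mathbb{F}}(n)=\mathcal{H}^f_{\mathbb{F}}(n)$ with $f=f^{(\mathtt{s})}_{\emptyset}$, $m=0$, I will compute $s_\lambda$ from Corollary \ref{special shape 2} (or directly from Theorem \ref{closed-schur}) and compare the result with \eqref{eq:elemSchur} term by term. Two preliminary checks are needed. First, over $\mathbb{F}$ with $q$ transcendental, $P^{(\mathtt{s})}_n(q^2,\emptyset)=\prod_{t=1}^n(q^{2t}-1)(q^{2t}+1)\neq 0$. Hence Theorem \ref{semisimple:non-dege} gives semisimplicity, with simple modules $\mathbb{D}(\lambda)$ for $\lambda\in\mathscr{P}^{\mathtt{s}}_n$. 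Second, $q$ is not a root of unity, so Proposition \ref{Hecke-Cliffod sym} makes $t^{\mathbb{F}}_{1,n}$ a symmetrizing form and $s_\lambda$ is defined. I also need $\mathbb{D}_\lambda\cong\mathbb{D}(\lambda)$; both are the unique simple module labelled by $\lambda$ in the standard parametrization, and I will take this identification from the construction in \cite{WW2} and \cite{SW}.

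With $m=0$ the product over $c$ in Corollary \ref{special shape 2} is empty, so
$$s_\lambda=2^{\lfloor \sharp\mathcal{D}_{\lambda}/2\rfloor}\cdot\frac{\prod_{1\le i<i'\le \ell(\lambda)+1}\bigl(q^{2(\lambda_i+\lambda_{i'})}-1\bigr)\cdot\prod_{j=\ell(\lambda)+1}^{\lambda_1}\prod_{(i,j)\in\lambda}\bigl(q^{2h^{\tilde\lambda}_{i,j}}-1\bigr)}{q^{2n(\lambda)}(q^2-1)^n\prod_{(i,j)\in\lambda}\bigl(q^{2(j-i)}+1\bigr)}.$$
Here $\lambda_{\ell(\lambda)+1}=0$. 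The powers of $2$, the $q^{2n(\lambda)}$, the $(q^2-1)^n$ and the factors $1+q^{2(j-i)}$ already agree with \eqref{eq:elemSchur}, apart from the extra factor $2^n$ in $c^\lambda$.

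The key step, and the main obstacle, is the combinatorial identity for shifted hook lengths:
$$\prod_{(i,j)\in\lambda}\bigl(q^{2h^{\tilde\lambda}_{i,j}}-1\bigr)=\prod_{1\le i<i'\le\ell(\lambda)+1}\bigl(q^{2(\lambda_i+\lambda_{i'})}-1\bigr)\cdot\prod_{j=\ell(\lambda)+1}^{\lambda_1}\prod_{(i,j)\in\lambda}\bigl(q^{2h^{\tilde\lambda}_{i,j}}-1\bigr).$$
Equivalently, the multiset of shifted hook lengths of the boxes in columns $j\le \ell(\lambda)$ of the shifted diagram equals $\{\lambda_i+\lambda_{i'}: i<i'\le \ell+1\}$. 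To prove it I first use Frobenius notation, $\tilde\lambda=(\lambda_1,\dots,\lambda_\ell\mid\lambda_1-1,\dots,\lambda_\ell-1)$. For a box $(i,j)$ with $i\le j\le\ell$, the arm is $\lambda_i+i-1-j$. Because $\tilde\lambda'_j=\lambda_j+j-1+1$ in that range, the leg contribution gives $h^{\tilde\lambda}_{i,j}=\lambda_i+\lambda_{j+1}$, with the convention $\lambda_{\ell+1}=0$. Setting $i'=j+1$ then produces exactly the pairs $i<i'\le \ell+1$. I will verify the column-length formula carefully, since off-by-one errors are likely there.

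Alternatively, I can avoid the identity altogether by running the induction of Lemma \ref{closed q-special}(2) with both closed forms. That works, but the direct identity is cleaner. Once the identity is established, $c^\lambda=2^n s_\lambda$ follows immediately.
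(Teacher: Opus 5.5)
Your proposal is correct and is the paper's proof: specialize Corollary \ref{special shape 2} to $m=0$, then use $h^{\tilde\lambda}_{i,j}=\lambda_i+\lambda_{j+1}$ for $1\le i\le j\le\ell(\lambda)$ to merge the pair product with the remaining columns into $\prod_{(i,j)\in\lambda}(q^{2h^{\tilde\lambda}_{i,j}}-1)$, and compare with \eqref{eq:elemSchur}. One index slip in your hook check: the shifted box $(i,j)$ sits in column $j+1$ of $\tilde\lambda$, so the column length you need is $\tilde\lambda'_{j+1}=\lambda_{j+1}+j$ (with $\lambda_{\ell+1}=0$), not $\tilde\lambda'_j=\lambda_j+j$; with this correction, arm plus leg plus one gives $\lambda_i+\lambda_{j+1}$.
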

	
	\begin{proof}
		For $1\leq i\leq j\leq \ell(\lambda)$, we can check by definition that $$
		h^{\tilde{\lambda}}_{i,j}=\lambda_i+\lambda_{j+1}.
		$$ It follows that $$\prod\limits_{1\leq i<i'\leq \ell(\lambda^{(0)})+1}\left(q^{2\left(\lambda^{(0)}_i+\lambda^{(0)}_{i'}\right)}-1\right)=\prod\limits_{1\leq i\leq j\leq \ell(\lambda)}(q^{2h^{\tilde{\lambda}}_{i,j}}-1).$$ 
		Hence,$$
		\left(\prod\limits_{1\leq i<i'\leq \ell(\lambda^{(0)})+1}\left(q^{2\left(\lambda^{(0)}_i+\lambda^{(0)}_{i'}\right)}-1\right)\right)\cdot \left(\prod\limits_{j=\ell(\lambda^{(0)})+1}^{\lambda^{(0)}_1}\prod\limits_{(i,j,0)\in\lambda^{(0)}}\left(q^{2h_{i,j}^{\widetilde{\lambda^{(0)}}}}-1\right)\right)=\prod\limits_{(i,j)\in
			\lambda}(q^{2h^{\tilde{\lambda}}_{i,j}}-1).$$
		Combining with Corollary \ref{special shape 2} and \eqref{eq:elemSchur}, we complete the proof of the Lemma.
		\end{proof}
		
			\medskip
		{\bf Proof of Proposition \ref{trace function}}:	By base change, we have $t_{1,n}^{\rm R}=1_{\rm R}\otimes_{\bf A}t_{1,n}^{\bf A}$ and $\gimel_n^{\rm R}=1_{\rm R}\otimes_{\bf A}\gimel_n^{\bf A}$. Hence, we only need to prove $\gimel_n^{\bf A}=\frac{1}{2^n}t_{1,n}^{\bf A}.$ To this end, we claim $\gimel_n^{\mathbb{F}}=\frac{1}{2^n}t_{1,n}^{\mathbb{F}}.$ Since $\mathbb{F}$ is the algebraic closure of $\C(q)$, $\mathcal{HC}_{\mathbb{F}}(n)$ is semisimple over $\mathbb{F}$. Therefore, our claim follows from Proposition \ref{schur formula 1} and Lemma \ref{schur=schur}. This completes the proof of the proposition.
\qed
\medskip
				
\section{Semisimplicity criterion for cyclotomic Hecke-Clifford superalgebra}\label{Semisimplicity criterion for cyclotomic Hecke-Clifford superalgebra}

{\bf Throughout this subsection, ${\rm R}=\mathbb{K}$ is the algebraically closed field of characteristic different from $2$. We fix $n\in\N, q^2\neq \pm 1$, $\undQ=(Q_1,\cdots,Q_m)\in(\mathbb{K}^*)^m$ and $f=f^{(\bullet)}_{\underline{Q}}$ with $\bullet\in\{\mathtt{0},\mathtt{s}\}$.  We shortly denote $\mathcal{H}^f_{\mathbb{K}}:=\mathcal{H}^f_{\mathbb{K}}(n)$. Let $x$ be an indeterminate, we set $\hO:=\mathbb{K}[[x]]=\{a_0+a_1x+a_2x^2+\cdots|~a_i\in \mathbb{K}\}$ and $\hK$ be the fraction field of $\hO$. We modify the parameters as follows: $q':=x^4+q,$ $Q'_i:=x^{8ni}+q^{-1}Q_i, 1\leq i\leq m$. Then we can define $\mathcal{H}^{f'}_{\hO}:=\mathcal{H}^{f'}_{\hO}(n)$, where $$f'=\begin{cases}
		f^{\mathsf{(0)}}_{\underline{Q'}}=\prod_{i=1}^m \biggl(X_1+X^{-1}_1-\mathtt{q}(Q_i')\biggr), &\qquad\text{ if $\bullet=\mathtt{0},$}\\
		f^{\mathsf{(s)}}_{\underline{Q'}}=(X_1-1)\prod_{i=1}^m \biggl(X_1+X^{-1}_1-\mathtt{q}(Q_i')\biggr),&\qquad\text{ if $\bullet=\mathtt{s}.$}
	\end{cases}$$  Similarly, we can define $\mathcal{H}^{f'}_{\hK}:=\mathcal{H}^{f'}_{\hK}(n)$.} Then we have $$
\mathcal{H}^{f'}_{\hK}\cong \hK\otimes_{\hO} \mathcal{H}^{f'}_{\hO},\qquad\mathcal{H}^{f}_{\mathbb{K}}\cong \mathbb{K}\otimes_{\hO}\mathcal{H}^{f'}_{\hO}.
$$ One can check $P_n^{(\bullet)}({q'}^2,\undQ')\neq 0$, hence $\mathcal{H}^{f'}_{\hK}$ is semisimple over $\hK$ by Theorem \ref{semisimple:non-dege}.   {\bf Accordingly, we define the residues of boxes in the young diagram $\undla$ via \eqref{eq:residue} as well as $\res(\mathfrak{t})$ for each $\mathfrak{t}\in\Std(\undla)$ with $\undla\in\mathscr{P}^{\bullet,m}_{n}$ with $m\geq 0$ with respect to parameters $(q',Q'_1,\cdots,Q'_m)$. }

One can check that all the eigenvalues $\mathtt{b}_{\pm}(\res_{\mt}(k))$ of $X_k$ belong to $\mathbb{K}[[x^2]]\subset\hO$. Furthermore, 
\begin{align}\label{integral}\sqrt{1\pm {q'}^2{Q'_c}^{2t}}\in \hO, \text{ for any $t\in \Z$ and $1\leq c\leq m$. }	
	\end{align} We use $t^{\hO}_{r,n}$ to denote the natural map $\mathcal{H}^{f'}_{\hO}\to\hO$. For $a\in \hO$, we use $a|_{x=0}\in \mathbb{K}$ to denote the image of $a$ in residue field $\mathbb{K}\cong \hO/(x)$.

		\medskip
	{\bf Proof of Theorem \ref{semisimplicity for cyclotomic Hecke-Clifford} (1)}: If $P^{(\mathtt{0})}_{n}(q^2,\undQ)\neq 0$, then $\mathcal{H}^{f}_{\mathbb{K}}$ is (split) semisimple by Theorem \ref{semisimple:non-dege}. Assume that $\mathcal{H}^{f}_{\mathbb{K}}$ is (split) semisimple. By Proposition \ref{Nondengerate} (1), $\mathcal{H}^{f'}_{\hO}$ is supersymmetric. Therefore, we can apply Theorem \ref{modular-semisimple 2} (1), and deduce that for all $\undla\in\mathscr{P}^{\mathtt{0},m}_{n}$, we have $s_{\undla}|_{x=0}\neq 0$. We shall prove $P^{(\mathtt{0})}_{n}(q^2,\undQ)\neq 0$ by choosing some special $\undla\in\mathscr{P}^{\mathtt{0},m}_{n}$.
	
	\begin{enumerate}
		\item Let $\undla=(\Gamma_{1,n-1},\underbrace{\emptyset,\cdots,\emptyset}_{m-1})$, then by Corollary \ref{special shape 1} (1) and \eqref{integral}, we deduce $\prod\limits_{t=1}^{n}\bigl(q^{2t}-1\bigr)\neq 0.$
		
		\item We fix $1\leq i\leq m$. Let $\undla\in\mathscr{P}^{\mathtt{0},m}_{n}$ such that $\lambda^{(i)}=\Gamma_{1,n-1}$. Then by Corollary \ref{special shape 1} (1) and \eqref{integral}, we deduce $\prod\limits_{t=3-n}^{1}\bigl(Q^2_i-q^{-2t}\bigr)\neq 0$. Similarly, let $\undmu\in\mathscr{P}^{\mathtt{0},m}_{n}$ such that $\mu^{(i)}=\Gamma_{n,0}$. Then by Corollary \ref{special shape 1} (1) and \eqref{integral}, we deduce $\prod\limits_{t=1}^{n-1}\bigl(Q^2_i-q^{-2t}\bigr)\neq 0$. Hence, $$\prod\limits_{t=3-n}^{n-1}\bigl(Q^2_i-q^{-2t}\bigr)\neq 0.$$
		
		\item We fix $1\leq i\leq m$. Let $\undla\in\mathscr{P}^{\mathtt{0},m}_{n}$ such that $\lambda^{(i)}=\Gamma_{u,n-u}$ for some $1\leq u\leq n$. Then by Corollary \ref{special shape 1} (1) and \eqref{integral}, we deduce 
		$\bigl(Q^2_i-q^{-4u}\bigr)\bigl(Q^2_i-q^{-4(u-n)}\bigr)\neq 0.$ Hence,  $$\prod\limits_{u=1}^n\left(\bigl(Q^2_i-q^{-4u}\bigr)\bigl(Q^2_i-q^{-4(u-n)}\bigr)\right)=\prod\limits_{t=1-n}^{n}\bigl(Q^2_i-q^{-4t}\bigr)\neq 0.$$
		
		\item We fix $1\leq i<i'\leq m$. Let $\undla\in\mathscr{P}^{\mathtt{0},m}_{n}$ such that $\lambda^{(i)}=\Gamma_{1,n-1}$. Then by Corollary \ref{special shape 1} (1) and \eqref{integral}, we deduce
		$\prod\limits_{t=1-n}^{0}\bigl({Q_i}-Q_{i'}q^{-2t}\bigr)
		\bigl(Q_iQ_{i'}-q^{-2(t+1)}\bigr)\neq 0.$ Similarly, let $\undmu\in\mathscr{P}^{\mathtt{0},m}_{n}$ such that $\mu^{(i)}=\Gamma_{n,0}$. Then by Corollary \ref{special shape 1} (1) and \eqref{integral}, we deduce $\prod\limits_{t=0}^{n-1}\bigl({Q_i}-Q_{i'}q^{-2t}\bigr)
		\bigl(Q_iQ_{i'}-q^{-2(t+1)}\bigr)\neq 0.$  Hence $$\prod\limits_{t=1-n}^{n-1}\bigl({Q_i}-Q_{i'}q^{-2t}\bigr)
		\bigl(Q_iQ_{i'}-q^{-2(t+1)}\bigr)\neq 0.$$
		\end{enumerate}
		
		To sum up, we complete the proof of $P^{(\mathtt{0})}_{n}(q^2,\undQ)\neq 0$. 
	\qed
	\medskip
	
In the rest of this subsection, we assume $\bullet=\mathtt{s}$.

\begin{lem}\cite[Corollary 6.28]{LS4}\label{eigenvalue}
	Let $i\in[n]$. Then $\tilde{a}$ is an eigenvalue of $X_i$ on $\mathcal{H}^{f}_{\mathbb{K}}$ if and only if $\tilde{a}=a|_{x=0}$ and $a$ is an eigenvalue of $X_i$ on $\mathcal{H}^{f'}_{\mathscr{K}}$.
	\end{lem}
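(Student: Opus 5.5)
The plan is to compare the generalized eigenvalues of $X_i$ on the regular module over $\mathbb{K}$ with those over $\hK$ by reducing characteristic polynomials. Since $\mathcal{H}^{f'}_{\hO}$ is a free $\hO$-module of finite rank with $\mathcal{H}^{f}_{\mathbb{K}}\cong \mathbb{K}\otimes_{\hO}\mathcal{H}^{f'}_{\hO}$ and $\mathcal{H}^{f'}_{\hK}\cong\hK\otimes_{\hO}\mathcal{H}^{f'}_{\hO}$, we consider left multiplication by $X_i$ as an $\hO$-linear endomorphism $L_i$ of $\mathcal{H}^{f'}_{\hO}$. We use the basis of Lemma \ref{basis}, which is the same basis over $\hO$, $\hK$ and $\mathbb{K}$. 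Let $\chi_i(y)\in\hO[y]$ be its characteristic polynomial. Its image in $\hK[y]$ is the characteristic polynomial of $X_i$ on $\mathcal{H}^{f'}_{\hK}$. Its reduction $\chi_i(y)|_{x=0}\in\mathbb{K}[y]$ is the characteristic polynomial of $X_i$ on $\mathcal{H}^{f}_{\mathbb{K}}$, because determinants commute with the ring map $\hO\to\mathbb{K}$. Here ``eigenvalue of $X_i$ on the algebra'' is read as an eigenvalue of left multiplication, i.e. a root of this characteristic polynomial.

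The key step is to show that $\chi_i(y)$ splits over $\hO$ itself, with all roots in $\hO$. By Theorem \ref{semisimple:non-dege}, $\mathcal{H}^{f'}_{\hK}$ is split semisimple, after passing to an algebraic closure if necessary. The simple modules $\mathbb{D}(\undla)$ have explicit bases on which $X_i$ acts by the scalars $\mb_\pm(\res_\mt(i))$ computed via \eqref{eigenvalues}. Hence the roots of $\chi_i$ are exactly these values $\mb_\pm(\res_\mt(i))$, with $\undla\in\mathscr{P}^{\bullet,m}_n$ and $\mt\in\Std(\undla)$. The paper has already noted that these lie in $\mathbb{K}[[x^2]]\subset\hO$, so $\chi_i(y)=\prod_j (y-a_j)$ with all $a_j\in\hO$.

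Reducing this factorization modulo $x$ gives $\chi_i(y)|_{x=0}=\prod_j\bigl(y-a_j|_{x=0}\bigr)$ in $\mathbb{K}[y]$. Therefore $\tilde a$ is an eigenvalue of $X_i$ on $\mathcal{H}^{f}_{\mathbb{K}}$ if and only if $\tilde a=a_j|_{x=0}$ for some root $a_j$. Both directions follow at once, since $\mathbb{K}[y]$ has unique factorization and $\mathbb{K}$ is algebraically closed.

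The main obstacle is justifying that every eigenvalue on $\hK$ actually lies in $\hO$. This requires checking that the square roots in $\mb_\pm$ make sense in $\mathbb{K}[[x]]$. With the chosen deformation $q'=x^4+q$ and $Q'_c=x^{8nc}+q^{-1}Q_c$, the discriminants $\mathtt{q}(\iota)^2/4-1$ have constant term governed by $q,Q_c$. If that constant term vanishes, one needs the $x$-adic valuation to be even with a square leading coefficient. I would verify this by writing $\mathtt{q}(\iota)^2/4-1=\bigl((q\iota-(q\iota)^{-1})/(q+q^{-1})\bigr)^2$, which is a perfect square in $\hK$. Its square root $(q\iota-(q\iota)^{-1})/(q+q^{-1})$ then lies in $\hO$, because $q'+q'^{-1}$ is a unit, as $q+q^{-1}\ne0$ in the residue field. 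This settles integrality without any delicate valuation argument, and integrality of the roots of a monic polynomial over the normal ring $\hO$ gives the same conclusion in any case.
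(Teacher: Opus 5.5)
Your skeleton is right. The characteristic polynomial $\chi_i(y)\in\hO[y]$ of left multiplication by $X_i$ on the free $\hO$-module $\mathcal{H}^{f'}_{\hO}$ specializes to the characteristic polynomials over $\hK$ and over $\mathbb{K}$. Once $\chi_i=\prod_j(y-a_j)$ with all $a_j\in\hO$, reducing modulo $x$ gives both implications. (The paper gives no proof of this lemma: it quotes it from [LS4, Corollary 6.28] and only remarks beforehand that the $\mb_\pm(\res_\mt(k))$ lie in $\mathbb{K}[[x^2]]$.)

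The gap is in the step you yourself call the main obstacle: your justification that the roots lie in $\hO$ is wrong. The identity $\mathtt{q}(\iota)^2/4-1=\bigl((q\iota-(q\iota)^{-1})/(q+q^{-1})\bigr)^2$ is false. In fact
\[
\frac{\mathtt{q}(\iota)^2}{4}-1=\frac{(q^2\iota-q^{-2}\iota^{-1})(\iota-\iota^{-1})}{(q+q^{-1})^2},
\]
which for $\iota=Q_cq^{2m}$ equals $\epsilon^2[m+1]_{c,q^2}[m]_{c,q^2}$, as in \eqref{eigenvalues}. It differs from your square by $\epsilon^2/(q+q^{-1})^2\neq 0$. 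Your version would give $\mb_+(\iota)\mb_-(\iota)=4/(q+q^{-1})^2$ instead of $1$. The fallback is circular. Integral closedness of $\hO$ only says that an element of $\hK$ integral over $\hO$ lies in $\hO$. The real question is whether the roots of $\chi_i$ lie in $\hK=\mathbb{K}((x))$ at all; for example, $y^2-x$ is monic over $\hO$ with no root in $\hK$. This is exactly where the special shape of the deformation must be used.

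To repair it, note that $q'=x^4+q$, $Q'_c=x^{8nc}+q^{-1}Q_c$ and $Q_0=1$ are units of $\mathbb{K}[[x^4]]$. So is $q'+q'^{-1}$, whose constant term $q+q^{-1}$ is nonzero because $q^2\neq -1$. Hence every deformed residue $\iota$, and the discriminant $\Delta:=\mathtt{q}(\iota)^2/4-1$ computed with $q'$, lie in $\mathbb{K}[[x^4]]$. If $\Delta\neq 0$, write $\Delta=x^{4k}u$ with $u\in\mathbb{K}[[x^4]]^\times$. Since $\mathbb{K}$ is algebraically closed of characteristic $\neq 2$, $u|_{x=0}$ has a square root, and the binomial series (with coefficients in $\Z[\frac12]$) gives $\sqrt{u}\in\mathbb{K}[[x^4]]$. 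Hence $\sqrt{\Delta}=\pm x^{2k}\sqrt{u}\in\mathbb{K}[[x^2]]$, and $\mb_\pm(\iota)=\mathtt{q}(\iota)/2\pm\sqrt{\Delta}\in\mathbb{K}[[x^2]]\subset\hO$, which is precisely the paper's remark. With this in place $\chi_i$ splits in $\hO[y]$, and the rest of your argument goes through unchanged.
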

	
	Recall $Q_0:=1.$ We define
	\begin{align}\label{Gamma poly}
		\Gamma_n^{(\mathsf{s})}(q^2,\underline{Q})
		:=\prod_{i=0}^{m}\prod_{k=1-n}^{n}
		\left(q^{2k}Q_i+1\right)\in\mathbb{K}.
	\end{align}
	
	\begin{prop}\label{Gamma condition}
		Suppose $\Gamma_n^{(\mathtt{s})}(q^2,\underline{Q})\neq 0$ in $\mathbb{K}.$ Then all of the eigenvalues of $1+X_j$ on $\mathcal{H}^{f}_{\mathbb{K}}$ ($1\leq j\leq n$) are non-zero and $\mathcal{H}^{f}_{\mathbb{K}}$ is a symmetric algebra with symmetrizing form $t^{\mathbb{K}}_{r,n}.$
	\end{prop}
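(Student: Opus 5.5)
The plan is to reduce the statement to the eigenvalue description furnished by the deformation $\mathcal{H}^{f'}_{\hO}$, and then invoke Corollary \ref{symmetric over ring} over the field $\mathbb{K}$ itself, or directly Proposition \ref{Nondengerate} (ii).

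\textbf{Step 1: eigenvalues of $X_j$ over $\hK$.} Over $\hK$ the algebra $\mathcal{H}^{f'}_{\hK}$ is split semisimple, since $P_n^{(\mathtt{s})}({q'}^2,\undQ')\neq 0$. By Theorem \ref{semisimple:non-dege} and the explicit construction of $\mathbb{D}(\undla)$, the eigenvalues of $X_j$ on $\mathcal{H}^{f'}_{\hK}$ are exactly the elements $\mb_\pm(\res_\mt(j))$, where $\mt\in\Std(\undla)$ and $\undla\in\mathscr{P}^{\mathtt{s},m}_n$. Each residue has the form $Q'_l{q'}^{2(b-a)}$ with $|b-a|\le n-1$, where $l=0$ is allowed and $Q'_0=1$ (for the strict component one has $b\geq a$).

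\textbf{Step 2: reduction mod $x$.} By Lemma \ref{eigenvalue}, every eigenvalue $\tilde a$ of $X_j$ on $\mathcal{H}^f_{\mathbb{K}}$ is $a|_{x=0}$ for some such $a=\mb_\pm(\iota)$. Write $\iota_0=\iota|_{x=0}$; up to the normalisation of $\undQ'$, this is $Q_l q^{2(b-a)}$ times a power of $q$. Since $\mb_\pm(\iota)$ are the two roots of $y+y^{-1}=\mathtt{q}(\iota)$, we get
\[
\tilde a\neq -1 \iff \mathtt{q}(\iota_0)\neq -2.
\]
Now $\mathtt{q}(\iota_0)=-2$ means $q\iota_0+(q\iota_0)^{-1}=-(q+q^{-1})$, that is, $(q\iota_0+q)(q\iota_0+q^{-1})=0$. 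Equivalently, $\iota_0=-1$ or $q^2\iota_0=-1$. After substituting the residue, these conditions become $Q_lq^{2k}=-1$ for some $k$ in a range $1-n\le k\le n$. It is exactly this shift $q^{2(b-a)}$ versus $q^{2(b-a+1)}$ that accounts for the range $k\in[1-n,n]$ in \eqref{Gamma poly}. Hence $\Gamma_n^{(\mathtt{s})}(q^2,\undQ)\ne0$ rules this out, and every eigenvalue of $1+X_j$ is nonzero.

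\textbf{Step 3: symmetry.} Because $1+X_j$ has only nonzero eigenvalues on the finite-dimensional algebra $\mathcal{H}^f_{\mathbb{K}}$ (acting by left multiplication), its characteristic polynomial has nonzero constant term. So $1+X_j$ is invertible, and therefore so is $(1+X_1)\cdots(1+X_n)$. Proposition \ref{Nondengerate} (ii) then gives that $t^{\mathbb{K}}_{r,n}$ is a symmetrizing form.

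The main obstacle is the bookkeeping in Step 2. It requires tracking exactly which $q$-powers occur, including the rescaling $Q'_i=x^{8ni}+q^{-1}Q_i$ and the convention for the $0$-component. The aim is to confirm that the forbidden values are precisely $Q_iq^{2k}=-1$ with $k\in[1-n,n]$, and that diagonal boxes of the strict component contribute nothing outside that range. I would first redo the computation of $\mathtt{q}(\iota)=-2$ explicitly, and then check the extreme boxes: the end of the first row and the bottom of the first column.
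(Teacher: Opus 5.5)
Your argument is correct, but it is not how the paper proceeds. The paper does no computation here: it cites \cite[Proposition 7.9]{LS3}, taking the nonvanishing of the eigenvalues of $1+X_j$ from that proof and the symmetry from its statement.

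You instead derive the eigenvalue claim from tools in this paper. The $X_j$-eigenvalues on the generic semisimple algebra $\mathcal{H}^{f'}_{\hK}$ (after passing to $\overline{\hK}$) are among the $\mb_\pm(\res_\mt(j))$. Lemma \ref{eigenvalue} transports this to $\mathcal{H}^f_{\mathbb{K}}$. The factorization $q^2\iota^2+(q^2+1)\iota+1=(\iota+1)(q^2\iota+1)$ then shows that $\mathtt{q}(\iota_0)=-2$ exactly when $\iota_0\in\{-1,-q^{-2}\}$. This makes clear why the range in \eqref{Gamma poly} is $1-n\le k\le n$. Boxes with $b-a\in[1-n,n-1]$ contribute $k=b-a$ and $k=b-a+1$. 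The strict component only gives $k\in[1,n]$, since $k=0$ is excluded by $\mathrm{char}\,\mathbb{K}\neq 2$.

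There is no circularity. The paper feeds this proposition together with Lemma \ref{eigenvalue} into Corollary \ref{symmetric-O}, whereas you use Lemma \ref{eigenvalue}, imported from \cite{LS4}, to prove the proposition itself. Your computation in fact shows directly that $1+a\in\hO^\times$ for every eigenvalue $a$ of $X_k$ on $\mathcal{H}^{f'}_{\hK}$. That is exactly the input of Corollary \ref{symmetric-O}, so your route gives both results at once. The price is heavier inputs: the classification of simples over $\overline{\hK}$, integrality of the eigenvalues in $\hO$, and \cite{LS4}, rather than an argument carried out over $\mathbb{K}$.

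On the bookkeeping you flagged, there is no extra power of $q$ to track. The isomorphism $\mathbb{K}\otimes_{\hO}\mathcal{H}^{f'}_{\hO}\cong\mathcal{H}^f_{\mathbb{K}}$ forces $\mathtt{q}(Q'_l|_{x=0})=\mathtt{q}(Q_l)$, i.e. $Q'_l|_{x=0}\in\{Q_l,\,q^{-2}Q_l^{-1}\}$ (up to relabelling the $Q_i$). Either choice gives precisely the forbidden conditions $Q_lq^{2k}=-1$ with $1-n\le k\le n$. The displayed $Q'_i=x^{8ni}+q^{-1}Q_i$ cannot be taken literally: a genuine factor $q^{-1}$ would produce odd powers of $q$ and break the match with $\Gamma_n^{(\mathtt{s})}(q^2,\undQ)$. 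So you should drop the hedge ``times a power of $q$'' and state $\iota_0=Q_lq^{2(b-a)}$ outright.
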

	
	\begin{proof}
	The first statement is contained in the proof of \cite[Proposition 7.9]{LS3}, and the second statement is \cite[Proposition 7.9]{LS3}.
		\end{proof}
	\begin{cor}\label{symmetric-O}
		Suppose $\Gamma_n^{(\mathsf{s})}(q^2,\underline{Q})\neq 0$ in $\mathbb{K}.$ Then $\mathcal{H}^{f'}_{\hO}$ is symmetric with the symmetrizing form $t^{\hO}_{r,n}$.
		\end{cor}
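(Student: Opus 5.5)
The plan is to apply Corollary \ref{symmetric over ring} with ${\rm R}=\hO$ and $\mathscr{K}=\hK$, taking $\bullet=\mathtt{s}$ and the deformed parameters $(q',\undQ')$. That corollary asks for two things for every $k\in[n]$: (a) the characteristic polynomial of $X_k$ on $\mathcal{H}^{f'}_{\hK}$ splits, and (b) every eigenvalue of $1+X_k$ on $\mathcal{H}^{f'}_{\hK}$ is a unit in $\hO$. Once both hold, the corollary shows directly that $\mathcal{H}^{f'}_{\hO}$ is symmetric with symmetrizing form $t^{\hO}_{r,n}$.

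For (a), I will use the fact that $P_n^{(\mathtt{s})}({q'}^2,\undQ')\neq 0$. By Theorem \ref{semisimple:non-dege}, applied over the algebraic closure of $\hK$, the algebra is split semisimple, and its simple modules $\mathbb{D}(\undla)$ have $X_k$ acting with eigenvalues $\mathtt{b}_\pm(\res_{\mt}(k))$ (the weights of the seminormal construction). Hence the eigenvalues of $X_k$ on the regular representation are exactly these $\mathtt{b}_\pm(\res_{\mt}(k))$. The paper has already noted that all of them lie in $\mathbb{K}[[x^2]]\subset\hO$. They therefore lie in $\hK$, so the characteristic polynomial of $X_k$ splits over $\hK$, and its roots are integral over $\hO$ (indeed they lie in $\hO$).

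For (b), take an eigenvalue $a\in\hO$ of $X_k$ on $\mathcal{H}^{f'}_{\hK}$. Then $1+a$ is a unit in $\hO=\mathbb{K}[[x]]$ exactly when $(1+a)|_{x=0}\neq 0$. By Lemma \ref{eigenvalue}, $a|_{x=0}$ is an eigenvalue of $X_k$ on $\mathcal{H}^f_{\mathbb{K}}$. By Proposition \ref{Gamma condition}, the hypothesis $\Gamma_n^{(\mathtt{s})}(q^2,\undQ)\neq 0$ makes every eigenvalue of $1+X_k$ on $\mathcal{H}^f_{\mathbb{K}}$ nonzero. So $1+a|_{x=0}\neq 0$, and $1+a\in\hO^\times$.

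The main obstacle is step (a). One must be sure that every eigenvalue of $X_k$ on the regular representation is among the $\mathtt{b}_\pm(\res_{\mt}(k))$ and lies in $\hO$, and not merely in an extension of $\hK$. If the ``one can check'' remark about $\mathbb{K}[[x^2]]$ needs backing up, I would verify it directly: $\mathtt{b}_\pm$ is given by \eqref{eigenvalues}, so it suffices to see that the square roots $\sqrt{[j-i+1]_{l,{q'}^2}[j-i]_{l,{q'}^2}}$ exist in $\mathbb{K}[[x]]$. This follows from Hensel's lemma together with \eqref{integral}, since the perturbation $q'=x^4+q$, $Q_i'=x^{8ni}+q^{-1}Q_i$ is chosen so that the radicands have a nonzero square constant term, or are $x^{2j}$ times such a series.
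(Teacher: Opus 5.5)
Your proposal is correct and is essentially the paper's proof: apply Corollary~\ref{symmetric over ring} with ${\rm R}=\hO$, using that the eigenvalues $a$ of $X_k$ on $\mathcal{H}^{f'}_{\hK}$ lie in $\hO$ and that $1+a|_{x=0}\neq 0$ by Lemma~\ref{eigenvalue} and Proposition~\ref{Gamma condition}. Your extra Hensel-type check that the $\mathtt{b}_\pm(\res_{\mt}(k))$ lie in $\hO$ only fills in what the paper states as ``by construction''; it works because the radicands are power series in $x^4$ and so have even $x$-adic valuation.
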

		
		\begin{proof}
		Let $k\in[n]$. By construction, all of the eigenvalues of $X_k$ on $\mathcal{H}^{f'}_{\hK}$ belong to $\hO$. By Lemma \ref{eigenvalue} and Proposition \ref{Gamma condition}, all of the eigenvalues of $1+X_k$ on $\mathcal{H}^{f'}_{\hK}$ ($1\leq j\leq n$) are invertible in $\hO$. Hence, the corollary follows from Corollary \ref{symmetric over ring}.
					\end{proof}
	
	\medskip
	{\bf Proof of Theorem \ref{semisimplicity for cyclotomic Hecke-Clifford} (2)}: If $P^{(\mathtt{s})}_{n}(q^2,\undQ)\neq 0$, then $\mathcal{H}^{f}_{\mathbb{K}}$ is (split) semisimple by Theorem \ref{semisimple:non-dege}. Assume that $\mathcal{H}^{f}_{\mathbb{K}}$ is (split) semisimple. By Corollary \ref{symmetric-O}, $\mathcal{H}^{f'}_{\hO}$ is symmetric. Therefore, we can apply Theorem \ref{modular-semisimple 2} (2), and deduce that for all $\undla\in\mathscr{P}^{\mathtt{s},m}_{n}$, we have $s_{\undla}|_{x=0}\neq 0$. We shall prove $P^{(\mathtt{s})}_{n}(q^2,\undQ)\neq 0$ by choosing some special $\undla\in\mathscr{P}^{\mathtt{s},m}_{n}$. 
	
	\begin{enumerate}
		\item Let $\undla\in\mathscr{P}^{\mathtt{s},m}_{n}$ such that $\lambda^{(0)}=(n)$. Then by Corollary \ref{special shape 2}, we deduce $\prod\limits_{t=1}^{n}\bigl(q^{2t}-1\bigr)\neq 0$. Combining with $\Gamma_n^{(\mathsf{s})}(q^2,\underline{Q})\neq 0$, we have $$
		\prod\limits_{t=1}^{n}\biggl(\bigl(q^{2t}-1\bigr)\bigl(q^{2t}+1\bigr)\biggr)\neq 0.
		$$
		\item We fix $1\leq i\leq m$. The proof of $$\prod\limits_{t=3-n}^{n-1}\bigl(Q^2_i-q^{-2t}\bigr)\neq 0$$ is exactly the same as the case $\bullet=\mathtt{0}$.
		
		\item We fix $1\leq i\leq m$. Let $\undla$ such that $\lambda^{(i)}=\Gamma_{1,n-1}$. Then by Corollary \ref{special shape 1} (2) and $\Gamma_n^{(\mathsf{s})}(q^2,\underline{Q})\neq 0$, we deduce 
		$\prod\limits_{t=1-n}^{1}\bigl(Q_i-q^{-2t}\bigr)\neq 0$.  Similarly, let $\undmu$ such that $\mu^{(i)}=\Gamma_{n,0}$. Then by Corollary \ref{special shape 1} (2) and $\Gamma_n^{(\mathsf{s})}(q^2,\underline{Q})\neq 0$, we deduce
		$\prod\limits_{t=0}^{n}\bigl(Q_i-q^{-2t}\bigr)\neq 0$.  Combining with $\Gamma_n^{(\mathsf{s})}(q^2,\underline{Q})\neq 0$, we have
		 $$\prod\limits_{t=1-n}^n\left(\bigl(Q_i-q^{-2t}\bigr)\bigl(Q_i+q^{-2t}\bigr)\right)=\prod\limits_{t=1-n}^{n}\bigl(Q^2_i-q^{-4t}\bigr)\neq 0.$$
		
		\item We fix $1\leq i<i'\leq m$. The proof of $$\prod\limits_{t=1-n}^{n-1}\bigl({Q_i}-Q_{i'}q^{-2t}\bigr)
		\bigl(Q_iQ_{i'}-q^{-2(t+1)}\bigr)\neq 0$$ is exactly the same as the case $\bullet=\mathtt{0}$.

	\end{enumerate}
	
	To sum up, we complete the proof of $P^{(\mathtt{s})}_{n}(q^2,\undQ)\neq 0$. 
	\qed
	\medskip
	
\section{Cyclotomic quiver Hecke superalgebra and cyclotomic quiver Hecke-Clifford superalgebra}\label{Cyclotomic quiver Hecke superalgebra and cyclotomic quiver Hecke-Clifford superalgebra}
\subsection{Dynkin quivers}\label{quivers}
Let $I$ be a finite set. An integral matrix $(a_{ij})_{i,j\in I}$ is called a Cartan matrix if it satisfies: i) $a_{ii}=2$, ii) $a_{ij}\leq 0$ for $i\neq j$, iii) $a_{ij}=0$ if and only if $a_{ji}=0$. We say ${\rm{A}}$ is symmetrizable if there is a diagonal matrix ${\rm{D}}={\rm{diag}}(\rd_i\in\Z_{>0}|i\in I)$ such that ${\rm{DA}}$ is symmetric.

Let $\bigl({\rm{A}}=(a_{ij})_{i,j\in I},P,\Pi,\Pi^\vee\bigr)$ be a Cartan superdatum in the sense of \cite[\S4.1]{KKO2}. That means, \begin{enumerate}
	\item[CS1)] ${\rm{A}}$ is a symmetrizable Cartan matrix;
	\item[CS2)] $P$ is a free abelian group, which is called the weight lattice;
	\item[CS3)] $\Pi=\{\alpha_i\in P|i\in I\}$, called the set of simple roots, is $\Z$-linearly independent;
	\item[CS4)] $\Pi^\vee=\{h_i\in P|i\in I\}\subset P^\vee=\Hom_\Z(P,\Z)$, called the set of simple coroots, satisfies that $\<h_i,\alpha_j\>=a_{ij}$ for all $i,j\in I$;
	\item[CS5)] there is a decomposition $I=I_{\rm{even}}\sqcup I_{\rm{odd}}$ such that \begin{equation}\label{evenodd}
		a_{ij}\in 2\Z, \quad \text{for all $i\in I_{\rm{odd}}$ and $j\in I$.}
	\end{equation}
\end{enumerate}

The diagonal matrix ${\rm{D}}$ gives rise to a symmetric bilinear form $(-|-)$ on $P$ which satisfies: $$
(\nu_i|\lambda)=\rd_i\<h_i,\lambda\>\quad \text{for all $\lambda\in P$.}
$$
In particular, we have $(\nu_i|\nu_j)=\rd_i a_{ij}$ and hence $\rd_i=(\nu_i|\nu_i)/2$ for each $i\in I$.

We define the root lattice $Q:=\oplus_{i\in I}\Z\nu_i$
and the positive root lattice $Q^+:=\oplus_{i\in I}\Z_{\geq 0}\nu_i$.
For any $\nu=\sum_{i\in I}k_i\nu_i\in Q^+$, we define ${\rm ht}(\nu):=\sum_{i\in I}k_i$.
For any $n\in\Z_{\geq 0}$, we define $Q_n^+:=\{\nu\in Q^+\mid {\rm ht}(\nu)=n\}$.
Let $P^+:=\{\Lambda\in P|\text{$\<h_i,\Lambda\>\in\Z_{\geq 0}$ for all $i\in I$}\}$.
Any element $\Lambda\in P^+$ is called a dominant integral weight.

For a Cartan superdatum $({\rm{A}},P,\Pi,\Pi^\vee)$,
we define the parity function $|\cdot|: I\rightarrow\{\overline{0},\overline{1}\}$ by
\begin{equation}\label{pi1}
	\overline{i}:=\begin{cases} \overline{1}, &\text{if $i\in I_{\rm{odd}}$,}\\
		\overline{0}, &\text{if $i\in I_{\rm{even}}$.}
	\end{cases}
\end{equation}

{\bf In the rest of this paper, we shall consider one of the following quivers with the corresponding index set $I=\{0,1,\cdots,e\}$.}

\begin{align*}A^{(1)}_{e}(e>1):\quad&
	\begin{tikzpicture}[scale=1.2,baseline]
		\foreach \r [remember=\r as \rr] in {0,...,4} {
			\node[circle,inner sep=1.8pt,fill=blue!80!black] (\r) at (360/7*\r:1){};
			\node at (360/7*\r:1.3){$\r$};
			\ifnum\r>0\draw(\rr)--(\r);\fi
		}
		\node[circle,inner sep=1.8pt,fill=blue!80!black] (6) at (360/7*6:1){};
		\node at (360/7*6:1.3){$e$};
		\draw[dashed](4) arc [start angle=205, end angle=290, radius=1] -- (6);
		\draw(6)--(0);
	\end{tikzpicture},
	\\[1mm]
	C^{(1)}_{e}(e\geq 2):\quad&
	\begin{tikzpicture}[scale=1.2,baseline]
		\draw[->, double, double distance=0.5mm] (0,0) -- (1,0);
		\draw[->,double,double distance=0.5mm](6,0)--(5,0);
		\draw(1,0)--(3,0);
		\draw(4,0)--(5,0);
		\node at (3.5,0){$\cdots$};
		\foreach \x in {0,...,3} {
			\node[circle,inner sep=1.8pt,fill=blue!80!black] (\x) at (\x,0){};
			\node at (\x,-0.25){$\x$};
		}
		\foreach \x [evaluate=\x as \c using {int(6-\x)}] in {4,5} {
			\node[circle,inner sep=1.8pt,fill=blue!80!black] (\x) at (\x,0){};
			\node at (\x,-0.25){$e-\c$};
		}
		\node[circle,inner sep=1.8pt,fill=blue!80!black] (6) at (6,0){};
		\node at (6,-0.25){$e$};
	\end{tikzpicture},
	\\[1mm]
	A^{(2)}_{2 e}(e=1):\quad&
	\begin{tikzpicture}[scale=1.2,baseline]
		\draw[->,double, double distance=2pt](1,0)--(0,0);
		\draw[->,double, double distance=0.5pt](1,0)--(0,0);
		\foreach \x in {0,1} {
			\node[circle,inner sep=1.8pt,fill=blue!80!black] (\x) at (\x,0){};
			\node at (\x,-0.25){$\x$};
		}
	\end{tikzpicture},
	\\[1mm]
	A^{(2)}_{2 e}(e\geq 2):\quad&
	\begin{tikzpicture}[scale=1.2,baseline]
		\draw[->,double,double distance=0.5mm](1,0)--(0,0);
		\draw[->,double,double distance=0.5mm](6,0)--(5,0);
		\draw(1,0)--(3,0);
		\draw(4,0)--(5,0);
		\node at (3.5,0){$\cdots$};
		\foreach \x in {0,...,3} {
			\node[circle,inner sep=1.8pt,fill=blue!80!black] (\x) at (\x,0){};
			\node at (\x,-0.25){$\x$};
		}
		\foreach \x [evaluate=\x as \c using {int(6-\x)}] in {4,5} {
			\node[circle,inner sep=1.8pt,fill=blue!80!black] (\x) at (\x,0){};
			\node at (\x,-0.25){$e-\c$};
		}
		\node[circle,inner sep=1.8pt,fill=blue!80!black] (6) at (6,0){};
		\node at (6,-0.25){$e$};
	\end{tikzpicture},
	\\[1mm]
	D^{(2)}_{e+1}(e\geq 1):\quad&
	\begin{tikzpicture}[scale=1.2,baseline]
		\draw[->,double,double distance=0.5mm](1,0)--(0,0);
		\draw[->,double,double distance=0.5mm](5,0)--(6,0);
		\draw(1,0)--(3,0);
		\draw(4,0)--(5,0);
		\node at (3.5,0){$\cdots$};
		\foreach \x in {0,...,3} {
			\node[circle,inner sep=1.8pt,fill=blue!80!black] (\x) at (\x,0){};
			\node at (\x,-0.25){$\x$};
		}
		\foreach \x [evaluate=\x as \c using {int(6-\x)}] in {4,5} {
			\node[circle,inner sep=1.8pt,fill=blue!80!black] (\x) at (\x,0){};
			\node at (\x,-0.25){$e-\c$};
		}
		\node[circle,inner sep=1.8pt,fill=blue!80!black] (6) at (6,0){};
		\node at (6,-0.25){$e$};
	\end{tikzpicture}.
\end{align*}

We orient each single edge arbitrarily. Following \cite[\S 4.7]{Ka-infdim-lie}, for each quiver above with the index set $I$, we define the generalized Cartan matrix as  \begin{align*}
	a_{ij}=
	\begin{cases}
		-1,& \text{ if } i\rightarrow j, i \leftarrow j, i\Rightarrow j \text{ or } i \fourlinerightarrow j, \\
		-2,& \text{ if } i\Leftarrow j \text{ or } i \Leftrightarrow j, \\
		-4,& \text{ if } i \fourlineleftarrow j, \\
		2, & \text{ if } i =j, \\
		0, & \text{ otherwise.}
	\end{cases}
\end{align*}
Moreover, we define $I_{\text{odd}}:=\{i\in I\mid a_{ij}\in 2\Z, \,\text{for all $j\in I$}\}$. Then we obtain a Cartan superdatum  $({\rm{A}},P,\Pi,\Pi^\vee)$,
with the parity function $|\cdot|: I\rightarrow\{\overline{0},\overline{1}\}$.

\subsection{Quiver Hecke superalgebra}

Recall that ${\rm R}$ is an integral domain of characteristic different from $2$. Let $\bigl({\rm{A}},P,\Pi,\Pi^\vee\bigr)$ be a Cartan superdatum in Subsection \ref{quivers}.

Let $u$ and $v$ be indeterminates over $\mathbb{K}$. For any $i,j\in I$, we define\begin{align*}
	Q_{i,j}(u,v)=
	\begin{cases}
		u-v, & \text{ if } i\rightarrow j, \\
		v-u, & \text{ if } i\leftarrow j, \\
		u-v^{2}, & \text{ if } i\Rightarrow j, \\
		v-u^{2}, & \text{ if } i\Leftarrow j, \\
		(u-v)(v-u), & \text{ if } i\Leftrightarrow j, \\
		u-v^{4}, & \text{ if } i\fourlinerightarrow  j, \\
		v-u^{4}, & \text{ if } i\fourlineleftarrow  j, \\
		0, & \text{ if } i= j. \\
		1, & \text{ otherwise.}
	\end{cases}
\end{align*}

\begin{defn}\cite[Definition 3.1]{KKT}
	Let $({\rm{A}},P,\Pi,\Pi^\vee)$ be a Cartan superdatum in Subsection \ref{quivers}.
	$\{Q_{i,j}|i,j\in I\}$ be chosen as above, and $n\in \N.$
	The quiver Hecke superalgebras $R_n$ is the superalgebra over ${\rm R},$ which is defined by the generators $$
	e({\bf i})\, ({\bf i}\in I^n), x_k\, (1\leq k\leq n),\, \tau_a (1\leq a\leq n-1),
	$$
	the parity $$
	\overline{e({\bf i})}=\bar{0},\quad \overline{x_ke({\bf i})}=\overline{\nu_k},\quad \overline{\tau_ae({\bf i})}=\overline{\nu_a} \cdot \overline{\nu_{a+1}},
	$$
	and the following relations:
	\begin{align}
		& e({\bf j})e({\bf i})=\delta_{{\bf j},{\bf i}}e({\bf i}),\,\,\text{for ${\bf j},{\bf i}\in I^\nu$}, \,\,e({\bf i})=\sum_{{\bf i}\in I^\nu}e({\bf i})\nonumber\\
		& x_px_qe({\bf i})=(-1)^{\overline{\bf i}_{p} \overline{\bf i}_{q}}x_qx_pe({\bf i}),\,\,\text{if $p\neq q$,}\nonumber\\
		& x_pe({\bf i})=e({\bf i})x_p,\,\,\,\tau_ae({\bf i})=e(s_a{\bf i})\tau_a,\,\,\text{where $s_a=(a,a+1)$,}\nonumber\\
		& \tau_ax_pe({\bf i})=(-1)^{\overline{\bf i}_{p} \overline{\bf i}_a\overline{\bf i}_{a+1}}x_p\tau_ae({\bf i}),\,\,\text{if $p\neq a,a+1$,}\nonumber\\
		& \bigl(\tau_ax_{a+1}-(-1)^{\overline{\bf i}_a\overline{\bf i}_{a+1}}x_a\tau_a\bigr)e({\bf i})\nonumber\\
		&\qquad =\bigl(x_{a+1}\tau_a-(-1)^{\overline{\bf i}_a\overline{\bf i}_{a+1}}\tau_ax_a\bigr)=\delta_{{\bf i}_a,{\bf i}_{a+1}}e({\bf i}),\nonumber\\
		&\tau_a^2e({\bf i})=Q_{{\bf i}_a,{\bf i}_{a+1}}(x_a,x_{a+1})e({\bf i}),\nonumber\\
		&\tau_a\tau_be({\bf i})=(-1)^{\overline{\bf i}_a\overline{\bf i}_{a+1}\overline{\bf i}_{b} \overline{\bf i}_{b+1}}\tau_b\tau_ae({\bf i}),\,\,\text{if $|a-b|>1$},\nonumber\\
		& (\tau_{a+1}\tau_a\tau_{a+1}-\tau_a\tau_{a+1}\tau_a)e({\bf i})\nonumber\\
		&=\begin{cases}
			\frac{Q_{{\bf i}_a,{\bf i}_{a+1}}(x_{a+2},x_{a+1})-Q_{{\bf i}_a,{\bf i}_{a+1}}(x_{a},x_{a+1})}{x_{a+2}-x_a}e({\bf i}), &\text{if ${\bf i}_a={\bf i}_{a+2}\in I_{\rm{even}}$;}\\
			(-1)^{\overline{\bf i}_{a+1}}(x_{a+2}-x_a)\frac{Q_{{\bf i}_a,{\bf i}_{a+1}}(x_{a+2},x_{a+1})-Q_{{\bf i}_a,{\bf i}_{a+1}}(x_{a},x_{a+1})}{x_{a+2}^2-x_a^2}e({\bf i}), &\text{if ${\bf i}_a={\bf i}_{a+2}\in I_{\rm{odd}}$;}\nonumber\\
			0, &\text{otherwise.}
		\end{cases}
	\end{align}
\end{defn}

$R_n$ is $\Z$-graded by setting
$$
\deg(e({\bf i}))=0,\quad \deg(x_ke({\bf i}))=(\nu_{{\bf i}_k}|\nu_{{\bf i}_k}),\quad \deg(\tau_ae({\bf i}))=-(\nu_{{\bf i}_a}|\nu_{{\bf i}_{a+1}}).
$$

\begin{prop}\cite[Corollary 3.15]{KKT}
	For each $w\in\Sym_n$,
	we fix a reduced expression $w=s_{i_1}\cdots s_{i_l}$, and define $\tau_w:=\tau_{i_1}\cdots\tau_{i_l}$,
	then the set of elements
	$$\{x^a \tau_w e({\bf i})\mid a\in (\Z_{\geq 0})^n,\,w\in \Sym_n,\,{\bf i}\in I^n\}$$
	forms a basis of the free ${\rm R}$-module $R_n,$ where
	$x^a=x_1^{a_1}\cdots x_n^{a_n}$ for
	$a=(a_1,\ldots,a_n)\in (\Z_{\geq 0})^n.$
\end{prop}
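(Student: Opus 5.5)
This is a PBW-type basis theorem for the quiver Hecke superalgebra. The proof has two halves. Spanning is proved by a straightening argument. Linear independence is proved by a faithful polynomial representation, in the style of Khovanov--Lauda and Rouquier, adapted to the super signs as in Kang--Kashiwara--Tsuchioka.

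\textbf{Spanning.} First I show that every word in the generators $e(\mathbf i), x_k, \tau_a$ lies in the ${\rm R}$-span of $\{x^a\tau_w e(\mathbf i)\}$. I will argue by induction on the number of $\tau$'s and then on length.
\begin{enumerate}
\item The idempotent relations let us move every $e(\mathbf j)$ to the right, at the cost of permuting $\mathbf j$.
\item The commutation relations between $\tau_a$ and $x_p$ move all $x$'s to the left. Each such move costs only a sign, or an error term $\delta_{\mathbf i_a,\mathbf i_{a+1}}e(\mathbf i)$ with one fewer $\tau$.
\item A product $\tau_{i_1}\cdots\tau_{i_l}$ that is not reduced can be rewritten using $\tau_a^2e(\mathbf i)=Q_{\mathbf i_a,\mathbf i_{a+1}}(x_a,x_{a+1})e(\mathbf i)$ and the braid relation. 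The braid relation holds up to polynomial error terms with fewer $\tau$'s, and the far commutation holds up to sign. This rewrites the product as $\pm\tau_w$ plus terms with fewer $\tau$'s, with polynomial coefficients.
\item Two reduced expressions of the same $w$ give $\tau_w$'s that differ by terms of lower length. So the choice of reduced expression does not affect spanning.
\end{enumerate}

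\textbf{Linear independence.} I construct a representation of $R_n$ on
\[
\mathcal P=\bigoplus_{\mathbf i\in I^n}{\rm R}\langle x_1,\dots,x_n\rangle e(\mathbf i),
\]
where the variables supercommute according to the parities $\overline{\mathbf i}_k$. The generators act as follows:
\begin{enumerate}
\item $e(\mathbf i)$ acts as the projection onto the $\mathbf i$-summand;
\item $x_k$ acts by left multiplication;
\item $\tau_a e(\mathbf i)$ acts by a super Demazure operator $\partial_a$ if $\mathbf i_a=\mathbf i_{a+1}$, and otherwise by $f\mapsto P_{\mathbf i_a,\mathbf i_{a+1}}(x_a,x_{a+1})\,s_a f$.
\end{enumerate}
For an odd repeated index the Demazure operator has denominator $x_{a+1}^2-x_a^2$, matching the odd case of the braid relation. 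The polynomials $P_{i,j}$ are chosen so that $P_{i,j}(u,v)P_{j,i}(v,u)=Q_{i,j}(u,v)$; for the quivers in the list this can be done with $P_{i,j}\in\{1,\,Q_{i,j}\}$ according to the orientation.

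Verifying the defining relations on $\mathcal P$ is a direct case-by-case computation, in which the sign conventions of the supercommuting variables must be tracked. I would cite \cite{KKT} for this verification rather than redo it.

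Once the representation is in place, independence follows by a leading-term argument. Base change to the fraction field of ${\rm R}$ and localize $\mathcal P$ at the nonzero differences. In that setting $\tau_w e(\mathbf i)$ acts as $g_w\,w+\sum_{v<w}g_v\,v$ with $g_w\neq0$, where $v,w$ denote the permutation operators. Since distinct permutations are linearly independent over the rational functions (an Artin-type lemma), a relation $\sum c_{a,w,\mathbf i}x^a\tau_w e(\mathbf i)=0$ forces all coefficients to vanish, starting from the maximal $w$ in Bruhat order.

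\textbf{Main obstacle.} The hardest step is checking the braid relation in the polynomial representation when $\mathbf i_a=\mathbf i_{a+2}$ is odd. Here the super Demazure operators interact with sign twists, and the $P$-factors must combine exactly into the prescribed difference quotient. I would first try to reduce this check to the rank-two subquivers $\{\mathbf i_a,\mathbf i_{a+1}\}$ that occur in the list above.
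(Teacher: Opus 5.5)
\textbf{Comparison with the paper's route.} The paper gives no proof of this statement; it quotes \cite[Corollary 3.15]{KKT}. There the basis of $R_n$ is deduced from the PBW basis of the quiver Hecke--Clifford superalgebra $RC_n$ (\cite[Corollary 3.9]{KKT}, also quoted in the paper). The transfer goes through the isomorphism $R_\nu\otimes\mathcal{C}_{m(\nu)}\cong e^\dagger RC_\nu e^\dagger$ of \cite[Theorem 3.13]{KKT}, whose cyclotomic form is Theorem~\ref{supermorita}(1). In $RC_n$ the $y_p$ commute and all signs are carried by the Clifford generators $c_p$, so the classical polynomial-representation argument works there with no super sign bookkeeping.

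Your plan is a faithful representation of $R_n$ itself on supercommuting polynomials, as Ellis--Khovanov--Lauda do for the odd nilHecke algebra. This is a genuinely different route that can be made to work, and your spanning half is fine. As written, however, the independence half has a gap.

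\textbf{The representation is not established.} The verification you defer to \cite{KKT} is not carried out there for $R_n$; their polynomial representation is one of $RC_n$. So the whole independence argument rests on a construction that is neither pinned down nor checked, exactly at the step you call the main obstacle.

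Moreover, the formula you do give is not a representation when $\tau_ae(\mathbf i)$ is odd. Take type $D^{(2)}_{e+1}$ with $e\ge 2$ and $\mathbf i=(0,e,0)$. Since $Q_{0,e}=1$, your recipe gives $P_{0,e}=1$, so $\tau_1e(\mathbf i)$ acts by the substitution $s_1\colon x_1\leftrightarrow x_2$. This operator commutes with $x_3$ and satisfies $s_1(x_2f)=x_1s_1(f)$. But since $\overline{\mathbf i}_1=\overline{\mathbf i}_2=\overline{\mathbf i}_3=\bar 1$, the defining relations demand $\tau_1x_3e(\mathbf i)=-x_3\tau_1e(\mathbf i)$ and $\tau_1x_2e(\mathbf i)=-x_1\tau_1e(\mathbf i)$.

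Whenever $\overline{\mathbf i}_a=\overline{\mathbf i}_{a+1}=\bar 1$ you must use a sign-twisted swap $\rho_a$: $x_a\mapsto -x_{a+1}$, $x_{a+1}\mapsto -x_a$, $x_j\mapsto -x_j$ for the other odd variables, and even variables fixed. In the odd repeated case the divided difference has to be built from it, e.g.\ $\partial_af=(x_a^2-x_{a+1}^2)^{-1}\bigl((x_a-x_{a+1})f-\rho_a(f)(x_a-x_{a+1})\bigr)$. With these operators every relation, in particular the odd braid relation, still has to be checked.

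\textbf{The leading-term step needs more than the classical Artin lemma.} After localizing you work in the skew field of fractions of a skew polynomial ring. Over a skew field, distinct automorphisms can be linearly dependent. Also, the odd divided differences are not of the form (rational function)$\cdot$(permutation) plus lower terms, since they involve right multiplication by $x_a-x_{a+1}$.

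One clean repair is to restrict scalars to the central subring $Z$ generated by the even variables and the squares of the odd ones. Then:
\begin{enumerate}
\item each summand of $\mathcal P$ is free of rank $2^{\sharp\{k\mid \mathbf i_k\in I_{\mathrm{odd}}\}}$ over $Z$;
\item left and right multiplications are $Z$-linear;
\item $x^a\tau_we(\mathbf i)$ becomes a sum of matrices over $\mathrm{Frac}(Z)$ composed with the isomorphisms induced by permutations $v\le w$, with invertible coefficient at $v=w$.
\end{enumerate}
Artin's lemma then applies entrywise.

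Alternatively, follow \cite{KKT}: prove the basis for $RC_n$ and pull it back along the isomorphism above. This also yields the Morita superequivalence the paper uses in Corollary~\ref{semisimple-equivalence}.
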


If $\Lambda\in P^+,\,i\in I$ and $u$ is an indeterminate over $\Z$, then we define
$$
a_i^\Lambda(x_1)=x_1^{\<h_i,\Lambda\>},\quad
a^\Lambda(x_1):=\sum_{{\bf i}\in I^n}x_1^{\<h_{{\bf i}_1},\Lambda\>}e({\bf i})\in R_n.
$$
\begin{defn}\cite[Section 3.7]{KKT}
	Let $\Lambda\in P^+$.
	The cyclotomic quiver Hecke superalgebra $R^\Lambda_n$ is defined to be the quotient algebra:
	$$R^\Lambda_n:=R_n/\<a^\Lambda(x_1)\>.$$
\end{defn}

$R^\Lambda_n$ inherits $\Z\times\Z_2$-grading from $R^\Lambda_n.$
That says, $R^\Lambda_n$ is a $\Z$-graded superalgebra too.
By a slight abuse of notation, we shall use the same symbols to denote the generators of both $R_n$ and $R^\Lambda_n.$
For any $\nu\in Q_n^+$, we define
$$ I^\nu:=\Bigl\{{\bf i}=({\bf i}_1,\cdots,{\bf i}_n)\in I^n\Bigm|\sum_{s=1}^{n}\nu_{{\bf i}_s}=\nu\Bigr\}. $$
Let $e_\nu:=\sum_{{\bf i}\in I^{\nu}}e({\bf i})$ be the central idempotent, then we define
$$R_\nu:=e_\nu R_n, \qquad R^\Lambda_\nu:=e_\nu R^\Lambda_n.$$
\subsection{Quiver Hecke-Clifford superalgebra}
Let $\bigl({\rm{A}}=(a_{ij})_{i,j\in I},P,\Pi,\Pi^\vee\bigr)$ be a Cartan superdatum in Subsection \ref{quivers}.  
Then we can define the quiver Hecke-Clifford
${\rm R}$-superalgebra $RC_n=RC_n(I).$

Let the set $J:=(I_{\rm odd}\times\{0\}) \sqcup (I_{\rm even} \times\{\pm \}).$ There is an involution $c: J\to J$ which fixes $I_{\rm odd}\times\{0\}$ and sends
$({\bf i}, \pm)$ to $({\bf i}, \mp)$ for each ${\bf i}\in I_{\rm even}$.
We also denote by $J^c:=I_{\rm odd}\times\{0\}$ the set of fixed points $\{j\in J \mid c(j)=j \}$ and
$\pr$ the canonical projection $J\to I$. The symmetric
group $\Sym_n$ acts on $J^n$ in a natural way.
For $p\in[n],$ we define
$c_p: J^n\to J^n$ by
$$c_{p}{\bf i}=(c^{\delta_{p\ell}}{\bf i}_{\ell})_{1\leq \ell\leq n}
\quad\text{for ${\bf i}=({\bf i}_1,\ldots,{\bf i}_n)\in J^n$.}$$

Following \cite[Remark 3.14]{KKT}, we define $\widetilde{Q}=(\widetilde{Q}_{j,j'}(u,v))_{j,j'\in J}\subseteq {\rm R}[u,v]$ be the family of polynomials via the following way: for any $(i,\varepsilon),(i',\varepsilon') \in J,$ where $i,i'\in I,$ $\varepsilon,\varepsilon'\in\{0,\pm\},$ we set
\begin{align}\label{extend Q-polys}
	\widetilde{Q}_{(i,\varepsilon),(i',\varepsilon')}(u,v):=Q_{i,i'}((-1)^{\varepsilon}u,(-1)^{\varepsilon'}v).
\end{align}

Note that $Q_{j,j'}(u,v)=Q_{j,j'}(-u,v)$ for $j\in J^c, j'\in J.$

\begin{defn}\cite[Definition 3.5]{KKT}
	Let $\bigl({\rm{A}}=(a_{ij})_{i,j\in I},P,\Pi,\Pi^\vee\bigr)$ be a Cartan superdatum,
	$Q=(Q_{j,j'}(u,v))_{j,j'\in J}$ be chosen as above, and $n\in \N$.
	The quiver Hecke-Clifford superalgebra
	$RC_n=RC_n(I)$ is the ${\rm R}$-superalgebra generated by the even generators
	$\{y_p\}_{1\le p\le n}$, $\{\sigma_a\}_{1\le a<n}$,
	$\{e({\bf i})\}_{{\bf i}\in J^n}$ and the odd generators $\{c_p\}_{1\leq
		p\leq n}$ with the following defining relations: for ${\bf j},{\bf i}\in
	J^n$, $1\leq p,\,q\leq n$, $1\leq a\leq n-1$, we have
	\begin{enumerate}
		\item $e({\bf j})e({\bf i})=\delta_{{\bf j},{\bf i}}e({\bf i})$, $1=\sum_{{\bf i}\in J^n}e({\bf i})$,
		$y_pe({\bf i})=e({\bf i})y_p$,  $c_pe({\bf i})=e(c_p{\bf i})c_p$,
		\item $y_py_q=y_qy_p$, $c_pc_q+c_qc_p=2\delta_{pq}$,
		\item $c_py_q=(-1)^{\delta_{p,q}}y_qc_p$,
		\item $\sigma_ae({\bf i}) = e(s_a{\bf i})\sigma_a, \sigma_a c_p = c_{s_a(p)}\sigma_a$,
		\item $\sigma_a y_pe({\bf i}) = y_{p}\sigma_ae({\bf i})$ if $p\not=a,a+1$,
		\item 
		$$\sigma_ay_{a+1}-y_a\sigma_a=
		\sum_{{\bf i}_a={\bf i}_{a+1}}e({\bf i})-\sum_{{\bf i}_a=c{\bf i}_{a+1}}c_ac_{a+1}e({\bf i}),$$
		\item$$y_{a+1}\sigma_a-\sigma_ay_a=
		\sum_{{\bf i}_a={\bf i}_{a+1}}e({\bf i})+\sum_{{\bf i}_a=c{\bf i}_{a+1}}c_ac_{a+1}e({\bf i}),$$
		\item $\sigma_a^2e({\bf i}) = \tQ_{{\bf i}_a,{\bf i}_{a+1}}(y_a,y_{a+1})e({\bf i})$,
		\item $\sigma_a\sigma_{b}=\sigma_{b}\sigma_{a}$ if $|a-b|>1$,
		\item
		\begin{align*}
			\sigma_{a+1}\sigma_a\sigma_{a+1}-\sigma_a\sigma_{a+1}\sigma_a
			&=\sum_{{\bf i}_a={\bf i}_{a+2}}
			\dfrac{\tQ_{{\bf i}_a,{\bf i}_{a+1}}(y_{a+2},y_{a+1})-\tQ_{{\bf i}_a,{\bf i}_{a+1}}(y_{a},y_{a+1})}
			{y_{a+2}-y_{a}}e({\bf i}) \\
			&\hs{-1ex}+\kern-2ex\sum_{{\bf i}_a=c{\bf i}_{a+2}}
			\dfrac{\tQ_{{\bf i}_a,{\bf i}_{a+1}}(y_{a+2},y_{a+1})-\tQ_{{\bf i}_a,{\bf i}_{a+1}}(-y_{a},y_{a+1})}{y_{a+2}+y_{a}}
			c_ac_{a+2}e({\bf i}).
		\end{align*}
	\end{enumerate}
\end{defn}

$RC_n$ is also $\Z$-graded by setting
$$
\deg(e({\bf i}))=0,\quad \deg(y_pe({\bf i}))=(\nu_{\pr({\bf i}_k)}|\nu_{\pr({\bf i}_k)}),\quad \deg(\sigma_ae({\bf i}))=-(\nu_{\pr({\bf i}_a)}|\nu_{\pr({\bf i}_{a+1})}).
$$

\begin{prop}\cite[Corollary 3.9]{KKT}
	For each $w\in \Sym_n$, we choose
	a reduced expression $s_{i_1}\cdots s_{i_{\ell}}$
	of $w$, and set $\sigma_w=\sigma_{i_1}\cdots
	\sigma_{i_{\ell}}$.
	Then the set of elements
	\begin{align*}
		\{
		y^a c^{\eta}\sigma_{w}e({\bf i})\mid
		a\in(\Z_{\geq 0})^n,\, \eta\in\Z_2^n,\,w\in\Sym_n,\,{\bf i}\in J^n \}
	\end{align*}
	forms an ${\rm R}$-basis of $RC_n,$ where
	$y^a=y_1^{a_1}\cdots y_n^{a_n}$ for
	$a=(a_1,\ldots,a_n)\in (\Z_{\geq 0})^n$ and
	$c^{\eta}=c_1^{\eta_1}\cdots c_n^{\eta_n}$
	for $\eta=(\eta_1,\cdots,\eta_n)\in\Z_2^n$.
\end{prop}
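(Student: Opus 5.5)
The basis statement for $RC_n$ is cited from \cite[Corollary 3.9]{KKT}. To justify it I would follow the standard PBW argument for quiver Hecke algebras, in two halves.

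\textbf{Spanning.} First I show that the displayed elements span $RC_n$. I filter $RC_n$ by the length of the $\sigma$-word and the total $y$-degree. Relations (4)--(7) let me move every $y_p$ and $c_p$ to the left of every $\sigma_a$, at the cost of terms of strictly smaller $\sigma$-length. Relation (2) lets me order the Clifford generators into the form $c^\eta$, and relation (3) lets me move the $y$'s to the left of the $c$'s, introducing only signs. Relation (8) shortens non-reduced words. Relation (10), together with the braid relation (9), shows that $\sigma_w$ is independent of the chosen reduced expression modulo lower terms. An induction on this filtration then gives spanning.

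\textbf{Linear independence.} I would construct a faithful polynomial-type representation. Take
\[
V=\bigoplus_{{\bf i}\in J^n}{\rm R}[y_1,\ldots,y_n]\otimes \mathcal{C}_n\, e({\bf i}),
\]
where $\mathcal{C}_n$ is the Clifford superalgebra. On $V$:
\begin{itemize}
\item $y_p$ acts by multiplication, twisted by the sign rule (3);
\item $c_p$ acts by left Clifford multiplication while changing ${\bf i}$ to $c_p{\bf i}$;
\item $\sigma_a$ acts by a Demazure-type operator, built from the polynomials $\widetilde{Q}_{j,j'}$ and a choice of square-root-free factorisation.
\end{itemize}
The concrete choice is this. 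When ${\bf i}_a\neq{\bf i}_{a+1}$ and ${\bf i}_a\neq c{\bf i}_{a+1}$, $\sigma_a$ swaps the variables, multiplied by one factor of $\widetilde{Q}$ according to the orientation. When ${\bf i}_a={\bf i}_{a+1}$ or ${\bf i}_a=c{\bf i}_{a+1}$, it acts by a divided difference corrected by $c_ac_{a+1}$.

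The faithfulness of such an operator representation on leading terms would then be checked by letting $y^ac^\eta\sigma_w e({\bf i})$ act on a generic vector. Its top-degree component is a distinct monomial times $w$ applied to that vector, so these elements are linearly independent.

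\textbf{Expected obstacle.} The main difficulty is verifying relations (8) and (10) for the Demazure operators in the mixed cases ${\bf i}_a=c{\bf i}_{a+2}$, where the signs $(-1)^\varepsilon$ from \eqref{extend Q-polys} and the Clifford factors interact. Rather than doing this directly, I would try to reduce to the known non-Clifford case \cite[Corollary 3.15]{KKT}. The idea is to localise and use a Morita-type equivalence between $RC_n$ and a quiver Hecke superalgebra over $I$, tensored with a Clifford algebra. Comparing ranks in each $\Z$-degree would then finish the proof, and this route is essentially the argument in \cite{KKT}.
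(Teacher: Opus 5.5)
\paragraph{Assessment.} The paper gives no proof of this statement; it only quotes \cite[Corollary 3.9]{KKT}, so your argument has to stand on its own. Your spanning half is the standard argument and is fine. The independence half has a genuine gap, and your fallback for it is circular.

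\paragraph{The operator representation is not actually built.} Everything rests on the representation on $V$, but you do not define the action of $\sigma_a$ precisely, and you do not check that it satisfies the defining relations; you explicitly postpone (8) and (10). That verification \emph{is} the content of the theorem: until it is done, $V$ is not an $RC_n$-module, and nothing follows about independence.

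Besides the case ${\bf i}_a=c{\bf i}_{a+2}$ in (10), there is a second delicate case, ${\bf i}_a={\bf i}_{a+1}\in J^c$, which you do not mention.
\begin{itemize}
\item There both sums in (6) and (7) contain $e({\bf i})$.
\item So $\sigma_a$ must combine a divided difference with denominator $y_a-y_{a+1}$ and a $c_ac_{a+1}$-twisted one with denominator $y_a+y_{a+1}$.
\item You must still obtain $\sigma_a^2e({\bf i})=\tQ_{{\bf i}_a,{\bf i}_a}(y_a,y_{a+1})e({\bf i})=0$.
\end{itemize}

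Your faithfulness check is also not right as phrased. Divided differences lower degree, and their leading coefficients are rational functions (possibly times $c_ac_{a+1}$), not monomials. The workable version goes as follows.
\begin{itemize}
\item Pass to the fraction field of ${\rm R}[y_1,\dots,y_n]$.
\item Write $\sigma_we({\bf i})=\sum_{v\le w}a_v\,v$, with $a_w$ invertible in the twisted algebra generated by rational functions and $\mathcal{C}_n$.
\item Conclude from the linear independence of the twisted operators $c^\eta v$ over that algebra.
\end{itemize}

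\paragraph{The fallback is circular.} To know that the natural map $R_\nu\otimes\mathcal{C}_{m(\nu)}\to e^\dag RC_\nu e^\dag$ is injective, or that the corner has the expected rank in each degree, you need linear independence of the corresponding elements of $RC_n$. That is a special case of what you are trying to prove.

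Accordingly, \cite{KKT} proceeds in the opposite order from the one you suggest. The basis of $RC_n$ is established first (Corollary 3.9), and the basis of $R_n$ (Corollary 3.15) comes only later, after the comparison with $e^\dag RC_\nu e^\dag$. So invoking \cite[Corollary 3.15]{KKT} here assumes the result, and calling this route ``essentially the argument in \cite{KKT}'' is inaccurate.

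Two smaller points:
\begin{itemize}
\item No localisation enters the $R_n$--$RC_n$ comparison; localisation is what relates $RC_n$ to the affine Hecke--Clifford superalgebra.
\item Passing from the corner $e^\dag RC_\nu e^\dag$ to all of $RC_n$ needs an extra observation: $c_pe({\bf i})c_p=e(c_p{\bf i})$, so every $e({\bf i})$ is conjugate into $e^\dag$ by an invertible Clifford monomial.
\end{itemize}

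\paragraph{What the repair requires.} You have to do the work you tried to avoid:
\begin{itemize}
\item fix factorisations $\tQ_{j,j'}(u,v)=P_{j,j'}(u,v)P_{j',j}(v,u)$ compatible with $c$;
\item write the three kinds of $\sigma_a$-operators explicitly;
\item verify all of (1)--(10) on $V$, including the $J^c$ cases;
\item run the leading-coefficient argument over the fraction field described above.
\end{itemize}
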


If $\Lambda\in P^+,\,j\in J$ and $u$ is an indeterminate over $\Z$, then we define
$$
a_j^\Lambda(u)=u^{\<h_{\pr(j)},\Lambda\>},\quad
a^\Lambda(y_1):=\sum_{{\bf i}\in J^n}y_1^{\<h_{\pr({\bf i}_1)},\Lambda\>}e({\bf i})\in RC_n.
$$
\begin{defn}\cite[Section 3.7]{KKT}
	Let $\Lambda\in P^+$.
	The cyclotomic quiver Hecke superalgebra $RC^\Lambda_n$ is defined to be the quotient algebra:
	$$RC^\Lambda_n:=RC_n/\<a^\Lambda(y_1)\>.$$
\end{defn}
Similarly, $RC^\Lambda_n$ inherits $\Z\times\Z_2$-grading from $RC^\Lambda_n.$
By a slight abuse of notation, we shall use the same symbols to denote the generators of both $RC_n$ and $RC^\Lambda_n.$


For any $\nu\in Q_n^+$, we define
$$ J^\nu:=\Bigl\{{\bf i}=({\bf i}_1,\cdots,{\bf i}_n)\in J^n\Bigm|\sum_{s=1}^{n}\nu_{\pr({\bf i}_s)}=\nu\Bigr\}. $$
Let $e_\nu^J=\sum_{{\bf i}\in J^{\nu}}e({\bf i})$ be the central idempotent, then we define
$$RC_\nu:=e_\nu^J RC_n, \qquad RC^\Lambda_\nu:=e_\nu^J RC^\Lambda_n.$$
Recall the canonical projection $\pr: J\to I$. We choose $J^\dag \subset J$ such that the projection $\pr$ induces a bijection $J^\dag\to I$. Let $e^\dag:=\sum_{{\bf i}\in {J^{\dag n}}}e({\bf i}).$
Kang, Kashiwara and Tsuchioka \cite{KKT} proved that the (cyclotomic) quiver Hecke superalgebra and the (cyclotomic) quiver Hecke-Clifford superalgebra are weakly Morita superequivalent to each other.

\begin{thm}\cite[Below Definition 3.10, Theorem 3.13]{KKT}\label{supermorita}
	Let $\Lambda\in P^+$ and $\nu=\sum_{i\in I}m_i\nu_i\in Q_+$. 
	
	\begin{enumerate}
		
		\item We have a
		$\Z\times \Z_2$-graded ${\rm R}$-algebra isomorphism
		\begin{align*}
			R_{\nu}^\Lambda\otimes \mathcal{C}_{m(\nu)}\cong e^\dag RC_{\nu}^\Lambda e^\dag.
		\end{align*}
		
		\item Suppose ${\rm R}=\mathbb{K}$ is a field, then we have the following morita superequivalent $$RC_{\nu}^\Lambda \overset{\text{sMor}}{\sim} e^\dag RC_{\nu}^\Lambda e^\dag.$$
	\end{enumerate}
\end{thm}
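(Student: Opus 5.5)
The approach is to construct the isomorphism in (1) at the level of the non-cyclotomic algebras, show it is bijective using the bases of \cite[Corollary 3.15]{KKT} and \cite[Corollary 3.9]{KKT}, and then pass to cyclotomic quotients. Part (2) will then follow from the fact that $e^\dag$ is a full idempotent of $RC^\Lambda_\nu$.

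\textbf{Step 1 (choices and the Clifford factor).} Fix $J^\dag$ by taking $(i,+)$ for $i\in I_{\rm even}$ and $(i,0)$ for $i\in I_{\rm odd}$, and write $\mathbf i^\dag\in J^{\dag n}$ for the lift of $\mathbf i\in I^\nu$. For a given $\mathbf i$, the positions $p$ with $\mathbf i_p\in I_{\rm even}$ number $m(\nu)=\sum_{i\in I_{\rm even}}m_i$. At these positions the element $c_p e(\mathbf i^\dag)$ does not lie in $e^\dag RC e^\dag$, because $c_p$ flips the sign in $J$. At the odd positions $c_p$ fixes $\mathbf i^\dag$, so $c_p e(\mathbf i^\dag)$ is an odd element of $e^\dag RC^\Lambda_\nu e^\dag$. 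For each $\mathbf i\in I^\nu$ I would take the odd elements $c_p e(\mathbf i^\dag)$ at odd positions $p$, together with the $\sigma_a$'s, to build $m(\nu)$ pairwise anticommuting odd elements that square to $1$ and commute with the image of $R_\nu$. I then glue these over all $\mathbf i$, twisting by the permutations that carry one $\mathbf i$ to another, so that they are independent of $\mathbf i$. These elements realise the generators of $\mathcal C_{m(\nu)}$.

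\textbf{Step 2 (the homomorphism).} I would define $\Phi:R_\nu\otimes\mathcal C_{m(\nu)}\to e^\dag RC_\nu e^\dag$ on generators as follows:
\[
e(\mathbf i)\mapsto e(\mathbf i^\dag),\qquad
x_pe(\mathbf i)\mapsto
\begin{cases}
y_pe(\mathbf i^\dag), & \text{if } \mathbf i_p\in I_{\rm even},\\
c_py_pe(\mathbf i^\dag), & \text{if } \mathbf i_p\in I_{\rm odd}.
\end{cases}
\]
The image of $\tau_ae(\mathbf i)$ is $\sigma_ae(\mathbf i^\dag)$ multiplied by a Clifford factor ($1$, $c_a$, $c_{a+1}$, or $c_ac_{a+1}$, depending on the parities of $\mathbf i_a,\mathbf i_{a+1}$). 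This factor is chosen so that the image has parity $\overline{\mathbf i_a}\,\overline{\mathbf i_{a+1}}$, and a correction term is added where $\mathbf i_a=\mathbf i_{a+1}$, so that the KLR-type commutation relation with $x$ holds. The Clifford generators go to the elements from Step 1. I would then check the defining relations one by one: the idempotent, $x$-$x$ and $x$-$\tau$ relations, the quadratic relation $\tau_a^2=Q_{\mathbf i_a,\mathbf i_{a+1}}(x_a,x_{a+1})$, and the braid relation. Each check uses \eqref{extend Q-polys} together with $\widetilde Q_{j,j'}(u,v)=\widetilde Q_{j,j'}(-u,v)$ for $j\in J^c$.

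\textbf{Step 3 (bijectivity and cyclotomic quotient).} Under $\Phi$, each PBW monomial $x^a\tau_we(\mathbf i)\otimes(\text{Clifford monomial})$ maps to $\pm\, y^ac^\eta\sigma_we(\mathbf i^\dag)$ plus terms with fewer $\sigma$'s. Here $c^\eta$ ranges over all Clifford monomials in the positions, with $n$ factors in total, matching $2^{m(\nu)}$ times the odd-position contributions. Comparing this with the basis of $e^\dag RC_\nu e^\dag$ obtained from \cite[Corollary 3.9]{KKT} gives a unitriangular change of basis, so $\Phi$ is bijective. Since $a^\Lambda(x_1)e(\mathbf i)$ maps to $y_1^{\langle h_{\mathbf i_1},\Lambda\rangle}e(\mathbf i^\dag)$ up to a unit Clifford factor, the ideals correspond and $\Phi$ descends to $R^\Lambda_\nu\otimes\mathcal C_{m(\nu)}\cong e^\dag RC^\Lambda_\nu e^\dag$. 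Both gradings are preserved by construction.

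\textbf{Step 4 (Morita superequivalence).} For any $\mathbf j\in J^\nu$, let $\eta$ be the set of positions where $\mathbf j$ differs from $\mathbf j^\dag$. Then $e(\mathbf j)=c^\eta e(\mathbf j^\dag)(c^\eta)^{-1}\in RC^\Lambda_\nu e^\dag RC^\Lambda_\nu$. Hence $e^\dag$ is full, and the functor $e^\dag RC^\Lambda_\nu\otimes_{RC^\Lambda_\nu}-$ is a superequivalence. This uses only homogeneous (even) idempotents, so the equivalence is compatible with the super structure.

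The main obstacle is Step 2: choosing the Clifford correction factors in the image of $\tau_a$ so that the braid relation holds with the correct signs when $\mathbf i_a=\mathbf i_{a+2}\in I_{\rm odd}$. In that case the $c_ac_{a+2}$ term in relation (10) of $RC_n$ has to reproduce exactly the factor $(-1)^{\overline{\mathbf i}_{a+1}}(x_{a+2}-x_a)/(x_{a+2}^2-x_a^2)$. I would first settle the rank-two cases $n=2,3$ by direct computation and then fix the sign conventions from those.
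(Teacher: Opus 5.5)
The paper does not prove this statement; it quotes it from \cite{KKT}. Your overall architecture is the natural one: an explicit map checked on PBW bases, descent to the cyclotomic quotients, and fullness of $e^\dag$ for (2). Step 4 is fine as it stands. The gap is in Steps 1--3: you use the wrong number of Clifford generators. The factor must be $\mathcal C_{m(\nu)}$ with $m(\nu)=\sum_{i\in I_{\rm odd}}m_i$, the number of \emph{odd} positions, not $\sum_{i\in I_{\rm even}}m_i$.

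This is forced by the basis comparison you invoke in Step 3. By \cite[Corollary 3.9]{KKT}, $y^ac^\eta\sigma_we(\mathbf i^\dag)=e(c^\eta w\mathbf i^\dag)\,y^ac^\eta\sigma_w$. This element survives in $e^\dag RC_\nu e^\dag$ only if $\supp(\eta)$ is contained in the set of positions $p$ with $(w\mathbf i)_p\in I_{\rm odd}$, because at an even position $c_p$ leaves $J^{\dag n}$, as you observe yourself in Step 1. So for each $(a,w,\mathbf i)$ there are $2^{\sum_{i\in I_{\rm odd}}m_i}$ Clifford monomials, not $2^n$, and this number must equal $2^{m(\nu)}$. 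With your value of $m(\nu)$ the asserted isomorphism is false. If $I_{\rm odd}=\emptyset$ (types $A^{(1)}_e$, $C^{(1)}_e$), then $e^\dag RC_\nu e^\dag$ is purely even and cannot contain $\mathcal C_m$ for any $m\geq 1$. Already for $\nu=\nu_i$ with $i\in I_{\rm even}$ one has $e^\dag RC_\nu e^\dag={\rm R}[y_1]e^\dag\cong R_\nu$. Hence the elements Step 1 asks for ($\sum_{I_{\rm even}}m_i$ anticommuting odd elements built from the $c_p$ at odd positions and the $\sigma_a$) cannot exist, and your count in Step 3 contradicts your own Step 1.

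Once $m(\nu)$ is corrected, the Clifford generators are simply $\gamma_k=\sum_{\mathbf i}c_{p_k(\mathbf i)}e(\mathbf i^\dag)$, where $p_k(\mathbf i)$ is the $k$-th odd position of $\mathbf i$. No $\sigma_a$ is involved, and these supercommute with your $\Phi(x_pe(\mathbf i))=c_py_pe(\mathbf i^\dag)$. The real constraint in Step 2 is then that $\Phi(\tau_ae(\mathbf i))$ must supercommute with the $\gamma_k$.

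If $\mathbf i_a$ and $\mathbf i_{a+1}$ are both odd, $\sigma_a$ interchanges $c_a$ and $c_{a+1}$, and neither $c_a\sigma_a$ nor $c_{a+1}\sigma_a$ supercommutes with them. For instance, $(c_a\sigma_a)c_a=c_ac_{a+1}\sigma_a\neq-\sigma_a=-c_a(c_a\sigma_a)$. What can work is the antisymmetric element $(c_a-c_{a+1})\sigma_ae(\mathbf i^\dag)$, which anticommutes with every $c_p$, suitably normalised. As an example, take $\mathbf i_a=\mathbf i_{a+1}$ odd. There the defining relation gives $\sigma_ay_{a+1}-y_a\sigma_a=(1-c_ac_{a+1})e(\mathbf i^\dag)$. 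From this one computes $(c_a-c_{a+1})\sigma_a\,c_{a+1}y_{a+1}e(\mathbf i^\dag)+c_ay_a\,(c_a-c_{a+1})\sigma_ae(\mathbf i^\dag)=2e(\mathbf i^\dag)$, which matches $(\tau_ax_{a+1}+x_a\tau_a)e(\mathbf i)=e(\mathbf i)$ after scaling by $\tfrac12$.

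Your list of correction factors $1,c_a,c_{a+1},c_ac_{a+1}$ does not contain this element. So Step 2 would already fail at the commutation relations with the Clifford factor, before the braid-relation signs you single out as the main obstacle.
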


\begin{cor}\label{semisimple-equivalence}
	$R_n^\Lambda$ is semisimple if and only if $RC_n^\Lambda$ is semisimple.
	\end{cor}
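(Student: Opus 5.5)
The plan is to deduce the corollary directly from Theorem \ref{supermorita}. Throughout, ``semisimple'' means that the usual Jacobson radical vanishes, equivalently that the underlying ungraded algebra is semisimple. We work over a field $\mathbb{K}$, as in part (2) of Theorem \ref{supermorita}. Both $R_n^\Lambda$ and $RC_n^\Lambda$ decompose as finite direct sums of the blocks $R^\Lambda_\nu$ and $RC^\Lambda_\nu$ over $\nu\in Q^+_n$, via the central idempotents $e_\nu$ and $e^J_\nu$. A finite direct product of algebras is semisimple if and only if each factor is. So it suffices to show, for each $\nu\in Q^+_n$, that $R^\Lambda_\nu$ is semisimple if and only if $RC^\Lambda_\nu$ is.

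Fix $\nu$. First, by Theorem \ref{supermorita}(2), $RC^\Lambda_\nu$ is Morita superequivalent to $e^\dag RC^\Lambda_\nu e^\dag$. This is realised by the functor $M\mapsto e^\dag M$, which is an equivalence of module categories. Semisimplicity of a finite-dimensional algebra is a Morita invariant, since it says every module is projective, equivalently every short exact sequence splits. Hence $RC^\Lambda_\nu$ is semisimple if and only if $e^\dag RC^\Lambda_\nu e^\dag$ is.

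Second, by Theorem \ref{supermorita}(1), $e^\dag RC^\Lambda_\nu e^\dag\cong R^\Lambda_\nu\otimes\mathcal{C}_{m(\nu)}$, where $\mathcal{C}_{m}$ is the Clifford superalgebra on $m$ odd generators. Since $\operatorname{char}\mathbb{K}\neq 2$, the Clifford algebra $\mathcal{C}_m$ is a simple superalgebra. Over the algebraic closure it is isomorphic to $\mathcal{M}_{k,k}$ if $m$ is even and to $\mathcal{Q}_k$ if $m$ is odd. The claim is that for any finite-dimensional superalgebra $A$, the super tensor product $A\otimes\mathcal{C}_m$ is semisimple if and only if $A$ is.

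The main obstacle is this tensor product step. Factor $\mathcal{C}_m\cong\mathcal{C}_1^{\otimes m}$ and reduce to $m=1$ by induction. For $m=1$, the element $1\otimes c$ is odd, squares to $1$, and supercommutes with $A\otimes 1$. Writing $\mathcal{C}_1=\mathbb{K}\oplus\mathbb{K}c$, one gets
\[
\mathcal{J}(A\otimes\mathcal{C}_1)=\mathcal{J}(A)\otimes\mathcal{C}_1 .
\]
For ``$\supseteq$'': $\mathcal{J}(A)$ is a nilpotent $\delta$-stable ideal, and $\mathcal{J}(A)\otimes\mathcal{C}_1$ is a nilpotent two-sided ideal. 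For ``$\subseteq$'': the quotient $(A/\mathcal{J}(A))\otimes\mathcal{C}_1$ is semisimple. To see this, use the super Wedderburn theorem after passing to $\overline{\mathbb{K}}$, which is allowed because the radical commutes with separable base change and characteristic $\neq 2$ is not an issue for $\mathcal{C}_1$. Each simple superfactor tensored with $\mathcal{C}_1$ is again simple: $\mathcal{M}_{a,b}\otimes\mathcal{C}_1\cong\mathcal{Q}_{a+b}$ and $\mathcal{Q}_k\otimes\mathcal{C}_1\cong\mathcal{M}_{k,k}$.

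Alternatively, one can avoid Wedderburn altogether. An element $x=a+a'c$ of a nilpotent ideal of $A\otimes\mathcal{C}_1$ forces both $a$ and $a'$ into a nilpotent ideal of $A$, by multiplying by $c$ and taking components. With the claim established, chaining the two steps over all $\nu$ gives the corollary.
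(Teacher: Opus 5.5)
Your proposal is correct and follows the same route as the paper: you decompose into the blocks $R^\Lambda_\nu$ and $RC^\Lambda_\nu$ for $\nu\in Q^+_n$, then apply Theorem \ref{supermorita} blockwise, and you also supply the two facts the paper uses without comment (semisimplicity is a Morita invariant, and tensoring with $\mathcal{C}_{m(\nu)}$ both preserves and reflects semisimplicity). One caveat: your Wedderburn route through $\overline{\mathbb{K}}$ needs $\mathbb{K}$ perfect, since otherwise that base change is not separable; your direct nilpotent-ideal argument covers the general case, provided you separate $a\otimes 1$ from $a'\otimes c$ using that $\mathcal{J}(A\otimes\mathcal{C}_1)$ is stable under the automorphism $\mathrm{id}_A\otimes\delta$ (multiplying by $c$ on one side only swaps the two components).
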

	
	\begin{proof}
		We have the following. \begin{align*}
			\text{$R_n^\Lambda$ is semisimple } &\Leftrightarrow \begin{matrix}	&\text{For any $\nu\in I^n$, $R_\nu^\Lambda$ is either semisimple or zero.}\\ 
				&\text{ There exists some $\nu\in I^n$, such that $R_\nu^\Lambda\neq 0$. }
				\end{matrix}\\
	&\xLeftrightarrow{ \text{ Threorem \ref{supermorita}}} \begin{matrix}	&\text{For any $\nu\in I^n$, $RC_\nu^\Lambda$ is either semisimple or zero.}\\ 
				&\text{ There exists some $\nu\in I^n$, such that $RC_\nu^\Lambda\neq 0$. }
			\end{matrix}\\
			&\Leftrightarrow 	\text{ $RC_n^\Lambda$ is semisimple }
			\end{align*}
		\end{proof}

\section{Semisimplicity criterion for cyclotomic quiver Hecke superalgebra}\label{Semisimplicity criterion for cyclotomic quiver Hecke superalgebra}
\subsection{Semisimplicity criterion for type $A^{(1)}_{e}$}
In this subsection, we consider the Dynkin quiver of type $A^{(1)}_{e}(e> 1)$. As explained in Subsection \ref{quivers}, in this case, $I_{\text{odd}}=\emptyset$ and $R_n$ is the usual quiver Hecke algebra of type $A^{(1)}_{e}$. We shall explain how to use Theorem \ref{semisimplicity for cyclotomic Hecke-Clifford} to derive semisimplicity criterion for $R^\Lambda_n$. 

We assume $p\nmid (e+1)$. Therefore, we can choose $q^2$ to be an $(e+1)$-th primitive root of unity. We fix $\xi\notin\{q^k \mid k\in\Z\}$. 

\begin{prop}\label{semisimplicty typeA }
	Let $f=f^{(\mathtt{0})}_{\undQ}$, where $\undQ=(\xi q^{2i_1},\cdots,\xi q^{2i_m})$ and $i_s\in I=\{0,1,\cdots,e\}$, for $1\leq s\leq m$. Then $P^{(\mathtt{0})}_{n}(q^2,\undQ)\neq 0$ if and only if \begin{align}\label{criterion of type A}
e\geq n,	\text { and for any $1\leq s_1\neq s_2\leq m,$ we have $n\leq |i_{s_1}-i_{s_2}|\leq e+1-n.$}
	 \end{align}
	\end{prop}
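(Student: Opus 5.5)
The plan is to go through the factors of $P^{(\mathtt{0})}_{n}(q^2,\undQ)$ one at a time, using that $q^2$ has multiplicative order exactly $e+1$: for $k\in\Z$ we have $q^{2k}=1$ if and only if $(e+1)\mid k$. For each of the four blocks of factors I will decide exactly when it vanishes for $\undQ=(\xi q^{2i_1},\cdots,\xi q^{2i_m})$, and then read off that their joint non-vanishing is \eqref{criterion of type A}.

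\emph{The first block.} We have $\prod_{t=1}^{n}(q^{2t}-1)\neq 0$ if and only if no $t\in[1,n]$ is divisible by $e+1$, that is, if and only if $n<e+1$, i.e.\ $e\geq n$.

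\emph{The $\xi^2$-blocks.} The factors $Q^2_s-q^{-2t}$, $Q^2_s-q^{-4t}$ and $Q_{s_1}Q_{s_2}-q^{-2(t+1)}$ become, respectively,
\begin{align*}
\xi^2q^{4i_s}-q^{-2t},\qquad \xi^2q^{4i_s}-q^{-4t},\qquad \xi^2q^{2(i_{s_1}+i_{s_2})}-q^{-2(t+1)}.
\end{align*}
Each vanishes only if $\xi^2$ is an integral power of $q$. I must check that this is excluded by the standing choice of $\xi$. As literally written, $\xi\notin\{q^k\mid k\in\Z\}$ does not rule out $\xi^2\in q^{\Z}$ (for instance $\xi$ a square root of $q$). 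So I will make the genericity explicit: choose $\xi$ with $\xi^2\notin\{q^k\mid k\in\Z\}$, which is possible since $\mathbb{K}$ is algebraically closed and hence infinite. I expect this hypothesis to be what the authors intend, and this is the one point needing care. Under it, all of these factors are non-zero independently of $n$ and the $i_s$.

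\emph{The remaining block.} This is
\begin{align*}
\prod_{1\leq s_1<s_2\leq m}\ \prod_{t=1-n}^{n-1}\bigl(Q_{s_1}-Q_{s_2}q^{-2t}\bigr)
=\prod_{1\leq s_1<s_2\leq m}\ \prod_{t=1-n}^{n-1}\xi q^{2i_{s_1}}\bigl(1-q^{2(i_{s_2}-i_{s_1}-t)}\bigr).
\end{align*}
Put $d=i_{s_1}-i_{s_2}\in[-e,e]$. Then this block is non-zero if and only if, for every pair, $d\not\equiv -t \pmod{e+1}$ for all $|t|\leq n-1$. In other words, the residue of $d$ modulo $e+1$ must avoid the symmetric interval $\{-(n-1),\dots,n-1\}$. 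Since this set is symmetric, we may replace $d$ by $|d|\in[0,e]$. The representatives of $|d|$ in that interval modulo $e+1$ are $|d|$ and $|d|-(e+1)$. The condition therefore becomes $|d|\geq n$ and $e+1-|d|\geq n$, i.e.\ $n\leq|i_{s_1}-i_{s_2}|\leq e+1-n$. The condition is symmetric in $s_1,s_2$, so it is equivalent to the ``$s_1\neq s_2$'' formulation in \eqref{criterion of type A}.

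\emph{Combining.} Putting the blocks together gives $P^{(\mathtt{0})}_{n}(q^2,\undQ)\neq0$ if and only if \eqref{criterion of type A} holds. Apart from the $\xi^2$ genericity issue, the only mildly delicate step is the residue bookkeeping in the last block. In particular, one should note that when $e\geq n$ the interval $\{-(n-1),\dots,n-1\}$ does not wrap around modulo $e+1$, so the two-sided bound on $|d|$ is exactly right.
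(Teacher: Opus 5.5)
Your proof is correct and follows the same route as the paper. You discard the factors involving $\xi^2$, use that $\prod_{t=1}^{n}(q^{2t}-1)\neq 0$ is equivalent to $e\geq n$, and analyse the remaining block $\prod_{s<s'}\prod_{t=1-n}^{n-1}(Q_s-Q_{s'}q^{-2t})$ modulo $e+1$, which you carry out explicitly where the paper only calls it an easy exercise.

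Your caveat about $\xi$ is well founded, because the paper merely asserts that the $\xi^2$-factors are non-zero. For instance, $\xi=-1$ with $q$ of odd order satisfies $\xi\notin q^{\Z}$ yet gives $Q_s^2-1=0$ when $i_s=0$, so a hypothesis like yours is genuinely needed (in fact $\xi^2\notin q^{2\Z}$ suffices). Your final aside that $\{-(n-1),\dots,n-1\}$ never wraps around modulo $e+1$ is inaccurate, since it does once $2n-1>e+1$, but your two-representative argument never uses it.
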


\begin{proof}Note that in this case, $$\prod\limits_{s=1}^m\biggl(\prod\limits_{t=3-n}^{n-1}\bigl(Q^2_s-q^{-2t}\bigr)\prod\limits_{t=1-n}^{n}\bigl(Q^2_s-q^{-4t}\bigr)\biggr)
	\cdot \prod\limits_{1\leq s<s'\leq m}
	\bigl(Q_sQ_{s'}-q^{-2(t+1)}\bigr)\neq 0.$$ Hence, $P^{(\mathtt{0})}_{n}(q^2,\undQ)\neq 0$ if and only if \begin{align}\label{reduce in type A}\prod\limits_{t=1}^{n}\bigl(q^{2t}-1\bigr)\left(\prod\limits_{1\leq s<s'\leq m}\prod\limits_{t=1-n}^{n-1}\bigl({Q_s}-Q_{s'}q^{-2t}\bigr)\right)\neq 0.
		\end{align}Clearly, $\prod\limits_{t=1}^{n}\bigl(q^{2t}-1\bigr)\neq 0$ if and only if $ n\leq e.$ It is an easy exercise to compute that under the condition $0\leq k_s\leq e$ for any $1\leq s\leq m$ and  $1\leq n\leq e$, $\prod\limits_{t=1-n}^{n-1}\bigl({Q_s}-Q_{s'}q^{-2t}\bigr)\neq 0$ holds if and only if 
		 \begin{align*}
			\text { for any $1\leq s_1\neq s_2\leq m,$ we have $n\leq |i_{s_1}-i_{s_2}|\leq e+1-n.$}
		\end{align*} This completes the proof of the proposition.
	\end{proof}

Now we consider the cyclotomic polynomial being the following form : $f=f^{(\mathtt{0})}_{\undQ}$, where $Q_s=\xi q^{2i_s}$ and $i_s\in I=\{0,1,\cdots,e\}$, for $1\leq s\leq m$. We can associate $f$ with a dominant integral weight $\Lambda(f):=\sum\limits_{s=1}^m \Lambda_{i_s}\in P^+$. Then we have the following.

\begin{thm}\cite[Corollary 4.8]{KKT}\label{type A}
	$\mathcal{H}^{f}_{\mathbb{K}}(n)\cong RC^{\Lambda(f)}_n(I)$.
\end{thm}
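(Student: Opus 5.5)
The plan is to follow the strategy of Kang--Kashiwara--Tsuchioka \cite{KKT}: realise $RC^{\Lambda(f)}_n(I)$ inside a suitable completion of $\mathcal{H}^{f}_{\mathbb{K}}(n)$ by decomposing along generalized eigenvalues of the $X_k$. Since the statement is quoted from \cite[Corollary 4.8]{KKT}, I will mainly check that our parameters match their hypotheses, and only sketch the construction.

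\emph{Step 1: the eigenvalues of the $X_k$.} In $\mathcal{H}^{f}_{\mathbb{K}}(n)$ with $Q_s=\xi q^{2i_s}$, I will show that every eigenvalue of $X_k$ has the form $\mathtt{b}_\pm(\xi q^{2j})$ with $j\in\Z/(e+1)\Z=I$. The argument is the usual one: eigenvalues of $X_1+X_1^{-1}$ are the roots $\mathtt{q}(Q_s)$ of $f$, and the intertwining relations \eqref{PX1}, \eqref{PX2} propagate the exponents by steps of $\pm 2$ in $q$. Because $q^2$ is a primitive $(e+1)$-th root of unity, the index set is $I$.

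The condition $\xi\notin\{q^k\}$ is what guarantees that the two families $\mathtt{b}_+(\xi q^{2j})$ and $\mathtt{b}_-(\xi q^{2j})=\mathtt{b}_+(\xi q^{2j})^{-1}$ are disjoint and pairwise distinct. This identifies the spectrum with $J=I\times\{\pm\}$, with the involution $c$ given by $X\mapsto X^{-1}$, realised via $C_k$ and \eqref{XC}. Here $J^c=\emptyset$, since $I_{\rm odd}=\emptyset$ in type $A^{(1)}_e$. If the condition $\xi\notin\{q^k\}$ turns out not to suffice, I will additionally impose $\xi^2\notin\{q^{k}\mid k\in\Z\}$, so that no $\mathtt{b}_\pm(\xi q^{2j})$ equals $\pm1$.

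\emph{Step 2: the generators.} I will define the idempotents $e(\mathbf{i})$, $\mathbf{i}\in J^n$, as projections onto the simultaneous generalized eigenspaces of $(X_1,\dots,X_n)$. Set
\[
y_p e(\mathbf{i}) := \bigl(1-\mathtt{b}_{\mathbf{i}_p}^{-1}X_p\bigr)e(\mathbf{i}),
\]
which is nilpotent on that block, and $c_p:=C_p$. Define $\sigma_a e(\mathbf{i})$ as $(T_a+\text{correction})\,g_{\mathbf{i},a}(X_a,X_{a+1})\,e(\mathbf{i})$, where $g_{\mathbf{i},a}$ is the normalising power series of \cite[\S4]{KKT}.

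\emph{Step 3: the relations.} I will verify the defining relations of $RC_n$, including $\sigma_a^2e(\mathbf{i})=\widetilde{Q}_{\mathbf{i}_a,\mathbf{i}_{a+1}}(y_a,y_{a+1})e(\mathbf{i})$ for the type $A^{(1)}_e$ polynomials \eqref{extend Q-polys}, together with the cyclotomic relation. The cyclotomic relation comes from $f(X_1)=0$: the multiplicity of the factor $X_1+X_1^{-1}-\mathtt{q}(\xi q^{2j})$ in $f$ equals $\langle h_j,\Lambda(f)\rangle$, so $y_1^{\langle h_j,\Lambda(f)\rangle}e(\mathbf{i})=0$.

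\emph{Step 4: bijectivity.} This gives a surjective map $RC^{\Lambda(f)}_n\to\mathcal{H}^f_{\mathbb{K}}(n)$, and an inverse map is obtained symmetrically. Alternatively, I will compare dimensions, using Lemma \ref{basis} against the basis theorem for cyclotomic quiver Hecke--Clifford algebras.

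The main obstacle is Step 3, specifically the choice of the normalising series $g_{\mathbf{i},a}$ so that the quadratic and braid relations come out exactly with $\widetilde{Q}$; this requires careful computation in \cite{KKT}. I will take that computation as given there, and check only that our eigenvalue sets have the shape \cite{KKT} require, i.e. that the $\mathtt{q}$-residues are $\mathtt{q}(\xi q^{2j})$ with all $j\in I$ distinct.
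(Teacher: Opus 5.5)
Your strategy agrees with the paper's, which gives no argument of its own and simply cites \cite[Corollary 4.8]{KKT}. However, the parts of the construction you actually write down do not work as stated.

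\textbf{The definition of $y_p$.} With $c_p=C_p$ and $y_pe(\mathbf{i})=(1-\mathtt{b}_{\mathbf{i}_p}^{-1}X_p)e(\mathbf{i})$, the relation $c_py_p=-y_pc_p$ fails. Write $b=\mathtt{b}_{\mathbf{i}_p}$. By \eqref{XC} and $C_pe(\mathbf{i})=e(c_p\mathbf{i})C_p$,
\[ (C_py_p+y_pC_p)e(\mathbf{i})=\bigl(2-bX_p-b^{-1}X_p^{-1}\bigr)e(c_p\mathbf{i})C_p=-(bX_p)^{-1}(bX_p-1)^2e(c_p\mathbf{i})C_p . \]
This is nonzero whenever $y_p^2$ does not vanish on the block $e(c_p\mathbf{i})$. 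It already fails for $n=1$ and $\Lambda(f)=3\Lambda_0$, where $(X_1-b^{\pm1})^2\neq 0$ on the blocks. Because $\widetilde{Q}_{(i,\pm),(i',\varepsilon')}(u,v)$ involves $\pm u$, your $y_p$ must change sign under $X_p\mapsto X_p^{-1}$, $b\mapsto b^{-1}$. A Cayley-type choice does this, for example $y_pe(\mathbf{i})=(b^{-1}X_p-1)(b^{-1}X_p+1)^{-1}e(\mathbf{i})$. It is still a unit multiple of $X_p-b$ on the block, so your cyclotomic argument in Step 3 survives.

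\textbf{The condition on $\xi$.} Your hedge is not optional. The condition $\xi\notin\{q^k\}$ does not ensure that the values $\mathtt{q}(\xi q^{2j})$, $j\in I$, are pairwise distinct and different from $\pm 2$. Suppose $e+1$ is odd and $q$ has order $e+1$. Then $\xi=-1$ is allowed, yet $\mathtt{q}(-1)=-2$, so $\mathtt{b}_+=\mathtt{b}_-=-1$ and $J^c\neq\emptyset$. Moreover $\mathtt{q}(-q^{2j})=\mathtt{q}(-q^{-2-2j})$, so the quiver is twisted rather than $A^{(1)}_e$. The correct requirement is $\xi^2\notin\{q^{2k}\mid k\in\Z\}$, i.e. $\xi\notin\pm\{q^k\}$. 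The paper's condition implies this only when $q$ has order $2(e+1)$, so you should impose it outright.

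\textbf{Step 4.} This is not a proof as written. There is no basis theorem for $RC^{\Lambda}_n$ to compare dimensions against. An inverse map also requires expressing $X_p$ and $T_a$ through $y_p,\sigma_a,c_p$, which needs nilpotency of the $y_p$ in $RC^{\Lambda}_n$ and invertibility of the normalising series. KKT get bijectivity by building mutually inverse isomorphisms between completions of $\mathcal{H}_{\mathbb{K}}(n)$ and $RC_n$. They then note that $f(X_1)e(\mathbf{i})$ and $a^{\Lambda(f)}(y_1)e(\mathbf{i})$ differ by a unit, so the cyclotomic ideals correspond. If you rely on KKT, cite it explicitly for this step too.
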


It is clear that for any $\Lambda\in P^+$, we can find a unique $f= f^{(\mathtt{0})}_{\undQ}$, where $\undQ=(\xi q^{2i_1},\cdots,\xi q^{2i_m})$ and $k_s\in I=\{0,1,\cdots,e\}$, for $1\leq s\leq m$, such that $\Lambda=\Lambda(f)$.

\begin{cor}\label{semisimplicty typeA' }
	 Assume $p\nmid (e+1)$, then  $R^{\Lambda}_n(I)$ is semisimple if and only if \begin{align*}
	e\geq n,	\text { and for any $1\leq s_1\neq s_2\leq m,$ we have $n\leq |i_{s_1}-i_{s_2}|\leq e+1-n.$}
	\end{align*}
\end{cor}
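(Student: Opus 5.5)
The plan is to chain together three results already in place. Given $\Lambda\in P^+$, write $\Lambda=\sum_{s=1}^m\Lambda_{i_s}$, and let $f=f^{(\mathtt{0})}_{\undQ}$ with $\undQ=(\xi q^{2i_1},\cdots,\xi q^{2i_m})$. This makes sense because $p\nmid(e+1)$, so we may fix $q^2$ a primitive $(e+1)$-th root of unity in $\mathbb{K}$, and fix $\xi\notin\{q^k\mid k\in\Z\}$. Then $\Lambda(f)=\Lambda$, and Theorem \ref{type A} gives $\mathcal{H}^f_{\mathbb{K}}(n)\cong RC^{\Lambda}_n(I)$. By Corollary \ref{semisimple-equivalence}, $R^\Lambda_n(I)$ is semisimple if and only if $RC^\Lambda_n(I)$ is, which holds if and only if $\mathcal{H}^f_{\mathbb{K}}(n)$ is semisimple.

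Next I apply Theorem \ref{semisimplicity for cyclotomic Hecke-Clifford} (1). This case $\bullet=\mathtt{0}$ needs no extra hypothesis, and the theorem was proved via the supersymmetrizing form of Proposition \ref{Nondengerate} (i). It says $\mathcal{H}^f_{\mathbb{K}}(n)$ is semisimple if and only if $P^{(\mathtt{0})}_n(q^2,\undQ)\neq 0$. Finally Proposition \ref{semisimplicty typeA } translates $P^{(\mathtt{0})}_n(q^2,\undQ)\neq0$ into exactly the stated combinatorial condition: $e\geq n$ and $n\leq |i_{s_1}-i_{s_2}|\leq e+1-n$ for all $s_1\neq s_2$.

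The main obstacle is making sure Proposition \ref{semisimplicty typeA } is justified, since its proof was only sketched. I would check it as follows. Any factor of the form $Q_s^2-q^{k}$ or $Q_sQ_{s'}-q^k$ is nonzero, because it equals $\xi^2q^{(\cdot)}-q^k$ and $\xi^2$ is not a power of $q$. (This requires choosing $\xi$ with $\xi^2\notin\{q^k\}$, which I would impose, enlarging the choice of $\xi$ if needed; $\mathbb{K}$ is algebraically closed, hence infinite.) The factor $\prod_{t=1}^n(q^{2t}-1)$ is nonzero exactly when $n\leq e$, since $q^2$ has order $e+1$. The factor $Q_s-Q_{s'}q^{-2t}$ vanishes exactly when $i_s-i_{s'}\equiv -t\pmod{e+1}$ for some $|t|\leq n-1$. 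So non-vanishing for all $s\neq s'$ means the residue of $i_s-i_{s'}$ modulo $e+1$ avoids $\{-(n-1),\dots,n-1\}$. Since $0\leq i_s\leq e$, this says $n\leq |i_s-i_{s'}|\leq e+1-n$.

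I would also note the degenerate case $m\le 1$, where the condition on pairs is vacuous, and check that "for any $s_1\neq s_2$" includes repeated indices $i_{s_1}=i_{s_2}$; these violate the condition, consistent with $Q_s=Q_{s'}$ killing the $t=0$ factor.
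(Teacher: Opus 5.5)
Your proposal is correct and follows the paper's own chain of reductions. Theorem \ref{type A} and Corollary \ref{semisimple-equivalence} reduce semisimplicity of $R^{\Lambda}_n(I)$ to that of $\mathcal{H}^f_{\mathbb{K}}(n)$, and Theorem \ref{semisimplicity for cyclotomic Hecke-Clifford}(1) with Proposition \ref{semisimplicty typeA } finishes it. Your remark that the factors $Q_s^2-q^{-2t}$, $Q_s^2-q^{-4t}$ and $Q_sQ_{s'}-q^{-2(t+1)}$ are nonzero only if $\xi^2\notin\{q^{2k}\mid k\in\Z\}$ (not merely $\xi\notin\{q^k\mid k\in\Z\}$) is a legitimate tightening of a hypothesis the paper leaves implicit, and it costs nothing since $R^{\Lambda}_n(I)$ does not depend on $\xi$.
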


\begin{proof}
	By Corollary \ref{semisimple-equivalence} and Theorem \ref{type A}, $R^{\Lambda}_n(I)$ is semisimple if and only if the corresponding cyclotomic Hecke-Clifford superalgebra $\mathcal{H}^{f}_{\mathbb{K}}(n)$ is semisimple. Hence the corollary follows from Theorem \ref{semisimplicity for cyclotomic Hecke-Clifford} and Proposition \ref{semisimplicty typeA }.
	\end{proof}
	
	\begin{rem}
	The condition in Corollary \ref{semisimplicty typeA' } appears weaker than that in \cite[Corollary 1.6.11]{Ma1}. However, it is easy to check that they are equivalent.
		\end{rem}
\subsection{Semisimplicity criterion for type $C^{(1)}_{e}$}
In this subsection, we consider the Dynkin quiver of type $C^{(1)}_{e}\, (e\geq 2)$ . As explained in Subsection \ref{quivers}, in this case, $I_{\text{odd}}=\emptyset$ and $R_n$ is the usual quiver Hecke algebra of type $C^{(1)}_{e}$. We shall explain how to use Theorem \ref{semisimplicity for cyclotomic Hecke-Clifford} to derive semisimplicity criterion for $R^\Lambda_n$. 

We assume $p\nmid e$. Therefore, we can choose $q^2$ to be a $2e$-th primitive root of unity.
\begin{prop}\label{semisimplicty typeC }
	Let $f= f^{(\mathtt{0})}_{\undQ}$, where $\undQ=(q^{2i_1-1},\cdots,q^{2i_m-1})$ and $i_s\in I=\{0,1,\cdots,e\}$, for $1\leq s\leq m$. Then $P^{(\mathtt{0})}_{n}(q^2,\undQ)\neq 0$ if and only if 
	\begin{enumerate}
		\item\label{criterion of type C1} For any $1\leq s\leq m,$ we have $\frac{n-1}{2} \leq i_s\leq e-\frac{n-1}{2}$.
		\item \label{criterion of type C2} For any $1\leq s_1\neq s_2\leq m,$ we have $n\leq |i_{s_1}-i_{s_2}|.$
	\end{enumerate}
	
\end{prop}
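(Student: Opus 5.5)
The plan is to go through $P^{(\mathtt{0})}_{n}(q^2,\undQ)$ factor by factor, exactly as in the proof of Proposition \ref{semisimplicty typeA }. Since $q^2$ is a primitive $2e$-th root of unity, each factor of the form $q^{2N}-1$ (after rescaling by powers of $q$) vanishes if and only if $N\equiv 0 \pmod{2e}$. I substitute $Q_s=q^{2i_s-1}$ and translate each vanishing condition into a congruence on the integers $i_s\in\{0,\dots,e\}$, with $t$ running over the relevant range.

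\emph{Step 1: the single-index factors.}
\begin{itemize}
\item $Q_s^2-q^{-4t}=0$ means $q^{2(2i_s-1+2t)}=1$, i.e. $2i_s-1+2t\equiv 0\pmod{2e}$. This is impossible since the left side is odd. So these factors never vanish, just as the $Q_sQ_{s'}$-type factors disappeared in type $A$.
\item $Q_s^2-q^{-2t}=0$ for some $t\in[3-n,n-1]$ means $2e$ divides an integer in $[2i_s+2-n,\,2i_s+n-2]$. Because $0\le 2i_s\le 2e$, the only candidate multiples are $0$ and $2e$. Avoiding $0$ gives $2i_s>n-2$, i.e. $i_s\ge \frac{n-1}{2}$. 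Avoiding $2e$ gives $i_s\le e-\frac{n-1}{2}$. This is exactly condition \eqref{criterion of type C1}; the degenerate case $n=1$ (empty product) is consistent with it.
\item $\prod_{t=1}^n(q^{2t}-1)\neq0$ holds if and only if $n\le 2e-1$. I will check this is implied by \eqref{criterion of type C1} when $m\ge1$: condition \eqref{criterion of type C1} forces $n-1\le e$, and $e+1\le 2e-1$ for $e\ge2$.
\end{itemize}

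\emph{Step 2: the pair factors, for $s\neq s'$ and $t\in[1-n,n-1]$.}
\begin{itemize}
\item $Q_s-Q_{s'}q^{-2t}=0$ if and only if $i_s-i_{s'}+t\equiv0\pmod{2e}$.
\item $Q_sQ_{s'}-q^{-2(t+1)}=0$ if and only if $i_s+i_{s'}+t\equiv0\pmod{2e}$.
\end{itemize}
For necessity of \eqref{criterion of type C2}: if $d=|i_{s_1}-i_{s_2}|<n$, then $t=\pm d$ lies in the range and kills a difference factor. For sufficiency, assume \eqref{criterion of type C1} and \eqref{criterion of type C2}.
\begin{itemize}
\item Difference factors: $|i_s-i_{s'}|\le e-(n-1)$, so $i_s-i_{s'}+t$ lies strictly between $-2e$ and $2e$. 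It is nonzero since $|i_s-i_{s'}|\ge n>|t|$.
\item Sum factors: $i_s+i_{s'}\in[n-1,\,2e-n+1]$, so $i_s+i_{s'}+t\in[0,2e]$. It can reach $0$ only if $i_s+i_{s'}=n-1$ with $t=1-n$, which forces $i_s=i_{s'}=\frac{n-1}{2}$, contradicting \eqref{criterion of type C2}. The value $2e$ is excluded symmetrically.
\end{itemize}

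Assembling Steps 1 and 2 gives the equivalence. The main obstacle is purely bookkeeping: the sum factors $Q_sQ_{s'}-q^{-2(t+1)}$ do not drop out automatically as they did in type $A$. One has to see that their vanishing is already excluded by the combination of \eqref{criterion of type C1} (which confines $i_s+i_{s'}$ to $[n-1,2e-n+1]$) and \eqref{criterion of type C2} (which rules out the boundary case $i_s=i_{s'}$). I would also check separately the edge cases $n=1$ and $m=1$.
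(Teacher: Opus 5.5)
Your proof is correct and follows the same factor-by-factor route as the paper. The factors $Q_s^2-q^{-4t}$ never vanish by parity, the factors $Q_s^2-q^{-2t}$ are equivalent to condition (1), and (1) together with $e\ge 2$ forces $n\le 2e-1$; the difference factors then give condition (2), and the sum factors are automatically nonzero. You also work out the ``easy exercises'' the paper leaves to the reader, and your remark that $m\ge 1$ is needed for (1) to force $n\le 2e-1$ makes explicit an assumption the paper uses silently.
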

\begin{proof}Since $q^2$ is a $2e$-th primitive root of unity, we have $\prod\limits_{s=1}^m\prod\limits_{t=1-n}^{n}\bigl(Q^2_s-q^{-4t}\bigr)\neq 0.$ Hence, $P^{(\mathtt{0})}_{n}(q^2,\undQ)\neq 0$ if and only if \begin{align}\label{reduce in type C}\prod\limits_{t=1}^{n}\bigl(q^{2t}-1\bigr)\left(\prod\limits_{s=1}^m\prod\limits_{t=3-n}^{n-1}\bigl(Q^2_s-q^{-2t}\bigr)\right)
		\left(\prod\limits_{1\leq s<s'\leq m}\prod\limits_{t=1-n}^{n-1}\bigl({Q_s}-Q_{s'}q^{-2t}\bigr)
		\bigl(Q_sQ_{s'}-q^{-2(t+1)}\bigr)\right)\neq 0.
	\end{align}Clearly, $\prod\limits_{t=1}^{n}\bigl(q^{2t}-1\bigr)\neq 0$ if and only if $n\leq 2e-1.$ It is an easy exercise to compute that under the condition $0\leq i_s\leq e$ for any $1\leq s\leq m$ and  $1\leq n\leq 2e-1$, $\prod\limits_{s=1}^m\prod\limits_{t=3-n}^{n-1}\bigl(Q^2_s-q^{-2t}\bigr)\neq 0$ holds if and only if \eqref{criterion of type C1} holds. On the other hand, if \eqref{criterion of type C1} holds, then we have $\frac{n-1}{2} \leq e-\frac{n-1}{2}$. Combining with $e\geq 2$, this implies $n\leq 2e-1$. Therefore, $\prod\limits_{t=1}^{n}\bigl(q^{2t}-1\bigr)\left(\prod\limits_{s=1}^m\prod\limits_{t=3-n}^{n-1}\bigl(Q^2_s-q^{-2t}\bigr)\right)\neq 0$ if and only if \eqref{criterion of type C1} holds. It is an easy exercise to compute that under the condition \eqref{criterion of type C1}, $\prod\limits_{1\leq s<s'\leq m}\prod\limits_{t=1-n}^{n-1}\bigl({Q_s}-Q_{s'}q^{-2t}\bigr)\neq 0$ holds if and only if \eqref{criterion of type C2} holds. Moreover, if both \eqref{criterion of type C1},\eqref{criterion of type C2} hold, then $\prod\limits_{1\leq s<s'\leq m}\prod\limits_{t=1-n}^{n-1}\bigl(Q_sQ_{s'}-q^{-2(t+1)}\bigr)\neq 0.$ To sum up, we have proved that \eqref{reduce in type C} holds if and only if \eqref{criterion of type C1},\eqref{criterion of type C2} hold.
	\end{proof}

Now we consider the cyclotomic polynomial being the following form : $f= f^{(\mathtt{0})}_{\undQ}$, where $\undQ=(q^{2i_1-1},\cdots,q^{2i_m-1})$ and $i_s\in I=\{0,1,\cdots,e\}$, for $1\leq s\leq m$ and associate $f$ with a dominant integral weight $\Lambda(f):=\sum\limits_{s=1}^m \Lambda_{k_s}\in P^+$. Then we have the following.

\begin{thm}\cite[Corollary 4.8]{KKT}\label{type C}
	$	\mathcal{H}^{f}_{\mathbb{K}}(n)\cong RC^{\Lambda(f)}_n(I)$.
\end{thm}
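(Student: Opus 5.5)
The plan is to deduce this from the general isomorphism theorem of Kang--Kashiwara--Tsuchioka \cite{KKT}. That theorem identifies a cyclotomic Hecke--Clifford superalgebra with a cyclotomic quiver Hecke--Clifford superalgebra $RC^{\Lambda}_n(I)$ once two things are checked. First, the eigenvalues of the $X_k$ must lie in a set $J$ carrying the involution $c$. Second, the induced quiver on $I=J/c$ must be the one whose Cartan datum we fixed. So I only need to verify the hypotheses of \cite[Corollary 4.8]{KKT} in our setting, namely $q^2$ a primitive $2e$-th root of unity and $Q_s=q^{2i_s-1}$. I will not rebuild the isomorphism itself.

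\textbf{Step 1: the spectrum of $X_1$.} By Lemma \ref{basis} and the defining relation $f(X_1)=0$, the spectrum of $X_1$ lies in the roots of $X_1+X_1^{-1}=\mathtt{q}(Q_s)$, that is, in $\{\mathtt{b}_\pm(Q_s)\}$. Arguing inductively through the intertwining relations \eqref{PX1}--\eqref{PX2}, exactly as in the residue calculus of Section \ref{Cyclotomic Hecke-Clifford superalgebra}, the spectrum of every $X_k$ lies among the values $\mathtt{b}_\pm(Q_sq^{2j})$ with $j\in\Z$.

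\textbf{Step 2: identify the quiver.} From \eqref{difference}, $\mathtt{q}(\iota)=\mathtt{q}(\iota')$ holds iff $\iota'=\iota$ or $\iota'=q^{-2}\iota^{-1}$. Write $\iota=q^{x}$ with $x$ odd, read modulo $4e$. Then $I$ is the set of odd residues modulo $4e$ taken up to the reflection $x\mapsto -2-x$. Its fixed points are $x\equiv -1$ and $x\equiv 2e-1$. Hence the quotient is a path with $e+1$ vertices, $0,\dots,e$, given by $x=2i-1$. Adjacent vertices are joined by the shift $x\mapsto x+2$, which is multiplication of the residue by $q^2$. At the two fixed endpoints, the shift together with its reflection produces two arrows into the neighbour, giving double bonds pointing inward. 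This is exactly the diagram $C^{(1)}_e$.

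\textbf{Step 3: compare $\widetilde{Q}$ and $\Lambda(f)$.} I then check that the $\widetilde{Q}$-polynomials \eqref{extend Q-polys} match those arising from KKT's completion of the Hecke--Clifford relations at these eigenvalues. The cyclotomic polynomial $f$ has a root at vertex $i_s$ with multiplicity equal to the number of $s$ with that index. Under the isomorphism it therefore becomes $a^{\Lambda(f)}(y_1)$ with $\Lambda(f)=\sum_s\Lambda_{i_s}$, and \cite[Corollary 4.8]{KKT} yields the claimed isomorphism. The standing assumptions $\mathrm{char}\,\mathbb{K}\neq 2$ and $q+q^{-1}\neq0$ supply KKT's invertibility hypotheses; the latter holds because $q^2$ has order $2e>2$, so $q^2\neq -1$.

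The main obstacle is Step 2. One must check that the $c$-orbit structure and the arrow multiplicities produced by the $\mathtt{q}$-function give the doubled edges of $C^{(1)}_e$ with the correct orientation. One must also check that the shift by $-1$ in $Q_s=q^{2i_s-1}$ lands the fixed points precisely on the endpoint vertices $0$ and $e$ rather than in the middle of the path.
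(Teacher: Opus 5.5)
Your plan matches the paper, which gives no argument of its own and simply invokes \cite[Corollary 4.8]{KKT}; your hypothesis checks (the spectrum of the $X_k$, the path with inward double bonds, $\Lambda(f)=\sum_s\Lambda_{i_s}$, and $q^2\neq\pm1$) correctly elaborate that citation. One clarification for Step 2: the reflection-fixed endpoints $x\equiv -1,\,2e-1$ are not $c$-fixed points of $J$, because an eigenvalue $\pm1$ of $X_k$ would require $\iota\in\{\pm1,\pm q^{-2}\}$, and these are even powers of $q$ while your residues are odd powers; hence every vertex is even, as required by $I_{\rm odd}=\emptyset$ for $C^{(1)}_e$.
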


It is clear that for any $\Lambda\in P^+$, we can find some $f= f^{(\mathtt{0})}_{\undQ}$, where $Q_s=q^{2i_s-1}$ and $i_s\in I=\{0,1,\cdots,e\}$, for $1\leq s\leq m$, such that $\Lambda=\Lambda(f)$.

\begin{cor}\label{semisimplicty typeC' }
	Assume $p\nmid e$, then  $R^{\Lambda}_n(I)$ is semisimple if and only if 
	\begin{enumerate}
		\item For any $1\leq s\leq m,$ we have $\frac{n-1}{2} \leq i_s\leq e-\frac{n-1}{2}$.
		\item  For any $1\leq s_1\neq s_2\leq m,$ we have $n\leq |i_{s_1}-i_{s_2}|.$
	\end{enumerate}
	
\end{cor}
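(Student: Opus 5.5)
The argument will mirror the proof of Corollary \ref{semisimplicty typeA' }. First, I will use the remark after Theorem \ref{type C}: for the given $\Lambda\in P^+$ there is some $f=f^{(\mathtt{0})}_{\undQ}$ with $\undQ=(q^{2i_1-1},\cdots,q^{2i_m-1})$, $i_s\in I$, such that $\Lambda=\Lambda(f)$. Here $q^2$ is a primitive $2e$-th root of unity, which exists because $p\nmid e$. The integers $i_s$ appearing in the two conditions of the corollary are the ones fixed by this choice. Next, Corollary \ref{semisimple-equivalence} says that $R^{\Lambda}_n(I)$ is semisimple if and only if $RC^{\Lambda}_n(I)$ is, and Theorem \ref{type C} identifies $RC^{\Lambda(f)}_n(I)$ with $\mathcal{H}^{f}_{\mathbb{K}}(n)$. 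So the question becomes when $\mathcal{H}^{f}_{\mathbb{K}}(n)$ is semisimple.

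Since $\bullet=\mathtt{0}$, Theorem \ref{semisimplicity for cyclotomic Hecke-Clifford} (1) applies with no extra hypothesis such as $\Gamma^{(\mathsf{s})}_n\neq 0$. It gives that $\mathcal{H}^{f}_{\mathbb{K}}(n)$ is semisimple if and only if $P^{(\mathtt{0})}_{n}(q^2,\undQ)\neq 0$. Before invoking it, I will check its standing hypothesis $q^2\neq\pm1$. This holds because $q^2$ has order $2e\geq 4$, and it also gives $q\neq\pm1$ and $q+q^{-1}\neq 0$. Finally, Proposition \ref{semisimplicty typeC } translates $P^{(\mathtt{0})}_{n}(q^2,\undQ)\neq0$ into exactly conditions (1) and (2) of the corollary, which finishes the argument.

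The one step I expect to need care is the identification of parameters. The weight should be $\Lambda(f)=\sum_s\Lambda_{i_s}$; the statement of Theorem \ref{type C} writes $k_s$, which I read as a typo for $i_s$. Moreover, the choice of $f$ is not unique: different orderings of the $i_s$, or a different choice of $q$, give different $f$. I must therefore make sure the criterion does not depend on this choice. Conditions (1) and (2) are symmetric in the $i_s$ and depend only on the multiset $\{i_s\}$, which $\Lambda$ determines. Once this is noted, the rest is a direct chain of the cited equivalences.
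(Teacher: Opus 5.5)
Your proposal is correct and follows the paper's proof. The paper reduces via Corollary~\ref{semisimple-equivalence} and Theorem~\ref{type C} to the semisimplicity of $\mathcal{H}^{f}_{\mathbb{K}}(n)$, then applies part (1) of Theorem~\ref{semisimplicity for cyclotomic Hecke-Clifford} and the proposition translating $P^{(\mathtt{0})}_{n}(q^2,\undQ)\neq 0$ into conditions (1)--(2). Your additional checks (that $q^2\neq\pm1$, and that the criterion does not depend on the choice of $f$) are sound and make explicit what the paper leaves implicit.
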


\begin{proof}
	By Corollary \ref{semisimple-equivalence} and Theorem \ref{type C}, $R^{\Lambda}_n(I)$ is semisimple if and only if the corresponding cyclotomic Hecke-Clifford superalgebra $\mathcal{H}^{f}_{\mathbb{K}}(n)$ is semisimple. Hence the corollary follows from Theorem \ref{semisimplicity for cyclotomic Hecke-Clifford} and Proposition \ref{semisimplicty typeC }.
	\end{proof}
	
	\begin{rem}
		The conditions in Corollary \ref{semisimplicty typeC' } appear weaker than those in \cite[Theorem 1.1]{Sp}. However, it is easy to check that they are equivalent.
	\end{rem}
	
\subsection{Semisimplicity criterion for type $A^{(2)}_{2e}$}
In this subsection, we consider the Dynkin quiver of type $A^{(2)}_{2e}\, (e\geq 1)$ . As explained in Subsection \ref{quivers}, in this case, $I_{\text{odd}}=\{0\}$ and $R_n$ has non-trivial superstructure. We shall explain how to use Theorem \ref{semisimplicity for cyclotomic Hecke-Clifford} to derive semisimplicity criterion for $R^\Lambda_n$. 

We assume $p\nmid (2e+1)$. Therefore, we can choose $q^2$ to be a $(2e+1)$-th primitive root of unity. 
\begin{prop}\label{semisimplicty twised typeA }
Let $f= f^{(\bullet)}_{\undQ}$, where $\bullet\in\{\mathtt{0},\mathtt{s}\}$, $\undQ=( q^{2i_1},\cdots,q^{2i_m})$ and $i_s\in I=\{0,1,\cdots,e\}$, for $1\leq s\leq m$. Then  $P^{(\bullet)}_{n}(q^2,\undQ)\neq 0$ if and only if 
	\begin{enumerate}
		\item\label{criterion of twisted type A1} For any $1\leq s\leq m,$ we have $n \leq i_s\leq e-\frac{n-2}{2}$.
		\item \label{criterion of twisted type A2} For any $1\leq s_1\neq s_2\leq m,$ we have $n\leq |i_{s_1}-i_{s_2}|.$
	\end{enumerate}
	
\end{prop}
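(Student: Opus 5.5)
The plan is to translate every factor of $P^{(\bullet)}_{n}(q^2,\undQ)$ into a congruence condition on exponents modulo $N:=2e+1$, and then carry out interval bookkeeping in $[0,e]$. Since $q^2$ is a primitive $N$-th root of unity, we have $q^{2k}=1$ if and only if $N\mid k$. Because $N$ is odd, $2$ is invertible modulo $N$, so $q^{4k}=1$ is also equivalent to $N\mid k$.

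\textbf{The factor not involving $\undQ$.} The factors $q^{2t}+1$ that appear when $\bullet=\mathtt{s}$ never vanish. Indeed, $q^{2t}$ lies in the cyclic group $\langle q^2\rangle$ of odd order, while $-1\neq 1$ (characteristic $\neq 2$) has order $2$. So in both cases $\bullet\in\{\mathtt{0},\mathtt{s}\}$ the $\undQ$-free factor is nonzero if and only if $n\le 2e$.

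\textbf{The $\undQ$-dependent factors.} Substituting $Q_s=q^{2i_s}$, each factor vanishes exactly under the following congruences.
\begin{enumerate}
\item[(b)] $Q_s^2-q^{-2t}=0$ if and only if $2i_s+t\equiv 0$, for some $t\in[3-n,n-1]$.
\item[(c)] $Q_s^2-q^{-4t}=0$ if and only if $i_s+t\equiv 0$, for some $t\in[1-n,n]$.
\item[(d)] $Q_s-Q_{s'}q^{-2t}=0$ if and only if $i_s-i_{s'}+t\equiv 0$, for some $t\in[1-n,n-1]$.
\item[(e)] $Q_sQ_{s'}-q^{-2(t+1)}=0$ if and only if $i_s+i_{s'}+u\equiv 0$, for some $u\in[2-n,n]$.
\end{enumerate}

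\textbf{Necessity of the stated conditions.}
\begin{enumerate}
\item[(i)] Factor (c) with $t=-i_s$ forces $i_s\ge n$; otherwise $-i_s\in[1-n,0]$ gives a zero. In particular $n\le e$, so $n\le 2e$ holds automatically.
\item[(ii)] Factor (b) forces the interval $[2i_s+3-n,\,2i_s+n-1]$ to avoid $N$. Since $2i_s+3-n\ge n+3>0$ by (i), and $2i_s+n-1<2N$, this is equivalent to $2i_s+n-1\le 2e$. That is the upper bound $i_s\le e-\frac{n-2}{2}$, up to checking parity at the endpoint.
\item[(iii)] Factor (d) forces the interval $[\,|i_s-i_{s'}|-n+1,\,|i_s-i_{s'}|+n-1\,]$ to avoid $0$ and $\pm N$. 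Avoiding $0$ gives $|i_{s_1}-i_{s_2}|\ge n$.
\end{enumerate}

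\textbf{Sufficiency.} Assume condition (1) holds. Then factor (c) is nonzero: the range $[i_s+1-n,\,i_s+n]$ lies in $[1,\,e+1+\frac{n}{2}]$, which is contained in $[1,N-1]$. Factor (b) is nonzero as in step (ii). Now assume (2) as well.
\begin{enumerate}
\item[(iii$'$)] The bound $|i_s-i_{s'}|+n-1\le e-\frac{n-2}{2}-n+n-1<N$ holds automatically, so factor (d) is nonzero.
\item[(iv)] For factor (e), we have $i_s+i_{s'}+u\ge 2n+2-n>0$. For the upper end, order the pair so that the smaller index is at most the larger minus $n$. This gives $i_s+i_{s'}+n\le 2\bigl(e-\frac{n-2}{2}\bigr)-n+n=2e+2-n$, which is $<N$ for $n\ge 2$. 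The case $n=1$ is handled directly, using that the indices are distinct.
\end{enumerate}

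\textbf{Expected obstacle.} The main difficulty is the boundary bookkeeping. First, one must convert the half-integer bound $e-\frac{n-2}{2}$ into the exact integer inequality coming from factor (b), checking both parities of $n$. Second, one must handle small cases ($n=1,2$ and $e=1$) where an interval could wrap past $N$ or reach $2N$. I would treat these by writing each interval explicitly and checking its endpoints against $0$, $N$ and $2N$.
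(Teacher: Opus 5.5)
You reduce each factor of $P^{(\bullet)}_n(q^2,\undQ)$ to a congruence modulo $N=2e+1$, which is correct and is the same factor-by-factor bookkeeping the paper uses. Your necessity half is also fine. The gap is the point you deferred as ``up to checking parity at the endpoint'', and it cannot be closed.

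For $n\ge 2$ and $i_s\ge n$, factor (b) is nonzero if and only if $2i_s+n-1\le 2e$, i.e.\ $i_s\le e-\frac{n-1}{2}$. For odd $n$ this has the same integer solutions as $i_s\le e-\frac{n-2}{2}$. For even $n$, however, condition (1) also allows $i_s=e-\frac{n}{2}+1$. Then the term $t=n-1$ gives $2i_s+t=N$, so $Q_s^2=q^{-2(n-1)}$ and $P^{(\bullet)}_n(q^2,\undQ)=0$.

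This gives a concrete counterexample to the statement itself: $e=2$, $n=2$, $m=1$, $i_1=2$. Condition (1) holds ($2\le 2\le 2$) and (2) is vacuous. Yet the factor $Q_1^2-q^{-2}=q^{8}-q^{-2}=q^{-2}(q^{10}-1)$ vanishes, because $q^2$ has order $5$. So your sufficiency step ``Factor (b) is nonzero as in step (ii)'' fails. Step (ii) only ever produced the stronger bound $2i_s+n-1\le 2e$, which is not implied by (1).

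The paper's proof makes the same slip. It asserts that the $\undQ$-free factor times factor (b) is nonzero iff $\frac{n-2}{2}\le i_s\le e-\frac{n-2}{2}$. But the range $[3-n,n-1]$ is centred at $1$, not $0$, so the correct condition is $\frac{n-2}{2}\le i_s\le e-\frac{n-1}{2}$.

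If you replace (1) by $n\le i_s\le e-\frac{n-1}{2}$ (equivalently $2i_s+n\le 2e+1$), your argument proves the corrected statement essentially verbatim. Step (iv) even loses its special case: ordering so that $i_s\le i_{s'}-n$, you get $i_s+i_{s'}+n\le 2i_{s'}\le 2e-n+1<N$ for every $n\ge 1$.

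One further caveat: your deduction of $n\le 2e$ from $i_s\ge n$ needs $m\ge 1$. For $m=0$, conditions (1) and (2) are vacuous, while the factor $\prod_{t=1}^n(q^{2t}-1)$ vanishes once $n\ge 2e+1$. That case must be excluded as well, or $n\le 2e$ added as a hypothesis.
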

\begin{proof} We only prove the case when $\bullet=\mathtt{s}$, since the proof for the case $\bullet=\mathtt{0}$ is similar.  Using the fact that $q^2$ is a $(2e+1)$-th primitive root of unity, we deduce that $\prod\limits_{t=1}^{n}\bigl(q^{2t}-1\bigr)\neq 0$ holds if and only if $n\leq 2e.$ Moreover, when $n\leq 2e$, we have $\prod\limits_{t=1}^{n}\bigl(q^{2t}+1\bigr)\neq 0$.  A similar argument as in Proposition \ref{semisimplicty typeC } shows that $	\prod\limits_{t=1}^{n}\biggl(\bigl(q^{2t}-1\bigr)\bigl(q^{2t}+1\bigr)\biggr)\left(\prod\limits_{s=1}^m\prod\limits_{t=3-n}^{n-1}\bigl(Q^2_s-q^{-2t}\bigr)\right)\neq 0$ if and only if the following holds \begin{align}\label{criterion of twisted type A1'}
	\text{	For any $1\leq s\leq m,$ we have $\frac{n-2}{2} \leq i_s\leq e-\frac{n-2}{2}$.}
		\end{align} It is an easy exercise to compute that under the condition \eqref{criterion of twisted type A1'}, $\prod\limits_{s=1}^m\prod\limits_{t=1-n}^{n}\bigl(Q^2_s-q^{-4t}\bigr)\neq 0$ holds if and only if $i_s\geq n$ for any $1\leq s\leq m$. In conclusion, $	\prod\limits_{t=1}^{n}\biggl(\bigl(q^{2t}-1\bigr)\bigl(q^{2t}+1\bigr)\biggr)\prod\limits_{i=1}^m\biggl(\prod\limits_{t=3-n}^{n-1}\bigl(Q^2_i-q^{-2t}\bigr)\prod\limits_{t=1-n}^{n}\bigl(Q^2_i-q^{-4t}\bigr)\biggr)\neq 0$ holds if and only if \eqref{criterion of twisted type A1} holds. Again, a similar argument as in Proposition \ref{semisimplicty typeC } shows that under condition \eqref{criterion of twisted type A1}, $\prod\limits_{1\leq i<i'\leq m}\biggl(\prod\limits_{t=1-n}^{n-1}\bigl({Q_i}-Q_{i'}q^{-2t}\bigr)
		\bigl(Q_iQ_{i'}-q^{-2(t+1)}\bigr)\biggr)\neq 0$ holds if and only if \eqref{criterion of twisted type A2} holds. Hence we complete the proof of the proposition.
	\end{proof}
	
Now we consider the cyclotomic polynomial being the following form : $f= f^{(\bullet)}_{\undQ}$, where $\bullet\in\{\mathtt{0},\mathtt{s}\}$, $\undQ=( q^{2i_1},\cdots,q^{2i_m})$ and $i_s\in I=\{0,1,\cdots,e\}$, for $1\leq s\leq m$ and associate $f$ with a dominant integral weight as follows $$\Lambda(f):=\begin{cases}\sum\limits_{s=1}^m 2^{\delta_{i_s,0}}\Lambda_{k_s}\in P^+\qquad&\text{if $\bullet=\mathtt{0}$,}\\
		\Lambda_0+\sum\limits_{s=1}^m 2^{\delta_{i_s,0}}\Lambda_{i_s}\in P^+\qquad&\text{if $\bullet=\mathtt{s}$}.
	\end{cases}$$ 
	
	Then we have the following.
	
	\begin{thm}\cite[Corollary 4.8]{KKT}\label{twisted type A}
		$	\mathcal{H}^{f}_{\mathbb{K}}(n)\cong RC^{\Lambda(f)}_n(I)$.
	\end{thm}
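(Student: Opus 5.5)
The statement is \cite[Corollary 4.8]{KKT} specialised to our parameters, so the plan is to check that our data $(q,\undQ,f)$ satisfy the hypotheses of Kang–Kashiwara–Tsuchioka's construction for the quiver $A^{(2)}_{2e}$, rather than to rebuild the isomorphism. I would proceed in three steps. The main obstacle is the third, matching the cyclotomic ideals.

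\textbf{Step 1: the eigenvalues of $X_k+X_k^{-1}$ on $\mathcal{H}^f_{\mathbb{K}}(n)$ are labelled by $I$.} Since every $Q_s$ is a power of $q^2$, every residue $\res(i,j,l)$ lies in $\{q^{2k}\mid k\in\Z\}$. The relevant quantities are therefore the values $\mathtt{q}(q^{2k})$. Because $q^2$ is a primitive $(2e+1)$-th root of unity, \eqref{difference} shows that $\mathtt{q}(q^{2k})=\mathtt{q}(q^{2k'})$ if and only if $k\equiv k'$ or $k+k'+1\equiv 0 \pmod{2e+1}$. Consequently the distinct values are $\mathtt{q}(q^{2i})$ for $i\in I=\{0,\dots,e\}$. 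The case $i=0$ is distinguished: there $\mathtt{q}(1)=2$, so $\mathtt{b}_+=\mathtt{b}_-=1$, and this node is the odd one, $I_{\rm odd}=\{0\}$. By \cite{BK}, the generalized eigenvalues of $X_k+X_k^{-1}$ on $\mathcal{H}^f_{\mathbb{K}}(n)$ all lie in this set. This gives a decomposition $1=\sum_{{\bf i}\in I^n}e({\bf i})$ into orthogonal idempotents. Refining $e({\bf i})$ by the two eigenvalues $\mathtt{b}_\pm$ of $X_k$ gives idempotents indexed by $J^n$.

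\textbf{Step 2: generators and defining relations.} In the KKT recipe, $y_k$ is a suitably normalised nilpotent part of $X_k$ on each idempotent ($X_k-\mathtt{b}_\pm$, and a square-root type normalisation at the odd node $0$), $c_k=C_k$, and $\sigma_a$ is a renormalised intertwiner built from $T_a$. I would check that the resulting quiver polynomials $Q_{i,j}(u,v)$ are those of the $A^{(2)}_{2e}$ diagram: $u-v$ type for neighbouring interior nodes, and $u-v^2$ or $u-v^4$ at the double and quadruple edges at $0$ and $e$. This uses only the ratios $\mathtt{q}(q^{2i})-\mathtt{q}(q^{2j})$ from \eqref{difference}, which vanish to first or higher order exactly along the edges. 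All of this is carried out in \cite[\S4]{KKT}; the only inputs needed are the primitivity of $q^2$ and $p\nmid(2e+1)$.

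\textbf{Step 3 (main obstacle): the cyclotomic ideals match with the right multiplicities.} I would check that the image of $f(X_1)e({\bf i})$ equals a unit times $y_1^{\langle h_{{\bf i}_1},\Lambda(f)\rangle}e({\bf i})$. For a factor $X_1+X_1^{-1}-\mathtt{q}(q^{2i_s})$ with $i_s\neq 0$, the factor vanishes to first order on the eigenspace of node $i_s$, contributing $\Lambda_{i_s}$. For $i_s=0$ one has
\[
X_1+X_1^{-1}-2=X_1^{-1}(X_1-1)^2,
\]
which vanishes to second order at node $0$ and so contributes $2\Lambda_0$; this accounts for the factor $2^{\delta_{i_s,0}}$. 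When $\bullet=\mathtt{s}$, the extra factor $(X_1-1)$ vanishes to first order at node $0$ and contributes the additional $\Lambda_0$. All remaining factors are invertible on each eigenspace, again because the $\mathtt{q}$-values are distinct. Granting these identifications, \cite[Corollary 4.8]{KKT} gives $\mathcal{H}^f_{\mathbb{K}}(n)\cong RC^{\Lambda(f)}_n(I)$.
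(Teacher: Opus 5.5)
Your proposal is correct and follows essentially the same route as the paper, which gives no argument beyond citing \cite[Corollary 4.8]{KKT}. Your Steps 1 and 3 make explicit the specialisation the paper leaves implicit: the residue classes $0,\dots,e$ modulo $k\mapsto -1-k$, the identification $I_{\rm odd}=\{0\}$ coming from $\mathtt{q}(1)=2$, and the cyclotomic multiplicities $2^{\delta_{i_s,0}}$ together with the extra $\Lambda_0$ from the factor $X_1-1$ when $\bullet=\mathtt{s}$. One minor slip is in Step 2: for $i\neq j$ in $I$ the differences $\mathtt{q}(q^{2i})-\mathtt{q}(q^{2j})$ are nonzero constants, as your own Step 1 shows, so they do not detect the edges, which instead come from the zeros of the squared intertwiners; since you defer that verification to \cite[\S 4]{KKT}, nothing in the argument depends on it.
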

	
	It is clear that for any $\Lambda\in P^+$, we can find some $f= f^{(\bullet)}_{\undQ}$, where $\bullet\in\{\mathtt{0},\mathtt{s}\}$, $\undQ=( q^{2i_1},\cdots,q^{2i_m})$ and $i_s\in I=\{0,1,\cdots,e\}$, for $1\leq s\leq m$ such that $\Lambda=\Lambda(f)$.

	\begin{cor}\label{semisimplicty twised typeA' }
		Assume $p\nmid (2e+1)$, then  $R^{\Lambda}_n(I)$ is semisimple if and only if 
		\begin{enumerate}
			\item For any $1\leq s\leq m,$ we have $n \leq i_s\leq e-\frac{n-2}{2}$.
			\item For any $1\leq s_1\neq s_2\leq m,$ we have $n\leq |i_{s_1}-i_{s_2}|.$
		\end{enumerate}
		
	\end{cor}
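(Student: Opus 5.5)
The plan follows the template of Corollaries \ref{semisimplicty typeA' } and \ref{semisimplicty typeC' }. Given $\Lambda\in P^+$ of type $A^{(2)}_{2e}$, I first choose $\bullet\in\{\mathtt{0},\mathtt{s}\}$ and $\undQ=(q^{2i_1},\ldots,q^{2i_m})$ with $i_s\in I$ such that $\Lambda=\Lambda(f)$ for $f=f^{(\bullet)}_{\undQ}$. Concretely, $\bullet=\mathtt{s}$ is forced precisely when the coefficient of $\Lambda_0$ in $\Lambda$ is odd. Then:
\begin{enumerate}
\item Corollary \ref{semisimple-equivalence} shows that $R^\Lambda_n(I)$ is semisimple if and only if $RC^\Lambda_n(I)$ is.
\item Theorem \ref{twisted type A} identifies $RC^\Lambda_n(I)$ with $\mathcal{H}^f_{\mathbb{K}}(n)$.
\item So everything reduces to showing that $\mathcal{H}^f_{\mathbb{K}}(n)$ is semisimple if and only if $P^{(\bullet)}_n(q^2,\undQ)\neq 0$.
\item Proposition \ref{semisimplicty twised typeA } then translates this nonvanishing into the two stated combinatorial conditions.
\end{enumerate}

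For $\bullet=\mathtt{0}$, step 3 is exactly Theorem \ref{semisimplicity for cyclotomic Hecke-Clifford}(1), so there is nothing more to do.

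For $\bullet=\mathtt{s}$, Theorem \ref{semisimplicity for cyclotomic Hecke-Clifford}(2) needs the extra hypothesis $\Gamma_n^{(\mathsf{s})}(q^2,\undQ)\neq 0$, so I must verify that $q^{2k}Q_i+1\neq0$ for $0\le i\le m$ and $1-n\le k\le n$. Here $Q_i=q^{2i_i}$ (with $Q_0=1$) is a power of $q^2$, and $q^2$ is a primitive root of unity of odd order $2e+1$. Every power of $q^2$ therefore also has odd order, and so cannot equal $-1$, since $-1$ has order $2$ and $\operatorname{char}\mathbb{K}\neq2$. Hence each factor $q^{2(k+i_i)}+1$ is nonzero, $\Gamma^{(\mathsf{s})}_n\neq0$ holds automatically, and Theorem \ref{semisimplicity for cyclotomic Hecke-Clifford}(2) applies.

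The main point to get right is this parity/odd-order argument. A further obstacle is the bookkeeping between $\Lambda$ and $f$. When the coefficient of $\Lambda_0$ is even, one should take $\bullet=\mathtt{0}$, with the $\Lambda_0$-multiplicity realized by the weight $2^{\delta_{i_s,0}}$ convention. One must then check that the stated conditions are indeed expressed in terms of the $i_s$ attached to $f$, exactly as in Proposition \ref{semisimplicty twised typeA }. Once these are in place, the corollary follows by chaining the equivalences in steps 1–4.
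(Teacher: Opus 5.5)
Your chain of reductions is the paper's own proof. Corollary~\ref{semisimple-equivalence} and the Kang--Kashiwara--Tsuchioka isomorphism reduce the question to $\mathcal{H}^f_{\mathbb{K}}(n)$, and Theorem~\ref{semisimplicity for cyclotomic Hecke-Clifford} reduces it to $P^{(\bullet)}_n(q^2,\undQ)\neq 0$. Your odd-order argument for $\Gamma^{(\mathsf{s})}_n(q^2,\undQ)\neq 0$ is correct, and it supplies a justification the paper only asserts. The gap is step~4, which you, like the paper, take as a black box. The type $A^{(2)}_{2e}$ proposition translating $P^{(\bullet)}_n(q^2,\undQ)\neq0$ into conditions (1)--(2) has the wrong upper bound when $n$ is even. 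As a result, the statement as written is false in general.

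Take $Q_s=q^{2i_s}$. Then $Q_s^2-q^{-2t}=q^{-2t}\bigl(q^{2(2i_s+t)}-1\bigr)$, which vanishes iff $(2e+1)\mid 2i_s+t$. As $t$ runs over $[3-n,n-1]$, the number $2i_s+t$ runs over $[2i_s+3-n,\,2i_s+n-1]$. So this factor of $P^{(\bullet)}_n$ is nonzero iff $\frac{n-2}{2}\le i_s\le e-\frac{n-1}{2}$, not $i_s\le e-\frac{n-2}{2}$. The symmetric bound is correct in type $C^{(1)}_e$, where $Q_s=q^{2i_s-1}$, but not here.

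For a counterexample, take $e\ge 2$, $\Lambda=\Lambda_e$ (so $\bullet=\mathtt{0}$, $m=1$, $i_1=e$) and $n=2$. Conditions (1) and (2) hold. Yet $Q_1^2-q^{-2}=q^{-2}(q^{4e+2}-1)=0$, so $P^{(\mathtt{0})}_2(q^2,\undQ)=0$. Your own steps 1--3 then show that $R^{\Lambda_e}_2(I)$ is not semisimple. As a sanity check, the boxes $(1,2)$ and $(2,1)$ have the same $\mathtt{q}$-residue, since $q\cdot q^{2e+2}=(q\cdot q^{2e-2})^{-1}$.

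There is also an edge case. For $\Lambda=\Lambda_0$ (so $\bullet=\mathtt{s}$, $m=0$), conditions (1)--(2) are vacuous, but $P^{(\mathtt{s})}_n\neq 0$ forces $n\le 2e$.

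So your argument is sound up to the criterion $P^{(\bullet)}_n(q^2,\undQ)\neq 0$, but you cannot finish by citing the proposition; you must redo the root-of-unity computation. That computation yields condition (1) with upper bound $e-\frac{n-1}{2}$, together with condition (2), plus $n\le 2e$ when $m=0$.
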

	
	\begin{proof}
		By Corollary \ref{semisimple-equivalence} and Theorem \ref{twisted type A}, $R^{\Lambda}_n(I)$ is semisimple if and only if the corresponding cyclotomic Hecke-Clifford superalgebra $\mathcal{H}^{f}_{\mathbb{K}}(n)$ is semisimple. Since $q^2$ is a $(2e+1)$-th primitive root of unity. We deduce that $\Gamma_n^{(\mathtt{s})}(q^2,\underline{Q})\neq 0$. 
		Then our corollary follows from Theorem \ref{semisimplicity for cyclotomic Hecke-Clifford} and Proposition \ref{semisimplicty twised typeA }.
		\end{proof}
		
	\begin{cor}\label{semisimple-equivalent 1}
		Assume $p\nmid (2e+1)$. Let $\Lambda\in P^+$ such that $\<h_0,\Lambda\>\in 2\N$. Then $R^{\Lambda}_n(I)$ is semisimple if and only if $R^{\Lambda+\Lambda_0}_n(I)$ is semisimple.
		\end{cor}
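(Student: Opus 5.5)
The plan is to reduce both algebras to cyclotomic Hecke--Clifford superalgebras that share the same parameter tuple $\undQ$ and differ only in $\bullet$. Corollary \ref{semisimplicty twised typeA' } then gives both of them a semisimplicity criterion that does not depend on $\bullet$.

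\textbf{Step 1: realise $\Lambda$ by an $f$ of type $\mathtt{0}$.} Write $\<h_0,\Lambda\>=2k$ and $\<h_i,\Lambda\>=k_i$ for $1\leq i\leq e$. Let $\undQ=(q^{2i_1},\cdots,q^{2i_m})$ list $k$ copies of $i_s=0$ and $k_i$ copies of $i_s=i$ for each $i\geq 1$. By the definition of $\Lambda(f)$, each entry with $i_s=0$ contributes $2^{\delta_{i_s,0}}\Lambda_0=2\Lambda_0$. Hence $\Lambda=\Lambda(f^{(\mathtt{0})}_{\undQ})$. This is the only place the hypothesis $\<h_0,\Lambda\>\in 2\N$ is used: with $\bullet=\mathtt{0}$ the coefficient of $\Lambda_0$ in $\Lambda(f)$ is always even.

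\textbf{Step 2: realise $\Lambda+\Lambda_0$ by an $f$ of type $\mathtt{s}$.} Keep the same tuple $\undQ$. The definition of $\Lambda(f)$ in the case $\bullet=\mathtt{s}$ gives $\Lambda(f^{(\mathtt{s})}_{\undQ})=\Lambda_0+\Lambda=\Lambda+\Lambda_0$.

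\textbf{Step 3: compare the two criteria.} By Theorem \ref{twisted type A}, $R^{\Lambda}_n(I)$ and $R^{\Lambda+\Lambda_0}_n(I)$ correspond to $\mathcal{H}^{f}_{\mathbb{K}}(n)$ with $f=f^{(\mathtt{0})}_{\undQ}$ and $f=f^{(\mathtt{s})}_{\undQ}$ respectively. Apply Corollary \ref{semisimplicty twised typeA' } to each, that is, Corollary \ref{semisimple-equivalence}, Theorem \ref{semisimplicity for cyclotomic Hecke-Clifford} and Proposition \ref{semisimplicty twised typeA }. The assumption $p\nmid(2e+1)$ makes $q^2$ a primitive $(2e+1)$-th root of unity, so $\Gamma^{(\mathtt{s})}_n(q^2,\undQ)\neq 0$ and part (2) of Theorem \ref{semisimplicity for cyclotomic Hecke-Clifford} applies in the $\mathtt{s}$ case. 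Both algebras are therefore semisimple if and only if
\begin{enumerate}
\item $n\leq i_s\leq e-\frac{n-2}{2}$ for every $1\leq s\leq m$, and
\item $n\leq |i_{s_1}-i_{s_2}|$ for all $1\leq s_1\neq s_2\leq m$.
\end{enumerate}
These conditions involve only the $i_s$, which are the same for both algebras, so the two semisimplicity statements are equivalent.

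\textbf{Expected obstacle.} The only delicate point is Step 1: making sure the multiset $\{i_s\}$ is read off correctly from $\Lambda$, which requires the evenness of $\<h_0,\Lambda\>$. I would also check that Proposition \ref{semisimplicty twised typeA } yields literally the same conditions for $\bullet=\mathtt{0}$ and $\bullet=\mathtt{s}$, as its statement asserts, even though its proof is written out only for $\bullet=\mathtt{s}$. Everything else is bookkeeping with results already stated.
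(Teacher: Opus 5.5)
Your proposal is correct and is essentially the paper's own proof. You realize $\Lambda=\Lambda(f^{(\mathtt{0})}_{\undQ})$ and $\Lambda+\Lambda_0=\Lambda(f^{(\mathtt{s})}_{\undQ})$ with the same $\undQ$, then note that the semisimplicity criterion is the same for $\bullet=\mathtt{0}$ and $\bullet=\mathtt{s}$. Your worry about that last point can be settled without the explicit conditions of Proposition~\ref{semisimplicty twised typeA }: $P^{(\mathtt{s})}_n(q^2,\undQ)=P^{(\mathtt{0})}_n(q^2,\undQ)\prod_{t=1}^n(q^{2t}+1)$, and $q^{2t}\neq -1$ because $q^2$ has odd order.

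Like the paper, you also tacitly need $m\geq 1$, i.e.\ $\Lambda\neq 0$. For $\Lambda=0$ one has $f^{(\mathtt{0})}_{\undQ}=1$ and $R^{0}_n=0$, whereas $R^{\Lambda_0}_n$ is nonzero and semisimple exactly when $n\leq 2e$. So the equivalence breaks there, under either convention for the zero algebra, and your conditions (1)--(2), being vacuous when $m=0$, do not detect it.
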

		
		\begin{proof}
			Note that when $\<h_0,\Lambda\>\in 2\N$, we can find a unique $f=f^{(\mathtt{0})}_{\undQ}$, where $\undQ=( q^{2i_1},\cdots,q^{2i_m})$ and $i_s\in I=\{0,1,\cdots,e\}$, for $1\leq s\leq m$ such that $\Lambda=\Lambda(f)$. By definition, we have $\Lambda+\Lambda_0=\Lambda(f^{(\mathtt{s})}_{\undQ})$. Hence the corollary follows from  Corollary \ref{semisimplicty twised typeA } immediately.
			\end{proof}
	\begin{example}
			Assume $e=5$ and $p\nmid (2e+1)$, and we consider $\Lambda=\Lambda_1+\Lambda_4$. Then by Proposition \ref{semisimplicty twised typeA },  $R^{\Lambda}_n(I)$ is semisimple if and only if $n\leq 2$. Both $R^{\Lambda}_2(I)$ and $R^{\Lambda+\Lambda_0}_2(I)$ are semisimple by Corollary \ref{semisimple-equivalent 1}.
		\end{example}
\subsection{Semisimplicity criterion for type $D^{(2)}_{e+1}$}

In this subsection, we consider the Dynkin quiver of type $D^{(2)}_{e+1}\, (e\geq 1)$ . As explained in Subsection \ref{quivers}, in this case, $I_{\text{odd}}=\{0,e\}$ and $R_n$ has non-trivial superstructure. We shall explain how to use Theorem \ref{semisimplicity for cyclotomic Hecke-Clifford} to derive semisimplicity criterion for $R^\Lambda_n$ where $\<h_0,\Lambda\>,\<h_e,\Lambda\>\in 2\N$.

We assume $p\nmid (e+1)$. Therefore, we can choose $q^2$ to be a $2(e+1)$-th primitive root of unity.  
 \begin{prop}\label{semisimplicty twised typeD }
 Let $f= f^{(\bullet)}_{\undQ}$, where $\bullet\in\{\mathtt{0},\mathtt{s},\mathtt{ss}\}$, $\undQ=( q^{2i_1},\cdots,q^{2i_m})$ and $i_s\in I=\{0,1,\cdots,e\}$, for $1\leq s\leq m$. Then  $P^{(\bullet)}_{n}(q^2,\undQ)\neq 0$ if and only if 
 \begin{enumerate}
 	\item \label{criterion of twisted type D1} For any $1\leq s\leq m,$ we have $n \leq i_s\leq e-n$.
 	\item \label{criterion of twisted type D2} For any $1\leq s_1\neq s_2\leq m,$ we have $n\leq |i_{s_1}-i_{s_2}|.$
 \end{enumerate}
 \end{prop}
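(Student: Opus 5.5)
The plan is to translate each factor of $P^{(\bullet)}_{n}(q^2,\undQ)$ into a congruence condition on exponents of $\zeta:=q^2$, and then read off the two conditions (1) and (2), as in the proofs of Propositions \ref{semisimplicty typeC } and \ref{semisimplicty twised typeA }. Since $\zeta$ is a primitive $2(e+1)$-th root of unity, $\zeta^k=1$ if and only if $2(e+1)\mid k$, and $\zeta^{e+1}=-1$. The three cases $\bullet\in\{\mathtt{0},\mathtt{s},\mathtt{ss}\}$ differ only in the first factor, namely $\prod_{t=1}^n(\zeta^t-1)$ versus $\prod_{t=1}^n(\zeta^t-1)(\zeta^t+1)=\prod_{t=1}^n(\zeta^{2t}-1)$. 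The latter is nonzero if and only if $n\le e$, and this also controls the former.

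\emph{Single-$Q_s$ factors.} I would use the identities
\[
Q_s^2-q^{-4t}=\zeta^{2i_s}-\zeta^{-2t},\qquad Q_s^2-q^{-2t}=\zeta^{2i_s}-\zeta^{-t}.
\]
\begin{enumerate}
\item The factor $\zeta^{2i_s}-\zeta^{-2t}$ vanishes if and only if $i_s+t\equiv 0 \pmod{e+1}$. As $t$ runs over $[1-n,n]$, the forbidden residues of $i_s$ modulo $e+1$ are $-n,\dots,n-1$.
\item Since $0\le i_s\le e$, avoiding these residues means exactly $n\le i_s\le e-n$, which is condition (1).
\item The factor $\zeta^{2i_s}-\zeta^{-t}$ vanishes only if $t=2k$ is even with $i_s+k\equiv0\pmod{e+1}$. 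Here $k$ lies in a subrange of $[1-n,n]$, so this vanishing is already excluded by (1).
\item When $m\ge1$, condition (1) forces $2n\le e$, hence $n\le e$, so the first factor is automatically nonzero. I will treat $m=0$ separately or note that the claim there reduces to $n\le e$; this edge case is the one point I would need to double-check against the intended statement.
\end{enumerate}

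\emph{Pairwise factors.} I would use the identities
\[
Q_s-Q_{s'}q^{-2t}=\zeta^{i_s}-\zeta^{i_{s'}-t},\qquad Q_sQ_{s'}-q^{-2(t+1)}=\zeta^{i_s+i_{s'}}-\zeta^{-(t+1)}.
\]
\begin{enumerate}
\item The first vanishes if and only if $i_s-i_{s'}+t\equiv0\pmod{2(e+1)}$. Since $|i_s-i_{s'}|\le e$ and $|t|\le n-1\le e$, the absolute value of this sum is less than $2(e+1)$, so vanishing means $i_s-i_{s'}=-t$. Running $t$ over $[1-n,n-1]$, the first product is nonzero exactly when $|i_s-i_{s'}|\ge n$, which is condition (2).
\item For the second factor, assume (1) holds. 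Then $i_s+i_{s'}+t+1\in[n+2,\,2e-n]$, which lies strictly between $0$ and $2(e+1)$, so this factor never vanishes.
\end{enumerate}

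Combining the two parts, $P^{(\bullet)}_{n}\neq0$ holds if and only if (1) and (2) hold. The only delicate points are the bookkeeping of the residue ranges modulo $e+1$ versus $2(e+1)$ and the degenerate case $m=0$. I expect the main obstacle to be the careful verification that the $Q_s^2-q^{-2t}$ factors impose nothing beyond (1), since they involve the odd exponents $t$.
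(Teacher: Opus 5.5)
Your proposal is correct and uses the same approach the paper intends: its proof just says the argument is ``exactly the same'' as the twisted $A^{(2)}_{2e}$ case, i.e.\ a factor-by-factor root-of-unity analysis. You carry out the details that reference leaves implicit, including the observation that here the $Q_s^2-q^{-2t}$ factors impose nothing beyond (1). Your worry about $m=0$ is well founded: there (1)--(2) are vacuous while $P^{(\bullet)}_n$ vanishes for large $n$, so the statement implicitly assumes $m\ge 1$, as the paper's type $C^{(1)}_e$ argument also does.
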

\begin{proof}
This is exactly the same as Proposition \ref{semisimplicty twised typeA }.
 \end{proof}
 
Now we consider the cyclotomic polynomial being the following form : $f= f^{(\bullet)}_{\undQ}$, where $\bullet\in\{\mathtt{0},\mathtt{s},\mathtt{ss}\}$, $\undQ=( q^{2i_1},\cdots,q^{2i_m})$ and $i_s\in I=\{0,1,\cdots,e\}$, for $1\leq s\leq m$ and associate $f$ with a dominant integral weight as follows $$\Lambda(f):=\begin{cases}\sum\limits_{s=1}^m 2^{\delta_{i_s,0}+\delta_{i_s,e}}\Lambda_{k_s}\in P^+\qquad&\text{if $\bullet=\mathtt{0}$,}\\
 	\Lambda_0+\sum\limits_{s=1}^m 2^{\delta_{i_s,0}+\delta_{i_s,e}}\Lambda_{i_s}\in P^+\qquad&\text{if $\bullet=\mathtt{s}$},\\
 	\Lambda_0+\Lambda_e+\sum\limits_{s=1}^m 2^{\delta_{i_s,0}+\delta_{i_s,e}}\Lambda_{i_s}\in P^+\qquad&\text{if $\bullet=\mathtt{ss}$.}
 \end{cases}$$ 
 
 Then we have the following.
 
 \begin{thm}\cite[Corollary 4.8]{KKT}\label{twisted type D}
 	$	\mathcal{H}^{f}_{\mathbb{K}}(n)\cong RC^{\Lambda(f)}_n(I)$.
 \end{thm}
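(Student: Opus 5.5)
The plan is to obtain the isomorphism as a specialization of the general Kang--Kashiwara--Tsuchioka result \cite[Corollary 4.8]{KKT}, as was done for Theorems \ref{type A}, \ref{type C} and \ref{twisted type A}. The only work is to check that our data satisfy the hypotheses of that corollary. Recall its content. For a set $\mathcal{I}\subset\mathbb{K}^*$ of possible eigenvalues of the $X_k$ that is stable under $x\mapsto x^{-1}$, one forms the quiver $\Gamma$ on $\mathcal{I}/(x\sim x^{-1})$ whose arrows record the relations $b'=q^{\pm2}b$ and $b'=q^{\pm2}b^{-1}$. If $\Gamma$ is of affine type, and $f$ is a product of factors $(X_1+X_1^{-1}-\mathtt{q}(Q))$, possibly together with $(X_1\mp1)$, all of whose roots lie in $\mathcal{I}$, then $\mathcal{H}^f_{\mathbb{K}}(n)$ is isomorphic to the cyclotomic quiver Hecke--Clifford superalgebra $RC^{\Lambda}_n(\Gamma)$. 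Here $\Lambda$ is read off from the root multiplicities of $f$.

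First I determine the eigenvalue set. By \eqref{eigenvalues}, every box residue has the form $q^{2i}\cdot q^{2(j-i)}$, and the relevant eigenvalues of $X_k$ are the $\mathtt{b}_\pm$ of such residues. Since $x+x^{-1}=\mathtt{q}(\iota)$ is solved by $x=q\iota$ and $x=(q\iota)^{-1}$, we get $\mathtt{b}_\pm(q^{2i})=q^{\pm(2i+1)}$. So $\mathcal{I}=\{q^{k}\mid k \text{ odd}\}\cup\{\pm1\}$, taken up to inversion. Because $q^2$ is a primitive $2(e+1)$-th root of unity, $q$ has order $4(e+1)$ or $2(e+1)$, and I will fix the appropriate choice. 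The classes $\{q^{2i+1},q^{-2i-1}\}$ with $0\le i\le e$ then correspond bijectively to $I$, the two ends being the fixed points $\pm1$ up to the shift. I will check that the neighbour relations form a chain $0-1-\cdots-e$, and that at each end the residue is self-inverse under $x\mapsto x^{-1}$ combined with $q^2$-shifts. This produces the double arrows pointing outward at both ends, which is the $D^{(2)}_{e+1}$ quiver with $I_{\rm odd}=\{0,e\}$, and it matches the polynomials $Q_{i,j}$ fixed in Subsection \ref{quivers}.

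Next I match the cyclotomic weight. A factor $X_1+X_1^{-1}-\mathtt{q}(q^{2i_s})$ contributes the root pair $q^{\pm(2i_s+1)}$. For $0<i_s<e$ this pair is two distinct eigenvalues lying in the single class $i_s$, so it contributes $\Lambda_{i_s}$. For $i_s\in\{0,e\}$ the two roots coincide up to the identification, and since $y_1$ is odd at these vertices the factor contributes $2\Lambda_{i_s}$; this is the source of the coefficient $2^{\delta_{i_s,0}+\delta_{i_s,e}}$. The extra factors $(X_1-1)$ and $(X_1+1)$ in $f^{(\mathtt{s})}$ and $f^{(\mathtt{ss})}$ contribute $\Lambda_0$ and $\Lambda_e$ respectively. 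Together these reproduce $\Lambda(f)$ as defined above.

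Once this bookkeeping agrees, \cite[Corollary 4.8]{KKT} yields the isomorphism directly. The main obstacle is the normalization of the end vertices. I have to verify that the fixed points $\pm1$ of inversion really sit at $0$ and $e$, that $e=\pm1$ is attained exactly once given the order of $q$, and that the multiplicity convention of KKT (whether an odd vertex $y_1$ appears squared) gives the factor $2$ rather than $1$. I would settle this by writing out $f$ in the local variable $y_1$ near each eigenvalue, following KKT's explicit substitution.
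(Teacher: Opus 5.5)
Your overall route is the paper's: the statement is quoted from \cite[Corollary 4.8]{KKT}, and the only content is matching the data. But the matching you propose rests on a false computation, and as written it would not reproduce $\Lambda(f)$.

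\textbf{The eigenvalue computation is wrong.} The roots of $x+x^{-1}=\mathtt{q}(\iota)$ are not $(q\iota)^{\pm1}$. By definition $\mathtt{q}(\iota)=2\frac{q\iota+(q\iota)^{-1}}{q+q^{-1}}$, which carries the factor $2/(q+q^{-1})\neq 1$. Hence $\mathtt{b}_\pm(q^{2i})\neq q^{\pm(2i+1)}$; compare \eqref{eigenvalues}. Two cases show the difference:
\begin{enumerate}
\item $\mathtt{q}(1)=2$, so $\mathtt{b}_\pm(1)=1$, not $q^{\pm1}$.
\item $q^{2(e+1)}=-1$ gives $q^{2e+1}=-q^{-1}$, so $\mathtt{q}(q^{2e})=-2$ and $\mathtt{b}_\pm(q^{2e})=-1$.
\end{enumerate}

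\textbf{The error propagates.}
\begin{enumerate}
\item In your model the end classes are $\{q^{\pm1}\}$ and $\{-q^{\pm1}\}$. These are free orbits of $x\mapsto x^{-1}$, not the fixed points $\pm1$, so $0$ and $e$ would come out even rather than odd.
\item The roots of the extra factors $X_1\mp1$ in $f^{(\mathtt{s})},f^{(\mathtt{ss})}$ would then lie outside your vertex set.
\item Your explanation of the coefficient $2^{\delta_{i_s,0}+\delta_{i_s,e}}$ collapses, because every pair $\{q^{2i+1},q^{-2i-1}\}$ is identified in exactly the same way. The phrase ``up to the shift'' is covering this inconsistency.
\item The multiplicative arrow rule $b'=q^{\pm2}b^{\pm1}$ fails on the true eigenvalues. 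Those at vertex $1$ solve $x+x^{-1}=\mathtt{q}(q^2)=2(q^2-1+q^{-2})$, so they are not $q^{\pm2}$.
\end{enumerate}
Incidentally, $q^2$ of order $2(e+1)$ forces $q$ to have order exactly $4(e+1)$, so there is no choice to fix.

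\textbf{The correct bookkeeping} works on residues, equivalently on $\mathtt{q}$-values. One has $\mathtt{q}(\iota)=\mathtt{q}(\iota')$ if and only if $\iota'\in\{\iota,q^{-2}\iota^{-1}\}$, and adjacent boxes have residues differing by $q^{\pm2}$.
\begin{enumerate}
\item The classes are $\{q^{2i},q^{-2-2i}\}$ for $0\le i\le e$. They form the chain $0-1-\cdots-e$, with the classes of $0$ and $e$ self-adjacent.
\item Vertex $i$ carries the $X$-eigenvalues $\mathtt{b}_\pm(q^{2i})$. For $0<i<e$ these are two distinct values different from $\pm1$. They collapse to $1$ exactly at $i=0$ and to $-1$ exactly at $i=e$, which is why $I_{\rm odd}=\{0,e\}$.
\item For $i_s\in\{0,e\}$, one has $X_1+X_1^{-1}-\mathtt{q}(q^{2i_s})=X_1^{-1}(X_1\mp1)^2$. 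This is a double root at an odd vertex, giving $2\Lambda_{i_s}$.
\item For $0<i_s<e$ the factor has two simple roots forming one orbit, giving $\Lambda_{i_s}$.
\item The factors $X_1-1$ and $X_1+1$ give $\Lambda_0$ and $\Lambda_e$.
\end{enumerate}
With this corrected bookkeeping the data match $\Lambda(f)$ as defined in the paper, and the citation applies.
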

 
 Let $\Lambda\in P^+$ such that $\<h_0,\Lambda\>,\<h_e,\Lambda\>\in 2\N$. Then we can find some $f= f^{(\mathtt{0})}_{\undQ}$, where $\undQ=( q^{2i_1},\cdots,q^{2i_m})$ and $i_s\in I=\{0,1,\cdots,e\}$, for $1\leq s\leq m$ such that $\Lambda=\Lambda(f)$.
 
 \begin{cor}\label{semisimplicty twised typeD' }
 	Assume $p\nmid (e+1)$. Let $\Lambda\in P^+$ such that $\<h_0,\Lambda\>,\<h_e,\Lambda\>\in 2\N$. Then  $R^{\Lambda}_n(I)$ is semisimple if and only if \begin{enumerate}
 		\item For any $1\leq s\leq m,$ we have $n \leq i_s\leq e-n$.
 		\item For any $1\leq s_1\neq s_2\leq m,$ we have $n\leq |i_{s_1}-i_{s_2}|.$
 	\end{enumerate}
 \end{cor}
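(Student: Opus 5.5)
The plan is to reduce Corollary \ref{semisimplicty twised typeD' } to Theorem \ref{semisimplicity for cyclotomic Hecke-Clifford}(1), in the same way as the corresponding corollaries in the other types. Because $\<h_0,\Lambda\>$ and $\<h_e,\Lambda\>$ are both even, we can write $\Lambda=\Lambda(f)$ with $f=f^{(\mathtt{0})}_{\undQ}$ and $\undQ=(q^{2i_1},\ldots,q^{2i_m})$. Choosing the $\mathtt{0}$ case is deliberate: it avoids both the $\Gamma^{(\mathsf{s})}_n$ hypothesis and the $\mathtt{ss}$ case, which Theorem \ref{semisimplicity for cyclotomic Hecke-Clifford} does not cover. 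Theorem \ref{twisted type D} then gives $\mathcal{H}^{f}_{\mathbb{K}}(n)\cong RC^{\Lambda}_n(I)$. By Corollary \ref{semisimple-equivalence}, $R^\Lambda_n(I)$ is semisimple if and only if $RC^\Lambda_n(I)$ is semisimple, that is, if and only if $\mathcal{H}^f_{\mathbb{K}}(n)$ is semisimple.

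Next we need $\mathbb{K}$ to be algebraically closed, as Section \ref{Semisimplicity criterion for cyclotomic Hecke-Clifford superalgebra} assumes. If the base field is not algebraically closed, we first pass to its algebraic closure. This is harmless because semisimplicity of finite-dimensional algebras over a perfect field is invariant under field extension; if the field is not perfect, we simply work over the algebraic closure throughout, as the paper does. Once this is arranged, Theorem \ref{semisimplicity for cyclotomic Hecke-Clifford}(1) applies with $\bullet=\mathtt{0}$. It says that $\mathcal{H}^f_{\mathbb{K}}(n)$ is semisimple exactly when $P^{(\mathtt{0})}_n(q^2,\undQ)\neq 0$. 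Here $q^2$ is a primitive $2(e+1)$-th root of unity, which requires $p\nmid(e+1)$; this guarantees $q^2\neq\pm1$ and $q+q^{-1}\neq 0$.

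The final step is Proposition \ref{semisimplicty twised typeD }, which translates $P^{(\mathtt{0})}_n(q^2,\undQ)\neq 0$ into conditions (1) and (2). This is where the real work lies, since its proof was only indicated. I would check it directly, factor by factor.

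First, $\prod_{t=1}^n(q^{2t}-1)\neq0$ holds if and only if $n\le 2e+1$.

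Second, for each $s$ the factor $Q_s^2-q^{-2t}=q^{4i_s}-q^{-2t}$ vanishes exactly when $2(e+1)\mid 2i_s+t$. Similarly, $Q_s^2-q^{-4t}=q^{4i_s}-q^{-4t}$ vanishes exactly when $(e+1)\mid i_s+t$. Over the range $t\in[1-n,n]$, the second product is nonzero precisely when $i_s+t\notin(e+1)\Z$ for all such $t$, which amounts to $i_s\ge n$ and $i_s\le e-n$. This matches condition (1).

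Third, for $s\neq s'$ the factor $Q_s-Q_{s'}q^{-2t}$ vanishes exactly when $(e+1)\mid i_s-i_{s'}+t$. For $|t|\le n-1$, nonvanishing gives $|i_s-i_{s'}|\ge n$ (note $|i_s-i_{s'}|\le e$). This is condition (2).

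Finally, I must verify that under (1) the remaining factors are automatically nonzero. These are the factors $Q_sQ_{s'}-q^{-2(t+1)}$, which require $(e+1)\nmid i_s+i_{s'}+t+1$, together with the factors $Q_s^2-q^{-2t}$ for $t\in[3-n,n-1]$. Condition (1) gives $2n\le i_s+i_{s'}\le 2e-2n$, so $i_s+i_{s'}+t+1$ lies in $[n+2,\,2e-n+1]$ and in particular avoids $e+1$. Condition (1) also gives $n\le e-n$, so the first product is nonzero as well.

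The main obstacle I expect is in the cross factors $Q_sQ_{s'}-q^{-2(t+1)}$. There the window $[n+2,\,2e-n+1]$ might still contain $e+1$, and that case has to be ruled out separately. If it cannot be excluded, the stated criterion would need adjusting, so I would carry out this range check carefully before anything else.
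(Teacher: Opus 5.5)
Your reduction is exactly the paper's proof. You write $\Lambda=\Lambda(f)$ with $f=f^{(\mathtt{0})}_{\undQ}$, pass to $\mathcal{H}^f_{\mathbb{K}}(n)$ via Theorem~\ref{twisted type D} and Corollary~\ref{semisimple-equivalence}, apply Theorem~\ref{semisimplicity for cyclotomic Hecke-Clifford}(1), and translate $P^{(\mathtt{0})}_n(q^2,\undQ)\neq 0$ via Proposition~\ref{semisimplicty twised typeD }, which the paper simply cites. The only defect is in your own factor-by-factor check of that proposition, which as written does not close. Put $\zeta:=q^2$, of exact order $2(e+1)$, so that $Q_s=\zeta^{i_s}$ and $\zeta^k=1$ exactly when $2(e+1)\mid k$. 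Only the factors $Q_s^2-q^{-4t}=\zeta^{2i_s}-\zeta^{-2t}$ reduce to a congruence modulo $e+1$. The cross factors do not:
\begin{itemize}
\item $Q_s-Q_{s'}q^{-2t}$ vanishes exactly when $2(e+1)\mid i_s-i_{s'}+t$;
\item $Q_sQ_{s'}-q^{-2(t+1)}$ vanishes exactly when $2(e+1)\mid i_s+i_{s'}+t+1$.
\end{itemize}
You used the modulus $e+1$ in both. In the first case this is harmless: under (1) one has $|i_s-i_{s'}|\le e-2n$, so the extra bound $|i_s-i_{s'}|\le e+1-n$ that your modulus would impose is automatic. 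In the second case it is fatal. Under (1) the values of $i_s+i_{s'}+t+1$ lie in $[n+2,\,2e-n]$, not $[n+2,2e-n+1]$. Since (1) forces $2n\le e$, the value $e+1$ does occur, for instance when $i_s+i_{s'}=e$ and $t=0$. This is compatible with (1) and (2): take $e=10$, $n=2$, $i_s=2$, $i_{s'}=8$. So your assertion that the window avoids $e+1$ is false. With your modulus you would conclude that such a factor vanishes, which refutes the criterion rather than proving it; this is exactly the unresolved doubt in your last paragraph.

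With the correct modulus the doubt disappears. The interval $[n+2,2e-n]\subset[3,2e+1]$ contains no multiple of $2(e+1)$. At the critical value $i_s+i_{s'}+t+1=e+1$ the factor equals $q^{-2(t+1)}(\zeta^{e+1}-1)=-2q^{-2(t+1)}$, which is nonzero because the characteristic of $\mathbb{K}$ is not $2$. You should also verify the factors $Q_s^2-q^{-2t}$, $t\in[3-n,n-1]$, explicitly rather than list them as automatic. Under (1), $2i_s+t\in[n+3,\,2e-n-1]$, which avoids $0$ and $2(e+1)$. With these corrections your check of Proposition~\ref{semisimplicty twised typeD } goes through, and your argument coincides with the paper's.
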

 \begin{proof}
By Corollary \ref{semisimple-equivalence} and Theorem \ref{twisted type D}, $R^{\Lambda}_n(I)$ is semisimple if and only if the corresponding cyclotomic Hecke-Clifford superalgebra $\mathcal{H}^{f}_{\mathbb{K}}(n)$ is semisimple. Then our corollary follows from Theorem \ref{semisimplicity for cyclotomic Hecke-Clifford} and Proposition \ref{semisimplicty twised typeD }.
 	\end{proof}
 	
 	\begin{rem}
 	Assume $p\nmid (e+1)$, then the ``if'' part of Corollary \ref{semisimplicty twised typeD' } is still true for arbitrary $\Lambda\in P^+$ by Theorem \ref{semisimple:non-dege} and Proposition \ref{semisimplicty twised typeD }. We conjecture the ``only if '' part also holds true for arbitrary $\Lambda\in P^+$. Motivated by Corollary \ref{semisimple-equivalent 1}, we also have the following Conjecture.
 		\end{rem}
 		
 	\begin{conjecture}\label{semisimple-equivalent 2}
 		Assume $p\nmid (e+1)$.
 		\begin{enumerate}
 		\item Let $\Lambda\in P^+$ such that $\<h_0,\Lambda\>\in 2\N$. Then $R^{\Lambda}_n(I)$ is semisimple if and only if $R^{\Lambda+\Lambda_0}_n(I)$ is semisimple.
 		\item  Let $\Lambda\in P^+$ such that $\<h_e,\Lambda\>\in 2\N$. Then $R^{\Lambda}_n(I)$ is semisimple if and only if $R^{\Lambda+\Lambda_e}_n(I)$ is semisimple.
 		\end{enumerate}
 		\end{conjecture}

	\end{document}